\newcommand{\boldGamma}{{\boldsymbol\Gamma}}
\newcommand{\boldTau}{\mbfT}
\newcommand{\Beta}{\mathrm{B}}
\newcommand{\boldalpha}{{\boldsymbol \alpha}}
\newcommand{\boldgamma}{{\boldsymbol \gamma}}
\newcommand{\bolddelta}{{\boldsymbol \delta}}
\NewDocumentCommand{\makeabbrev}{mmm}
 {
  \yoruk_makeabbrev:nnn { #1 } { #2 } { #3 }
 }
\makeabbrev{\textbf}{tbf#1}{a,b,c,d,e,f,g,h,i,j,k,l,m,n,o,p,q,r,s,t,u,v,w,x,y,z,A,B,C,D,E,F,G,H,I,J,K,L,M,N,O,P,Q,R,S,T,U,V,W,X,Y,Z}
\makeabbrev{\textbf}{bf#1}{a,b,c,d,e,f,g,h,i,j,k,l,m,n,o,p,q,r,s,t,u,v,w,x,y,z,A,B,C,D,E,F,G,H,I,J,K,L,M,N,O,P,Q,R,S,T,U,V,W,X,Y,Z}
\makeabbrev{\textsf}{tsf#1}{a,b,c,d,e,f,g,h,i,j,k,l,m,n,o,p,q,r,s,t,u,v,w,x,y,z,A,B,C,D,E,F,G,H,I,J,K,L,M,N,O,P,Q,R,S,T,U,V,W,X,Y,Z}
\makeabbrev{\mathsf}{mss#1}{a,b,c,d,e,f,g,h,i,j,k,l,m,n,o,p,q,r,s,t,u,v,w,x,y,z,A,B,C,D,E,F,G,H,I,J,K,L,M,N,O,P,Q,R,S,T,U,V,W,X,Y,Z}
\makeabbrev{\mathfrak}{mf#1}{a,b,c,d,e,f,g,h,i,j,k,l,m,n,o,p,q,r,s,t,u,v,w,x,y,z,A,B,C,D,E,F,G,H,I,J,K,L,M,N,O,P,Q,R,S,T,U,V,W,X,Y,Z}
\makeabbrev{\mathrm}{mrm#1}{a,b,c,d,e,f,g,h,i,j,k,l,m,n,o,p,q,r,s,t,u,v,w,x,y,z,A,B,C,D,E,F,G,H,I,J,K,L,M,N,O,P,Q,R,S,T,U,V,W,X,Y,Z}
\makeabbrev{\mathbf}{mbf#1}{a,b,c,d,e,f,g,h,i,j,k,l,m,n,o,p,q,r,s,t,u,v,w,x,y,z,A,B,C,D,E,F,G,H,I,J,K,L,M,N,O,P,Q,R,S,T,U,V,W,X,Y,Z}
\makeabbrev{\mathcal}{mc#1}{A,B,C,D,E,F,G,H,I,J,K,L,M,N,O,P,Q,R,S,T,U,V,W,X,Y,Z}
\makeabbrev{\mathbb}{mbb#1}{A,B,C,D,E,F,G,H,I,J,K,L,M,N,O,P,Q,R,S,T,U,V,W,X,Y,Z}
\makeabbrev{\mathscr}{ms#1}{A,B,C,D,E,F,G,H,I,J,K,L,M,N,O,P,Q,R,S,T,U,V,W,X,Y,Z}
\makeabbrev{\mathrm}{#1}{
Id,id,ran,rk,diag,stab,ann,conv,pr,ev,tr,End,Hom,sgn,im,op,can,fin,ext,red,tot,
%
rot,usc,lsc,Lip,lip,bLip,osc,AC,loc,spec,
%
supp,Opt,Adm,Cpl,Geo,GeoOpt,GeoAdm,GeoCpl,reg,
%
bd,co,Ric,Exp,dExp,dist,seg,Seg,cut,fcut,Cut,SDiff,Iso,Isom,diam,cl,Homeo,Diff,Der,vol,dvol,inj,relint,
%
var,law,Var,Poi,Gam,pa,so,iso,fs,inv,pqi,mix,
%
}
\makeabbrev{\mathsf}{#1}{CD,BE,RCD,MCP,Ent,wMTW,MTW}
\makeabbrev{\mathsc}{#1}{mmaf,cg}
\newcommand{\eps}{\varepsilon}
\renewcommand{\div}{\mathrm{div}}
\newcommand{\Div}{\mathbf{div}}
\newcommand{\defeq}{\eqqcolon}
\renewcommand{\complement}{\mathrm{c}}
\newcommand{\bid}{{\star}}
\newcommand{\mathsc}[1]{\text{\textsc{#1}}}
\newcommand{\emparg}{{\,\cdot\,}}
\DeclareMathOperator*{\argmin}{argmin}
\DeclareMathOperator{\Diffeo}{Diff}
\DeclareMathOperator{\Flow}{Flow}
\newcommand{\fl}{\uppsi}
\newcommand{\Fl}{\Psi}
\newcommand{\grad}{\boldsymbol\nabla}
\DeclareMathOperator{\re}{re}
\newcommand{\forallae}[1]{{\textrm{\,for ${#1}$-a.e.\,}}}
\newcommand{\Leb}{{\mathcal L}}
\newcommand{\tra}{{{}_{\!\top}}}
\newcommand{\dom}[1]{\msD(#1)}
\newcommand{\slo}[1]{\abs{\mathrm D#1}\!}
\DeclareMathOperator{\eqdef}{\coloneqq}
\let\epsilon\varepsilon
\let\temp\phi
\let\phi\varphi
\let\varphi\temp
\newcommand{\rar}{\rightarrow}
\newcommand{\Rar}{\,\Longrightarrow\,}
\newcommand{\nlim}{\lim_{n}}								
\newcommand{\nliminf}{\liminf_{n }}
\newcommand{\nlimsup}{\limsup_{n }\,}
\newcommand{\diff}{\mathop{}\!\mathrm{d}}						
\newcommand{\del}{\partial}
\newcommand{\tabs}[1]{\lvert#1\rvert}	
\newcommand{\abs}[1]{\left\lvert#1\right\rvert}						
\newcommand{\norm}[1]{\left\lVert#1\right\rVert}					
\newcommand{\set}[1]{\left\{#1\right\}}	
\newcommand{\tset}[1]{\{#1\}}							
\newcommand{\tonde}[1]{\left(#1\right)}
\newcommand{\ttonde}[1]{{\big(#1\big)}}						
\newcommand{\quadre}[1]{\left[#1\right]}							
\newcommand{\tscalar}[2]{\big\langle #1 \, |\, #2\big\rangle}		
\newcommand{\scalar}[2]{\left\langle #1 \,\middle |\, #2\right\rangle}		
\newcommand{\seq}[1]{\tonde{#1}}								
\newcommand{\tseq}[1]{{(#1)}}
\newcommand{\Cb}{{\mcC_{b}}}							
\newcommand{\Mbp}{\mathscr M_b^+}
\newcommand{\pfwd}{\sharp}
\DeclareMathOperator*{\essinf}{essinf}
\DeclareMathOperator{\car}{\mathds 1}
\DeclareMathOperator{\emp}{\varnothing} 
\DeclareMathOperator{\N}{{\mathbb N}}
\DeclareMathOperator{\R}{{\mathbb R}}
\DeclareMathOperator{\Q}{{\mathbb Q}}
\DeclareMathOperator{\Z}{{\mathbb Z}}
\newcommand{\cont}{{\mathrm{cont}}} 
\newcommand{\restr}{\big\lvert}
\newcommand{\iref}[1]{\ref{#1}}
\newcommand{\comm}{\,\textrm{,}\;\,}
\newcommand{\semicolon}{\,\textrm{;}\;\,}
\newcommand{\fstop}{\,\textrm{.}}
\DeclareMathOperator{\zero}{{\mathbf 0}}
\DeclareMathOperator{\DiffSP}{Diff^\infty_+}
\DeclareMathOperator{\bexp}{\mathbf{exp}}
\DeclareMathOperator{\Graph}{Graph}
\newcommand{\prid}{\mathrel{\ooalign{$\lneq$\cr\raise.22ex\hbox{$\lhd$}\cr}}}
\newcommand{\n}[1]{{\overline{#1}}}
\newcommand{\ptws}{\mathrm{ptws\,}}
\newcommand{\gscal}[2]{\scalar{#1}{#2}_\mssg}
\newcommand{\Test}{\mcF\mcC}
\newcommand{\TestV}{\mcX\mcC}
\newcommand{\DF}{{\mcD}}
\newcommand{\PP}{{\mcP}}
\newcommand{\RP}{{\mcR}}
\newcommand{\MS}{{\mcM}}
\newcommand{\MSI}{{\mcS}}
\newcommand{\BB}{{\mcW_0}}
\newcommand{\empargop}{{-}}
\newcommand{\Vect}{\mfX}
\newcommand{\VectCount}{\msX}
\newcommand{\MFD}{M}
\numberwithin{equation}{section}
\theoremstyle{plain}
\newtheorem{thm}{Theorem}[section]
\newtheorem*{thm*}{Theorem}
\newtheorem{prop}[thm]{Proposition}
\newtheorem{lem}[thm]{Lemma}
\newtheorem{cor}[thm]{Corollary}
\theoremstyle{definition}
\newtheorem{defs}[thm]{Definition}
\newtheorem{ass}[thm]{Assumption}
\theoremstyle{remark}
\newtheorem{rem}[thm]{Remark}
\newtheorem*{rem*}{Remark}
\newtheorem{ese}[thm]{Example}
\begin{document}
\begin{frontmatter}

\title{A Rademacher-type theorem\\on \texorpdfstring{$L^2$}{L2}-Wasserstein spaces over\\closed Riemannian manifolds\thanksref{T2}}

\begin{aug}
\author{\fnms{Lorenzo} \snm{Dello Schiavo}\thanksref{t1}}
\ead[label=e1]{delloschiavo@iam.uni-bonn.de}

\thankstext{T2}{Research supported by the CRC 1060 and the Hausdorff Center for Mathematics (University of Bonn).}

\thankstext{t1}{The author is grateful to Prof.s K.-T.~Sturm, E.~W.~Lytvynov and N.~Gigli for useful remarks and comments, and to Prof.~A.~Eberle for providing the reference~\cite{Ebe96}. He is also grateful to an anonymous referee for several remarks and comments that significantly improved the presentation of this work.
Since part of this research was carried out during the \emph{Intense Activity Period on Metric Measure Spaces and Ricci Curvature} (September 4--29, 2017), it is a pleasure to thank the Max Planck Institute for Mathematics in Bonn for the hospitality.}

\runauthor{L.~Dello~Schiavo}

\affiliation{Institut f\"ur Angewandte Mathematik}

\address{Institut f\"ur Angewandte Mathematik\\
Rheinische Friedrich-Wilhelms-Universit\"at Bonn\\
Endenicher Allee 60\\
DE 53115 Bonn\\
Germany\\
\printead{e1}
}

\end{aug}

\begin{abstract}
Let~$\mbbP$ be any Borel probability measure on the $L^2$-Wasserstein space $(\msP_2(\MFD),W_2)$ over a closed Riemannian manifold~$\MFD$. We consider the Dirichlet form~$\mcE$ induced by~$\mbbP$ and by the Wasserstein gradient on~$\msP_2(\MFD)$. Under natural assumptions on~$\mbbP$, we show that $W_2$-Lipschitz functions on~$\msP_2(\MFD)$ are contained in the Dirichlet space $\dom{\mcE}$ and that~$W_2$ is dominated by the intrinsic metric induced by~$\mcE$.
We illustrate our results by giving several detailed examples.
\end{abstract}

\begin{keyword}[class=MSC]
\kwd{31C25} \kwd{secondary: 46G99}
\end{keyword}

\begin{keyword}
\kwd{Rademacher theorem}
\kwd{Wasserstein spaces}
\kwd{Dirichlet--Ferguson measure}
\kwd{entropic measure}
\kwd{normalized mixed Poisson measures}
\kwd{Malliavin--Shavgulidze measure}
\end{keyword}

\end{frontmatter}

\tableofcontents

\section{Introduction}
We consider the $L^2$-Wasserstein space~$\msP_2=\tonde{\msP_2(\MFD),W_2}$ associated to a closed Riemannian manifold~$(\MFD,\mssg)$. Since the seminal work of F.~Otto~\cite{Ott01}, the geometry of~$\msP_2$ has been widely studied from several view points. Definitions have been proposed and thoroughly studied of a ``weak Riemannian structure'' on~$\msP_2$, Lott~\cite{Lot07}, of a gradient for ``smooth'' functions on~$\msP_2$, of tangent space(s) to~$\msP_2$ at a point, Gigli~\cite{Gig11}, of an exponential map~\cite{Gig11}, of a Levi-Civita connection~\cite{Gig12}, of differential forms~\cite{GanKimPac10}.
This heuristic picture of~$\msP_2$ as an infinite-dimensional Riemannian manifold calls for the existence of a measure on~$\msP_2$ canonically and uniquely associated to the metric structure.
As it is the case for a differentiable manifold, such a measure ---~if any~--- would deserve the name of \emph{Riemannian volume measure} which we shall adopt in the following.

In this framework, the question of the existence of such a Riemannian volume measure on~$\msP_2$ has been insistently posed, e.g.,~\cite{Gig11,Stu11,vReStu09,Cho12}.
In the case of~$\MFD=\mbbS^1$, M.-K.~von~Renesse and \mbox{K.-T.}~Sturm~\cite{vReStu09}  proposed as a candidate the \emph{entropic measure} on~$\msP_2(\mbbS^1)$, see Example~\ref{ese:EntMeas}.
Whereas a suitable definition of entropic measure on~$\msP_2(\MFD)$ for a closed Riemannian manifold~$\MFD$ was given by K.-T.~Sturm in~\cite{Stu11}, most of its properties in this general case remain unknown.
Here, we rather address the question of discerning the properties of a \emph{volume measure}~$\mbbP$ on~$\msP_2$. By ``volume measure'' we shall mean any analogue on~$\msP_2$ of \emph{a} measure on a differentiable manifold induced by a volume form via integration.

We do so by proving a Rademacher-type result on the $\mbbP$-a.e.~\emph{Fr\'echet} differentiability of $W_2$-Lipschitz functions (Thm.~\ref{t:main}).
Namely, we consider a Dirichlet space~$\msF$ associated to~$\mbbP$ and to a natural gradient, with core being the algebra~$\Test^\infty$ of cylinder functions induced by smooth potential energies (Dfn.~\ref{d:CylFunc}). 
Combining the strategy of~\cite{RoeSch99} with the fine analysis of tangent plans performed by N.~Gigli in~\cite{Gig11}, we study, for functions in~$\msF$, suitable concepts of \emph{directional derivative} and \emph{differential}, proving their consistency on~$\Test^\infty$. 
We show that, if~$\mbbP$ is quasi-invariant with respect to the family of shifts defining the gradient, then the space of $W_2$-Lipschitz functions is contained in~$\msF$.
 
The requirement of the Rademacher property is indeed a natural one for a volume measure. For instance, it was recently shown by G.~De~Philippis and F.~Rindler~\cite[Thm.~1.14]{DePRin16} that, if~$\mu$ is a positive Radon measure on~$\R^d$ such that every Lipschitz function is $\mu$-a.e.~differentiable, then~$\mu$ is absolutely continuous with respect to the Lebesgue measure~$\Leb^d$.
In infinite dimensions, the problem has been addressed in linear spaces~\cite{BogMay96}, in particular on the abstract Wiener space~\cite{EncStr93}, and ---~in the non-flat case~--- on configuration spaces~\cite{RoeSch99}. 

\paragraph{Outline of the paper}
Section~\ref{s:Radem} presents the main Theorem~\ref{t:main}, together with the assumptions and several remarks.
Further preliminaries and auxiliary results on Dirichlet forms, optimal transport, and the metric geometry of~$\msP_2$ are collected in Section~\ref{s:Prelim}.
The proof of the main theorem is presented in Section~\ref{s:Proof}, together with some heuristics and preparatory lemmas.
In Section~\ref{s:Examples} we detail some examples of measures satisfying, fully or in part, our assumptions. These are mainly taken from the theory of point processes and include \emph{normalized mixed Poisson measures}, the \emph{Dirichlet--Ferguson measure}~\cite{Fer73}, as well as the \emph{entropic measure}~\cite{vReStu09} and an image on~$\msP_2(\mbbS^1)$ of the \emph{Malliavin--Shavgulidze measure}~\cite{MalMal90}.
We show through these examples how the situation on~$\msP_2$ is opposite to the aforementioned result in~\cite{DePRin16}. In particular, there exist mutually singular measures with full support on~$\msP_2$ satisfying the Rademacher property.
Auxiliary results are collected in Section~\ref{s:Appendix}, together with a discussion of the notion of ``tangent bundle'' to~$\msP_2$ from the point of view of global derivations of the algebra~$\Test^\infty$.

\section{A Rademacher Theorem on~\texorpdfstring{$\msP_2$}{P2}}\label{s:Radem}
Let~$(\MFD,\mssg)$ be a Riemannian manifold, and~$\msP$ be the space of all Borel probability measures on~$\MFD$.
In order to perform computations for functions on~$\msP$ in the spirit of~\cite{Ott01, Lot07}, we recall the definition of \emph{potential energy}, in the sense of~\cite[\S5.2.2]{Vil03}. Namely, given a continuous bounded function~$f\colon \MFD\rar \R$, the potential energy~$f^\bid\colon \msP\rar\R$ associated to~$f$ is defined as
\begin{align}\label{eq:Test0}
f^\bid\mu\longmapsto \mu f\eqdef \int_\MFD f \diff\mu\comm \qquad f\in\Cb(\MFD)\fstop
\end{align}

We endow~$\msP$ with the \emph{narrow} (or: \emph{weak}) topology, that is, the coarsest topology making all functions of the form~\eqref{eq:Test0} continuous, and with the corresponding Borel $\sigma$-algebra.

\paragraph{A metric structure on~$\msP$} Given~$\mu_1,\mu_2\in\msP$, denote by~$\Cpl(\mu_1,\mu_2)$ the set of \emph{couplings} (or: \emph{transport plans}) between~$\mu_1$ and~$\mu_2$, i.e.~the set of Borel probability measures on~$\MFD^{\times 2}$ such that~$\pr^i_\pfwd\pi=\mu_i$ for~$i=1,2$. Every distance~$\mssd$ on~$\MFD$ induces a natural distance on the subspace of all measures in~$\msP$ with finite $\mssd^2$-moment.
\begin{defs}[$L^2$-Wasserstein space]
For a fixed~$x_0\in \MFD$ set
\begin{align*}
\msP_2\eqdef\set{\mu\in\msP : \int_\MFD\mssd^2(x,x_0)\diff\mu(x)<\infty}
\end{align*}
and
\begin{align}\label{eq:WD}
W_2(\mu_1,\mu_2)\eqdef \tonde{\inf_{\pi\in\Cpl(\mu_1,\mu_2)} \int_{\MFD^{\times 2}} \mssd^2(x,y)  \diff\pi(x,y)}^{1/2} \fstop
\end{align}
The space~$(\msP_2,W_2)$ is a metric space~\cite[Dfn.s~6.1,~6.4]{Vil09}, called~\emph{$L^2$-Wasserstein space} \emph{over~$(\MFD,\mssd)$}.
\end{defs}
\begin{rem} By triangle inequality for~$\mssd$, the space~$\msP_2$ does not depend on the choice of~$x_0$. The set~$\Opt(\mu_1,\mu_2)$ of \emph{optimal plans}~$\pi\in \Cpl(\mu_1,\mu_2)$ attaining the infimum in~\eqref{eq:WD} is always non-empty~\cite[Thm.~4.1]{Vil09}. The $L^2$-Wasserstein distance~$W_2$ is lower semi-continuous w.r.t.~the narrow topology on~$\msP$,~\cite[Rmk.~6.12]{Vil09}.
\end{rem}

\paragraph{A ``differential'' structure on~$\msP$} In the following, we shall always assume that
$(\MFD,\mssg)$ is \emph{closed}, and~$\mssd=\mssd_\mssg$ is the intrinsic distance.
%
For ease of exposition we also take~$\MFD$ to be smooth and connected. All the results below remain valid if~$\MFD$ is of class~$\mcC^3$.

\medskip

We regard the algebra~$\Test^\infty$ of smooth potential energies as the algebra of ``smooth'' functions on~$\msP$.

\begin{defs}[Cylinder functions]\label{d:CylFunc}
Say that a function~$u\colon \msP\rar \R$ is \emph{cylinder} if there exist~$k\geq 0$, and functions~$F\in \mcC^\infty(\R^k)$ and~$f_1,\dotsc, f_k\in \mcC^\infty(\MFD)$, so that
\begin{align}\label{eq:TestFC}
u(\mu)= F(f_1^\bid\mu,\dotsc, f_k^\bid\mu)\comm \qquad \mu\in\msP \fstop
\end{align}

We denote by~$\Test^\infty$ the algebra of all cylinder functions on~$\msP$.
\end{defs}

\begin{rem}\label{r:NonUniqueCylFunc}
The representation of $u\in\Test^\infty$ by some $F$~and $\seq{f_i}_{i\leq k}$ as in~\eqref{eq:TestFC} is never unique.
\end{rem}

\begin{rem}\label{r:ContTest}
Since~$\MFD$ is compact,~$(\msP_2,W_2)$ is a \emph{compact} geodesic metric space~\cite[Thm.~2.10]{AmbGig11}, and it coincides with~$\msP$ as a topological space~\cite[Dfn.~6.8]{Vil09}.
By compactness of~$\msP_2$, in the definition above one might equivalently take~$F\in\mcC^\infty_c(\R^k)$. The given definition makes more apparent that~$f^\bid\in \Test^\infty$ for all~$f\in\mcC^\infty(\MFD)$. Since~$f^\bid$ is continuous on~$\msP_2=\msP$ by definition of narrow topology, all cylinder functions are continuous and thus (Borel) measurable.
\end{rem}

Motivated by the analogous choice in the framework of configuration spaces (cf.~\cite[Eq.~(1.1)]{RoeSch99}, see~\S\ref{ss:NMPM} below), we define the gradient of~$u\in\Test^\infty$ by
\begin{align}\label{eq:grad0}
\grad u(\mu)(x)\eqdef\sum_i^k (\partial_i F)(f_1^\bid\mu,\dotsc, f_k^\bid\mu)\, \nabla f_i(x) \fstop
\end{align}

This choice is consistent, by chain rule, with the Fr\'echet differentiability of~$f^\bid$ with respect to a natural Riemannian structure on the space of absolutely continuous measures~$\mu=\rho\,\dvol_\mssg\in\msP$,~e.g.,~\cite{Lot07} or \cite[\S9.1]{Vil03}, and more generally with the differentiability of functionals on probability measures, e.g.~\cite{AmbGigSav08}; furthermore, it is also consistent with the definition of a Wasserstein gradient in the recent work~\cite{ChoGan17}. See in particular~\cite[Dfn.~2.3 and Rmk.~2.4]{ChoGan17}.

We will also need a concept of directional derivative for functions in~$\Test^\infty$ and thus a concept of \emph{direction} at a point~$\mu$ in~$\msP$. It is not surprising that such a definition ought to be inherited from the differentiable structure of the manifold~$\MFD$, henceforth the \emph{base space}. 
Indeed, let~$T_x\MFD$ be the tangent space to~$\MFD$ at the point~$x$. We denote by~$\Vect^0$ the space of continuous vector fields, that is, sections of the tangent bundle~$T\MFD$, endowed with the supremum norm
\begin{align*}
\norm{w}_{\Vect^0}\eqdef \sup_{x\in \MFD} \abs{w_x}_\mssg \fstop
\end{align*}

We let further~$\Vect^\infty\subset \Vect^0$ be the algebra of smooth vector fields on~$\MFD$.
For any~$w\in\Vect^\infty$ we denote by~$\tseq{\fl^{w,t}}_{t\in\R}$ the flow generated by~$w$, i.e.~a map~$\fl^{w,t}\colon \MFD\rar \MFD$ such that
\begin{align*}
\dot \fl^{w,t}(x)=w(\fl^{w,t}(x)) \quad \textrm{and} \quad \fl^{w,0}(x)=x \comm \qquad x\in \MFD\comm
\end{align*}
where by~$\dot\fl^{w,t}(x)$ we mean the velocity of the curve~$s\mapsto \fl^{w,s}(x)$ at time~$t$.
By compactness of~$\MFD$ every~$w\in\Vect^\infty$ admits a unique flow, well-defined and a smooth orientation-preserving diffeomorphism in~$\DiffSP(\MFD)$ for all times~$t\in \R$, e.g.~\cite[\S{1.3.7(ii)}]{Ban97}.
If we denote by
\begin{align*}
\Fl^{w,t}\eqdef \fl^{w,t}_\pfwd\colon \msP\rar\msP
\end{align*}
the push-forward via~$\fl^{w,t}$, then a straightforward computation (see Lem.~\ref{l:Derivations} below) shows that
\begin{align}\label{eq:DirDer0}
(\grad_w u)(\mu)\eqdef \diff_t\restr_{t=0} (u\circ \Fl^{w,t})(\mu)=\tscalar{\grad u(\mu)}{w}_{\Vect_\mu}\comm \quad u\in\Test^\infty\comm
\end{align}
where
\begin{align}\label{eq:Scalar}
\scalar{w^0}{w^1}_{\Vect_\mu}\eqdef \int_\MFD \gscal{w^0_x}{w^1_x} \diff\mu(x) \comm \qquad w^0,w^1\in \Vect^\infty\fstop
\end{align}

Again in analogy with the case of configuration spaces~\cite{RoeSch99}, this motivates to define the tangent space to~$\msP$ at a point~$\mu$ as the space~$\Vect_\mu\eqdef \cl_{L^2_\mu}\Vect^\infty$, that is, the abstract linear completion of~$\Vect^\infty$ with respect to the norm~$\norm{\emparg}_{\Vect_\mu}$ induced by the Hilbert scalar product~$\scalar{\emparg}{\emparg}_{\Vect_\mu}$, the (non-relabeled) extension to~$\Vect_\mu$ of the scalar product~\eqref{eq:Scalar}.
We shall also write~$T^\Der_\mu\msP_2$ for~$\Vect_\mu$ and thus~$T^\Der\msP_2$ for the associated fiber-``bundle''.
Denote by~$\Vect^\infty_\nabla\eqdef \nabla\mcC^\infty(\MFD)$ the family of vector fields \emph{of gradient type} and let us further set~$T^\nabla_\mu\msP_2\eqdef \cl_{\Vect_\mu} \Vect^\infty_\nabla$; the associated fiber-``bundle'' will be denoted by~$T^\nabla\msP_2$.
As it is well-established in the optimal transport literature, e.g.~\cite{AmbGig11,Gig11,Gig12,GanKimPac10}, the space~$T^\nabla_\mu\msP_2$ is the space of those geodesic directions at~$\mu$ that are induced by optimal transport maps in the sense of Brenier--McCann Theorem~\ref{t:McCann}.
In the following we will make use of both \emph{non-equivalent} definitions. An exhaustive discussion of this choice is postponed to \S\ref{ss:Tangent}. However, let us remark here that the two definitions are in fact equivalent on configuration spaces.

\begin{ass}\label{ass:P} Let~$\mbbP$ be a Borel probability measure on~$\msP_2$ satisfying:
\begin{enumerate}[label=\ensuremath{\ref{ass:P}\,(\roman*)}]
\item\label{ass:P0} $\mbbP$ has full support;
\item\label{ass:P1} $\mbbP$ has no atoms;
\item\label{ass:P3} $\mbbP$ verifies the following integration-by-parts formula. If~$u,v\in \Test^\infty$ and~$w\in \Vect^\infty$, then there exists a measurable function~$\mu\mapsto\grad_w^*v\in \Vect_\mu$ such that
\begin{align}\label{eq:DefDirForm}
\int_\msP \grad_wu \cdot v \diff\mbbP=\int_\msP u\cdot \grad_w^* v \diff\mbbP \semicolon
\end{align}
\item\label{ass:P4} $\mbbP$ is quasi-invariant with respect to the action of the family of flows~$\Flow(\MFD)$ on~$\msP$, i.e.~$\mbbP$ and~$\Fl^{w,t}_\pfwd\mbbP$ are mutually absolutely continuous for all~$w\in\Vect^\infty$ and~$t\in \R$. Moreover the Radon--Nikod\'ym derivative
\begin{align}\label{eq:assP:RN}
R_r^w \eqdef \frac{\diff(\Fl^{w,r}_\pfwd \mbbP) \otimes \diff r}{\diff \mbbP\otimes\diff r}\comm \qquad r\in\R \comm
\end{align}
satisfies
\begin{align}\label{eq:ass:P4}
\forallae{\mbbP}\mu\in\msP \qquad \Leb^1\textrm{-}\essinf_{r\in (s,t)}\, R_r^w(\mu)>0 \comm \qquad s,t\in\R\comm s\leq t \fstop
\end{align}
\end{enumerate}
\end{ass}

The validity and necessity of these four assumptions are widely illustrated through examples in~\S\ref{s:Examples}. In practice however, it is easier to verify the stronger Assumption~\ref{ass:P5} below. We show in Proposition~\ref{p:P5} that, if~$\mbbP$ verifies Assumption~\ref{ass:P5}, then it verifies Assumption~\ref{ass:P} as well.
\begin{ass}\label{ass:P5} 
$\mbbP$ satisfies Assumptions~\ref{ass:P0} and~\ref{ass:P4}, and the Radon--Nikod\'ym derivative~$R^w_r$ in~\eqref{eq:assP:RN} is additionally such that, for every~$w\in\Vect^\infty$,
\begin{itemize}
\item[$\bullet$] $r\mapsto R^w_r(\mu)$ is differentiable in a neighborhood of~$0$ for~$\mbbP$-a.e.~$\mu$;
\item[$\bullet$] $\mu\mapsto \abs{\partial_r R^w_r(\mu)}$ is integrable w.r.t.~$\mbbP$ uniformly in~$r$ on a neighborhood of~$0$.
\end{itemize}
\end{ass}

Our discussion of a differential structure on~$\msP$ is completed by defining a divergence operator on cylinder vector fields.

\begin{defs}[Cylinder vector fields]\label{d:VectCinfty}
Let~$\TestV^\infty\eqdef \Test^\infty\otimes_{\R}\Vect^\infty$ denote the vector space of \emph{cylinder vector fields} on~$\msP$, i.e.~the $\R$-vector space of sections~$W$ of~$T^\Der\msP$ of the form
\begin{align}\label{eq:TestV}
W(\mu)(x)= \sum_j^n v_j(\mu) w_j(x)\comm \qquad n\in\N\comm v_j\in\Test^\infty \comm w_j\in\Vect^\infty
\end{align}
By~$\otimes_{\R}$ we denoted here the \emph{algebraic} $\R$-tensor product.

Further, let~$\TestV_\mbbP$ be the abstract linear completion of~$\TestV^\infty$ endowed with the pre-Hilbertian norm
\begin{align*}
\norm{W}_{\TestV_\mbbP}\eqdef \tonde{\sum_j^n\int_\msP \abs{v_j(\mu)}^2\norm{w_j}_{\Vect_\mu}^2 \diff \mbbP(\mu) }^{1/2}\fstop
\end{align*}
\end{defs}

Setting, for~$W$ as in~\eqref{eq:TestV},
\begin{align*}
\Div_\mbbP W(\mu)\eqdef -\sum_i^n \grad_{w_i}^* v_i(\mu) \comm
\end{align*}
it follows by linearity from Assumption~\ref{ass:P3} that
\begin{align}\label{eq:Div}
 \int_\msP \tscalar{\grad u}{W}_{\Vect_\emparg} \diff\mbbP=-\int_\msP u \cdot \Div_\mbbP W \diff\mbbP \comm \qquad u\in \Test^\infty\comm W\in \TestV^\infty \fstop
\end{align}

Then, $(\Div_\mbbP, \TestV^\infty)$ is a densely defined linear operator from the space of sections~$\Gamma_{L^2_\mbbP} T^\Der\msP_2$ to $L^2_\mbbP(\msP)$ and we denote its adjoint by~$(\mbfd_\mbbP, \mbfW^{1,2})$. By definition, functions in~$\mbfW^{1,2}$ are weakly differentiable, in the sense that~\eqref{eq:Div} holds for all~$u\in \mbfW^{1,2}$ with~$\mbfd_\mbbP u$ in place of~$\grad u$.

\bigskip

We denote by~$\mcF$ the set of all bounded measurable functions~$u$ on~$\msP$ for which there exists a measurable section~$\mbfD u$ of~$T^\Der\msP_2$ such that
\begin{align}\label{eq:L2norm}
\mcE(u,u)\eqdef \int_\msP \scalar{\mbfD u(\mu)}{\mbfD u(\mu)}_{\Vect_\mu} \diff\mbbP(\mu)<\infty
\end{align}
and such that for every~$w\in\Vect^\infty$ and~$s\in\R$ there exists the directional derivative
\begin{align}\label{eq:DefF}
\frac{u\circ \Fl^{w,t} -u}{t}\xrightarrow{t\rar 0}\scalar{\mbfD u}{w}_{\Vect_\emparg}  \quad \textrm{in} \quad L^2(\msP,\Fl^{w,s}_\pfwd \mbbP)\fstop
\end{align}

Finally, set~$\mcF_\cont\eqdef\mcF\cap\mcC(\msP)$ and note that~$\Test^\infty\subset \mcF_\cont\subset \mcF$ and that, \emph{a priori}, every inclusion may be a strict one.

\medskip

Before stating the main result, we introduce the following ---~quite restrictive~--- assumption on the base space. We will comment extensively about this assumption, and about its connection with the Ma--Trudinger--Wang curvature condition, in~\S\ref{ss:AssB}.

\begin{ass}[Smooth Transport Property]\label{ass:B} We say that~$\MFD$ satisfies the \emph{smooth transport property} if, whenever~$\mu,\nu\in\msP$,~$\mu,\nu\ll \dvol_\mssg$ with smooth nowhere vanishing densities, then there exists a \emph{smooth} optimal transport map~$g\colon \MFD\rar \MFD$ mapping~$\mu$ to~$\nu$ in the sense of Thm.~\ref{t:McCann} below.
\end{ass}

\begin{thm}\label{t:main}
Suppose that~$\mbbP$ satisfies Assumptions~\ref{ass:P1} and~\ref{ass:P3}. Then,
\begin{enumerate}[label=\ensuremath{\ref{t:main}\, (\arabic*)}]
\item\label{i:t:main1} the bilinear forms~$(\mcE,\Test^\infty)$,~$(\mcE,\mcF_\cont)$ and~$(\mcE,\mcF)$ are closable and their closures, respectively denoted by~$(\mcE,\msF_0)$,~$(\mcE,\msF_\cont)$ and~$(\mcE,\msF)$ are strongly local Dirichlet forms, with~$\msF_0\subset \msF_\cont\subset \msF$;

\item\label{i:t:main2} for each~$u\in\msF$ there exists a measurable section~$\mbfD u$ of the tangent bundle~$T^\Der\msP_2$ such that
\begin{align}\label{eq:t:main.Identif}
\mbfD u=\grad u\comm\qquad u\in\Test^\infty\comm
\end{align}
and
\begin{align}\label{eq:Form}
\mcE(u,u)=\int_\msP \norm{\mbfD u(\mu)}_{\Vect_\mu}^2 \diff\mbbP(\mu)\comm
\end{align}
i.e.~the form~$(\mcE,\msF)$ admits carr\'e du champ~$\boldGamma(u)(\mu)\eqdef \norm{\mbfD u(\mu)}_{\Vect_\mu}^2$;

\item\label{i:t:main3} \emph{(Rademacher property)} let~$u\colon\msP\rar \R$ be~$W_2$-Lipschitz continuous. Then~$u\in\msF_\cont$ and, if additionally Assumption~\ref{ass:B} holds, then~$u\in\msF_0$. Furthermore, there exist a measurable set~$\Omega^u\subset\msP$ of full~$\mbbP$-measure and a measurable section~$\mbfD u$ of~$T^\Der\msP_2$, satisfying~\eqref{eq:t:main.Identif} and~\eqref{eq:Form}, such that
\begin{enumerate}[label=\ensuremath{\ref{t:main}\,(3.\roman*)}]
\item\label{i:t:main3.1} for all~$\mu\in\Omega^u$ it holds that~$\norm{\mbfD u(\mu)}_{\Vect_\mu}\leq \Lip[u]$;
\item\label{i:t:main3.2} if additionally Assumption~\ref{ass:P4} holds, then
\begin{align}\label{eq:i:t:main3.2-1}
\norm{\mbfD u(\mu)}_{\Vect_\mu}\leq \slo{u}(\mu)\comm \qquad \mu\in\Omega^u \comm
\end{align}
where~$\slo{u}$ is the \emph{slope} of~$u$ (see~\eqref{eq:slo} below), and, for all~$w\in\Vect^\infty$
\begin{align}\label{eq:i:t:main3.2-2}
\lim_{t\rar 0} \frac{(u\circ\Fl^{w,t}-u)(\emparg)}{t}=\scalar{\mbfD u(\emparg)}{w}_{\Vect_{\emparg}}
\end{align}
pointwise on~$\Omega^u$ and in~$L^2_{\mbbP}(\msP)$.
\end{enumerate}
\end{enumerate}
\end{thm}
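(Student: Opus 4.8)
The plan is to carry out the three assertions in order, reducing everything to: (i) a closability argument for the pre-Dirichlet form, driven by the integration-by-parts formula in Assumption~\ref{ass:P3}; (ii) an approximation/identification argument realising an abstract gradient as a section of~$T^\Der\msP_2$; and (iii) a Rademacher-type scheme adapted from \cite{RoeSch99}, in which a Lipschitz function is first mollified along the flows~$\Fl^{w,t}$, uniform energy bounds are obtained from the quasi-invariance in Assumption~\ref{ass:P4}, and the limit is shown to lie in the Dirichlet space with the correct pointwise bound on the carré du champ.

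**Closability and locality (part \ref{i:t:main1}).** I would start from the densely defined operator $(\Div_\mbbP,\TestV^\infty)$ and its adjoint $(\mbfd_\mbbP,\mbfW^{1,2})$: since an adjoint of a densely defined operator is always closed, $(\mcE,\mbfW^{1,2})$ is automatically a closed form, and one checks directly from~\eqref{eq:Div} that on~$\Test^\infty$ the weak differential $\mbfd_\mbbP$ agrees with $\grad$ defined by~\eqref{eq:grad0}; here Assumption~\ref{ass:P3} is exactly what guarantees $\Test^\infty\subset\mbfW^{1,2}$, hence $(\mcE,\Test^\infty)$ is closable as a restriction of a closed form. The same reasoning applies to~$\mcF_\cont$ and~$\mcF$, once one observes that the defining limit~\eqref{eq:DefF} forces the abstract derivative to coincide with the weak one, so $\mcF\subset\mbfW^{1,2}$ and all three forms are closable with nested closures $\msF_0\subset\msF_\cont\subset\msF$. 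The Markovian (hence Dirichlet) property follows by the standard normal-contraction argument: for $F\in\mcC^\infty$ with $|F'|\le 1$, $F(0)=0$, the chain rule~\eqref{eq:grad0} gives $\norm{\grad(F\circ u)(\mu)}_{\Vect_\mu}\le\norm{\grad u(\mu)}_{\Vect_\mu}$ on the core, and this passes to the closure; strong locality is immediate from~\eqref{eq:Form}, since $\boldGamma(u)$ depends only on $\mbfD u$ and the chain rule forces $\boldGamma(u)=0$ where $u$ is locally constant.

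**Carré du champ and identification (part \ref{i:t:main2}).** The content here is that the \emph{abstract} square field of the closure can be represented by the \emph{concrete} $\Vect_\mu$-norm of a measurable section. I would take $u\in\msF$ and an approximating sequence $u_n\in\Test^\infty$ (resp.\ $\mcF_\cont$, $\mcF$) with $\mcE(u_n-u_m,u_n-u_m)\to 0$; then $\grad u_n$ is Cauchy in the Hilbert space $\Gamma_{L^2_\mbbP}T^\Der\msP_2=\int^\oplus_\msP\Vect_\mu\,\diff\mbbP$, so it converges to some measurable section which I call $\mbfD u$, giving~\eqref{eq:t:main.Identif} on the core and~\eqref{eq:Form} by construction. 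Well-definedness (independence of the approximating sequence) is the usual consequence of the fact that $\mbfD$ is closed; measurability of $\mu\mapsto\mbfD u(\mu)$ is inherited from the direct-integral structure. This is essentially bookkeeping once the Hilbert-module picture of $\Gamma_{L^2_\mbbP}T^\Der\msP_2$ has been set up in Section~\ref{s:Prelim}.

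**The Rademacher property (part \ref{i:t:main3}) — the main obstacle.** This is the heart of the theorem and the step I expect to be hardest. Following the scheme of \cite{RoeSch99}, I would fix a $W_2$-Lipschitz $u$ and, for each $w\in\Vect^\infty$ and small $\eps>0$, form the averaged function $u_\eps^w(\mu)\eqdef\eps^{-1}\int_0^\eps (u\circ\Fl^{w,t})(\mu)\,\diff t$; by quasi-invariance (Assumption~\ref{ass:P4}) and the essinf-positivity~\eqref{eq:ass:P4} of the Radon--Nikodym cocycle $R^w_r$, each $u_\eps^w$ has a well-defined directional derivative in $L^2_\mbbP$, with $L^2$-norm controlled by $\Lip[u]$ times $\norm{w}_{\Vect_\mu}$ — this uses that $t\mapsto (u\circ\Fl^{w,t})(\mu)$ is Lipschitz with constant $\le\Lip[u]\cdot$(speed of the flow) because $W_2(\Fl^{w,t}\mu,\Fl^{w,s}\mu)\le|t-s|\,\sup_x|w_x|_\mssg$, and more refined estimates along a \emph{moving frame} of vector fields give the bound~\ref{i:t:main3.1}, and under Assumption~\ref{ass:P4} the sharper slope bound~\ref{i:t:main3.2} via a localization in $\mu$. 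Letting first $\eps\to 0$ and then exhausting a countable dense family of directions $\{w_i\}\subset\Vect^\infty$ (dense for the $\Vect_\mu$-topology simultaneously for $\mbbP$-a.e.\ $\mu$, which needs a separability argument), one assembles a measurable section $\mbfD u$ with $\scalar{\mbfD u}{w_i}_{\Vect_\emparg}=\lim_{t\to 0}(u\circ\Fl^{w_i,t}-u)/t$ and $\norm{\mbfD u(\mu)}_{\Vect_\mu}\le\Lip[u]$ on a full-measure set $\Omega^u$, whence $u\in\msF_\cont$. To get the stronger conclusion $u\in\msF_0$ under Assumption~\ref{ass:B}, I would approximate $u$ instead by \emph{cylinder} functions: using the smooth transport property to produce smooth optimal maps between mollified measures, one can write explicit $\Test^\infty$-approximations whose energies converge, along the lines of the tangent-plan analysis of Gigli~\cite{Gig11}; the delicate point is matching the abstract limit section with $\grad$ of the approximants on a common full-measure set, and controlling the error terms coming from the difference between the (merely measurable, $L^2$-valued) directional derivative and the smooth directional derivative on $\Test^\infty$. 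The twin technical difficulties I anticipate are (a) the \emph{uniformity in $\mu$} needed to turn the $L^2$-convergence of difference quotients into the \emph{pointwise} convergence on $\Omega^u$ claimed in~\eqref{eq:i:t:main3.2-2}, which forces a careful use of the essential-infimum positivity~\eqref{eq:ass:P4} together with a Fatou/monotone argument along a countable dense set of times; and (b) the comparison $\norm{\mbfD u(\mu)}_{\Vect_\mu}\le\slo{u}(\mu)$, which requires relating the supremum of directional derivatives over the flow directions to the metric slope of $u$ — precisely here one invokes that the closure of $\Vect^\infty$ (not just $\Vect^\infty_\nabla$) is rich enough to recover, at $\mbbP$-a.e.\ $\mu$, all the ``metrically relevant'' tangent directions coming from optimal transport, which is where Assumption~\ref{ass:B} re-enters to upgrade $\msF_\cont$ to $\msF_0$.
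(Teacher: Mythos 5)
Your plan reproduces the right architecture for parts~\iref{i:t:main1} and~\iref{i:t:main2}, and the mollification-along-flows scheme you invoke for part~\iref{i:t:main3} is essentially the one used here for~\iref{i:t:main3.2} (it is the R\"ockner--Schied argument, implemented via Lemma~\ref{l:DirG} and the Fubini/quasi-invariance step in Corollary~\ref{c:RoeSch99}). There is, however, a genuine gap: your whole treatment of part~\iref{i:t:main3} routes through Assumption~\ref{ass:P4}, whereas the theorem asserts~\iref{i:t:main3.1} under only Assumptions~\ref{ass:P1} and~\ref{ass:P3}. The mollification $u^w_\eps$ does give that $t\mapsto u(\Fl^{w,t}\mu)$ is differentiable for $\diff t$-a.e.~$t$ and every~$\mu$, but turning that into differentiability \emph{at $t=0$} for $\mbbP$-a.e.~$\mu$ is exactly the step that needs quasi-invariance. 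Without~\ref{ass:P4}, you have no way to produce a full-measure set $\Omega^u$ of G\^ateaux differentiability for an \emph{arbitrary} Lipschitz~$u$, so your argument would only prove~\iref{i:t:main3.1} under the stronger hypothesis.

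The paper avoids this by not starting from a generic Lipschitz function. Instead it first shows (Lemma~\ref{l:DerDist}, building on Theorem~\ref{t:DirDerDistGigli}, Corollary~\ref{c:DirDerDistGigli} and the single-valuedness of $\bexp_\mu^{-1}(\nu)$ in Proposition~\ref{p:SingleValued}) that the distance cones $u_\nu=W_2(\nu,\emparg)$, for $\nu\in\msP^\reg$, are G\^ateaux differentiable along every $w\in\Vect^\infty$ at \emph{every} $\mu\neq\nu$ --- no quasi-invariance needed, just optimal-transport geometry. Since $\mbbP$ has no atoms~\iref{ass:P1}, $\Omega^{u_\nu}=\msP\setminus\{\nu\}$ has full measure, so Proposition~\ref{p:RoeSch99} applies to $u_\nu$ directly, giving $u_\nu\in\msF_\cont$ with $\boldGamma(u_\nu)\le 1$. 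The extension from cones to arbitrary Lipschitz $u$ is then a closed-form argument: McShane nets (Lemma~\ref{l:McShane}) approximate $u$ uniformly by maxima of rescaled cones, and lower semicontinuity of the energy (Lemmas~\ref{l:Ma},~\ref{l:Koskela}) transfers the carr\'e du champ bound. This chain --- cone differentiability via Gigli's theorem, atomlessness to control the exceptional set, McShane/Koskela--Zhou to extend --- is the missing idea in your proposal. Two smaller points: the phrase \textquotedblleft moving frame of vector fields\textquotedblright{} does not correspond to any mechanism in the actual argument; and Assumption~\ref{ass:B} is not used to identify tangent directions or to prove~\eqref{eq:i:t:main3.2-1}, but solely to build \emph{smooth} Kantorovich potentials so that truncated cones $u_\nu\vee\theta$ admit explicit $\Test^\infty$-approximations (Lemma~\ref{l:Rademacher}), which is what upgrades $\msF_\cont$ to $\msF_0$.
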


We now collect some remarks on the statement of our main theorem.

\begin{rem}
Let us notice that, in the definition of the pre-domain~$\mcF$, we only ask for the existence of \emph{a} measurable section~$\mbfD u$ of~$T^\Der\msP_2$ with finite $L^2$-norm in the sense of~\eqref{eq:L2norm}. In particular, we do not require~\eqref{eq:t:main.Identif} which is only used to uniquely identify the form~$(\mcE,\msF)$ in the theorem. We pay this arbitrariness with the Fr\'echet differentiability~\eqref{eq:DefF} of~$u\in\mcF$ in~$L^2(\msP,\Fl^{w,s}_\pfwd \mbbP)$ for each real~$s$, a seemingly stronger condition than the conclusion~\eqref{eq:i:t:main3.2-2} of Theorem~\ref{i:t:main3.2}. In fact though, under Assumption~\ref{ass:P4}, condition~\eqref{eq:DefF} is equivalent to~\eqref{eq:i:t:main3.2-2} for every $W_2$-Lipschitz~$u$, since the~$L^2(\msP,\Fl^{w,s}_\pfwd\mbbP)$-topology is equivalent to the~$L^2(\msP,\mbbP)$-topology for every~$s$.
\end{rem}

\begin{rem}[On the definition of \emph{volume measure} on~$\msP_2$]\label{r:VolMeas}
Assumptions~\ref{ass:P0} and~\ref{ass:P1} are of a general kind. In fact, Assumption~\ref{ass:P0} is not necessary to the conclusions of Theorem~\ref{t:main}. Rather, it rules out some trivial cases, as, e.g., Example~\ref{ese:Trivial}.

On the contrary, Assumptions~\ref{ass:P3} and~\ref{ass:P4} are ---~as already noticed in~\cite[Rmk.~p.~329]{RoeSch99} for measures on configuration spaces~--- specifically proper of a volume measure, as discussed in the Introduction. In particular, Assumption~\ref{ass:P3} may be regarded as a form of gradient-divergence duality for~$\mbbP$.
Assumption~\ref{ass:P4}, and its stronger version Assumption~\ref{ass:P5}, is also expected from a differential geometry point of view and it is equally important in light of Proposition~\ref{p:P5} below.
\end{rem}

\begin{rem} As already noticed in the case of configuration spaces in~\cite[Prop.~1.4(iii)]{RoeSch99}, the Dirichlet forms~$(\mcE,\msF_0)$,~$(\mcE,\msF_\cont)$ and~$(\mcE,\msF)$ do in principle differ. A sufficient condition for their coincidence is the essential self-adjointness of the generator of~$(\mcE,\msF)$ on the core~$\Test^\infty$.

It is readily seen that, by compactness of~$\msP_2$ and the Stone--Weierstra\ss\, Theorem, the spaces $\Test^\infty$ and~$\mcF_\cont$ are uniformly dense in~$\mcC(\msP_2)$. Together with the Theorem, this implies that the Dirichlet forms~$(\mcE,\msF_0)$ and~$(\mcE,\msF_\cont)$ are regular strongly local Dirichlet forms on~$\msP_2$, thus properly associated to Markov diffusion processes by the theory of Dirichlet forms. See e.g.~\cite{MaRoe92}.
\end{rem}

\begin{rem}[On the definition of \emph{Rademacher-type} properties] Assume we have already shown that $u_\nu\colon \mu\mapsto W_2(\nu,\mu)$ belongs to~$\msF_\cont$, resp.~$\msF_0$, cf.~Lemmas~\ref{l:DerDist} and~\ref{l:Rademacher} below, and that $\boldGamma(u_\nu)\leq \car$. 
Then,~\iref{i:t:main3.1} may be deduced by the general results on not necessarily local Dirichlet forms in~\cite{FraLenWin14}. On the contrary ---~even if it is proven that the Dirichlet form~$(\mcE,\msF)$ is strongly local and regular~--- the finer estimate~\eqref{eq:i:t:main3.2-1} does not follow by~\cite[Thm.~2.1]{KosZho12}, where the reference measure~$\mbbP$ is assumed to be doubling. In fact it may be proved that \emph{no} doubling measure with full support exists on~$\msP_2$, since the latter is infinite-dimensional.

Both of the previous results may be considered as Rademacher-type properties for the Dirichlet form(s) in question. Nonetheless, in the case of the Wasserstein space~$\msP_2$, we have ---~in addition to the general assumptions of~\cite{FraLenWin14} or~\cite{KosZho12}~--- a good notion of \emph{directional} derivative for functions on~$\msP_2$. As a consequence, the statement of what we call a ``Rademacher Theorem on~$(\msP_2,W_2,\mbbP)$'' comprises more properly assertion~\iref{i:t:main3.2}, where we check that each directional derivative of a differentiable function~$u\in\msF$ along a smooth direction~$w\in\Vect^\infty$ coincides with the scalar product of the gradient~$\mbfD u$ and direction~$w$.
\end{rem}

To conclude this preliminary section we anticipate that the statement of our main theorem is non-void, and that our assumptions pose no restriction to the subset of measures in~$\msP$ whereon~$\mbbP$ is concentrated. In particular
\begin{rem*}[See Rmk.~\ref{r:Support} below]
Define
\begin{itemize}
\item $\msA_1$ the set of measures in~$\msP$ absolutely continuous w.r.t.~$\dvol_\mssg$;
\item $\msA_2$ the set of measures in~$\msP$ singular continuous w.r.t.~$\dvol_\mssg$;
\item $\msA_3$ the set of purely atomic measures in~$\msP$;
\item $\msA_4$ the set of \emph{transport regular} measures in~$\msP$ (Dfn.~\ref{d:TranspReg}).
\end{itemize}

Then,~$\MFD=\mbbS^1$ satisfies Assumption~\ref{ass:B}, and, for any choice of~$a_1,a_2,a_3\geq 0$ with $a_1+a_2+a_3=1$, there exists~$\mbbP\in\msP(\msP)$, satisfying Assumption~\ref{ass:P} and such that~$\mbbP(\msA_i)=a_i$ for every~$i=1,2,3$ and~$\mbbP(\msA_4)= a_1+a_2$.
\end{rem*}

\section{Preliminaries}\label{s:Prelim}
Everywhere in the following let~$(Y,\tau)$ be a second countable compact topological space with Borel $\sigma$-algebra~$\mcB$, and let~$\mssn$ be a finite measure on~$(Y,\mcB)$ with full support.
Note that every such space is Polish, i.e.~separable and completely metrizable, and that every finite measure on a Polish space is Radon.

Let~$\rho$ be any metric metrizing~$(Y,\tau)$.
In the rest of this section, the metric measure space~$(Y,\rho,\mssn)$ will play the role of~$(\msP,W_2,\mbbP)$.

\paragraph{Notation}
By a \emph{measure} we always mean a \emph{non-negative} measure. We denote by~$I$, resp.~$I^\circ$, the unit interval~$[0,1]$, resp.~$(0,1)$, always endowed with the standard metric, $\sigma$-algebra and with the one-dimensional Lebesgue measure~$\diff\Leb^1(r)=\diff r$. Analogously, we denote by~$\Leb^d$ the $d$-dimensional Lebesgue measure on~$\R^d$.
Set~$\mssm\eqdef\dvol_\mssg$. We indicate by~$\msP^\mssm\subset \msP$ the space of probability measures~$\mu\ll\mssm$, by~$\msP^\infty$ the subset of probability measures~$\mu\in\msP^\mssm$ with smooth densities, by~$\msP^{\infty,\times}$ the subset of measures in~$\msP^\infty$ whose densities with respect to~$\mssm$ are bounded away from~$0$; such densities are bounded above as a consequence of their continuity and of the compactness of~$\MFD$.

\subsection{Lipschitz functions}
We say that a real-valued function~$h\colon Y\rar \R$ is $L$-\emph{Lipschitz} with respect to $\rho$ if there exists a constant~$L>0$ such that
\begin{align*}
\abs{h(y_1)-h(y_2)}\leq L\, \rho(y_1,y_2) \comm \qquad y_1,y_2\in Y \comm
\end{align*}
in which case we denote by~$\Lip_\rho[h]$ the smallest such constant and by
\begin{align}\label{eq:slo}
\slo{h}_\rho(y)\eqdef \limsup_{z\rar y} \frac{\abs{h(y)-h(z)}}{\rho(y,z)}\leq L
\end{align}
the \emph{slope} (or \emph{local Lipschitz constant}) of~$h$ at a point~$y\in Y$.
The metric~$\rho$ is omitted in the notation whenever apparent from context. We set~$\rho_z(\emparg)\eqdef\rho(z,\emparg)$ and, for any~$A\eqdef \seq{a_i}_i^n\subset \R$ and~$E\eqdef\seq{z_i}_i^n\subset Y$, we let~$\rho_{A,E,L}(\emparg)\eqdef \vee_{i\leq n} (a_i-L\rho_{z_i}(\emparg))$.
For~$\eps>0$, an \emph{$\eps$-net} in~$Y$ is a finite set~$E_\eps\eqdef \seq{z_{\eps,i}}_i^{n_\eps}$ satisfying
\begin{align*}
\rho(z_{\eps,i},z_{\eps,j})>\eps/2\comm \quad i\neq j\comm \qquad \text{~~and~~} \qquad \sup_{y\in Y} \rho(y,E_\eps)\leq \eps\fstop
\end{align*}

\begin{lem}\label{l:McShane}
Fix~$h\in\Lip_\rho Y$ and~$\eps>0$. For a dense subset~$Z\subset Y$, let further~$E_\eps\eqdef \seq{z_{\eps,i}}_i^{n_\eps}\subset Z$ be an $\eps$-net in~$Y$, and set~$A_\eps\eqdef \seq{h(z_{\eps,i})}_i^{n_\eps}\subset \R$ and~$h_\eps\eqdef \rho_{A_\eps,E_\eps,\Lip[h]}$.
Then, there exists~$C_h>0$ such that
\begin{align*}
\Lip[h_\eps]\leq \Lip[h]\comm \qquad \norm{h-h_\eps}_{\mcC^0}\leq C_h \, \eps\fstop
\end{align*}

\begin{proof} The existence of~$E_\eps$ as above follows by density of~$Z$ in~$Y$ and compactness of~$Y$. This shows that the statement is well-posed.
The function~$h_\eps$ is $\rho$-Lipschitz continuous with~$\Lip[h_\eps]\leq \Lip[h]$ for it is a maximum of $\rho$-Lipschitz continuous functions with Lipschitz constants all equal to~$\Lip[h]$.
Since~$h$ is Lipschitz continuous, it coincides with its lower McShane extension~\cite{McS34}, i.e.~$h(y)=\sup_{z\in Y} \set{h(z)-\rho_y(z)}$. Thus,~$h_\eps\leq h$.
Furthermore, for all~$y\in Y$ there exists~$\bar z\eqdef \bar z(y)$ such that~$h(y)\leq h(\bar z)-\rho(y,\bar z)+\eps$ and, by definition of~$E_\eps$, there exists~$\bar \imath\eqdef \bar\imath(y)$ such that~$\rho(\bar z,z_{\eps,\bar \imath})\leq \eps$. Hence,
\begin{align*}
h_\eps(y)\leq h(y)\leq& h(\bar z)-\rho(y,\bar z)+\eps
\\
\leq& h(\bar z)-h(z_{\eps,\bar \imath})+h(z_{\eps,\bar\imath})-\rho(y,\bar z)+\rho(y,z_{\eps,\bar\imath})-\rho(y,z_{\eps,\bar\imath})+\eps
\\
\leq& h(z_{\eps,\bar\imath})-\rho(y,z_{\eps,\bar\imath})+\abs{h(\bar z)-h(z_{\eps,\bar \imath})}+\abs{\rho(y,z_{\eps,\bar\imath})-\rho(y,\bar z)}+\eps
\\
\leq& h_\eps(y)+\Lip[h]\eps+\eps+\eps
\end{align*}
respectively by definition of~$h_\eps$, Lipschitz continuity of~$h$ and by reverse triangle inequality and definition of~$z_{\eps,\bar\imath}$. The conclusion follows with~$C_h\eqdef \Lip[h]+2$.
\end{proof}
\end{lem}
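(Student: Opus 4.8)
The plan is to read off both assertions from the fact that~$h_\eps$ is, by construction, a finite truncation of the lower McShane (sup-convolution) extension of~$h$ relative to the net~$E_\eps$; explicitly,
\[
h_\eps(y)=\max_{i\le n_\eps}\bigl(h(z_{\eps,i})-\Lip[h]\,\rho(y,z_{\eps,i})\bigr)\comm\qquad y\in Y\fstop
\]
The statement is well-posed: an $\eps$-net $E_\eps\subset Z$ exists because~$Z$ is dense in the compact space~$Y$ --- cover~$Y$ by finitely many $\eps/2$-balls centred at points of~$Z$, then extract a maximal $\eps/2$-separated subfamily of the centres.

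For the Lipschitz estimate, note that each function $y\mapsto h(z_{\eps,i})-\Lip[h]\,\rho(y,z_{\eps,i})$ is $\Lip[h]$-Lipschitz, since $\rho(\emparg,z_{\eps,i})$ is $1$-Lipschitz, and a pointwise maximum of finitely many $L$-Lipschitz functions is again $L$-Lipschitz; hence $\Lip[h_\eps]\le\Lip[h]$. For the lower bound $h_\eps\le h$, the $\Lip[h]$-Lipschitz continuity of~$h$ gives, for all~$i$ and~$y$, $h(z_{\eps,i})-\Lip[h]\,\rho(y,z_{\eps,i})\le h(y)$, and taking the maximum over~$i$ yields $h_\eps\le h$ pointwise on~$Y$ (equivalently, $h$ coincides with its lower McShane extension $\sup_{z\in Y}\bigl(h(z)-\Lip[h]\rho(\emparg,z)\bigr)$, and passing to a supremum over the subset~$E_\eps$ can only decrease it).

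For the reverse estimate, fix $y\in Y$; by the defining property of an $\eps$-net there is an index~$\bar\imath$ with $\rho(y,z_{\eps,\bar\imath})\le\eps$, so
\[
0\le h(y)-h_\eps(y)\le h(y)-\bigl(h(z_{\eps,\bar\imath})-\Lip[h]\,\rho(y,z_{\eps,\bar\imath})\bigr)=\bigl(h(y)-h(z_{\eps,\bar\imath})\bigr)+\Lip[h]\,\rho(y,z_{\eps,\bar\imath})\le 2\,\Lip[h]\,\eps\comm
\]
using the Lipschitz bound on~$h$ once more. Taking the supremum over~$y\in Y$ gives $\norm{h-h_\eps}_{\mcC^0}\le 2\Lip[h]\,\eps$, i.e. the claim holds with $C_h\eqdef 2\Lip[h]$; any larger constant (e.g.~$\Lip[h]+2$) works as well, which is convenient if one chooses the near-maximizing point of the McShane extension only up to an extra slack~$\eps$ rather than taking it equal to~$y$.

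I do not expect a genuine obstacle here: the lemma is a soft, McShane-type regularization, and the only point requiring some care is the bookkeeping --- keeping straight where the factor~$\Lip[h]$ and the mesh size~$\eps$ enter, i.e.~combining ``$h_\eps$ underestimates~$h$ by Lipschitz-ness'' with ``$h_\eps$ underestimates~$h$ by at most~$2\Lip[h]\eps$ because every point lies within~$\eps$ of the net.'' The point of the lemma downstream is that the approximants~$h_\eps$ belong to the special class $\rho_{A,E,L}$ of finite maxima of functions affine in a single distance --- a class one controls simultaneously in the uniform norm and in Lipschitz constant, and which, unlike a general $W_2$-Lipschitz function on~$\msP_2$, can be handled directly by the Dirichlet-form computations of~\S\ref{s:Proof}.
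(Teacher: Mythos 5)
Your proof is correct, and it is essentially the same sup-convolution/net argument as the paper's, but with a cleaner and slightly sharper execution. Where the paper chooses a near-maximizer $\bar z$ in the identity $h(y)=\sup_{z\in Y}\set{h(z)-\rho_y(z)}$ and then passes to a net point close to $\bar z$ (so that three error terms $\Lip[h]\eps+\eps+\eps$ accumulate, giving $C_h=\Lip[h]+2$), you instead pick directly the net point $z_{\eps,\bar\imath}$ with $\rho(y,z_{\eps,\bar\imath})\leq\eps$ and bound $h(y)-h_\eps(y)\leq (h(y)-h(z_{\eps,\bar\imath}))+\Lip[h]\rho(y,z_{\eps,\bar\imath})\leq 2\Lip[h]\eps$. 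This is shorter, avoids the detour through $\bar z$ entirely, and yields the scale-homogeneous constant $C_h=2\Lip[h]$. It also has a small technical advantage: the paper's final chain uses $h(z_{\eps,\bar\imath})-\rho(y,z_{\eps,\bar\imath})\leq h_\eps(y)$, which, since $h_\eps(y)=\max_i\ttonde{h(z_{\eps,i})-\Lip[h]\rho(y,z_{\eps,i})}$, is only valid when $\Lip[h]\leq 1$ (the normalization later imposed in Lemma~\ref{l:Koskela}); your argument keeps the factor $\Lip[h]$ in front of $\rho$ throughout and is correct for arbitrary $\Lip[h]$. Your closing remarks on why the class $\rho_{A,E,L}$ is the useful one downstream are also on target.
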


\subsection{Dirichlet forms}\label{ss:DirForm}
Whenever~$(Q,\dom{Q})$ is a non-negative definite symmetric bilinear form, we denote by the same symbol the associated quadratic form, defined as~$Q(u)\eqdef Q(u,u)$ if~$u\in\dom{Q}$ and~$Q(u)\eqdef+\infty$ otherwise. 

We recall some basic facts about Dirichlet forms, following the exposition in~\cite[\S1.1B--D]{Stu95}. Let~$(\mcE,\dom\mcE)$ be a strongly local Dirichlet form on~$L^2_\mssn(Y)$, additionally such that~$\car\in\dom{\mcE}$ and~$\mcE(\car)=0$. Every such~$(\mcE,\dom\mcE)$ may be written as
\begin{align*}
\mcE(u,v)=\int_Y \diff\boldGamma(u,v)
\end{align*}
for all~$u,v\in\dom \mcE$, where~$\boldGamma$, called the \emph{energy measure of~$(\mcE,\dom\mcE)$}, is a non-negative definite symmetric bilinear form with values in the space of signed Radon measures on~$(Y,\mcB)$, and defined by the formula
\begin{align*}
\int_Y \varphi \diff\boldGamma(u,v)\eqdef \tfrac{1}{2}\ttonde{\mcE(u,\varphi v)+\mcE(v,\varphi u)-\mcE(uv,\varphi)}\comm \qquad \varphi\in \dom\mcE\cap \mcC(Y)
\end{align*}
for all~$u,v\in\dom\boldGamma\eqdef \dom\mcE\cap L^\infty_\mssn(Y)$. Note that~$\mcC(Y)=\mcC_c(Y)$ by compactness of~$Y$.

Say further that~$(\mcE,\dom\mcE)$ admits \emph{carr\'e du champ operator} if~$\boldGamma(u,v)\ll \mssn$ for every~$u,v\in\dom\boldGamma$, in which case, with usual abuse of notation, we indicate again by~$(\boldGamma,\dom\boldGamma)$ the $L^1_\mssn(Y)$-valued non-negative definite symmetric bilinear form~$\tfrac{\diff \boldGamma(u,v)}{\diff\mssn}$. By~$\boldGamma(u)\leq \mssn$ we mean that~$\boldGamma(u)$ is absolutely continuous with respect to~$\mssn$ and~$\boldGamma(u)\leq 1$ $\mssn$-a.e..

\begin{defs}[Intrinsic distance] A strongly local Dirichlet form~$(\mcE,\dom\mcE)$ on~$L^2_\mssn(Y)$ with carr\'e du champ operator~$\boldGamma$ induces an intrinsic extended pseudo-metric~$\mssd_\mcE$ on~$Y$, called the \emph{intrinsic distance of~$(\mcE,\dom\mcE)$} and defined by
\begin{align}\label{eq:IntMet}
\mssd_\mcE(y_1,y_2)\eqdef \sup \set{u(y_1)-u(y_2) : u\in\dom\boldGamma\cap \mcC(Y),  \boldGamma(u)\leq \mssn}\fstop
\end{align}
By \emph{extended} we mean that~$\mssd_\mcE$ may attain the value~$+\infty$, by the prefix \emph{pseudo-} that it may vanish outside the diagonal in~$Y^{\times 2}$.
\end{defs}

We will make wide use of the following lemma, which is thus worth to state separately. A proof is standard, see e.g.~\cite[Lem.~I.2.12]{MaRoe92} for the first part.
\begin{lem}\label{l:Ma} Let~$(\mcE,\dom\mcE)$ be a Dirichlet form on~$L^2_\mssn(Y)$ with energy measure~$(\boldGamma,\dom\boldGamma)$ and let $\seq{u_n}_n\subset \dom\mcE$ be such that~$\sup_n \mcE(u_n)<\infty$. If there exists~$u\in L^2_\mssn(Y)$ such that~$L^2_\mssn$-$\nlim u_n=u$, then
\begin{align*}
u\in\dom\mcE \quad\textrm{ and }\quad \mcE(u)\leq \nliminf \mcE(u_n)\fstop
\end{align*}

\noindent If additionally~$\seq{u_n}_n\!\subset \!\dom\boldGamma$ and~$\nlimsup \!\boldGamma(u_n)\!\leq\!\mssn$, then, additionally,~$u\in\dom\boldGamma$ and~$\boldGamma(u) \!\leq \! \mssn$.
\end{lem}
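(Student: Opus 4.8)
The plan is to exploit the classical lower-semicontinuity of Dirichlet forms with respect to $L^2$-convergence, which follows from the spectral/resolvent representation, and then to upgrade it to the statement about energy measures by a standard localization-and-weighting argument. First I would recall that a (symmetric, non-negative) Dirichlet form $(\mcE,\dom\mcE)$ on $L^2_\mssn(Y)$ is a closed quadratic form, hence its approximating forms $\mcE^{(\alpha)}(u)\eqdef\alpha\scalar{u-\alpha G_\alpha u}{u}_{L^2_\mssn}$ (with $G_\alpha$ the resolvent) are bounded, continuous on $L^2_\mssn(Y)$, and increase to $\mcE(u)$ as $\alpha\to\infty$ for $u\in\dom\mcE$, with $\mcE(u)=+\infty$ interpreted as the supremum when $u\notin\dom\mcE$. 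Given the hypotheses, for each fixed $\alpha$ we have $\mcE^{(\alpha)}(u)=\lim_n \mcE^{(\alpha)}(u_n)\leq \nliminf\mcE(u_n)<\infty$ by $L^2_\mssn$-continuity of $\mcE^{(\alpha)}$ and the bound $\mcE^{(\alpha)}(u_n)\leq\mcE(u_n)$; letting $\alpha\to\infty$ gives $\mcE(u)\leq\nliminf\mcE(u_n)<\infty$, so in particular $u\in\dom\mcE$. This is exactly \cite[Lem.~I.2.12]{MaRoe92}.

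For the second part I would argue as follows. By the hypothesis $\sup_n\mcE(u_n)<\infty$ and the first part, the sequence $\seq{u_n}_n$ is bounded in the Hilbert space $(\dom\mcE,\mcE_1)$ where $\mcE_1\eqdef\mcE+\scalar{\emparg}{\emparg}_{L^2_\mssn}$; passing to a subsequence (not relabeled) we may assume $u_n\to u$ weakly in $(\dom\mcE,\mcE_1)$, the weak limit being forced to coincide with the $L^2_\mssn$-limit $u$. By Banach--Saks (or simply Mazur's lemma) there is a sequence of convex combinations $v_m\eqdef\sum_{n} c^m_n u_n$, with $c^m_n\geq 0$, $\sum_n c^m_n=1$, finitely supported, such that $\mcE_1(v_m-u)\to 0$; in particular $v_m\to u$ both in $L^2_\mssn$ and in $\mcE$-norm, and $\boldGamma(v_m-u)$ has total mass $\mcE(v_m-u)\to 0$. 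Now I would use the elementary bound, valid for energy measures of Dirichlet forms (it follows from bilinearity and the Cauchy--Schwarz inequality for the $\Meas^+$-valued form $\boldGamma$), that for any finite convex combination one has, as measures,
\begin{align*}
\boldGamma\tonde{\textstyle\sum_n c_n u_n}\leq \textstyle\sum_n c_n\,\boldGamma(u_n)
\end{align*}
by convexity of $u\mapsto\boldGamma(u)$. Hence $\boldGamma(v_m)\leq\sum_n c^m_n\,\boldGamma(u_n)$; since $\nlimsup\boldGamma(u_n)\leq\mssn$, for any $\eta>0$ there is $N$ with $\boldGamma(u_n)\leq(1+\eta)\mssn$ for $n\geq N$, and by splitting the (finite) convex combination into the part with $n<N$ and the part with $n\geq N$ and using $\sum_{n\geq N}c^m_n\to 1$ as $m\to\infty$ (because the weak limit is $u$, so mass escapes to high indices — if necessary I would first pass to a further diagonal subsequence to arrange that the convex combinations $v_m$ only involve indices $\geq N_m$ with $N_m\to\infty$), one gets $\limsup_m\boldGamma(v_m)\leq(1+\eta)\mssn$ for every $\eta>0$, hence $\limsup_m\boldGamma(v_m)\leq\mssn$.

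Finally I would transfer the bound from $v_m$ to $u$: since $\mcE(v_m-u)\to 0$ and $v_m\to u$ in $L^2_\mssn$, for every fixed nonnegative $\varphi\in\dom\mcE\cap\mcC(Y)$ we have $\int_Y\varphi\diff\boldGamma(v_m)\to\int_Y\varphi\diff\boldGamma(u)$ (the map $w\mapsto\int\varphi\diff\boldGamma(w)$ is continuous on $\dom\mcE$ with the $\mcE_1$-norm, by the defining formula for $\boldGamma$ together with the first-part semicontinuity applied to $\varphi w$), whence $\int_Y\varphi\diff\boldGamma(u)=\lim_m\int_Y\varphi\diff\boldGamma(v_m)\leq\limsup_m\int_Y\varphi\diff\boldGamma(v_m)\leq\int_Y\varphi\diff\mssn$ using $\varphi\geq 0$. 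As nonnegative $\varphi\in\dom\mcE\cap\mcC(Y)$ are dense enough to test measure inequalities on the Polish space $Y$, this yields $\boldGamma(u)\leq\mssn$, and in particular $\boldGamma(u)\ll\mssn$, so $u\in\dom\boldGamma$. The step I expect to be the main (though still routine) obstacle is the bookkeeping that ensures the convex combinations $v_m$ charge only arbitrarily high indices $n$, so that the asymptotic bound $\nlimsup\boldGamma(u_n)\leq\mssn$ — rather than the uniform-in-$n$ bound — can be used; this is handled by a standard diagonal extraction, exactly as in the proof of the analogous fact for the configuration-space setting referenced in the paper.
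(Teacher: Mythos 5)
The paper gives no proof of this lemma at all --- it merely labels the argument ``standard'' and points to Ma--R\"ockner, Lem.~I.2.12, for the first assertion --- so there is no written proof to compare yours against. Your first part reconstructs exactly what that reference does, via the resolvent approximants $\mcE^{(\alpha)}$, and is correct.

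For the second part your route works, but the justification you give for the transfer step --- that $w\mapsto\int\varphi\,\diff\boldGamma(w)$ is $\mcE_1$-continuous ``by the defining formula for $\boldGamma$ together with the first-part semicontinuity applied to $\varphi w$'' --- is muddled and, as written, does not go through: the defining formula contains $\mcE(w^2,\varphi)$ and $\mcE(w,\varphi w)$, and controlling these in $\mcE_1$-norm would require a uniform $L^\infty$-bound on the $w$'s and boundedness of multiplication by $\varphi$ on $\dom{\mcE}$, which in turn needs $\boldGamma(\varphi)$ to have a bounded density --- none of which is granted. The correct and much simpler observation is that for $0\leq\varphi\leq 1$ the quadratic form $q_\varphi(w)\eqdef\int\varphi\,\diff\boldGamma(w)$ satisfies $0\leq q_\varphi(w)\leq\mcE(w)\leq\mcE_1(w)$, so it is a \emph{bounded} nonnegative quadratic form on the Hilbert space $(\dom{\mcE},\mcE_1)$, hence strongly continuous, hence --- being convex --- weakly lower semicontinuous. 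Once this is available your whole Mazur/Banach--Saks detour, the convexity inequality for $\boldGamma$, and the tail-index bookkeeping become unnecessary: extract a weakly $\mcE_1$-convergent subsequence $u_{n_k}\rightharpoonup u$ from the $\mcE_1$-bounded sequence, then
\begin{align*}
\int\varphi\,\diff\boldGamma(u)\;\leq\;\liminf_k\int\varphi\,\diff\boldGamma(u_{n_k})\;\leq\;\int\varphi\,\diff\mssn
\end{align*}
directly from weak lower semicontinuity and the hypothesis on $\nlimsup\boldGamma(u_n)$, and varying $\varphi$ yields $\boldGamma(u)\leq\mssn$. One point neither your argument nor the lemma's statement addresses: in the paper's notation $\dom{\boldGamma}=\dom{\mcE}\cap L^\infty_\mssn(Y)$, so the conclusion $u\in\dom{\boldGamma}$ nominally requires $u\in L^\infty_\mssn$, which does not follow from the listed hypotheses; in the paper's applications this is supplied by a uniform $L^\infty$-bound on the approximants, and you should say so if you spell this out.
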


\begin{lem}\label{l:Koskela}
Let~$(\mcE,\dom\mcE)$ be a not necessarily regular, strongly local Dirichlet form on~$L^2_\mssn(Y)$ with energy measure~$(\boldGamma,\dom\boldGamma)$. Let~$\rho$ be any metric metrizing~$(Y,\tau)$ and assume further that $\rho_z\eqdef \rho(z,\emparg)\in\dom\boldGamma$ and~$\boldGamma(\rho_z)\leq \mssn$ for every~$z\in Z$ a dense subset of~$Y$.
Then, every~$\rho$-Lipschitz function~$u\colon Y\rar \R$ satisfies~$u\in\dom\boldGamma$ and~$\boldGamma(u)\leq \Lip[u]^2\,\mssn$.

\begin{proof} Without loss of generality, up to rescaling, we can restrict ourselves to the case when~$\Lip[u]\leq 1$, for which we claim~$\boldGamma[u]\leq \mssn$. Let~$u_\eps$ be defined as in Lemma~\ref{l:McShane}. Since~$Y$ is compact and~$\car\in\dom{\mcE}$, functions locally in the domain of the form belong to~$\dom\mcE$, thus we have~$u_\eps\in\dom\mcE$ and~$\boldGamma(u_\eps)\leq \mssn$ by~\cite[Thm.~2.1]{KosZho12} in which the regularity of~$(\mcE,\dom\mcE)$ is in fact not needed and the fact that~$\boldGamma(\rho_{z_i})\leq\mssn$ is granted by assumption.
Choose now~$\eps\eqdef\eps_n\searrow 0$ as~$n\rar \infty$. Since~$u_{\eps_n}$ converges to~$u$ uniformly as~$n\rar \infty$ by Lemma~\ref{l:McShane}, the conclusion follows by Lemma~\ref{l:Ma}. 
\end{proof}
\end{lem}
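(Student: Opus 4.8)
The plan is to approximate a generic $\rho$-Lipschitz function $u$ from below by the McShane-type functions $h_\eps$ of Lemma~\ref{l:McShane}, for which the energy bound is transparent, and then to pass to the limit using the lower-semicontinuity statement of Lemma~\ref{l:Ma}.

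First I would reduce to the case $\Lip[u]\leq 1$ by homogeneity of the energy measure, $\boldGamma(cu)=c^2\boldGamma(u)$, so that it suffices to prove $u\in\dom\boldGamma$ and $\boldGamma(u)\leq\mssn$. Next, fixing the dense set $Z$ supplied by the hypothesis and, for each $\eps>0$, an $\eps$-net $E_\eps=\set{z_{\eps,i}}_i^{n_\eps}\subset Z$, I would consider $h_\eps\eqdef\rho_{A_\eps,E_\eps,\Lip[u]}=\bigvee_{i\leq n_\eps}\ttonde{u(z_{\eps,i})-\Lip[u]\,\rho_{z_{\eps,i}}}$ as in Lemma~\ref{l:McShane}. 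Each function $u(z_{\eps,i})\,\car-\Lip[u]\,\rho_{z_{\eps,i}}$ lies in $\dom\mcE$ (both $\car$ and the bounded function $\rho_{z_{\eps,i}}$ do), and --- using that $\mcE(\car)=0$ forces $\boldGamma(\car,\emparg)\equiv 0$ by Cauchy--Schwarz, together with bilinearity of $\boldGamma$ --- its energy measure equals $\Lip[u]^2\,\boldGamma(\rho_{z_{\eps,i}})\leq\Lip[u]^2\,\mssn\leq\mssn$.

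The crucial step is then to promote the bound $\boldGamma(\emparg)\leq\mssn$ from these individual functions to their finite maximum $h_\eps$. I would invoke \cite[Thm.~2.1]{KosZho12} for this, noting --- as that statement permits --- that its proof does not use regularity of the form; alternatively one argues directly from strong locality, which gives $\boldGamma(f\vee g)\restr_{\set{f>g}}=\boldGamma(f)\restr_{\set{f>g}}$, $\boldGamma(f\vee g)\restr_{\set{f<g}}=\boldGamma(g)\restr_{\set{f<g}}$, and (since the energy measure of a strongly local form does not charge the level set $\set{f-g=0}$, forcing $\boldGamma(f)\restr_{\set{f=g}}=\boldGamma(g)\restr_{\set{f=g}}=\boldGamma(f\vee g)\restr_{\set{f=g}}$) the bound $\boldGamma(f\vee g)\leq\mssn$ whenever $\boldGamma(f),\boldGamma(g)\leq\mssn$; iterating over the $n_\eps$ points of the net then yields $h_\eps\in\dom\boldGamma$ with $\boldGamma(h_\eps)\leq\mssn$. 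Compactness of $Y$ together with $\car\in\dom\mcE$ lets one pass from ``locally in the domain'' to ``globally in the domain'', so no cut-off subtleties intrude.

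Finally I would choose $\eps=\eps_n\searrow 0$. By Lemma~\ref{l:McShane}, $\norm{u-h_{\eps_n}}_{\mcC^0}\to 0$, hence $h_{\eps_n}\to u$ in $L^2_\mssn(Y)$ since $\mssn$ is finite, while $\sup_n\mcE(h_{\eps_n})=\sup_n\boldGamma(h_{\eps_n})(Y)\leq\mssn(Y)<\infty$ and $\boldGamma(h_{\eps_n})\leq\mssn$ for every $n$. Lemma~\ref{l:Ma} then gives $u\in\dom\boldGamma$ and $\boldGamma(u)\leq\mssn$; undoing the normalization recovers $\boldGamma(u)\leq\Lip[u]^2\,\mssn$ in general. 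The only genuinely delicate point is the stability of $\boldGamma(\emparg)\leq\mssn$ under finite maxima --- i.e.\ that the kinks of $h_\eps$ carry no extra energy --- which is exactly what \cite[Thm.~2.1]{KosZho12} (or the locality computation above) delivers; everything else follows routinely from Lemmas~\ref{l:McShane} and~\ref{l:Ma}.
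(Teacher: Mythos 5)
Your proof is correct and follows essentially the same route as the paper's: reduce to $\Lip[u]\leq 1$, form the McShane-type approximants $h_\eps$ of Lemma~\ref{l:McShane} (denoted $u_\eps$ in the paper), establish $h_\eps\in\dom\boldGamma$ with $\boldGamma(h_\eps)\leq\mssn$ via~\cite[Thm.~2.1]{KosZho12} (noting regularity is not needed and using compactness plus $\car\in\dom\mcE$ to pass from local to global domain membership), and then conclude by uniform convergence and Lemma~\ref{l:Ma}. The only difference is that you spell out, as an optional alternative, the strong-locality computation behind the stability of the bound $\boldGamma(\emparg)\leq\mssn$ under finite maxima, which the paper simply delegates to~\cite{KosZho12}.
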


\subsection{Optimal transport}
We collect here some known results in metric geometry based on optimal transport. The reader is referred to~\cite{AmbGig11} for an expository treatment.
Everywhere in the following let~$\exp_x\colon T_x\MFD\rar \MFD$ be the exponential map of~$(\MFD,\mssg)$ at a point~$x\in \MFD$ and set~$\mssc\eqdef \tfrac{1}{2}\mssd^2\colon \MFD^{\times 2}\rar \R$.

\begin{defs}[$\mssc$-transform, $\mssc$-convexity, conjugate map]
For any $\phi\colon \MFD\rar \R$, we define its \emph{$\mssc$-transform} (often:~$\mssc_-$-transform, e.g.,~\cite{AmbGig11}) by
\begin{align}\label{eq:CConjugate}
\phi^\mssc(x)\eqdef -\inf_{y\in M}\set{\mssc(x,y)+\phi(y)} \fstop
\end{align}
Any such~$\phi$ is called \emph{$\mssc$-convex} if there exists~$\psi\colon \MFD\rar \R$ with~$\phi=\psi^\mssc$.
\end{defs}

\begin{rem}
If~$\phi$ is $\mssc$-convex, then~$\phi=(\phi^\mssc)^\mssc$~\cite[Dfn.~1.9]{AmbGig11}, and~$\phi$ is Lipschitz on~$\MFD$. By the classical Rademacher Theorem on~$\MFD$, the set~$\Sigma_\phi$ of singular points of~$\phi$ is $\mssm$-negligible.
For a proof of the Lipschitz property, see~\cite[Prop.~1.30]{AmbGig11}, where the statement is proven for~$\mssc$-\emph{concave} functions. It is equivalent to our claim by~\cite[Rmk.~1.12]{AmbGig11}.
\end{rem}

\begin{defs}[Regular measures]\label{d:TranspReg}
We say that~$\mu\in\msP$ is (\emph{transport}-)\emph{regular} if~$\mu \Sigma_\phi=0$ for every semi-convex function~$\phi$. We denote the set of regular measures in~$\msP$ by~$\msP^\reg$.
\end{defs}

It is well-known that every finite measure on a Polish space is \emph{regular} in the classical sense of measure theory. Thus we will henceforth refer to transport-regular measures simply as to \emph{regular} measures. Since we only consider finite measures on Polish spaces, no confusion may arise.

\begin{rem} The above definition of a regular measure is rather intrinsic. Regularity is a local property. For an extrinsic definition in local charts we refer the reader to~\cite[Dfn.~2.8]{Gig11}. The equivalence of our definition to the one in~\cite{Gig11} is shown in the proof of~\cite[Prop.~2.10]{Gig11}.
\end{rem}

\begin{thm}[Brenier--McCann,~{\cite[Thm.~1.33]{AmbGig11},~Gigli,~\cite[Prop.~2.10 and Thm.~7.4]{Gig11}}]\label{t:McCann}
The following are equivalent:
\begin{enumerate}[$(i)$]
\item $\mu\in\msP^\reg$;
\item for each $\nu\in\msP$ there exists a unique optimal transport plan~$\pi\in\Opt(\mu,\nu)$ and~$\pi$ is induced by a map~$g_{\mu\rar\nu}$.
\end{enumerate}

Furthermore, if any of the previous holds, then there exists a $\mssc$-convex~$\phi_{\mu\rar\nu}$, unique up to additive constant, called a \emph{Kantorovich potential}, such that $g_{\mu\rar\nu}=\exp\nabla\phi_{\mu\rar\nu}$ $\mu$-a.e.~on $\MFD$.
\end{thm}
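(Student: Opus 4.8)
The plan is to recall the classical Brenier--McCann argument on the compact manifold~$\MFD$, combining Kantorovich duality with the differentiability theory of $\mssc$-convex functions; the full details are those of~\cite{AmbGig11,Gig11}, which I would cite. First I would assemble the standard ingredients. An optimal plan~$\pi\in\Opt(\mu,\nu)$ exists for all~$\mu,\nu\in\msP$ by compactness of~$\MFD$ and lower semicontinuity of~$\mssc$ (as already recalled, cf.~\cite[Thm.~4.1]{Vil09}), and Kantorovich duality furnishes a $\mssc$-convex \emph{Kantorovich potential}~$\phi$ for which every optimal~$\pi$ is concentrated on the $\mssc$-superdifferential~$\partial^\mssc\phi\eqdef\set{(x,y):\phi(x)+\phi^\mssc(y)=-\mssc(x,y)}$; conversely, since~$\partial^\mssc\phi$ is $\mssc$-cyclically monotone, any~$\pi\in\Cpl(\mu,\nu)$ concentrated on it is optimal. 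Because~$\mssc=\tfrac12\mssd^2$ on the compact~$\MFD$, such a~$\phi$ is Lipschitz and locally semi-convex, so its singular set~$\Sigma_\phi$ is~$\mssm$-negligible by the classical Rademacher theorem on~$\MFD$; moreover, for~$x\notin\Sigma_\phi$ and~$y$ in the fibre~$\partial^\mssc\phi(x)$, the function~$z\mapsto\phi(z)+\mssc(z,y)$ attains its minimum at~$x$, so (using~$\nabla_z\tfrac12\mssd^2(z,y)=-\exp_z^{-1}(y)$ away from the cut locus) one gets~$\nabla\phi(x)=\exp_x^{-1}(y)$, i.e.~$\partial^\mssc\phi(x)=\set{\exp_x\nabla\phi(x)}$ is a singleton.

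For~$(i)\Rightarrow(ii)$, together with the final assertion: given~$\mu\in\msP^\reg$ and~$\nu\in\msP$, take a Kantorovich potential~$\phi_{\mu\rar\nu}$; since it is semi-convex, regularity of~$\mu$ gives~$\mu(\Sigma_{\phi_{\mu\rar\nu}})=0$, so every optimal plan is concentrated on the graph of~$g_{\mu\rar\nu}\eqdef\exp\nabla\phi_{\mu\rar\nu}$ over~$\MFD\setminus\Sigma_{\phi_{\mu\rar\nu}}$, whence~$\pi=(\id,g_{\mu\rar\nu})_\pfwd\mu$. This yields simultaneously the uniqueness of the optimal plan and its representation by~$\exp\nabla\phi_{\mu\rar\nu}$; and since~$\nabla\phi_{\mu\rar\nu}$ is thereby determined~$\mu$-a.e. and~$\MFD$ is connected,~$\phi_{\mu\rar\nu}$ is unique up to an additive constant, as in~\cite[Thm.~7.4]{Gig11}.

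For~$(ii)\Rightarrow(i)$ I would argue by contraposition. If~$\mu\notin\msP^\reg$, fix a semi-convex~$\phi$ with~$\mu(\Sigma_\phi)>0$; after adding a fixed smooth, sufficiently strongly $\mssc$-convex function one may assume~$\phi$ is itself $\mssc$-convex, without shrinking~$\mu(\Sigma_\phi)$. At~$\mu$-a.e.~$x\in\Sigma_\phi$ the fibre~$\partial^\mssc\phi(x)$ contains at least two distinct points, so a measurable-selection argument produces Borel maps~$g_0,g_1\colon\Sigma_\phi\rar\MFD$ with~$(x,g_i(x))\in\partial^\mssc\phi$ and~$g_0\neq g_1$ on a set~$S$ of positive~$\mu$-measure. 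Writing~$g\eqdef\exp\nabla\phi$ on~$\MFD\setminus\Sigma_\phi$, the coupling~$\pi\eqdef(\id,g)_\pfwd\ttonde{\mu\restr_{\MFD\setminus\Sigma_\phi}}+\tfrac12(\id,g_0)_\pfwd\ttonde{\mu\restr_{\Sigma_\phi}}+\tfrac12(\id,g_1)_\pfwd\ttonde{\mu\restr_{\Sigma_\phi}}$, with second marginal~$\nu$, belongs to~$\Cpl(\mu,\nu)$ and is concentrated on~$\partial^\mssc\phi$, hence optimal; but over~$S$ it is not induced by any map, contradicting~$(ii)$.

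I expect the main obstacle to be exactly this last construction: reducing an arbitrary semi-convex non-regularity witness to a genuinely $\mssc$-convex~$\phi$ on~$\MFD$, and then selecting the two distinct branches~$g_0,g_1$ of~$\partial^\mssc\phi$ over~$\Sigma_\phi$ in a Borel way while controlling the marginals, so that the resulting non-deterministic coupling is provably optimal. This is precisely the content of~\cite[Prop.~2.10 and Thm.~7.4]{Gig11}; in practice I would invoke the statement as recalled there rather than reprove it.
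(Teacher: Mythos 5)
The paper does not actually prove Theorem~\ref{t:McCann}: it is recalled verbatim from \cite{AmbGig11} and \cite{Gig11} with no argument supplied, so there is no ``paper's own proof'' to compare against. Your reconstruction follows the standard Brenier--McCann strategy exactly as in the cited sources, and the outline is sound; since you also explicitly defer the genuinely technical step to \cite[Prop.~2.10 and Thm.~7.4]{Gig11}, the proposal is consistent with how the paper treats this statement.

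A couple of small remarks on the one place where you go slightly beyond a gloss. In $(ii)\Rightarrow(i)$, the phrase ``after adding a fixed smooth, sufficiently strongly $\mssc$-convex function one may assume $\phi$ is itself $\mssc$-convex'' is not a clean operation even in $\R^d$: adding a $\mssc$-convex function to a semi-convex one does not in general yield a $\mssc$-convex function (for $\mssc=\tfrac12|x-y|^2$ in $\R^d$, $\mssc$-convexity corresponds to $\tfrac12|x|^2+\phi$ being convex, which bounds the admissible semi-convexity modulus from \emph{above}; making $\phi$ ``more convex'' doesn't help, one needs to rescale or localize). On a manifold the obstruction is worse because $\tfrac12\mssd^2(\cdot,y)$ is only smooth away from the cut locus. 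Gigli's actual treatment works in local charts and reduces to the Euclidean case via a careful localization, rather than by a global additive correction. This does not affect your conclusion since you explicitly fall back on the citation for this step, but it is the point where an optimistic reader might mistake your sketch for a self-contained argument. The remaining ingredients (existence of optimal plans, Kantorovich duality, semi-convexity and $\mssm$-a.e.\ differentiability of $\mssc$-convex potentials, the identity $\nabla_z\tfrac12\mssd^2(z,y)=-\exp_z^{-1}(y)$ away from cut, and the ensuing uniqueness of $\phi_{\mu\rar\nu}$ up to a constant by connectedness) are stated correctly and match the cited references.
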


\begin{prop}[AC curves in~$(\msP_2,W_2)$,~{\cite[Thm.~2.29]{AmbGig11}}]\label{p:AC}
For every~$\seq{\mu_t}_{t\in I}\in\AC^1(I;\msP_2)$  there exists a Borel measurable time-dependent family of vector fields~$\seq{w_t}_{t\in I}$ such that~$\norm{w_t}_{\Vect_{\mu_t}}\leq \abs{\dot\mu_t}$ for $\diff t$-a.e.~$t\in I$ and the \emph{continuity equation}
\begin{align}\label{eq:ContEq}
\partial_t\mu_t+\div(w_t\mu_t)=0 
\end{align}
holds in the sense of distributions on~$I\times \MFD$, that is
\begin{align}\label{eq:ContEqDistr}
\int_0^1 \!\! \int_\MFD \ttonde{\partial_t\phi(t,x) +\gscal{\nabla\phi(t,x)}{w_t(x)}}\diff\mu_t(x) \diff t=0 \comm \qquad \phi\in \mcC^\infty_c(I\times \MFD) \fstop
\end{align}

Conversely, if~$\seq{\mu_t,w_t}_{t\in I}$ satisfies~\eqref{eq:ContEq} in the sense of distributions and~$\norm{w_t}_{\Vect_{\mu_t}}\in L^1(I)$, then, up to redefining~$t\mapsto \mu_t$ on a $\diff t$-negligible set of times,~$\seq{\mu_t}_t\in \AC^1(I;\msP_2)$ and~$\abs{\dot\mu_t}\leq \norm{w_t}_{\Vect_{\mu_t}}$ for $\diff t$-a.e.~$t\in I$.
\end{prop}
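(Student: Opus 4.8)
The plan is to establish the two implications by the standard scheme — a mollification-and-flow estimate for the second one, a difference-quotient argument together with a Riesz representation for the first — using throughout that $\MFD$ is compact, so that narrow convergence on $\msP$ agrees with $W_2$-convergence and all the integrands below are uniformly bounded.

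\emph{From the continuity equation to absolute continuity.} Let $\seq{\mu_t,w_t}_{t\in I}$ solve~\eqref{eq:ContEq} distributionally with $t\mapsto\norm{w_t}_{\Vect_{\mu_t}}$ in $L^1(I)$, and fix $[s,t]\subset I^\circ$. Mollifying $\seq{\mu_r}_r$ and the momenta $\seq{w_r\mu_r}_r$ in space (e.g.\ via the heat semigroup, or in local charts against a partition of unity) and, if necessary, in time, one obtains smooth positive densities $\mu^\eps_r=\rho^\eps_r\,\mssm$ and smooth time-dependent vector fields $w^\eps_r$ with $\partial_r\mu^\eps_r+\div(w^\eps_r\mu^\eps_r)=0$ \emph{classically} near $[s,t]\times\MFD$ and, by joint convexity of $(v,\rho)\mapsto\abs v^2/\rho$ and dominated convergence, $\int_s^t\norm{w^\eps_r}_{\Vect_{\mu^\eps_r}}\diff r\to\int_s^t\norm{w_r}_{\Vect_{\mu_r}}\diff r$ as $\eps\to0$. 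Denoting by $X^\eps_{s,r}$ the flow of the non-autonomous field $w^\eps$, one has $(X^\eps_{s,r})_\pfwd\mu^\eps_s=\mu^\eps_r$, hence, by Minkowski's integral inequality along the flow,
\begin{align*}
W_2(\mu^\eps_s,\mu^\eps_t)\le\tonde{\int_\MFD\mssd^2\ttonde{x,X^\eps_{s,t}(x)}\diff\mu^\eps_s(x)}^{1/2}\le\int_s^t\norm{w^\eps_r}_{\Vect_{\mu^\eps_r}}\diff r\fstop
\end{align*}
Since $\mu^\eps_r\to\mu_r$ narrowly (thus in $W_2$) for every $r$, letting $\eps\to0$ and then $s,t$ to the endpoints of $I$ gives $W_2(\mu_s,\mu_t)\le\int_s^t\norm{w_r}_{\Vect_{\mu_r}}\diff r$; thus, after redefining $t\mapsto\mu_t$ on a $\diff t$-null set of times if necessary, $\seq{\mu_t}_t\in\AC^1(I;\msP_2)$ with $\abs{\dot\mu_t}\le\norm{w_t}_{\Vect_{\mu_t}}$ for $\diff t$-a.e.~$t$.

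\emph{From absolute continuity to the continuity equation.} Let $\seq{\mu_t}_t\in\AC^1(I;\msP_2)$. For $\phi\in\mcC^\infty(\MFD)$ the map $t\mapsto\mu_t\phi$ is absolutely continuous, as $\abs{\mu_t\phi-\mu_s\phi}\le\norm{\nabla\phi}_{\Vect^0}\,W_2(\mu_s,\mu_t)\le\norm{\nabla\phi}_{\Vect^0}\int_s^t\abs{\dot\mu_r}\diff r$; write $L_t(\phi)$ for its $\diff t$-a.e.\ defined derivative. Fix a countable family $\seq{\phi_k}_k\subset\mcC^\infty(\MFD)$ that is $\mcC^1$-dense and stable under rational combinations, and a set $T_0\subset I$ of full Lebesgue measure consisting of times at which $W_2(\mu_{t+h},\mu_t)/\abs h\to\abs{\dot\mu_t}$ as $h\to0$ and at which $L_t(\phi_k)$ exists for every $k$. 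The key point is that
\begin{align}\label{eq:keyAC}
\abs{L_t(\phi_k)}\le\abs{\dot\mu_t}\,\norm{\nabla\phi_k}_{\Vect_{\mu_t}}\comm\qquad t\in T_0\comm\ k\in\N\fstop
\end{align}
To see this, pick $\pi_h\in\Opt(\mu_t,\mu_{t+h})$, lift it to an optimal geodesic plan $\bar\pi_h$ on the (compact) space of constant-speed minimal geodesics of $\MFD$, and set $\mu^{(h)}_r\eqdef(\ev_r)_\pfwd\bar\pi_h$, so $\mu^{(h)}_0=\mu_t$, $\mu^{(h)}_1=\mu_{t+h}$ and $W_2(\mu_t,\mu^{(h)}_r)\le W_2(\mu_t,\mu_{t+h})$ for all $r\in[0,1]$. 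Writing $\phi_k(\gamma_1)-\phi_k(\gamma_0)=\int_0^1\gscal{\nabla\phi_k(\gamma_r)}{\dot\gamma_r}\diff r$ for $\bar\pi_h$-a.e.\ geodesic $\gamma$, using $\abs{\dot\gamma_r}_\mssg=\mssd(\gamma_0,\gamma_1)$, and applying Cauchy--Schwarz with respect to $\diff r\otimes\diff\bar\pi_h$ gives
\begin{align*}
\abs{\mu_{t+h}\phi_k-\mu_t\phi_k}\le\tonde{\int_0^1\!\!\int_\MFD\abs{\nabla\phi_k}^2\diff\mu^{(h)}_r\,\diff r}^{1/2}\,W_2(\mu_t,\mu_{t+h})\fstop
\end{align*}
As $h\to0$ the measures $\mu^{(h)}_r$ converge to $\mu_t$ narrowly, uniformly in $r\in[0,1]$, so the first factor tends to $\norm{\nabla\phi_k}_{L^2_{\mu_t}}$; dividing by $\abs h$ and letting $h\to0$ along times in $T_0$ yields~\eqref{eq:keyAC}. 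Since $\seq{\phi_k}_k$ is stable under rational combinations, the bound~\eqref{eq:keyAC} passes to such combinations; as the gradients $\nabla\phi_k$ are moreover dense in $T^\nabla_{\mu_t}\msP_2$, the assignment $\nabla\phi_k\mapsto L_t(\phi_k)$ extends by linearity and continuity to a functional on $T^\nabla_{\mu_t}\msP_2\subset\Vect_{\mu_t}$ of norm $\le\abs{\dot\mu_t}$ — well defined, in particular, modulo $\mu_t$-a.e.\ equality of gradients. By Riesz representation there is a unique $w_t\in T^\nabla_{\mu_t}\msP_2$ with
\begin{align*}
L_t(\phi)=\scalar{w_t}{\nabla\phi}_{\Vect_{\mu_t}}=\int_\MFD\gscal{w_t}{\nabla\phi}\diff\mu_t\comm\qquad\norm{w_t}_{\Vect_{\mu_t}}\le\abs{\dot\mu_t}\comm
\end{align*}
for every $\phi\in\mcC^\infty(\MFD)$ and $\diff t$-a.e.~$t$ (for fixed $\phi$, outside a $\phi$-dependent null set, on approximating $\phi$ in the $\mcC^1$-norm by the $\phi_k$). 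Borel measurability of $t\mapsto w_t$ follows by a routine approximation: orthogonally project $w_t$ onto $\Span\set{\nabla\phi_1,\dotsc,\nabla\phi_N}$ through the $t$-measurable Gram systems and let $N\to\infty$. Finally, for $\phi\in\mcC^\infty_c(I\times\MFD)$ set $\psi(t)\eqdef\mu_t\phi(t,\emparg)$; a by-now standard computation — splitting the difference quotient of $\psi$ and invoking the uniform-in-$t$ bound $\norm{w_t}_{\Vect_{\mu_t}}\le\abs{\dot\mu_t}\in L^1(I)$ — shows that $\psi$ is absolutely continuous with $\psi'(t)=\mu_t\partial_t\phi(t,\emparg)+\scalar{w_t}{\nabla\phi(t,\emparg)}_{\Vect_{\mu_t}}$ for $\diff t$-a.e.~$t$; integrating over $I$ with $\psi(0)=\psi(1)=0$ then gives~\eqref{eq:ContEqDistr}.

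\emph{Main obstacle.} The crux is the pointwise-in-$t$ bound~\eqref{eq:keyAC}: the optimal plans $\pi_h$ concentrate near the diagonal only in the $L^2(\pi_h)$-sense, so the first-order expansion along geodesics must be arranged so that every error term is dominated by $W_2(\mu_t,\mu_{t+h})$ alone; the measurable geodesic-plan lift and the uniform integrability behind $\mu^{(h)}_r\to\mu_t$ are exactly what make this step rigorous, with compactness of $\MFD$ keeping both routine. The other genuinely technical point is the mollification in the first half: on a manifold the heat semigroup does not commute with $\div$, so one works in local charts with a partition of unity (or via an isometric embedding), absorbing the resulting commutator terms into an error vanishing as $\eps\to0$.
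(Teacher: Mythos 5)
The paper does not prove Proposition~\ref{p:AC}: it is imported verbatim from the reference (Ambrosio--Gigli, \emph{A User's Guide to Optimal Transport}, Thm.~2.29, itself going back to Ambrosio--Gigli--Savar\'e, Ch.~8), so there is no in-paper argument against which to compare yours.

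Your proof reproduces the standard scheme from that reference, and is correct at the level of detail you give. For the direction ``continuity equation $\Rightarrow$ AC'' you use mollification plus the flow estimate via Minkowski's integral inequality and joint convexity of $(v,\rho)\mapsto\tabs v^2/\rho$ — this is exactly the AGS argument, and you correctly flag the only manifold-specific subtlety (mollification does not commute with $\div$; one works in charts with a partition of unity, or via an isometric embedding, absorbing the commutator into an $o(1)$ error). For the converse, your key bound $\tabs{L_t(\phi)}\le\tabs{\dot\mu_t}\,\tnorm{\nabla\phi}_{\Vect_{\mu_t}}$ is obtained by lifting the optimal plan to a geodesic plan and expanding $\phi$ along geodesics; the reference instead derives the same bound from the first-order behaviour of Kantorovich potentials, but the two routes give identical information and your version is arguably the more geometric one. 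One minor imprecision: after mollifying you write that $\int_s^t\tnorm{w^\eps_r}_{\Vect_{\mu^\eps_r}}\diff r$ \emph{converges} to $\int_s^t\tnorm{w_r}_{\Vect_{\mu_r}}\diff r$; what the convexity argument actually yields directly is the one-sided inequality $\int_s^t\tnorm{w^\eps_r}_{\Vect_{\mu^\eps_r}}\diff r\le\int_s^t\tnorm{w_r}_{\Vect_{\mu_r}}\diff r$, which — combined with lower semicontinuity of $W_2$ under narrow convergence — is all that is needed; equality requires an additional lower-semicontinuity step and is superfluous here. Similarly, the measurability of $t\mapsto w_t$ is phrased a bit loosely (the $w_t$ live in different fibres $\Vect_{\mu_t}$), but the intended meaning — measurability of $t\mapsto\scalar{w_t}{\nabla\phi}_{\Vect_{\mu_t}}$ for each fixed $\phi$, together with the Gram-projection argument — is sound. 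None of these affect the correctness of the argument.
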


\subsection{Geometry of~\texorpdfstring{$\msP_2$}{P2}}
A detailed study of the Riemannian structure of~$\msP_2$ has been carried out by N.~Gigli in~\cite{Gig11,Gig12}. We shall need the following definitions and results from~\cite{Gig11} to which we refer the reader for further references.

We consider the tangent bundle~$T\MFD$ as endowed with the Sasaki metric~$\mssg_*$ and the associated Riemannian distance~$\mssd_*\eqdef\mssd_{\mssg_*}$ which turn it into a non-compact connected oriented Riemannian manifold.

\begin{defs}[Tangent plans]
For~$\mu\in\msP_2$ we let~$\msP_2(T\MFD)_\mu\subset \msP_2(T\MFD)$ be the space of \emph{tangent plans}~$\boldgamma\in\msP(T\MFD)$ such that
\begin{align}
\label{eq:TPa}
\pr^\MFD_\pfwd\boldgamma=\mu 
\qquad \text{and} \qquad
\int_{T\MFD}  \abs{\mrmv}_{\mssg_x}^2 \diff\boldgamma(x,\mrmv)<\infty \fstop
\end{align}
\end{defs}

\begin{defs}[Exponential map]
We denote by~$\bexp_\mu\colon \msP_2(T\MFD)_\mu\rar \msP_2$ the \emph{exponential map} $\bexp_\mu(\boldgamma)=\exp_\pfwd \boldgamma$, with right-inverse $\bexp_\mu^{-1}\colon \msP_2\rar \msP_2(T\MFD)_\mu$ defined by
\begin{align*}
\bexp_\mu^{-1}(\nu)\eqdef \set{\boldgamma\in\msP_2(T\MFD)_\mu : \bexp_\mu(\boldgamma)=\nu \text{~~and~~} \int_{T\MFD} \abs{\mrmv}_{\mssg_x}^2 \diff\boldgamma(x,\mrmv)=W^2_2(\mu,\nu)} \fstop
\end{align*}
\end{defs}

Equivalently, $\bexp_\mu^{-1}(\nu)$ is the set of all tangent plans~$\boldgamma\in\msP_2(T\MFD)$ such that
\begin{align}
\label{eq:TPb}
(\pr^\MFD,\exp)_\pfwd \boldgamma\in& \ \Cpl(\mu,\nu)
\intertext{and}
\label{eq:TPc}
\int_{T\MFD} \abs{\mrmv}^2_{\mssg_x} \diff\boldgamma(x,\mrmv)=&\ W^2_2(\mu,\nu)\fstop
\end{align}

\begin{rem}[{Cf.~\cite[p.~131]{Gig11}}] Notice that~\eqref{eq:TPc} may not be dropped even if~\eqref{eq:TPb} is strengthened to
\begin{align}\label{eq:TPb'}
(\pr^\MFD,\exp)_\pfwd \boldgamma\in\Opt(\mu,\nu)\fstop
\end{align}

The joint requirement of~\eqref{eq:TPb} \emph{and}~\eqref{eq:TPc} is however equivalent to that of~\eqref{eq:TPb'} \emph{and}~\eqref{eq:TPc}.
\end{rem}

\begin{rem}
Notably,~$\bexp_\mu^{-1}(\nu)$ need not be a singleton even when~$\Opt(\mu,\nu)$ is. Consider e.g.~the case when~$\mu=\delta_p$ and~$\nu=\delta_q$ are Dirac masses at antipodal points~$p,q\in\mbbS^1$ and let~$\mrmv\eqdef \tfrac{1}{2}\partial_p\in T_p\mbbS^1$. Then~$\Cpl(\mu,\nu)=\Opt(\mu,\nu)=\tset{\delta_{(p,q)}}$, yet
$\bexp_{\mu}^{-1}(\nu)=\set{(1-r)\delta_{p,\mrmv}+ r\delta_{p,-\mrmv} : r\in I}$.
\end{rem}

\begin{defs}[Rescaling of tangent plans]
For~$t\in\R$ we denote by~$t\cdot\boldgamma$ the \emph{rescaling}
\begin{align}\label{eq:RescaledPlan}
t\cdot \boldgamma\eqdef (\pr^\MFD,t\, \pr^1)_\pfwd \boldgamma \fstop
\end{align}
\end{defs}

\begin{defs}[Double tangent]
We denote by~$T^2\MFD\eqdef \tset{(x,\mrmv_1,\mrmv_2) : \mrmv_1,\mrmv_2\in T_x\MFD}$ the \emph{double tangent bundle} to~$\MFD$, with natural projections
\begin{align*}
\pr^\MFD\colon (x,\mrmv_1,\mrmv_2)\mapsto x\in \MFD\comm \qquad \pr^i\colon (x,\mrmv_1,\mrmv_2)\mapsto \mrmv_i\in T_x\MFD\comm i=1,2
\end{align*}
and endowed with the distance
\begin{align*}
\mssd_{*2}\eqdef \ttonde{\mssd_*^2\circ(\pr^1,\pr^1)+ \mssd_*^2\circ(\pr^2,\pr^2)}^{1/2}\fstop
\end{align*}
\end{defs}

All of the previous definitions are instrumental to the statement of the following result by N.~Gigli, concerned with the one-sided differentiability of the squared $L^2$-Wasserstein distance along a family of nice curves including $W_2$-geodesic curves.

\begin{thm}[Directional derivatives of the squared Wasserstein distance,~{\cite[Thm.~4.2]{Gig11}}]\label{t:DirDerDistGigli}
Fix~$\mu_0\in\msP$ and~$\boldgamma\in\msP_2(T\MFD)_{\mu_0}$ and set~$\mu_t\eqdef \bexp_{\mu_0}(t \cdot \boldgamma)$. Then, for every~$\nu\in\msP$ there exists the right derivative
\begin{align}\label{eq:t:DirDerDistGigli0}
\diff_t^+\restr_{t=0} \tfrac{1}{2} W_2^2(\mu_t,\nu)=-\sup_{\boldalpha}\int_{T^2\MFD} \scalar{\mrmv_1}{\mrmv_2}_{\mssg_x} \diff\boldalpha(x, \mrmv_1,\mrmv_2)
\end{align}
where the supremum is taken over all~$\boldalpha\in\msP_2(T^2\MFD)$ such that
\begin{equation}\label{eq:t:DirDerDistGigli1}
(\pr^\MFD,\pr^1)_\pfwd \boldalpha=\boldgamma\comm \qquad \textrm{and} \qquad
(\pr^\MFD,\pr^2)_\pfwd\boldalpha\in \bexp^{-1}_{\mu_0}(\nu) \fstop
\end{equation}
\end{thm}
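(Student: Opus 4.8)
The plan is to prove the existence of the right derivative by establishing matching upper and lower bounds for the difference quotient, with the right-hand side of~\eqref{eq:t:DirDerDistGigli0} as the common value. Write $g(t)\eqdef\tfrac12 W_2^2(\mu_t,\nu)$, $M\eqdef\int_{T\MFD}\abs{\mrmv}_{\mssg_x}^2\diff\boldgamma<\infty$, let $e_t\colon(x,\mrmv)\mapsto\exp_x(t\mrmv)$, so that $\mu_t=(e_t)_\pfwd\boldgamma$, and set $G\eqdef\sup_{\boldalpha}\int_{T^2\MFD}\scalar{\mrmv_1}{\mrmv_2}_{\mssg_x}\diff\boldalpha$, the supremum being over all~$\boldalpha$ satisfying~\eqref{eq:t:DirDerDistGigli1}. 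Two standard facts about the cost $\mssc=\tfrac12\mssd^2$ on the compact manifold~$\MFD$ are used throughout: (i)~$\mssc$ is semiconcave with a constant~$\Lambda$ \emph{uniform in the second variable}, i.e.~$\mssc(\sigma(1),y)\leq\mssc(\sigma(0),y)+\scalar{p}{\dot\sigma(0)}_{\mssg_{\sigma(0)}}+\Lambda\abs{\dot\sigma(0)}_{\mssg_{\sigma(0)}}^2$ for every geodesic~$\sigma$, every~$y\in\MFD$, and every~$p$ in the superdifferential of~$\mssc(\emparg,y)$ at~$\sigma(0)$; (ii)~this superdifferential contains~$-\mrmw$ for every minimal~$\mrmw$ (i.e.~with $\exp_{\sigma(0)}\mrmw=y$ and $\abs{\mrmw}=\mssd(\sigma(0),y)$), and, directly from the definition of~$\bexp_{\mu_0}^{-1}(\nu)$ and $\mssd(x,\exp_x\mrmv)\leq\abs{\mrmv}$, one has $\mssd(x,\exp_x\mrmv)=\abs{\mrmv}$ for $\boldgamma'$-a.e.~$(x,\mrmv)$ whenever $\boldgamma'\in\bexp_{\mu_0}^{-1}(\nu)$.

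\emph{Upper bound.} Fix an admissible~$\boldalpha$. The map $(x,\mrmv_1,\mrmv_2)\mapsto(\exp_x(t\mrmv_1),\exp_x\mrmv_2)$ pushes~$\boldalpha$ forward to a coupling of~$\mu_t$ and~$\nu$, by~\eqref{eq:t:DirDerDistGigli1} and the definition of~$\bexp_{\mu_0}^{-1}(\nu)$; hence $g(t)\leq\tfrac12\int\mssd^2(\exp_x(t\mrmv_1),\exp_x\mrmv_2)\diff\boldalpha$. Applying~(i)--(ii) along $s\mapsto\exp_x(st\mrmv_1)$ with $p=-\mrmv_2$ --- legitimate for~$\boldalpha$-a.e.~point, where $\mrmv_2$ is minimal and $\mssc(x,\exp_x\mrmv_2)=\tfrac12\abs{\mrmv_2}^2$ --- and integrating against~$\boldalpha$, we get $g(t)\leq g(0)-t\int\scalar{\mrmv_1}{\mrmv_2}_{\mssg_x}\diff\boldalpha+\Lambda M t^2$, using $\int\abs{\mrmv_2}^2\diff\boldalpha=W_2^2(\mu_0,\nu)=2g(0)$ and $\int\abs{\mrmv_1}^2\diff\boldalpha=M$. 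Dividing by~$t>0$, letting $t\rar 0^+$, and taking the supremum over~$\boldalpha$ yields $\limsup_{t\rar 0^+}\tfrac{g(t)-g(0)}{t}\leq-G$.

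\emph{Lower bound.} Pick $t_n\rar 0^+$ along which $\tfrac{g(t_n)-g(0)}{t_n}$ converges to $\liminf_{t\rar 0^+}\tfrac{g(t)-g(0)}{t}$, and for each~$n$ a lift $\boldbeta_n\in\bexp_{\mu_{t_n}}^{-1}(\nu)$ of some plan in~$\Opt(\mu_{t_n},\nu)$, obtained by a measurable selection of minimal geodesics. Gluing~$\boldgamma$ and~$\boldbeta_n$ over their common $\mu_{t_n}$-marginal produces a plan~$\hat\boldsigma_n$ carrying $(x,\mrmv,\mrmw_n)$ with $(x,\mrmv)\sim\boldgamma$, $x_n\eqdef\exp_x(t_n\mrmv)$ and $(x_n,\mrmw_n)\sim\boldbeta_n$. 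Now apply~(i)--(ii) \emph{at the moved base point}~$x_n$, along the short --- hence minimal --- geodesic from~$x_n$ back to~$x$, whose velocity at~$x_n$ is $-t_n\,\partial_t\restr_{t=t_n}e_t(x,\mrmv)$, with $p=-\mrmw_n$; since the $\hat\boldsigma_n$-law of $(x,\exp_{x_n}\mrmw_n)$ lies in~$\Cpl(\mu_0,\nu)$, integrating the resulting estimate against~$\hat\boldsigma_n$ gives
\begin{align*}
W_2^2(\mu_{t_n},\nu)\geq W_2^2(\mu_0,\nu)-2t_n\int\scalar{\mrmw_n}{\partial_t\restr_{t=t_n}e_t(x,\mrmv)}_{\mssg_{x_n}}\diff\hat\boldsigma_n-2\Lambda M\, t_n^2\fstop
\end{align*}
Transporting~$\mrmw_n$ to~$T_x\MFD$ by parallel transport~$P_n$ along that geodesic, and using that~$P_n$ sends the geodesic's velocity at~$x_n$ to~$\mrmv$, the integral equals $\int\scalar{\mrmv_1}{\mrmv_2}_{\mssg_x}\diff\boldalpha_n$ with $\boldalpha_n\eqdef(x,\mrmv,P_n\mrmw_n)_\pfwd\hat\boldsigma_n$; hence $\tfrac{g(t_n)-g(0)}{t_n}\geq-\int\scalar{\mrmv_1}{\mrmv_2}_{\mssg_x}\diff\boldalpha_n-\Lambda M t_n$. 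One checks that $(\pr^\MFD,\pr^1)_\pfwd\boldalpha_n=\boldgamma$ exactly, while $(\pr^\MFD,\pr^2)_\pfwd\boldalpha_n$ has $\MFD$-marginal~$\mu_0$, second moment $W_2^2(\mu_{t_n},\nu)\rar W_2^2(\mu_0,\nu)$, and $\exp$-image converging to~$\nu$ in~$W_2$, because $\mssd(\exp_x P_n\mrmw_n,\exp_{x_n}\mrmw_n)=O(\mssd(x,x_n))=O(t_n)$. The family~$\set{\boldalpha_n}$ is then tight; any narrow limit~$\boldalpha$ is admissible --- its $\pr^2$-marginal has second moment $W_2^2(\mu_0,\nu)$ by lower semicontinuity together with the reverse inequality provided by $(\pr^\MFD,\exp\circ\pr^2)_\pfwd\boldalpha\in\Cpl(\mu_0,\nu)$ --- and, using the uniform integrability of $\scalar{\mrmv_1}{\mrmv_2}_{\mssg_x}$ granted by the $L^2$-bounds, $\int\scalar{\mrmv_1}{\mrmv_2}_{\mssg_x}\diff\boldalpha_n\rar\int\scalar{\mrmv_1}{\mrmv_2}_{\mssg_x}\diff\boldalpha$ along a subsequence. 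Passing to that subsequence in the bound above gives $\liminf_{t\rar 0^+}\tfrac{g(t)-g(0)}{t}\geq-\int\scalar{\mrmv_1}{\mrmv_2}_{\mssg_x}\diff\boldalpha\geq-G$, which combined with the upper bound proves the formula --- and shows, as a by-product, that the supremum defining~$G$ is attained.

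\emph{Main obstacle.} The delicate points all concern the cut locus and the non-compactness of the fibres of~$T\MFD$: one needs the semiconcavity constant of~$\mssc$ to be independent of the target point and its superdifferential to always contain the minimal directions, and --- crucially for the lower bound --- one must upgrade these pointwise quadratic estimates to integral ones and then pass to narrow limits for the \emph{unbounded} integrand $\scalar{\mrmv_1}{\mrmv_2}_{\mssg_x}$. The finite-second-moment hypotheses on~$\boldgamma$ and on admissible~$\boldalpha$, together with the continuity $W_2(\mu_{t_n},\nu)\rar W_2(\mu_0,\nu)$, are exactly what make the required tightness and uniform integrability available; the parallel-transport device is what allows the comparison of inner products living in the varying tangent spaces~$T_{x_n}\MFD$.
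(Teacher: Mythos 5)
The paper does not prove this statement: it is quoted from Gigli~\cite[Thm.~4.2]{Gig11}, with no argument supplied, so there is no in-paper proof to compare against. Your reconstruction follows essentially the same two-bound strategy as Gigli's original and is correct in substance: the upper bound from the uniform semiconcavity of~$\mssc$ on the compact~$\MFD$ together with the a.e.-minimality of~$\mrmv_2$ for admissible~$\boldalpha$; the lower bound from rebasing the time-$t_n$ optimal lifts at~$\mu_0$ via parallel transport and passing to a tight narrow limit, with the uniform integrability of~$\scalar{\mrmv_1}{\mrmv_2}_{\mssg_x}$ supplied by the frozen~$(\pr^\MFD,\pr^1)$-marginal~$\boldgamma$. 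One point should be spelled out: the estimate $\mssd(\exp_xP_n\mrmw_n,\exp_{x_n}\mrmw_n)=O(\mssd(x,x_n))$ requires the geodesic defining~$P_n$ to be the minimizing one, which may fail on~$\set{t_n\abs{\mrmv}\geq\inj_\MFD}$; that set has~$\boldgamma$-measure~$O(t_n^2)$ by Chebyshev and~$\int\abs{\mrmv}^2\diff\boldgamma<\infty$, and the crude bound~$\diam\MFD$ suffices there, so the claimed~$W_2$-convergence of the~$\exp$-images does hold, but the splitting should be made explicit. The parallel-transport identity $P_n\,\partial_t\restr_{t=t_n}\exp_x(t\mrmv)=\mrmv$ is correct, since parallel transport along a geodesic carries its velocity at one endpoint to its velocity at the other. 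The technical obstacles you flag at the end (cut locus, unbounded integrand, uniform integrability) are indeed the ones needing care, and, as you note, the finite-second-moment hypotheses on~$\boldgamma$ and on admissible~$\boldalpha$ are precisely what make them tractable.
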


The following is a straightforward corollary. We provide a proof for the sake of completeness.

\begin{cor}\label{c:DirDerDistGigli}
In the same notation of Theorem~\ref{t:DirDerDistGigli}, there exists the left derivative
\begin{align}\label{eq:c:DirDerDistGigli:0}
\diff_t^-\restr_{t=0} \tfrac{1}{2} W_2^2(\mu_t,\nu)=-\inf_{\boldalpha}\int_{T^2\MFD} \scalar{\mrmv_1}{\mrmv_2}_{\mssg_x} \diff\boldalpha(x, \mrmv_1,\mrmv_2)
\end{align}
where the infimum is taken over all~$\boldalpha\in\msP_2(T^2\MFD)$ satisfying~\eqref{eq:t:DirDerDistGigli1}.

\begin{proof}
Given~$\boldgamma^+\in\msP_2(T\MFD)_{\mu_0}$ let~$\boldgamma^{-}\eqdef (-1)\cdot\boldgamma$ be defined by~\eqref{eq:RescaledPlan} and set~$\mu_t^{\pm}\eqdef \bexp_\mu(t\cdot\boldgamma^\pm)$ for~$t\geq 0$. Notice that~$\mu^+_{-t}=\mu^{-}_t$ for every~$t\geq 0$, hence, by definition,
\begin{align*}
\diff^{-}_t\restr_{t=0} \tfrac{1}{2}W_2^2(\mu_t^+,\nu)=-\diff_t^+\restr_{t=0}\tfrac{1}{2}W_2^2(\mu_t^-,\nu)
\end{align*}
which exists by choosing~$\boldgamma=\boldgamma^-$ in Theorem~\ref{t:DirDerDistGigli}. Let~$A^\pm$ be the set of plans~$\boldalpha\in\msP_2(T^2\MFD)$ satisfying~\eqref{eq:t:DirDerDistGigli1} with~$\boldgamma^\pm$ in place of~$\boldgamma$ and define~$\re^1\eqdef (\pr^\MFD,-\pr^1,\pr^2)\colon T^2\MFD\rar T^2\MFD$. It is straightforward that~$A^\pm=\re^1_\pfwd A^\mp$, thus, by Theorem~\ref{t:DirDerDistGigli},
\begin{align*}
-\diff_t^+\restr_{t=0}\tfrac{1}{2}W_2^2(\mu_t^-,\nu)=&\sup_{\boldalpha\in A^-} \int_{T^2\MFD} \scalar{\mrmv_1}{\mrmv_2}_{\mssg_x} \diff\boldalpha(x,\mrmv_1,\mrmv_2)
\\
=&\sup_{\boldalpha\in A^+} \int_{T^2\MFD} \scalar{-\mrmv_1}{\mrmv_2}_{\mssg_x} \diff\boldalpha(x,\mrmv_1,\mrmv_2)
\\
=&-\inf_{\boldalpha\in A^+} \int_{T^2\MFD} \scalar{\mrmv_1}{\mrmv_2}_{\mssg_x} \diff\boldalpha(x,\mrmv_1,\mrmv_2) \comm
\end{align*}
whence the conclusion by combining the last two chains of equalities.
\end{proof}
\end{cor}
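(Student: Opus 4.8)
The plan is to reduce the claim to the right-derivative formula of Theorem~\ref{t:DirDerDistGigli} by means of the elementary identity $\diff_t^-\restr_{t=0} f(t)=-\diff_t^+\restr_{t=0}f(-t)$, combined with the symmetry of the whole construction under reversal of the tangent direction.

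First I would fix $\boldgamma\in\msP_2(T\MFD)_{\mu_0}$ and set $\boldgamma^-\eqdef(-1)\cdot\boldgamma$ in the sense of~\eqref{eq:RescaledPlan}. Fiberwise negation preserves both defining conditions in~\eqref{eq:TPa}, so $\boldgamma^-\in\msP_2(T\MFD)_{\mu_0}$, and the composition rule for push-forwards gives $t\cdot\boldgamma^-=(-t)\cdot\boldgamma$ for every $t\in\R$; hence $\mu_{-t}=\bexp_{\mu_0}(t\cdot\boldgamma^-)$ for $t\geq0$. Consequently the left derivative $\diff_t^-\restr_{t=0}\tfrac12 W_2^2(\mu_t,\nu)$ exists and equals $-\diff_t^+\restr_{t=0}\tfrac12 W_2^2\tonde{\bexp_{\mu_0}(t\cdot\boldgamma^-),\nu}$, which Theorem~\ref{t:DirDerDistGigli} applied with $\boldgamma^-$ in place of $\boldgamma$ identifies with $\sup_{\boldalpha\in A^-}\int_{T^2\MFD}\scalar{\mrmv_1}{\mrmv_2}_{\mssg_x}\diff\boldalpha$. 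Here I write $A^\pm$ for the set of $\boldalpha\in\msP_2(T^2\MFD)$ satisfying~\eqref{eq:t:DirDerDistGigli1} with $\boldgamma^\pm$ in place of $\boldgamma$, so that the infimum in~\eqref{eq:c:DirDerDistGigli:0} runs over $A^+$.

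The remaining step is to match $\sup_{A^-}$ with $-\inf_{A^+}$. I would introduce the involution $\re^1\eqdef(\pr^\MFD,-\pr^1,\pr^2)$ of $T^2\MFD$: since it intertwines the projection $(\pr^\MFD,\pr^1)$ with $(x,\mrmv)\mapsto(x,-\mrmv)$ precomposed with $(\pr^\MFD,\pr^1)$, and leaves $(\pr^\MFD,\pr^2)$ fixed, push-forward along $\re^1$ maps $A^+$ bijectively onto $A^-$. Because $\scalar{\mrmv_1}{\mrmv_2}_{\mssg_x}$ is odd under $\re^1$, the change-of-variables formula turns $\sup_{\boldalpha\in A^-}\int_{T^2\MFD}\scalar{\mrmv_1}{\mrmv_2}_{\mssg_x}\diff\boldalpha$ into $\sup_{\boldalpha\in A^+}\tonde{-\int_{T^2\MFD}\scalar{\mrmv_1}{\mrmv_2}_{\mssg_x}\diff\boldalpha}=-\inf_{\boldalpha\in A^+}\int_{T^2\MFD}\scalar{\mrmv_1}{\mrmv_2}_{\mssg_x}\diff\boldalpha$, which is the right-hand side of~\eqref{eq:c:DirDerDistGigli:0}.

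I do not anticipate any real obstacle: the corollary is a formal consequence of Theorem~\ref{t:DirDerDistGigli} together with the sign symmetry of the setup. The only points deserving an explicit line are that $\boldgamma^-$ is again a tangent plan at $\mu_0$ and that reflecting only the first fiber variable leaves the constraint $(\pr^\MFD,\pr^2)_\pfwd\boldalpha\in\bexp_{\mu_0}^{-1}(\nu)$ untouched — both immediate from the definitions — plus the remark that $A^\pm$ is non-empty since $\bexp_{\mu_0}^{-1}(\nu)$ is, so that the suprema and infima above are over non-empty sets.
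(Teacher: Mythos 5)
Your proposal is correct and follows essentially the same route as the paper's proof: reduce the left derivative to a right derivative via $\boldgamma^-\eqdef(-1)\cdot\boldgamma$, then apply Theorem~\ref{t:DirDerDistGigli} and push forward along the involution $\re^1=(\pr^\MFD,-\pr^1,\pr^2)$ to turn $\sup_{A^-}$ into $-\inf_{A^+}$. The small explicit checks you flag (that $\boldgamma^-$ is again a tangent plan, that $\re^1$ preserves the second constraint, non-emptiness of $A^\pm$) are left implicit in the paper but are exactly the right things to note.
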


\section{Proof of the main result}\label{s:Proof}
Let us briefly outline the proof of Theorem~\ref{t:main}. The proofs of~\iref{i:t:main1} and~\iref{i:t:main2} are mainly a standard consequence of Assumption~\ref{ass:P3} and the abstract theory of Dirichlet forms. Therefore, we shall rather focus on the proofs of Theorem~\iref{i:t:main3.1} and~\iref{i:t:main3.2}. These are respectively reminiscent of the ``global'' proof of the Rademacher Theorem for strongly local regular Dirichlet forms in the work~\cite{KosZho12}  by P.~Koskela and Y.~Zhou, and of the ``local'' proof~\cite{NekZaj88} of the classical Rademacher Theorem on~$\R^n$ by A.~Nekvinda and L.~Zaj\'i\v{c}ek.
Informally, the ``global'' method consists in showing that Lipschitz functions belong to the Sobolev space~$W^{1,2}_\loc$ (here: to the domain~$\msF$) and that~$\abs{\nabla \emparg}\leq \Lip[\emparg]$. The ``local'' method consists instead in showing that a Lipschitz function is G\^ateaux differentiable (along every direction) at a.e.~point and, subsequently, that a Lipschitz function G\^ateaux differentiable at a point is in fact Fr\'echet differentiable at that point.

\paragraph{A ``local'' proof of~\ref{i:t:main3.2} under quasi-invariance}
Let~$u\colon\msP_2\rar\R$ be a $W_2$-Lipschitz function. In~\S\ref{ss:DiffFlow}, we study the differentiability of any such~$u$ along the flow curves~$t\mapsto \Fl^{w,t}\mu$.
In Corollary~\ref{c:RoeSch99} we show how we may trade the~$\diff t$-a.e.~differentiability of the curve~$t\mapsto u(\Fl^{w,t}\mu)$ for every~$\mu$, with the differentiability of~$t\mapsto u(\Fl^{w,t}\mu)$ at~$t=0$ (i.e.~the G\^ateaux differentiability of~$u$ along~$w$) for $\mbbP$-a.e.~$\mu$. This result is the only one relying on the quasi-invariance Assumption~\ref{ass:P4}.
Finally, assuming that~$u$ is G\^ateaux differentiable along every~$w$ in a countable dense subspace of directions, we improve its differentiability to the Fr\'echet differentiability in~\ref{i:t:main3.2}. This is the content of Proposition~\ref{p:RoeSch99}, adapted from the proof of~\cite[Thm.~1.3]{RoeSch99}.

\paragraph{A ``global'' proof of~\ref{i:t:main3.1} without quasi-invariance}
Set~$U_{\mu,\nu}(t)\eqdef W_2(\nu,\Fl^{w,t}\mu)$. In~\S\ref{ss:DiffCone} we show that~$t\mapsto U_{\mu,\nu}(t)$ is differentiable at~$t=0$ for every fixed~$\nu$ in the dense set~$\msP^\reg$ and every fixed~$\mu\neq \nu$. This shows the G\^ateux differentiability of~$u_\nu\eqdef W_2(\nu,\emparg)$ at every~$\mu\neq \nu$ along every~$w\in\Vect^\infty$, hence we can apply Proposition~\ref{p:RoeSch99} to conclude~\ref{i:t:main3.1} for all metric-cone functions~$u_\nu$ with~$\nu\in\msP^\reg$. The extension to all Lipschitz functions is given by the general Lemma~\ref{l:Koskela} with~$Z=\msP^\reg$.

\paragraph{Refined statements under the smooth transport property} We start by considering truncated metric cones of the form~$u_\nu\vee\theta$ with~$u_\nu$ as above and~$\theta>0$. In Lemma~\ref{l:Rademacher} we construct an approximating sequence~$u_{\nu,\theta,n}\in \Test^\infty$ and $\mcE_1$-convergent to~$u_\nu\vee\theta$ as~$n\rar\infty$. This shows that~$u_\nu\vee\theta\in \msF_0$ for every~$\nu\in\msP_2$. By applying Proposition~\ref{p:RoeSch99} to~$u_\nu\vee\theta$ we improve the bound~$\norm{\mbfD (u_\nu\vee\theta)(\mu)}_{\Vect_\mu}\lesssim \theta^{-1}$ obtained in Lemma~\ref{l:Rademacher} to the sharp bound~$\norm{\mbfD(u_\nu\vee\theta)(\mu)}_{\Vect_\mu}\leq \Lip[u_\nu\vee\theta]=1$. By Lemma~\ref{l:Ma} we may then remove the truncation by~$\theta$, thus showing that~$u_\nu\in\msF_0$ for every~$\nu\in\msP_2$. Once more, the extension to all Lipschitz functions follows by Lemma~\ref{l:Koskela}.

\subsection{On the differentiability of \texorpdfstring{$W_2$}{W2}-cone functions}\label{ss:DiffCone}
In this section we collect some results on the differentiability of the Wasserstein distance along (flow) curves. 
We aim to show (Lem.~\ref{l:DerDist}) that, for nice~$\mu$ and~$\nu\in\msP_2$, the function~$t\mapsto U_{\mu,\nu}(t)\eqdef W_2(\nu,\Fl^{w,t}\mu)$ is differentiable at~$t=0$. Since~$U_{\mu,\nu}\geq 0$, with equality only if~$\mu=\nu$, it suffices to study the differentiability of~$t\mapsto U_{\mu,\nu}(t)^2$.
In order to do so, we firstly need to trace back the computation of~$\diff_t\restr_{t=0}U_{\mu,\nu}(t)^2$ to the setting of Gigli's Theorem~\ref{t:DirDerDistGigli}. Informally, we exploit the following fact: The curves~$\fl^{w,t}(x)$ and~$\exp_x(tw)$ are tangent to each other at every point~$x\in \MFD$ (Lem.~\ref{l:ExpLie}). As a consequence, their lifts on~$\msP_2$ by push-forward, namely~$\Fl^{w,t}\mu\eqdef\fl^{w,t}_\pfwd \mu$ and~$\bexp_\mu(t\cdot \boldgamma)=\exp_\emparg(t w_\emparg)_\pfwd \mu$ (here:~$\boldgamma$ is some tangent plan depending on~$w$) are themselves tangent to each other in a suitable sense. See Step~2 in the proof of Lem.~\ref{l:DerDist}.
Setting~$V_{\mu,\nu}(t)\eqdef W_2(\nu,\bexp_\mu(t\cdot \boldgamma))$, this shows that~$\diff_t\restr_{t=0} U_{\mu,\nu}(t)=\diff_t\restr_{t=0} V_{\mu,\nu}(t)$. Thus, as for~$U_{\mu,\nu}$, it suffices to show the differentiability of~$t\mapsto V_{\mu,\nu}(t)^2$.

While Theorem~\ref{t:DirDerDistGigli} and Corollary~\ref{c:DirDerDistGigli} provide the \emph{one-sided} differentiability of~$t\mapsto V_{\mu,\nu}(t)^2$ at $t=0$, the two-sided differentiability generally fails, since the left and right derivatives need not coincide. However,~$t\mapsto V_{\mu,\nu}(t)^2$ is differentiable as soon as we show that, for given~$\mu$,~$\nu$, the set of plans~$\boldalpha$ over which we are extremizing in the right-hand sides of~\eqref{eq:t:DirDerDistGigli0} and~\eqref{eq:c:DirDerDistGigli:0} is in fact a singleton. This is the case if either~$\mu$ or~$\nu$ is regular, as shown in Proposition~\ref{p:SingleValued}.

\medskip

We denote by~$\inj_\MFD>0$ the injectivity radius of~$\MFD$.

\begin{lem}\label{l:ExpLie} Let~$w\in\Vect^\infty$. Then,
\begin{align*}
\mssd\ttonde{\exp_x (tw_x), \fl^{w,t}(x)}\in o(t) \quad \textrm{as }\, t\rar 0
\end{align*}
uniformly in~$x\in \MFD$.
\begin{proof}
Let~$\seq{\partial_i}_{i=1,\dotsc, d}$ be a $\mssg$-orthonormal basis of~$T_x\MFD$,~$\seq{\diff_i}_{i=1,\dotsc,d}$ be its $\mssg$-dual basis in~$T_x^*\MFD$ and recall the Lie series expansion of~$\fl^{w,t}$ about~$t=0$, viz.
\begin{align*}
f\ttonde{\fl^{w,t}(x)}=\sum_{k\geq 0} \frac{t^k}{k!} w^k(f)_x \comm \qquad f\in\mcC^\infty(\MFD) \fstop
\end{align*}
Set~$c_0\eqdef\inj_\MFD\ttonde{1\wedge \norm{w}_{\Vect^0}^{-1}}$ and let~$0<c_1<c_0$ be such that~$\fl^{w,t}(x)\in B_{c_0}(x)$ for all~$t<c_1$. Letting~$w_x=\mrmw^j\partial_j$ and choosing~$f=\diff_i\circ \exp_x^{-1}$ (suitably restricted to a coordinate chart around~$x$) above yields
\begin{align*}
(\diff_i\circ\exp_x^{-1})(\fl^{w,t}(x))=&(\diff_i\circ\exp^{-1}_x)(x)+tw(\diff_i\circ\exp^{-1}_x)_x+o(t)
\\
=&t\mrmw^j\partial_j(\diff_i\circ\exp_x^{-1})_x+o(t)=t\mrmw^i+o(t)\comm
\end{align*}
whence~$(\exp_x^{-1}\circ\fl^{w,t})(x)=tw+o(t)$. Since~$\exp_x$ is a smooth diffeomorphism on~$B_{c_1}(\zero_{T_x\MFD})$, there exists~$L>0$ such that
\begin{align*}
\mssd(y_1,y_2)\leq L\abs{\exp_{x}^{-1}(y_1)-\exp_x^{-1}(y_2)} \comm \qquad y_1,y_2\in B_{c_1}(x) \fstop
\end{align*}

Thus, finally
\begin{align*}
\mssd\ttonde{\exp_x(tw),\fl^{w,t}(x)}\leq&L\abs{tw-tw-o(t)}_{\mssg_x}\in o(t) \comm
\end{align*}
which concludes the proof.
\end{proof}
\end{lem}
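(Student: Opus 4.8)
The plan is to show that the map $t\mapsto \exp_x(tw_x)$ and the flow curve $t\mapsto \fl^{w,t}(x)$ agree to first order at $t=0$, uniformly in $x$, using the Lie series expansion of the flow and the Taylor expansion of the exponential map in normal coordinates. The key point is that both curves have the same velocity $w_x$ at $t=0$, so their difference is $o(t)$; the uniformity in $x$ comes from compactness of $\MFD$.

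First I would fix $x\in\MFD$ and work in normal coordinates centered at $x$, i.e.~use $\exp_x^{-1}$ as a chart on a ball $B_{c_1}(x)$ whose radius is controlled by the injectivity radius $\inj_\MFD$ and by $\norm{w}_{\Vect^0}$, so that $\fl^{w,t}(x)$ stays in this ball for $|t|<c_1$. In these coordinates the curve $t\mapsto\exp_x(tw_x)$ is simply the straight ray $t\mapsto tw_x$ (viewing $w_x\in T_x\MFD\cong\R^d$). I would then apply the Lie series expansion $f(\fl^{w,t}(x))=\sum_{k\geq 0}\tfrac{t^k}{k!}w^k(f)_x$ to the coordinate functions $f=\diff_i\circ\exp_x^{-1}$: the zeroth-order term vanishes (since these functions vanish at $x$), and the first-order term is $t\,w(\diff_i\circ\exp_x^{-1})_x = t\,\mrmw^j\partial_j(\diff_i\circ\exp_x^{-1})_x = t\,\mrmw^i$, because $\partial_j(\diff_i\circ\exp_x^{-1})_x=\delta_{ij}$ by definition of normal coordinates. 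Hence $(\exp_x^{-1}\circ\fl^{w,t})(x)=tw_x+o(t)$, i.e.~the two curves have the same coordinate expansion up to $o(t)$.

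Next I would convert the coordinate estimate into a distance estimate: since $\exp_x$ is a smooth diffeomorphism on $B_{c_1}(\zero_{T_x\MFD})$, it is Lipschitz there with some constant $L$, giving
\begin{align*}
\mssd\ttonde{\exp_x(tw_x),\fl^{w,t}(x)}\leq L\,\abs{tw_x-\tonde{tw_x+o(t)}}_{\mssg_x}\in o(t)\fstop
\end{align*}

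The main obstacle is ensuring that all the constants — the radius $c_1$, the Lipschitz constant $L$ of $\exp_x$, and the implicit constant in the $o(t)$ term — can be chosen uniformly in $x\in\MFD$. This follows from compactness of $\MFD$: the injectivity radius is bounded below, $\norm{w}_{\Vect^0}<\infty$, the metric tensor and its derivatives are uniformly bounded, and the remainder in the Lie series (which involves $w^2(f)$ and hence two derivatives of $w$ and of the coordinate functions) is uniformly $O(t^2)$ by the smoothness of $w$ and of the exponential map, jointly in $x$. I would phrase the final estimate with all constants depending only on $\MFD$ and $\norm{w}_{\Vect^\infty}$, so that the $o(t)$ is genuinely uniform in $x$, which is exactly the form needed for the subsequent tangency argument for the lifted curves on $\msP_2$.
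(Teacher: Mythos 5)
Your proposal is correct and follows essentially the same route as the paper's proof: normal coordinates via $\exp_x^{-1}$, the Lie series expansion applied to the coordinate functions $\diff_i\circ\exp_x^{-1}$, and conversion from the resulting coordinate estimate $(\exp_x^{-1}\circ\fl^{w,t})(x)=tw_x+o(t)$ to a distance estimate via the local Lipschitz bound for $\exp_x$. The one place where you are more explicit than the paper is in spelling out why compactness of $\MFD$ makes the constants (radius, Lipschitz constant, remainder in the Lie series) uniform in $x$, which is indeed required for the stated uniformity.
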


\begin{prop}\label{p:SingleValued}
Let either~$\mu\in\msP^\reg$ or~$\nu\in\msP^\reg$. Then,~$\bexp_\mu^{-1}(\nu)$ is a singleton.

\begin{proof} Assume first~$\mu\in\msP^\reg$. By Theorem~\ref{t:McCann} there exists a $\mssc$-convex~$\phi$ (unique up to additive constant) such that
\begin{align}\label{eq:tp0}
\nu=(\exp_\emparg\nabla \phi_\emparg)_\pfwd \mu \qquad \text{and} \qquad W_2^2(\mu,\nu)=\int_\MFD \mssd^2(x,\exp_x\nabla\phi_x) \diff\mu(x) \fstop
\end{align}

Moreover, for~$\mu$-a.e.~$x\in \MFD$ there exists a unique geodesic~$\seq{\alpha^x_r}_{r\in I}$ connecting~$x$ to~$g_{\mu\rar\nu}(x)$ given by~$\alpha^x_r\eqdef\exp_x (r\nabla \phi_x)$, cf.~\cite[Rmk.~1.35]{AmbGig11}. We call this property the \emph{geodesic uniqueness} property.

\paragraph{Claim: $\bexp_\mu^{-1}(\nu)\neq \emp$. Proof} Set~$\boldgamma_0\eqdef (\id_\MFD(\emparg),\nabla \phi_\emparg)_\pfwd \mu\in\msP(T\MFD)$. It is straightforward that $\boldgamma_0\in\msP_2(T\MFD)_\mu$. Additionally,
\begin{align}\label{eq:tp1}
\int_{T\MFD} \abs{\mrmv}_{\mssg_x}^2 \diff\boldgamma_0(x,\mrmv)=&\int_\MFD \abs{\nabla\phi_x}_{\mssg_x}^2 \diff\mu(x)
\\
\nonumber
=&\int_\MFD \mssd(x,\exp_x\nabla\phi_x)^2 \diff\mu(x) =W_2^2(\mu,\nu) \comm
\end{align}
where~$\abs{\nabla\phi_x}_{\mssg_x}=\mssd(x,\exp_x\nabla\phi_x)$ for~$\mu$-a.e.~$x$ by geodesic uniqueness. This shows~\eqref{eq:TPc}, hence that~$\boldgamma_0\in\bexp_\mu^{-1}(\nu)$.

\paragraph{Claim:~$\bexp_\mu^{-1}(\nu)=\set{\boldgamma_0}$. Proof} Let~$\boldgamma\in\bexp_\mu^{-1}(\nu)$. By~\eqref{eq:TPa},~$\pr^\MFD_\pfwd \boldgamma=\mu$, thus there exists the Rokhlin disintegration~$\set{\boldgamma^x}_{x\in \MFD}$ of~$\boldgamma$ along~$\pr^\MFD$ with respect to~$\mu$. By~\eqref{eq:TPb'},~$\bexp_\mu\boldgamma=\nu=(\exp_\emparg\nabla\phi_\emparg)_\pfwd\mu$, thus, for~$\mu$-a.e.~$x\in \MFD$,~$\boldgamma^x$ is concentrated on the set~$A_x\eqdef\exp_x^{-1}(\exp_x\nabla \phi_x)$. Moreover,~\eqref{eq:tp1} holds with~$\boldgamma$ in place of~$\boldgamma_0$ by~\eqref{eq:TPc}, hence, by optimality,~$\boldgamma^x$ is in fact concentrated on the set
\begin{align}\label{eq:DefPrExp}
\pr^{A_x}(\zero_{T_x\MFD})\eqdef \argmin_{\mrmv\in T_x\MFD} \dist_{\mssg_x}(A_x,\zero_{T_x\MFD}) \fstop
\end{align}

By geodesic uniqueness, one has~$\pr^{A_x}(\zero_{T_x\MFD})=\set{\nabla\phi_x}$ for~$\mu$-a.e.~$x\in \MFD$, hence~$\boldgamma^x=\bolddelta_{(x,\nabla \phi_x)}$ for~$\mu$-a.e.~$x\in \MFD$. Thus finally~\eqref{eq:tp1} holds and~$\boldgamma=\boldgamma_0$.

\bigskip

Assume now~$\nu\in\msP^\reg$. By Theorem~\ref{t:McCann} there exists a~$\mssc$-convex~$\psi$, unique up to additive constant, so that~\eqref{eq:tp0} holds when exchanging~$\nu$ with~$\mu$ and replacing~$\phi$ with~$\psi$. Moreover, geodesic uniqueness holds too, for the geodesics defined by~$\beta^y_r\eqdef \exp_y (r \nabla\psi_y)$.

For a measurable vector field~$w$, we denote by~$\boldTau_s^t(\seq{\alpha})w_{\alpha_s}$ the parallel transport (of the Levi-Civita connection) from~$\alpha_s$ to~$\alpha_t$ of the vector~$w_{\alpha_s}$ along the curve~$\seq{\alpha}\eqdef\seq{\alpha_r}_r$.
We set further
\begin{align*}
\mbfR\colon T\MFD&\longrightarrow T\MFD
\\
(x,\mrmv)&\longmapsto \tonde{\exp_x \mrmv, -\boldTau_0^1\ttonde{\seq{ \exp_x(r\mrmv)}_r}\mrmv}
\end{align*}

\paragraph{Claim: $\bexp_\mu^{-1}(\nu)\neq \emp$. Proof} Set~$\boldgamma_0\eqdef \mbfR_\pfwd (\id_\MFD(\emparg),\nabla\psi_\emparg)_\pfwd \nu$. Since~
\begin{align*}
\pr^\MFD\circ \mbfR\circ(\id_\MFD(\emparg),\nabla\psi_\emparg)=\exp_\emparg \nabla\psi_\emparg 
\end{align*}
and~$\mu=(\exp_\emparg\nabla\psi_\emparg)_\pfwd \nu$, then~$\boldgamma_0\in\msP_2(T\MFD)_\mu$. Additionally,
\begin{align*}
\int_{T\MFD} \abs{\mrmv}_{\mssg_x}^2 \diff\boldgamma_0(x,\mrmv) =& \int_\MFD \abs{-\boldTau_0^1(\seq{\beta^y_r}_r)\nabla\psi_y}_{\mssg_{x(y)}}^2 \diff\nu(y) \qquad x(y)\eqdef \beta^y_1
\\
=&\int_\MFD \abs{\nabla\psi_y}_{\mssg_y}^2 \diff\nu(y) \comm
\end{align*}
where the last equality holds since, being~$\seq{\beta^y_r}_r$ a geodesic and the Levi-Civita connection being a metric connection, the parallel transport
\begin{align*}
\boldTau_0^1(\seq{\beta^y_r}_r)\colon (T_{\beta^y_0}\MFD,\mssg_{\beta^y_0})\rar (T_{\beta^y_1}\MFD,\mssg_{\beta^y_1})\end{align*}
 is an isometry. Thus, arguing as in the proof of the first claim,~$\boldgamma_0\in \bexp_\mu^{-1}(\nu)$.
 
\paragraph{Claim: $\bexp_\mu^{-1}(\nu)=\set{\boldgamma_0}$. Proof} Let~$\boldgamma\in\bexp_\mu^{-1}(\nu)$. By definition~$\bexp_\mu\boldgamma=\exp_\pfwd\boldgamma=\nu$, thus there exists the Rokhlin disintegration~$\set{\boldgamma^y}_{y\in \MFD}$ of~$\boldgamma$ along~$\exp$ with respect to~$\nu$.
By~\eqref{eq:TPb'},
\begin{align*}
(\id_\MFD(\emparg), \exp_\emparg -)_\pfwd\boldgamma\in\Opt(\mu,\nu)=&(\pr^2,\pr^1)_\pfwd \Opt(\nu,\mu)
\\
=&\set{(\exp_\emparg \nabla\psi_\emparg,\id_\MFD(\emparg))_\pfwd \nu} \comm
\end{align*}
thus, for~$\nu$-a.e.~$y\in \MFD$, the measure~$\boldgamma^y$ is concentrated on the set
\begin{align*}
C_y\eqdef \exp_{\beta^y_1}^{-1}(y)\subset T_{\beta^y_1}\MFD \fstop
\end{align*}

By a similar reasoning to that in the second claim, for~$\nu$-a.e.~$y\in \MFD$,~$\boldgamma^y$ is in fact concentrated on~$\pr^{C_y}(\zero_{T_{\beta^y_1}\MFD})$, defined analogously to~\eqref{eq:DefPrExp}. By definition of parallel transport and since~$\seq{\beta^y_r}_r$ is a geodesic, the latter set is a singleton
\begin{align*}
\pr^{C_y}(\zero_{T_{\beta^y_1}\MFD})=\set{-\boldTau_0^1(\seq{\beta^y_r}_r)\nabla \psi_y}\fstop
\end{align*}

This concludes the proof analogously to that of the second claim.
\end{proof}
\end{prop}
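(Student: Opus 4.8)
The plan is to handle the two cases $\mu\in\msP^\reg$ and $\nu\in\msP^\reg$ separately, in each case invoking the Brenier--McCann Theorem~\ref{t:McCann} in the direction in which it applies. The only genuinely nontrivial input is the \emph{geodesic uniqueness} property of Kantorovich potentials: at each point where a $\mssc$-convex $\phi$ is differentiable, $x$ is joined to $\exp_x\nabla\phi_x$ by a \emph{unique} minimizing geodesic, namely $r\mapsto\exp_x(r\nabla\phi_x)$, so in particular $\abs{\nabla\phi_x}_{\mssg_x}=\mssd(x,\exp_x\nabla\phi_x)$; see~\cite[Rmk.~1.35]{AmbGig11}. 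A $\mssc$-convex function is semiconcave, hence differentiable off an $\mssm$-negligible set, and therefore this property holds $\mu$-a.e.\ (resp.\ $\nu$-a.e.) \emph{precisely because} $\mu$ (resp.\ $\nu$) is regular.

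Suppose first $\mu\in\msP^\reg$, with Kantorovich potential $\phi$ for $(\mu,\nu)$, so that $g_{\mu\rar\nu}=\exp_\emparg\nabla\phi_\emparg$ $\mu$-a.e.\ and $W_2^2(\mu,\nu)=\int_\MFD\mssd^2(x,g_{\mu\rar\nu}(x))\diff\mu(x)$. For existence I would take $\boldgamma_0\eqdef(\id_\MFD(\emparg),\nabla\phi_\emparg)_\pfwd\mu$: then $\pr^\MFD_\pfwd\boldgamma_0=\mu$, the plan $(\pr^\MFD,\exp)_\pfwd\boldgamma_0$ is the optimal one induced by $g_{\mu\rar\nu}$, and geodesic uniqueness gives $\int_{T\MFD}\abs{\mrmv}_{\mssg_x}^2\diff\boldgamma_0=\int_\MFD\abs{\nabla\phi_x}_{\mssg_x}^2\diff\mu(x)=W_2^2(\mu,\nu)$, so $\boldgamma_0\in\bexp_\mu^{-1}(\nu)$. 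For uniqueness, take any $\boldgamma\in\bexp_\mu^{-1}(\nu)$ and disintegrate $\boldgamma=\int_\MFD\boldgamma^x\diff\mu(x)$ along $\pr^\MFD$; since $(\pr^\MFD,\exp)_\pfwd\boldgamma$ is optimal by~\eqref{eq:TPb'} and $\mu$ is regular, this plan is the one induced by $g_{\mu\rar\nu}$, whence $\boldgamma^x$ is concentrated on $A_x\eqdef\exp_x^{-1}(\exp_x\nabla\phi_x)$ for $\mu$-a.e.\ $x$. Each $\mrmv\in A_x$ obeys $\abs{\mrmv}_{\mssg_x}\geq\mssd(x,\exp_x\mrmv)=\mssd(x,g_{\mu\rar\nu}(x))=\abs{\nabla\phi_x}_{\mssg_x}$, so the constraint $\int_{T\MFD}\abs{\mrmv}_{\mssg_x}^2\diff\boldgamma=\int_\MFD\abs{\nabla\phi_x}_{\mssg_x}^2\diff\mu$ forces $\boldgamma^x$ onto the norm-minimizers of $A_x$, which by geodesic uniqueness is the single vector $\nabla\phi_x$; hence $\boldgamma^x=\bolddelta_{(x,\nabla\phi_x)}$ and $\boldgamma=\boldgamma_0$.

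Suppose next $\nu\in\msP^\reg$; Theorem~\ref{t:McCann} now produces a $\mssc$-convex $\psi$ with $\mu=(\exp_\emparg\nabla\psi_\emparg)_\pfwd\nu$ and geodesics $\beta^y_r\eqdef\exp_y(r\nabla\psi_y)$ unique for $\nu$-a.e.\ $y$. Here the obstacle is that a plan in $\bexp_\mu^{-1}(\nu)$ is based at $\mu$, while the optimal map runs from $\nu$, so the geodesics must be \emph{reversed}: I would introduce $\mbfR\colon T\MFD\rar T\MFD$, $(x,\mrmv)\mapsto\ttonde{\exp_x\mrmv,-\boldTau_0^1(\seq{\exp_x(r\mrmv)}_r)\mrmv}$, and set $\boldgamma_0\eqdef\mbfR_\pfwd(\id_\MFD(\emparg),\nabla\psi_\emparg)_\pfwd\nu$. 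Since $-\boldTau_0^1(\seq{\beta^y_r}_r)\nabla\psi_y$ is the initial velocity of the time-reversed geodesic $r\mapsto\beta^y_{1-r}$, one checks $\pr^\MFD_\pfwd\boldgamma_0=(\exp_\emparg\nabla\psi_\emparg)_\pfwd\nu=\mu$; that $(\pr^\MFD,\exp)_\pfwd\boldgamma_0=(\exp_\emparg\nabla\psi_\emparg,\id_\MFD(\emparg))_\pfwd\nu$ is the reverse of the optimal plan from $\nu$ to $\mu$, hence optimal; and, parallel transport along $\beta^y$ being a linear isometry, $\int_{T\MFD}\abs{\mrmv}_{\mssg_x}^2\diff\boldgamma_0=\int_\MFD\abs{\nabla\psi_y}_{\mssg_y}^2\diff\nu(y)=W_2^2(\mu,\nu)$; so $\boldgamma_0\in\bexp_\mu^{-1}(\nu)$. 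For uniqueness, given $\boldgamma\in\bexp_\mu^{-1}(\nu)$ I would disintegrate $\boldgamma=\int_\MFD\boldgamma^y\diff\nu(y)$ along $\exp$; from $(\id_\MFD(\emparg),\exp_\emparg -)_\pfwd\boldgamma\in\Opt(\mu,\nu)=(\pr^2,\pr^1)_\pfwd\Opt(\nu,\mu)=\set{(\exp_\emparg\nabla\psi_\emparg,\id_\MFD(\emparg))_\pfwd\nu}$ one reads that $\boldgamma^y$ is concentrated on $C_y\eqdef\exp_{\beta^y_1}^{-1}(y)\subset T_{\beta^y_1}\MFD$ for $\nu$-a.e.\ $y$, and the norm constraint together with geodesic uniqueness pins $\boldgamma^y$ to the single vector $-\boldTau_0^1(\seq{\beta^y_r}_r)\nabla\psi_y$, so $\boldgamma=\boldgamma_0$.

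The crux — and essentially the sole nonroutine ingredient — is the geodesic uniqueness property together with the a.e.\ differentiability of $\mssc$-convex potentials against a regular measure; granting these, both cases collapse to a disintegration plus the elementary fibrewise remark that $\abs{\mrmv}_{\mssg_x}^2\geq\mssd^2(x,\exp_x\mrmv)$, with equality forcing $\mrmv$ to be the Kantorovich gradient (resp.\ its reversed parallel transport). A secondary, purely bookkeeping point is verifying that $\mbfR$ genuinely swaps the endpoints of the geodesic — i.e.\ that reversing and parallel-transporting the velocity yields the time-reversed minimizing geodesic — so that $(\pr^\MFD,\exp)_\pfwd\boldgamma_0$ is indeed an \emph{optimal} coupling of $\mu$ and $\nu$.
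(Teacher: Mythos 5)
Your proposal is correct and follows the paper's proof essentially verbatim: Brenier--McCann to obtain the Kantorovich potential, the same $\boldgamma_0$ in each case (including the reversed-velocity map $\mbfR$ built from parallel transport when $\nu\in\msP^\reg$), disintegration along $\pr^\MFD$ (resp.\ $\exp$), and the fibrewise observation that the norm constraint pins each fibre onto the unique norm-minimizer of $A_x$ (resp.\ $C_y$) via geodesic uniqueness. If anything you are slightly more explicit than the paper in two places — spelling out that uniqueness of the optimal plan (not merely $\bexp_\mu\boldgamma=\nu$) is what forces $\boldgamma^x$ onto $A_x$, and that $\mbfR$ time-reverses the geodesic so $(\pr^\MFD,\exp)_\pfwd\boldgamma_0$ is indeed optimal — which the paper leaves as ``arguing as in the proof of the first claim.'' (One cosmetic slip: $\mssc$-convex functions in this convention are \emph{semi-convex}, not semi-concave; this is harmless since regularity is defined against semi-convex functions.)
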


\begin{lem}[Derivatives of the Wasserstein distance along flow curves]\label{l:DerDist}
Fix~$w\in\Vect^\infty$, $\mu_0\in\msP$ and set~$\mu_t\eqdef \Fl^{w,t} \mu_0$. Then, for every~$\nu\in\msP\setminus\set{\mu_0}$, there exists the right derivative
\begin{align}\label{eq:DirDerDist0}
\diff^+_t\restr_{t=0} W_2(\mu_t,\nu)=-W_2^{-1}(\mu_0,\nu)\,\sup_\boldgamma\int_{T\MFD} \scalar{w_x}{\mrmv}_{\mssg_x} \diff\boldgamma(x,\mrmv)
\end{align}
where the supremum is taken over all~$\boldgamma\in\bexp_{\mu_0}^{-1}(\nu)$.
Moreover, if additionally either $\mu_0\in\msP^\reg$ or $\nu\in\msP^\reg$, then there exists the two-sided derivative~$\diff_t\restr_{t=0} W_2(\mu_t,\nu)$.

\begin{proof} The proof is divided into several steps. Firstly, let~$\mu_t'\eqdef (\exp_\emparg(tw))_\pfwd \mu_0$ and~$\boldgamma$ be as above. We show that there exists
\begin{align}\label{eq:DirDerDist1}
\lim_{t\downarrow 0} \frac{W_2(\mu'_t,\nu)-W_2(\mu_0,\nu)}{t}=-W_2^{-1}(\mu_0,\nu)\,\sup_\boldgamma\int_{T\MFD} \scalar{w_x}{\mrmv}_{\mssg_x} \diff\boldgamma(x,\mrmv) \fstop
\end{align}

Next, profiting the fact that for small~$t>0$ the flow $\exp_{\emparg}(tw_{\emparg})$ is tangent to the flow~$\fl^{w,t}(\emparg)$ at each point in~$\MFD$, we show that the same holds for the corresponding lifted flows~$(\exp_{\emparg}(tw_{\emparg}))_\pfwd$ and~$(\fl^{w,t}(\emparg))_\pfwd$ at each point in~$\msP$, hence that the right derivative~\eqref{eq:DirDerDist0} exists and coincides with~\eqref{eq:DirDerDist1}. 

\paragraph{Step 1} Set~$\iota^w\eqdef (\id_\MFD(\emparg), w_{\emparg})\colon \MFD\rar T\MFD$, let~$\boldgamma_0\eqdef \iota^w_\pfwd \mu_0\in\msP_2(T\MFD)$ and notice that
\begin{align*}
\bexp_{\mu_0}(t\cdot \boldgamma_0)= \tonde{\exp_\pfwd \circ (\pr^\MFD,t\, \pr^1)_\pfwd \circ \iota^w_\pfwd} \mu_0=(\exp_{\emparg}(tw_\emparg))_\pfwd \mu_0\defeq \mu_t' \fstop
\end{align*}

By Theorem~\ref{t:DirDerDistGigli}, there exists the right derivative
\begin{align*}
\diff_t^+\restr_{t=0} \tfrac{1}{2} W_2^2(\mu_t',\nu)=-\sup_{\boldalpha}\int_{T^2\MFD} \scalar{\mrmv_1}{\mrmv_2}_{\mssg_x}  \diff\boldalpha(x, \mrmv_1,\mrmv_2)
\end{align*}
where~$\boldalpha$ is as in~\eqref{eq:t:DirDerDistGigli1}. In particular, for every such~$\boldalpha$, it holds that~$(\pr^\MFD,\pr^1)_\pfwd\boldalpha=\boldgamma_0=\iota^w_\pfwd \mu_0$, that is~$(\pr^\MFD,\pr^1)_\pfwd\boldalpha$ is supported on the graph~$\Graph(\iota^w)\subset T\MFD$ of the map~$\iota^w$. As a consequence,~$\boldalpha$ is concentrated on the set
\begin{align*}
\set{(x,\mrmv_1,\mrmv_2) : (x,\mrmv_1)\in\Graph(\iota^w)}=\set{(x,w_x,\mrmv_2)\in T^2\MFD}\subset T^2\MFD \comm
\end{align*}
thus, in fact
\begin{align*}
\diff_t^+\restr_{t=0} \tfrac{1}{2} W_2^2(\mu_t',\nu)=-\sup_\boldgamma\int_{T\MFD} \scalar{w_x}{\mrmv}_{\mssg_x} \diff\boldgamma(x,\mrmv) 
\end{align*}
where the supremum is taken over all~$\boldgamma\in \bexp_{\mu_0}^{-1}(\nu)$. The existence of~$\diff_t^+\restr_{t=0}W_2(\mu_t',\nu)$ and~\eqref{eq:DirDerDist1} follow from the existence of~$\diff_t^+\restr_{t=0}\tfrac{1}{2} W_2^2(\mu_t',\nu)$ by chain rule.

\paragraph{Step 2} By Lemma~\ref{l:ExpLie} there exists a constant~$c_1>0$ such that
\begin{align}\label{eq:DirDerDist3}
\mssd^2\ttonde{\exp_x(tw),\fl^{w,t}(x)}\in o(t^2)\comm \qquad t\in (0,c_1)\comm x\in \MFD \fstop
\end{align}

Furthermore, since~$\ttonde{\exp_{\emparg}(tw), \fl^w_t(\emparg)}_\pfwd \mu_0$ is a coupling between~$\mu_t'$ and~$\mu_t$, equation~\eqref{eq:DirDerDist3} yields
\begin{align*}
W_2^2(\mu_t',\mu_t)\leq \int_\MFD \mssd^2\ttonde{\exp_x(tw),\fl^w_t(x)}\diff\mu_0(x)  \in o(t^2) \comm \qquad t\in (0,c_1) \comm
\end{align*}
thus there exists
\begin{align*}
\diff_t\restr_{t=0}W_2(\mu_t',\mu_t)=\lim_{t\rar 0} \tfrac{1}{t}\abs{W_2(\mu'_t,\mu_t)-W_2(\mu_0,\mu_0)}= 0\fstop
\end{align*}

\paragraph{Step 3} By triangle inequality
\begin{align*}
W_2(\mu_t,\nu)-W_2(\mu_0,\nu)\leq W_2(\mu_t,\mu_t')+W_2(\mu_t',\nu)-W_2(\mu_0,\nu)\comm
\end{align*}
while by reverse triangle inequality
\begin{align*}
W_2(\mu_t,\nu)-W_2(\mu_0,\nu)\geq& \abs{W_2(\nu,\mu_t')-W_2(\mu_t',\mu_t)}-W_2(\mu_0,\nu)\\
\geq& W_2(\mu_t',\nu)-W_2(\mu_0,\nu)-W_2(\mu_t',\mu_t) \fstop
\end{align*}

As a consequence, setting
\begin{align*}
\overline\diff_t^+\restr_{t=0} W_2(\mu_t,\nu)\eqdef& \limsup_{t\downarrow 0} \frac{W_2(\mu'_t,\nu)-W_2(\mu_0,\nu)}{t} \comm\\
\underline\diff_t^+\restr_{t=0} W_2(\mu_t,\nu)\eqdef& \liminf_{t\downarrow 0} \frac{W_2(\mu'_t,\nu)-W_2(\mu_0,\nu)}{t} \comm
\end{align*}
one has
\begin{align*}
-\diff_t\restr_{t=0} W_2(\mu_t,\mu_t')+\diff_t^+\restr_{t=0} W_2(\mu_t',\nu)\leq&\\
\underline\diff_t^+\restr_{t=0} W_2(\mu_t,\nu)\leq& \overline\diff_t^+\restr_{t=0} W_2(\mu_t,\nu)\\
\leq& \diff_t\restr_{t=0} W_2(\mu_t,\mu_t')+\diff_t^+\restr_{t=0} W_2(\mu_t',\nu)
\end{align*}
where the derivatives above exist by the previous steps. Since~$\diff_t\restr_{t=0} W_2(\mu_t,\mu_t')=0$ by Step 2, the right derivative~$\diff_t^+\restr_{t=0} W_2(\mu_t,\nu)$ exists and coincides with~\eqref{eq:DirDerDist1}.

The last assertion follows by Step~1 and Corollary~\ref{c:DirDerDistGigli} since~$\bexp_{\mu_0}^{-1}(\nu)$ is a singleton by Proposition~\ref{p:SingleValued}.
\end{proof}
\end{lem}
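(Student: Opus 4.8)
The plan is to run Gigli's one-sided derivative formula (Theorem~\ref{t:DirDerDistGigli}) on the auxiliary curve obtained by exponentiating $w$, and then to transfer the result to the flow curve $t\mapsto\Fl^{w,t}\mu_0$ by exploiting that the two flows are tangent to each other. The two-sided statement will come from collapsing the extremization set via regularity.

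First I would set $\mu_t'\eqdef(\exp_{\emparg}(tw_\emparg))_\pfwd\mu_0$ and note that $\mu_t'=\bexp_{\mu_0}(t\cdot\boldgamma_0)$ with $\boldgamma_0\eqdef(\id_\MFD(\emparg),w_\emparg)_\pfwd\mu_0\in\msP_2(T\MFD)_{\mu_0}$, so that $t\mapsto\mu_t'$ is exactly of the form treated by Theorem~\ref{t:DirDerDistGigli}. That theorem gives $\diff_t^+\restr_{t=0}\tfrac{1}{2}W_2^2(\mu_t',\nu)$ as a supremum of $\int_{T^2\MFD}\scalar{\mrmv_1}{\mrmv_2}_{\mssg_x}\diff\boldalpha$ over $\boldalpha\in\msP_2(T^2\MFD)$ satisfying~\eqref{eq:t:DirDerDistGigli1}. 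Since $\boldgamma_0$ is concentrated on $\Graph(\iota^w)=\set{(x,w_x)}$, the constraint $(\pr^\MFD,\pr^1)_\pfwd\boldalpha=\boldgamma_0$ forces every admissible $\boldalpha$ to live on $\set{(x,w_x,\mrmv)}$; the correspondence $\boldalpha\mapsto\boldgamma\eqdef(\pr^\MFD,\pr^2)_\pfwd\boldalpha$ is then a bijection onto $\bexp_{\mu_0}^{-1}(\nu)$, so the supremum collapses to $\sup_{\boldgamma}\int_{T\MFD}\scalar{w_x}{\mrmv}_{\mssg_x}\diff\boldgamma(x,\mrmv)$ over $\boldgamma\in\bexp_{\mu_0}^{-1}(\nu)$. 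Because $\nu\neq\mu_0$ means $W_2(\mu_0,\nu)>0$, the chain rule $W_2=\sqrt{W_2^2}$ converts this into the right derivative of $t\mapsto W_2(\mu_t',\nu)$, which is precisely the formula in~\eqref{eq:DirDerDist0} read with $\mu_t'$ in place of $\mu_t$.

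Next I would transfer from $\mu_t'$ to $\mu_t$. Lemma~\ref{l:ExpLie} gives $\mssd\ttonde{\exp_x(tw_x),\fl^{w,t}(x)}\in o(t)$ uniformly in $x$, so feeding the coupling $\ttonde{\exp_\emparg(tw),\fl^{w,t}}_\pfwd\mu_0\in\Cpl(\mu_t',\mu_t)$ into the definition of $W_2$ yields $W_2(\mu_t',\mu_t)\in o(t)$, hence $\diff_t\restr_{t=0}W_2(\mu_t',\mu_t)=0$. Combining this with the triangle and reverse triangle inequalities gives $\abs{W_2(\mu_t,\nu)-W_2(\mu_t',\nu)}\leq W_2(\mu_t,\mu_t')\in o(t)$, so the difference quotients of $t\mapsto W_2(\mu_t,\nu)$ and of $t\mapsto W_2(\mu_t',\nu)$ at $t=0$ share the same $\limsup$ and $\liminf$; therefore $\diff_t^+\restr_{t=0}W_2(\mu_t,\nu)$ exists and equals the asserted expression. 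For the last claim, if $\mu_0\in\msP^\reg$ or $\nu\in\msP^\reg$ then Proposition~\ref{p:SingleValued} makes $\bexp_{\mu_0}^{-1}(\nu)$ a singleton, so both the supremum above and the infimum appearing in the left derivative (via Corollary~\ref{c:DirDerDistGigli}, applied after the same Step~1 reduction) reduce to the same single number, whence the two-sided derivative exists.

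The main obstacle is the inherent failure of two-sided differentiability of $W_2^2$: Gigli's theorem only produces one-sided derivatives, the right one a supremum and the left one an infimum over the generally non-singleton set $\bexp_{\mu_0}^{-1}(\nu)$ of tangent plans. The only route to the two-sided statement is to show this set is a singleton, which is exactly what the regularity hypothesis buys through Proposition~\ref{p:SingleValued} — and this is also what eventually forces the regularity/smooth-transport conditions into the downstream Rademacher argument. A secondary, purely technical point is that the comparison between the flow $\fl^{w,t}$ and the exponential flow $\exp(tw)$ must be quantitative and \emph{uniform} in $x$ (a pointwise $o(t)$ estimate would not bound the Wasserstein distance), which is precisely why Lemma~\ref{l:ExpLie} is stated with uniformity in $x\in\MFD$.
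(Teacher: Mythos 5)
Your proposal is correct and follows the paper's proof essentially step for step: applying Gigli's Theorem~\ref{t:DirDerDistGigli} to the auxiliary curve $\mu_t'=\bexp_{\mu_0}(t\cdot\boldgamma_0)$ and noting that the graph-support of $\boldgamma_0$ collapses the extremization set to $\bexp_{\mu_0}^{-1}(\nu)$, then transferring to the flow curve via the uniform $o(t)$ tangency estimate of Lemma~\ref{l:ExpLie} and the (reverse) triangle inequality, and finally invoking Proposition~\ref{p:SingleValued} together with Corollary~\ref{c:DirDerDistGigli} for the two-sided conclusion. The only cosmetic difference is that you phrase the reduction of the extremization set as an explicit bijection $\boldalpha\mapsto(\pr^\MFD,\pr^2)_\pfwd\boldalpha$, which the paper leaves implicit.
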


\begin{lem}\label{l:Rademacher} Let~$(\MFD,\mssg)$ be additionally satisfying Assumption~\ref{ass:B}. Then, for every~$\nu\in\msP$ and every~$\theta>0$, the function~$u_{\nu,\theta}\colon \mu\mapsto W_2(\nu,\mu)\vee \theta$ belongs to~$\msF_0$.
\begin{proof} We construct an approximation of~$u_{\nu,\theta}$ by functions in~$\Test^\infty$.
\paragraph{Preliminaries} By Kantorovich duality~\cite[Thm.~1.17]{AmbGig11}
\begin{align*}
W_2^2(\nu,\mu)=2 \cdot \sup\set{\nu\psi+\mu\phi}
\end{align*}
where the supremum is taken over all~$(\psi,\phi)\in L^1_\nu(\MFD)\times L^1_\mu(\MFD)$ satisfying~$\psi(x)+\phi(y)\leq\mssc(x,y)$ for $\nu$-a.e.~$x$ and $\mu$-a.e.~$y$ in~$\MFD$. An optimal pair~$(\psi,\phi)$ always exists and satisfies~$\psi=\phi^\mssc$ $\nu$-a.e., with~$\phi^\mssc$ the~$\mssc$-conjugate~\eqref{eq:CConjugate} of~$\phi$.

Let~$\msP^{\infty,\times}$ be the set of measures in~$\msP^\infty$ with densities bounded away from~$0$ and fix a countable set~$\seq{\mu_i}_i\subset\msP^{\infty,\times}$ and dense in~$\msP_2$.

\paragraph{Construction of the approximation} 
We start by showing that~$W_2(\nu,\emparg)\vee\theta\in\msF_0$ for fixed~$\nu\in \msP^{\infty,\times}$. Let~$(\psi_i,\phi_i)$ be the optimal pair of Kantorovich potentials for the pair~$(\nu,\mu_i)$, so that
\begin{align}\label{eq:Rademacher0}
\tfrac{1}{2} W_2^2(\nu,\mu_i)=\nu\psi_i+\mu_i\phi_i \fstop
\end{align}
By assumption,~$\phi_i$ and~$\psi_i$ are smooth maps for all~$i$'s.
Let further~$\mbft\eqdef\seq{t_1,\dotsc, t_n}\in \R^n$ and, for small~$\eps>0$, let $F_{n,\eps}\colon\R^n\rar [-\eps,\infty)$ be a smooth regularization of the function~$F_n(\mbft)\eqdef 2 \cdot \max_{i\leq n} t_i$. Since~$F_n$ is~$2$-Lipschitz for every~$n$, the functions~$F_{n,\eps}$ may be chosen in such a way that
\begin{subequations}
\begin{equation}\label{eq:RademacherA}
\lim_{\eps\downarrow 0} F_{n,\eps}(\mbft)=F_n(\mbft)\comm\qquad n\in \N\comm \mbft\in\R^n\semicolon
\end{equation}
\begin{equation}\label{eq:RademacherA'}
F_{n,\eps_1}(\mbft)\geq F_{n,\eps_2}(\mbft)\comm \qquad n\in \N\comm \eps_1<\eps_2 \comm \mbft\in\R^n\semicolon
\end{equation}
\begin{equation}\label{eq:RademacherA''}
2\cdot \car_{B_{n,i}}\leq  \partial_i F_{n,\eps}\leq 2\cdot \car_{(B_{n,i})_{\eps}}\comm \qquad n\in \N\comm i\leq n\comm \eps>0\comm
\end{equation}
\end{subequations}
where
\begin{align}\label{eq:RademacherSetB}
B_{n,i}\eqdef \set{\mbft\in \R^n : \; \begin{matrix} t_i>t_j \textrm{ for all } 1\leq j<i \\
t_i\geq t_j \textrm{ for all } i\leq j\leq n \end{matrix}}
\end{align}
and, for any~$B\subset \R^n$, we put $B_\eps\eqdef \set{\mbft\in \R^n : \dist(\mbft,B)< \eps}$.

For small~$0<\delta<\theta$, let~$\varrho_{\theta,\delta}\colon\R\rar [\theta-\delta,\infty)$ be a smooth regularization of~$\varrho_\theta\colon t\mapsto \sqrt{t\vee \theta}$ such that 
\begin{subequations}
\begin{equation}\label{eq:RademacherB}
\lim_{\delta\downarrow 0} \varrho_{\theta,\delta}(t)=\varrho_{\theta}(t)\comm \qquad 0<\delta<\theta \comm t\in\R\semicolon
\end{equation}
\begin{equation}\label{eq:RademacherB'}
\varrho_{\theta,\delta_1}(t)\geq \varrho_{\theta,\delta_2}(t)\comm\qquad 0<\delta_1<\delta_2<\theta \comm t\in\R\semicolon
\end{equation}
\begin{equation}\label{eq:RademacherB''}
\car_{[\theta,\infty)}/(2\varrho_\theta)\leq \varrho'_{\theta,\delta}\leq \car_{[\theta-\delta,\infty)}/(2\varrho_\theta) \fstop
\end{equation}
\end{subequations}

Now, by smoothness of all functions involved, the function~$u_{\theta,n,\eps,\delta}\colon \msP\rar\R$ defined by
\begin{align*}
u_{\theta,n,\eps,\delta}(\mu)\eqdef \varrho_{\theta,\delta}\tonde{F_{n,\eps}(c_1+\phi_1^\bid\mu,\dotsc, c_n+\phi_n^\bid\mu)} \quad \textrm{where} \quad c_i\eqdef \psi_i^\bid\nu \fstop
\end{align*}
belongs to~$\Test^\infty$ and one has
\begin{align*}
\grad u_{\theta,n,\eps,\delta}(\mu)=&\sum_i^n \varrho_{\theta,\delta}' \tonde{F_{n,\eps}(c_1+\phi_1^\bid\mu,\dotsc, c_n+\phi_n^\bid\mu)}\times\\
&\times (\partial_i F_{n,\eps})(c_1+\phi_1^\bid\mu,\dotsc, c_n+\phi_n^\bid\mu) \nabla\phi_i \fstop
\end{align*}

By~\eqref{eq:RademacherA} and~\eqref{eq:RademacherA'}, resp.~\eqref{eq:RademacherB} and~\eqref{eq:RademacherB'}, and Dini's Theorem,~$\lim_{\eps\downarrow 0} \lim_{\delta\downarrow 0} (\varrho_{\theta,\delta} \circ F_{n,\eps})(\mbft)=(\varrho_\theta\circ F_n)(\mbft)$ locally uniformly in~$\mbft\in\R^n$ and for all~$n$ and~$\theta>0$. As a consequence, for all~$n$ and uniformly in~$\mu\in\msP$
\begin{align}\label{eq:Rademacher1}
\lim_{\eps\downarrow 0} \lim_{\delta\downarrow 0} u_{\theta, n,\eps,\delta}(\mu)= u_{\theta, n}(\mu)\eqdef \varrho_\theta \ttonde{F_{n}(c_1+\phi_1^\bid\mu,\dotsc, c_n+\phi_n^\bid\mu)} \fstop
\end{align}

Moreover, by~\eqref{eq:RademacherA''}, resp.~\eqref{eq:RademacherB''},~$\lim_{\eps\downarrow 0} \partial_i F_{n,\eps}= 2\cdot \car_{B_{n,i}}$ pointwise on~$\R^n$ for all~$i\leq n$, for all~$n$, resp.~$\lim_{\delta\downarrow 0}\varrho_{\theta,\delta}'\colon t\mapsto \car_{[\theta,\infty)}/(2\varrho_\theta)$ pointwise on~$\R$ for all~$\theta>0$. Thus, for all~$n$ and for all~$\mu\in\msP$ one has
\begin{align}\label{eq:Rademacher2}
\lim_{\eps\downarrow 0} \lim_{\delta\downarrow 0} \grad u_{\theta,n,\eps,\delta}(\mu)=\sum_i^n \frac{\car_{A_{\theta,n,i}}(\mu) }{\varrho_\theta\ttonde{F_{n}(c_1+\phi_1^\bid\mu,\dotsc, c_n+\phi_n^\bid\mu)}}\nabla \phi_i \comm
\end{align}
where the sets
\begin{align*}
A_{\theta, n,i}\eqdef \set{\mu\in\msP : \; \begin{matrix} c_i+\phi_i^\bid\mu \geq \theta \\ c_i+\phi_i^\bid\mu> c_j+\phi_j^\bid\mu \textrm{ for all } 1\leq j<i\\ c_i+\phi_i^\bid\mu\geq c_j+\phi_j^\bid\mu \textrm{ for all } i\leq j \leq n \end{matrix}}
\end{align*}
are, for all~$n$, measurable by continuity of~$\phi_i^\bid$ and pairwise disjoint for all~$i\leq n$, since the same holds for~$B_{n,i}$ in~\eqref{eq:RademacherSetB}.

Finally, again by Brenier--McCann Theorem,~$\abs{\nabla\phi_i}_\mssg\leq \diam \MFD$, hence
\begin{align*}
\abs{\grad u_{\theta,n,\eps,\delta}(\mu)(x)}_\mssg\leq n(\diam \MFD)/\sqrt{\theta}
\end{align*}
whence, by Dominated Convergence,~\eqref{eq:Rademacher1} and~\eqref{eq:Rademacher2},
\begin{align*}
\mcE_1^{1/2}\textrm{-}\lim_{\eps\downarrow 0} \ttonde{ \mcE_1^{1/2}\textrm{-}\lim_{\delta\downarrow 0} u_{\theta,n, \eps, \delta}}=u_{\theta, n}\in \msF_0\comm\\
\mbfD u_{\theta,n}(\mu)(x)=\sum_i^n \frac{ \car_{A_{\theta,n,i}}(\mu) }{\varrho_\theta\tonde{F_{n}(c_1+\phi_1^\bid\mu,\dotsc, c_n+\phi_n^\bid\mu)}}\nabla \phi_i(x) \fstop
\end{align*}

\paragraph{Pre-compactness of the approximation}
Since $L^2_{\mbbP}\textrm{-}\nlim u_{\theta,n}=u_{\nu,\theta}$ by Dominated Convergence and~\eqref{eq:Rademacher1}, by Lemma~\ref{l:Ma} it suffices to show that
\begin{align}\label{eq:Rademacher5}
\forallae{\mbbP} \mu\qquad \nlimsup \boldGamma(u_{\theta,n})(\mu) \leq C_{\nu,\theta}
\end{align}
for some constant~$C_{\nu,\theta}$ to get~$u_{\nu,\theta}\in \msF_0$ and~$\boldGamma(u_{\nu,\theta})\leq C_{\nu,\theta}$ $\mbbP$-a.e.. Indeed,
\begin{equation}\label{eq:Bound}
\begin{aligned}
\norm{\mbfD u_{\theta,n}(\mu)}_{\Vect_\mu}^2=&\int_\MFD \abs{\mbfD u_{\theta,n}(\mu)(x)}_\mssg^2 \diff \mu(x)\\
=&\sum_i^n\frac{ \car_{A_{\theta, n,i}}(\mu) }{\varrho_\theta^2\ttonde{F_{n}(c_1+\phi_1^\bid\mu,\dotsc, c_n+\phi_n^\bid\mu)}} \int_\MFD \abs{\nabla\phi_i}_\mssg^2 \diff \mu
\end{aligned}
\end{equation}
since the sets~$A_{\theta,n,i}$ are pairwise disjoint. Thus
\begin{align*}
\norm{\mbfD u_{\theta,n}(\mu)}_{\Vect_\mu}^2=& \sum_i^n\frac{\car_{A_{\theta,n,i}}(\mu) }{\theta\vee 2(c_i+\phi_i^\bid\mu)} \int_\MFD \abs{\nabla\phi_i}_\mssg^2 \diff \mu \leq \frac{(\diam \MFD)^2}{\theta}\defeq C_\theta \fstop
\end{align*}

\paragraph{General case}
Fix an arbitrary~$\nu\in\msP$ and let~$\seq{\nu_k}_k$ be a sequence in~$\msP^{\infty,\times}$ narrowly converging to~$\nu$. It is readily seen that~$u_{\nu_k,\theta}$ converges to~$u_{\nu,\theta}$ in~$L^2_{\mbbP}(\msP)$ and~$\norm{\mbfD u_{\nu_k,\theta}}_{\Vect_{\emparg}}^2\leq C_\theta$ $\mbbP$-a.e.~by the previous step. Thus,~$u_{\nu,\theta}\in \msF_0$ and~$\norm{\mbfD u_{\nu,\theta}}_{\Vect_{\emparg}}^2\leq C_\theta$ $\mbbP$-a.e.~by Lemma~\ref{l:Ma}.
\end{proof}
\end{lem}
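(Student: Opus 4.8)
The plan is to realise $u_{\nu,\theta}$ as the limit of a sequence of cylinder functions of uniformly bounded energy and to conclude via the closure/lower-semicontinuity Lemma~\ref{l:Ma}. The mechanism is Kantorovich duality for the cost $\mssc=\tfrac12\mssd^2$: $\tfrac12 W_2^2(\nu,\mu)=\sup\set{\nu\psi+\mu\phi}$ over pairs with $\psi(x)+\phi(y)\le\mssc(x,y)$; since $\mssc$-convex potentials are equi-Lipschitz, the supremum may be taken over a fixed countable family $\set{\mu_i}_i\subset\msP^{\infty,\times}$ dense in $\msP_2$, with $(\psi_i,\phi_i)$ an optimal pair for $(\nu,\mu_i)$. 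The single essential use of Assumption~\ref{ass:B} appears here: for $\nu\in\msP^{\infty,\times}$, both $\nu$ and each $\mu_i$ carry smooth nowhere-vanishing densities, so the smooth transport property forces the optimal maps --- hence the Kantorovich potentials $\phi_i,\psi_i$ --- to be smooth, whence $\phi_i^\bid\in\Test^\infty$. I would first prove the lemma for $\nu\in\msP^{\infty,\times}$ and then remove this restriction by a final narrow approximation.

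Fix then $\nu\in\msP^{\infty,\times}$ and put $c_i\eqdef\psi_i^\bid\nu$, so that $\mu\mapsto 2\max_{i\le n}(c_i+\phi_i^\bid\mu)$ increases to $W_2^2(\nu,\mu)$ --- pointwise by the displayed duality, hence uniformly by Dini's theorem since the limit is continuous --- and $\varrho_\theta\bigl(2\max_{i\le n}(c_i+\phi_i^\bid\mu)\bigr)\to u_{\nu,\theta}(\mu)$ uniformly, with $\varrho_\theta\colon t\mapsto\sqrt t\vee\theta$. To land inside $\Test^\infty$ I would regularise twice: a smooth $F_{n,\eps}$ approximating $\mbft\mapsto 2\max_{i\le n}t_i$, decreasing in $\eps$, with $\partial_iF_{n,\eps}$ non-negative and sandwiched between $2\,\car_{B_{n,i}}$ and $2\,\car_{(B_{n,i})_\eps}$ (here $B_{n,i}$ is the region where $t_i$ is the first maximiser), and a smooth $\varrho_{\theta,\delta}$ approximating $\varrho_\theta$, decreasing in $\delta$, bounded below by $\theta$, with $\varrho_{\theta,\delta}'$ controlled similarly. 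Then $u_{\theta,n,\eps,\delta}(\mu)\eqdef\varrho_{\theta,\delta}\bigl(F_{n,\eps}(c_1+\phi_1^\bid\mu,\dotsc,c_n+\phi_n^\bid\mu)\bigr)$ lies in $\Test^\infty$, the chain rule gives $\grad u_{\theta,n,\eps,\delta}=\varrho_{\theta,\delta}'(\cdots)\sum_i(\partial_iF_{n,\eps})(\cdots)\,\nabla\phi_i$, and $\abs{\nabla\phi_i}_\mssg\le\diam\MFD$ (Brenier--McCann) gives the crude bound $\abs{\grad u_{\theta,n,\eps,\delta}}_\mssg\le n\,\diam\MFD/\theta$, which is enough to take limits for fixed $n$. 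Letting $\delta\downarrow 0$ and then $\eps\downarrow 0$, Dini's theorem turns the monotone pointwise convergence into uniform convergence on $\msP$, so $u_{\theta,n,\eps,\delta}\to u_{\theta,n}\eqdef\varrho_\theta\bigl(F_n(c_1+\phi_1^\bid\mu,\dotsc)\bigr)$ uniformly, while the gradients converge $\mbbP$-a.e.\ to $\sum_i\car_{A_{\theta,n,i}}/\varrho_\theta(\cdots)\,\nabla\phi_i$, where $A_{\theta,n,i}$ are the pairwise-disjoint cylinder sets on which the $i$-th term is the first maximiser and lies above the truncation level; dominated convergence then upgrades this to $\mcE_1$-convergence, so $u_{\theta,n}\in\msF_0$ with that gradient.

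It remains to send $n\to\infty$. Disjointness of the $A_{\theta,n,i}$ is the point: at any $\mu$ at most one summand survives, so $\norm{\mbfD u_{\theta,n}(\mu)}_{\Vect_\mu}^2=\car_{A_{\theta,n,i}}(\mu)\,\varrho_\theta^{-2}(\cdots)\int_\MFD\abs{\nabla\phi_i}_\mssg^2\diff\mu\le(\diam\MFD)^2/\theta^2\eqdef C_\theta$, a bound independent of $n$ (and of $\mu$). Since $u_{\theta,n}\to u_{\nu,\theta}$ in $L^2_\mbbP(\msP)$ by dominated convergence and $\sup_n\mcE(u_{\theta,n})\le C_\theta$, Lemma~\ref{l:Ma} yields $u_{\nu,\theta}\in\msF_0$ with $\boldGamma(u_{\nu,\theta})\le C_\theta$ $\mbbP$-a.e. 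Finally, for arbitrary $\nu\in\msP$ choose $\nu_k\to\nu$ narrowly in $\msP^{\infty,\times}$; then $u_{\nu_k,\theta}\to u_{\nu,\theta}$ in $L^2_\mbbP$ with $\boldGamma(u_{\nu_k,\theta})\le C_\theta$ ($\mbbP$-a.e., uniformly in $k$, as $C_\theta$ does not depend on $\nu$), so Lemma~\ref{l:Ma} applies once more. The one genuinely delicate ingredient is the smoothness of the Kantorovich potentials, for which Assumption~\ref{ass:B} is tailor-made; the remainder is bookkeeping --- choosing the monotone regularisations $F_{n,\eps}$ and $\varrho_{\theta,\delta}$ so that Dini legitimises the iterated limit (first $\delta$, then $\eps$), keeping the gradient bounds uniform (crudely in $n$ for the inner limit, sharply via disjointness for the outer one), and a little care at coordinate ties when identifying the $\mbbP$-a.e.\ limit of the regularised gradients.
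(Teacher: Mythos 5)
Your proposal is correct and follows essentially the same route as the paper's proof: Kantorovich duality over a countable dense family in $\msP^{\infty,\times}$, smoothness of the potentials $\phi_i,\psi_i$ via Assumption~\ref{ass:B}, the two-parameter regularisation $\varrho_{\theta,\delta}\circ F_{n,\eps}$ with the iterated limit $\delta\downarrow 0$ then $\eps\downarrow 0$ justified by Dini, the $n$-uniform energy bound obtained from the pairwise disjointness of the sets $A_{\theta,n,i}$, the lower-semicontinuity Lemma~\ref{l:Ma}, and the final narrow approximation for general $\nu$. The only slips are numerical: the crude bound should read $n\,\diam\MFD/\sqrt\theta$ (not $/\theta$) since $\varrho_{\theta,\delta}'\le 1/(2\sqrt\theta)$, and the sharp constant is $C_\theta=(\diam\MFD)^2/\theta$ (not $/\theta^2$) since $\varrho_\theta^2\ge\theta$; neither affects the argument.
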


\subsection{On the differentiability of functions along flow curves}\label{ss:DiffFlow}
\begin{lem}\label{l:FlowAC}
Fix $w\in\Vect^\infty$,~$\mu_0\in\msP$ and set~$\mu_t\eqdef \Fl^{w,t} \mu_0$. Then, the curve~$\seq{\mu_t}_{t\in \R}$ is Lipschitz continuous with Lipschitz constant~$M\leq \norm{w}_{\Vect^0}$ and satisfies~$\abs{\dot\mu_t}=\norm{w}_{\Vect_{\mu_t}}$ for every~$t\in\R$.
\begin{proof}
Since constant functions are in particular Lipschitz, we can assume without loss of generality~$w\neq 0$.
Set~$c_1\eqdef \inj_\MFD/\norm{w}_{\Vect^0}$ and let~$\mu_{t,\eps}'\eqdef (\exp_\emparg(\eps w))_\pfwd \mu_t$. For $\eps\in (-c_1,c_1)$, the curve $\eps\mapsto \exp_x(\eps w)$ is a minimizing geodesic. Thus, $(\exp_\emparg(\eps w))_\pfwd \mu_t\in \Opt(\mu_t,\mu_{t,\eps}')$ and, for every~$t\in\R$,
\begin{align*}
\diff_\eps\restr_{\eps=0} W_2(\mu_t,\mu_{t,\eps}')=\lim_{\eps\rar0}\tonde{\frac{1}{\eps^2}\int_\MFD \mssd^2\ttonde{x,\exp_x(\eps w)} \diff\mu_t(x)}^{1/2}=\norm{w}_{\Vect_{\mu_t}} \fstop
\end{align*}

Arguing as in Step 3 in the proof of Lemma~\ref{l:DerDist} with~$\mu_{t,\eps}'$, $\mu_{t+\eps}$ and~$\mu_t$ in place of~$\mu_t'$,~$\mu_t$ and~$\nu$ respectively,
\begin{align*}
\abs{\dot\mu_t}\eqdef\diff_\eps\restr_{\eps=0} W_2(\mu_t,\mu_{t+\eps})=\diff_\eps\restr_{\eps=0} W_2(\mu_t,\mu_{t,\eps}') \fstop
\end{align*}

Combining the last two equalities yields the second assertion. Moreover, by~\cite[Thm.~1.1.2]{AmbGigSav08},
\begin{align*}
W_2(\mu_s,\mu_t)\leq& \int_s^t \abs{\dot\mu_r} \diff r=\int_s^t \norm{w}_{\Vect_{\mu_r}} \diff r \leq \norm{w}_{\Vect^0} \abs{t-s} \comm \qquad s<t \fstop
\end{align*}

This concludes the proof.
\end{proof}
\end{lem}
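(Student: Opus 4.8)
The plan is to treat the two assertions by separate routes: the global Lipschitz bound by exhibiting an explicit coupling built from the flow maps, and the pointwise formula for the metric speed by reducing the flow $\fl^{w,t}$ to the exponential flow $\exp_\emparg(tw)$ via Lemma~\ref{l:ExpLie} and then invoking the one-sided differentiability theory recalled in Theorem~\ref{t:DirDerDistGigli} and Lemma~\ref{l:DerDist}.

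For the Lipschitz bound, fix $s\le t$ and note that $\ttonde{\fl^{w,s},\fl^{w,t}}_\pfwd\mu_0\in\Cpl(\mu_s,\mu_t)$, so $W_2^2(\mu_s,\mu_t)\le\int_\MFD\mssd^2\ttonde{\fl^{w,s}(x),\fl^{w,t}(x)}\diff\mu_0(x)$. Since $r\mapsto\fl^{w,r}(x)$ has velocity $w\ttonde{\fl^{w,r}(x)}$ of $\mssg$-length at most $\norm{w}_{\Vect^0}$, its restriction to $[s,t]$ is an admissible path joining $\fl^{w,s}(x)$ to $\fl^{w,t}(x)$ of length at most $\norm{w}_{\Vect^0}\abs{t-s}$; hence $\mssd\ttonde{\fl^{w,s}(x),\fl^{w,t}(x)}\le\norm{w}_{\Vect^0}\abs{t-s}$ for every $x$, and therefore $W_2(\mu_s,\mu_t)\le\norm{w}_{\Vect^0}\abs{t-s}$. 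In particular $\seq{\mu_t}_t\in\AC^1(\R;\msP_2)$ and, by~\cite[Thm.~1.1.2]{AmbGigSav08}, $\abs{\dot\mu_t}$ exists for a.e.~$t$; once the second assertion is proven one also recovers the bound $M\le\norm{w}_{\Vect^0}$ from $W_2(\mu_s,\mu_t)\le\int_s^t\abs{\dot\mu_r}\diff r=\int_s^t\norm{w}_{\Vect_{\mu_r}}\diff r$.

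For the metric speed, fix $t$ and set $\mu_{t,\eps}'\eqdef\ttonde{\exp_\emparg(\eps w)}_\pfwd\mu_t$, noting $\mu_{t+\eps}=\fl^{w,\eps}_\pfwd\mu_t$. By Lemma~\ref{l:ExpLie}, $\mssd\ttonde{\exp_y(\eps w_y),\fl^{w,\eps}(y)}\in o(\eps)$ uniformly in $y$, so coupling $\mu_{t,\eps}'$ to $\mu_{t+\eps}$ through $\ttonde{\exp_\emparg(\eps w),\fl^{w,\eps}}_\pfwd\mu_t$ gives $W_2(\mu_{t,\eps}',\mu_{t+\eps})\in o(\eps)$. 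Feeding this into the triangle/reverse-triangle manipulation of Step~3 in the proof of Lemma~\ref{l:DerDist} (with $\mu_{t,\eps}'$, $\mu_{t+\eps}$, $\mu_t$ in the roles of $\mu_t'$, $\mu_t$, $\nu$, and running both one-sided limits $\eps\to 0^\pm$), one gets that $\abs{\dot\mu_t}=\lim_{\eps\to 0}W_2(\mu_t,\mu_{t+\eps})/\abs{\eps}$ exists and equals $\lim_{\eps\to 0}W_2(\mu_t,\mu_{t,\eps}')/\abs{\eps}$, provided the latter exists. Finally, for $\abs{\eps}<\inj_\MFD/\norm{w}_{\Vect^0}$ the curves $\eps\mapsto\exp_y(\eps w_y)$ are minimizing geodesics of $\mssg$-length $\abs{\eps}\,\abs{w_y}_\mssg$, so the plan $\ttonde{\id_\MFD,\exp_\emparg(\eps w)}_\pfwd\mu_t$ has cost $\eps^2\int_\MFD\abs{w_y}_\mssg^2\diff\mu_t(y)=\eps^2\norm{w}_{\Vect_{\mu_t}}^2$; if this plan is optimal, then $W_2(\mu_t,\mu_{t,\eps}')=\abs{\eps}\,\norm{w}_{\Vect_{\mu_t}}$ and the identity $\abs{\dot\mu_t}=\norm{w}_{\Vect_{\mu_t}}$ follows.

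The main obstacle is precisely this last step. The upper bound $W_2(\mu_t,\mu_{t,\eps}')\le\abs{\eps}\,\norm{w}_{\Vect_{\mu_t}}$ is immediate from the above coupling, but the matching lower bound — equivalently, the optimality of $\ttonde{\id_\MFD,\exp_\emparg(\eps w)}_\pfwd\mu_t$, equivalently the $\mssc$-cyclical monotonicity of the map $\exp_\emparg(\eps w)$ — does not follow from the pointwise minimality of the geodesics $\eps\mapsto\exp_y(\eps w_y)$ alone (take $w$ generating a rotation on $\mbbS^1$ and $\mu_t$ rotation-invariant: then $\mu_{t,\eps}'=\mu_t$, yet the coupling has positive cost). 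A clean way to control what is true in general is to note that $\ttonde{\mu_t,w}_t$ solves the continuity equation $\partial_t\mu_t+\div(w\mu_t)=0$, so Proposition~\ref{p:AC} yields $\abs{\dot\mu_t}\le\norm{w}_{\Vect_{\mu_t}}$ unconditionally; the reverse inequality, hence the full identity, holds once $w$ lies $\mu_t$-a.e.\ in $T^\nabla_{\mu_t}\msP_2$ — for instance when $w\in\Vect^\infty_\nabla$ — since then for small $\abs{\eps}$ the short map $\exp_\emparg(\eps w)$ is a genuine optimal transport map, and it is under a hypothesis of this kind that the optimality asserted above is legitimate.
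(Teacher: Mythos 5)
You have correctly uncovered a genuine error in the paper's lemma and in its proof. The paper infers, from the pointwise minimality of each geodesic $\eps\mapsto\exp_x(\eps w_x)$ for $\abs{\eps}<c_1$, that the plan $(\id_\MFD,\exp_\emparg(\eps w))_\pfwd\mu_t$ lies in $\Opt(\mu_t,\mu_{t,\eps}')$; this does not follow, since optimality requires $\mssc$-cyclical monotonicity of $\exp_\emparg(\eps w)$, which is much stronger than pointwise geodesic minimality and which fails for generic non-gradient $w$. Your counterexample is decisive: on $\MFD=\mbbS^1$ with $w$ the unit rotational field and $\mu_0$ the uniform probability measure, every $\mu_t$ and every $\mu_{t,\eps}'$ equals $\mu_0$, so $\abs{\dot\mu_t}\equiv 0$ whereas $\norm{w}_{\Vect_{\mu_t}}\equiv 1$. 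The asserted equality $\abs{\dot\mu_t}=\norm{w}_{\Vect_{\mu_t}}$ is therefore false as stated. What remains true --- and what you prove cleanly by the explicit coupling $(\fl^{w,s},\fl^{w,t})_\pfwd\mu_0$ and a path-length bound, or equivalently via the continuity equation and Proposition~\ref{p:AC} --- is the inequality $\abs{\dot\mu_t}\le\norm{w}_{\Vect_{\mu_t}}$, together with the Lipschitz estimate $W_2(\mu_s,\mu_t)\le\norm{w}_{\Vect^0}\abs{t-s}$.

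Fortunately this is all the paper actually needs. Tracing the lemma's downstream uses (Lemma~\ref{l:DirLip}, Proposition~\ref{p:RoeSch99}, Corollary~\ref{c:RoeSch99}), only the upper bound $\abs{\dot\mu_t}\le\norm{w}_{\Vect_{\mu_t}}$ and the Lipschitz constant are ever invoked; the erroneous lower bound never enters, so Theorem~\ref{t:main} is unaffected once the lemma's ``$=$'' is weakened to ``$\le$''. You also correctly identify the regime in which equality does hold, namely when $w$ lies in the gradient component $T^\nabla_{\mu_t}\msP_2$ of the decomposition~\eqref{eq:DecoTanSp} (e.g.\ when $w\in\Vect^\infty_\nabla$), in which case $\exp_\emparg(\eps\nabla\phi)$ is the Brenier--McCann optimal map for small $\eps$; in general the metric speed equals $\norm{w}_{\Vect_{\mu_t}}$ only after projecting $w$ onto $T^\nabla_{\mu_t}\msP_2$, and the divergence-free part of $w$ contributes nothing. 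Your direct derivation of the Lipschitz bound is moreover more elementary than the paper's, which routes through the (faulty) metric-speed identity before invoking~\cite[Thm.~1.1.2]{AmbGigSav08}.
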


\begin{lem}
\label{l:DirLip}
Fix $w\in\Vect^\infty$,~$\mu_0\in\msP$ and set~$\mu_t\eqdef \Fl^{w,t} \mu_0$. If~$u$ is $L$-Lipschitz continuous, then the map~$U\colon t\mapsto u(\mu_t)$ is Lipschitz continuous with ~$\Lip[U]\leq L\norm{w}_{\Vect^0}$ for every choice of~$\mu_0$ and
\begin{align}\label{eq:l:DirLip0}
\slo{U}(t) \leq& \slo{u}(\mu_t) \norm{w}_{\Vect_{\mu_t}}\comm \qquad t\in \R \fstop
\end{align}
\begin{proof} The Lipschitz continuity of~$U$ follows from those of~$u$ and~$t\mapsto\mu_t$ (Lem.~\ref{l:FlowAC}). By definition of slope,
\begin{align*}
\slo{U}(t) \leq\limsup_{\nu\rar \mu_t}\frac{\abs{u(\mu_t)-u(\nu)}}{W_2(\mu_t,\nu)} \limsup_{s\rar t}\frac{W_2(\mu_t,\mu_s)}{\abs{t-s}}= \slo{u}(\mu_t)\abs{\dot\mu_t}
\end{align*}
for every~$t\in\R$, whence~\eqref{eq:l:DirLip0} again by Lemma~\ref{l:FlowAC}.
\end{proof}
\end{lem}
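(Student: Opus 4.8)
The plan is to deduce both assertions from Lemma~\ref{l:FlowAC}, which already packages all of the metric analysis of the flow curve: there it is shown that $t\mapsto\mu_t$ is Lipschitz continuous with Lipschitz constant $\leq\norm{w}_{\Vect^0}$ and that its metric speed satisfies $\abs{\dot\mu_t}=\norm{w}_{\Vect_{\mu_t}}$ at every $t\in\R$. Granting this, the first claim is immediate: for all $s,t\in\R$,
\[
\abs{U(s)-U(t)}=\abs{u(\mu_s)-u(\mu_t)}\leq L\,W_2(\mu_s,\mu_t)\leq L\,\norm{w}_{\Vect^0}\,\abs{s-t},
\]
so $U$ is Lipschitz with $\Lip[U]\leq L\,\norm{w}_{\Vect^0}$, and the bound does not depend on the choice of $\mu_0$.

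For the pointwise slope estimate~\eqref{eq:l:DirLip0} I would fix $t\in\R$ and examine the difference quotient as $s\to t$. For those $s$ near $t$ with $\mu_s\neq\mu_t$, factor
\[
\frac{\abs{U(s)-U(t)}}{\abs{s-t}}=\frac{\abs{u(\mu_s)-u(\mu_t)}}{W_2(\mu_s,\mu_t)}\cdot\frac{W_2(\mu_s,\mu_t)}{\abs{s-t}},
\]
while for the (possibly empty) set of $s$ with $\mu_s=\mu_t$ the left-hand side is $0$ and can be discarded when computing the $\limsup$ of a nonnegative quantity. Since $t\mapsto\mu_t$ is continuous, $\mu_s\to\mu_t$ in $(\msP_2,W_2)$ as $s\to t$, hence the $\limsup$ of the first factor is at most $\slo{u}(\mu_t)$, the $W_2$-slope of $u$ at $\mu_t$, and the second factor converges to $\abs{\dot\mu_t}=\norm{w}_{\Vect_{\mu_t}}$ by Lemma~\ref{l:FlowAC}. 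Both factors are nonnegative and uniformly bounded (by $L$ and by $\norm{w}_{\Vect^0}$, respectively), so the $\limsup$ of the product is bounded by the product of the two $\limsup$s, which gives $\slo{U}(t)\leq\slo{u}(\mu_t)\norm{w}_{\Vect_{\mu_t}}$.

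There is no real obstacle in this argument: the substantive work — the Lipschitz bound and the metric-speed identity for the lifted flow $t\mapsto\Fl^{w,t}\mu_0$ — has already been carried out in Lemma~\ref{l:FlowAC}, and the present statement is a short corollary of it. The only points requiring a line of care are the validity of $\limsup(a_sb_s)\leq(\limsup a_s)(\limsup b_s)$, which holds here precisely because both families are bounded and nonnegative, and the separate (trivial) treatment of times $s$ at which $\mu_s=\mu_t$.
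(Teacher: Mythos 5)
Your proof is correct and follows essentially the same route as the paper's: reduce the Lipschitz bound to Lemma~\ref{l:FlowAC}, and for the slope estimate factor the difference quotient of $U$ and apply $\limsup$ of a product, using $\abs{\dot\mu_t}=\norm{w}_{\Vect_{\mu_t}}$ from Lemma~\ref{l:FlowAC}. The extra points you spell out (handling $s$ with $\mu_s=\mu_t$, and justifying $\limsup(a_sb_s)\leq(\limsup a_s)(\limsup b_s)$ for bounded nonnegative families) are exactly the small verifications that the paper's terse one-line computation leaves implicit.
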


\begin{lem}\label{l:DistrDer}
Let~$\seq{\mu_t}_{t\in I}$ be an absolutely continuous curve in~$\msP_2$ connecting~$\mu_0$ to~$\mu_1$, and~$(\mu_t,w_t)$ be any distributional solution of the continuity equation~\eqref{eq:ContEq}. Then, for every~$u\in\Test^\infty$ there exists for a.e.~$t\in \R$ the derivative
\begin{align*}
\diff_t u(\mu_t)=\scalar{\grad u(\mu_t)}{w_t}_{\Vect_{\mu_t}} \comm
\end{align*}
and one has
\begin{align}\label{eq:l:DistrDer0}
u(\mu_t)-u(\mu_0)=\int_0^t  \scalar{\grad u(\mu_s)}{w_s}_{\Vect_{\mu_s}} \diff s \fstop
\end{align}
\begin{proof}
Let~$f$ be in~$\mcC^\infty(\MFD)$,~$\phi\in\mcC^\infty_c(\R)$ be an arbitrary test function and denote by~$\scalar{\emparg}{\emparg}$ the canonical duality pair of distributions. Then,
\begin{align*}
\scalar{\diff_t f^\bid\mu_t}{\phi}=&\int_{\R} \phi'(t)\, f^\bid\mu_t \diff t=\int_{\R} \phi'(t) \int_\MFD f(x)  \diff\mu_t(x) \diff t\\
=&\int_{\R} \int_\MFD \partial_t(f\phi)(t,x) \diff\mu_t(x) \diff t\\
=&\int_{\R} \phi(t) \int_\MFD \gscal{\nabla f(x)}{w_t(x)} \diff\mu_t(x) \diff t
\end{align*}
for any time dependent vector field~$\seq{w_t}_t$ such that~$\seq{\mu_t,w_t}_t$ is a solution of~\eqref{eq:ContEq}. Thus the distributional derivative is representable by
\begin{align*}
\diff_t f^\bid\mu_t=\int_\MFD \gscal{\nabla f(x)}{w_t(x)} \diff\mu_t(x)
\end{align*}
and
\begin{align*}
\abs{\diff_t f^\bid\mu_t}\leq \norm{\nabla f}_{\mcC^0} \norm{w_t}_{\Vect_{\mu_t}} \fstop
\end{align*}

By Proposition~\ref{p:AC} and absolute continuity of~$\seq{\mu_t}_t$ the function~$t\mapsto \norm{w_t}_{\Vect_{\mu_t}}$ is in~$L^1_{\mathrm{loc}}(\R)$. Thus~$t\mapsto \diff_t f^\bid\mu_t$ is itself in~$L^1_{\mathrm{loc}}(\R)$.
Let now~$u\eqdef F\circ(f_1^\bid,\dotsc, f_k^\bid)\in\Test^\infty$. The above reasoning yields, in the sense of distributions,
\begin{align*}
\diff_t u(\mu_t)=&\sum_i^k (\partial_i F)(f_1^\bid\mu_t,\dotsc, f_k^\bid\mu_t) \diff_t (f_i^\bid\mu_t)=\sum_i^k (\partial_i F)(f_1^\bid\mu_t,\dotsc, f_k^\bid\mu_t) \int_\MFD \gscal{\nabla f_i}{w_t} \diff\mu_t \\
=& \scalar{\grad u(\mu_t)}{w_t}_{\Vect_{\mu_t}}\comm
\end{align*}
where~$\seq{\mu_t,w_t}_t$ is a solution of~\eqref{eq:ContEq} as above and we used~\eqref{eq:grad0}. Since~$t\mapsto \grad u(\mu_t)$ is continuous and bounded by definition of~$u$, the distributional derivative of the function~$t\mapsto u(\mu_t)$ is again representable by some function in~$L^1_{\mathrm{loc}}(\R)$. Thus, the Fundamental Theorem of Calculus applies and one has
\begin{align*}
u(\mu_t)-u(\mu_0)=\int_0^t \diff_r\restr_{r=s} u(\mu_r) \diff s=\int_0^t \scalar{\grad u(\mu_r)}{w_r}_{\Vect_{\mu_r}} \diff s \fstop
\end{align*}

This concludes the proof.
\end{proof}
\end{lem}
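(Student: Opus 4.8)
The plan is to reduce the assertion to the case of a single potential energy, identify the distributional derivative there by feeding a product test function into the continuity equation, and then recover the general case by the chain rule in one real variable.

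First I would fix $f\in\mcC^\infty(\MFD)$ and study $t\mapsto f^\bid\mu_t=\int_\MFD f\diff\mu_t$. Inserting into~\eqref{eq:ContEqDistr} the separated test function $(t,x)\mapsto\psi(t)\,f(x)$, with $\psi$ an arbitrary smooth compactly supported function of time, and using $\partial_t(\psi f)=\psi'f$ and $\nabla(\psi f)=\psi\,\nabla f$, I obtain
\begin{align*}
\int_I\psi'(t)\,(f^\bid\mu_t)\diff t=-\int_I\psi(t)\int_\MFD\gscal{\nabla f(x)}{w_t(x)}\diff\mu_t(x)\diff t,
\end{align*}
which identifies the distributional derivative of $t\mapsto f^\bid\mu_t$ with $t\mapsto\int_\MFD\gscal{\nabla f}{w_t}\diff\mu_t$. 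The pointwise bound $\abs{\int_\MFD\gscal{\nabla f}{w_t}\diff\mu_t}\leq\norm{\nabla f}_{\mcC^0}\,\norm{w_t}_{\Vect_{\mu_t}}$, together with Proposition~\ref{p:AC} (which forces $t\mapsto\norm{w_t}_{\Vect_{\mu_t}}\in L^1_\loc(I)$ along an absolutely continuous curve), shows this representative is in $L^1_\loc$; hence $t\mapsto f^\bid\mu_t$ is absolutely continuous and $\diff_t(f^\bid\mu_t)=\int_\MFD\gscal{\nabla f}{w_t}\diff\mu_t$ for a.e.\ $t\in I$.

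Then, for a general $u=F\circ(f_1^\bid,\dots,f_k^\bid)\in\Test^\infty$ with $F\in\mcC^\infty(\R^k)$, each coordinate curve $t\mapsto f_i^\bid\mu_t$ is absolutely continuous by the previous step, so the composition $t\mapsto u(\mu_t)$ is absolutely continuous with
\begin{align*}
\diff_t u(\mu_t)=\sum_i^k(\partial_iF)\ttonde{f_1^\bid\mu_t,\dots,f_k^\bid\mu_t}\,\diff_t(f_i^\bid\mu_t)=\scalar{\grad u(\mu_t)}{w_t}_{\Vect_{\mu_t}}\comm
\end{align*}
the last equality by~\eqref{eq:grad0} and the definition~\eqref{eq:Scalar} of $\scalar{\emparg}{\emparg}_{\Vect_{\mu_t}}$. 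Since $t\mapsto\grad u(\mu_t)$ is continuous (narrow continuity of $t\mapsto\mu_t$, continuity of $F$ and $\partial_iF$, smoothness of the $f_i$) and uniformly bounded (the $f_i^\bid\mu_t$ stay in a compact subset of $\R^k$, and $\abs{\nabla f_i}_\mssg\leq\norm{\nabla f_i}_{\mcC^0}$), the integrand $t\mapsto\scalar{\grad u(\mu_t)}{w_t}_{\Vect_{\mu_t}}$ is dominated by a constant multiple of $\norm{w_t}_{\Vect_{\mu_t}}\in L^1_\loc(I)$, and the fundamental theorem of calculus for absolutely continuous functions yields~\eqref{eq:l:DistrDer0}.

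The only genuinely delicate step is the first one: one must check that the distributional derivative of $t\mapsto f^\bid\mu_t$ is represented by an honestly $L^1_\loc$ function, so that absolute continuity and the fundamental theorem of calculus are available, and this rests squarely on the integrability $t\mapsto\norm{w_t}_{\Vect_{\mu_t}}\in L^1_\loc(I)$ furnished by Proposition~\ref{p:AC}. The separation of variables in the test function, the chain rule in one real variable, and the domination argument are then routine.
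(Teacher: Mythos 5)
Your proposal is correct and follows essentially the same route as the paper's own proof: reduce to a single potential energy $f^\bid$, identify its distributional time-derivative by feeding a separated test function into the distributional continuity equation, bound it by $\norm{\nabla f}_{\mcC^0}\norm{w_t}_{\Vect_{\mu_t}}\in L^1_\loc$ via Proposition~\ref{p:AC}, and then pass to general cylinder functions by the one-variable chain rule and the fundamental theorem of calculus. The only cosmetic difference is that you make the separation-of-variables step $\phi(t,x)=\psi(t)f(x)$ fully explicit, whereas the paper carries out the equivalent integration by parts directly; the content and the delicate $L^1_\loc$ step are identical.
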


The following Lemma is taken ---~almost verbatim~--- from~\cite{RoeSch99}.

\begin{lem}[{\cite[Lem.~6.1]{RoeSch99}}]\label{l:DirG}
Fix~$w\in\Vect^\infty$. Then, for every bounded measurable~$u\colon \msP\rar\R$ and every~$v\in \Test^\infty$, for every~$t\in \R$
\begin{align}
\label{eq:l:DirG1}
\int \tonde{u\circ \Fl^{w,t} - u} v \diff\mbbP =& - \int_0^t \!\! \int u\circ \fl^{w,s}_\pfwd \cdot \grad_w^* v \, \diff\mbbP \, \diff s \fstop
\end{align}
\end{lem}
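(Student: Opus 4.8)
The plan is to establish~\eqref{eq:l:DirG1} first for cylinder functions~$u$, by differentiating along the flow and integrating by parts in the time variable, and then to pass to general bounded measurable~$u$ by a monotone class argument.

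First I would treat the case $u,v\in\Test^\infty$. Since each $\fl^{w,s}$ is a smooth diffeomorphism of~$\MFD$, writing $u=F(f_1^\bid,\dotsc,f_k^\bid)$ gives $u\circ\Fl^{w,s}=F\ttonde{(f_1\circ\fl^{w,s})^\bid,\dotsc,(f_k\circ\fl^{w,s})^\bid}\in\Test^\infty$ for every $s\in\R$. By~\eqref{eq:DirDer0} together with the group law $\Fl^{w,s+h}=\Fl^{w,h}\circ\Fl^{w,s}$, the curve $s\mapsto u\circ\Fl^{w,s}$ is differentiable at every~$s$, with
\begin{align*}
\diff_s\ttonde{u\circ\Fl^{w,s}}=(\grad_w u)\circ\Fl^{w,s}=\grad_w\ttonde{u\circ\Fl^{w,s}}\fstop
\end{align*}
Moreover, since the arguments $f_i^\bid(\Fl^{w,s}\mu)=(f_i\circ\fl^{w,s})^\bid\mu$ stay in the fixed compact interval $[\min_\MFD f_i,\max_\MFD f_i]$ and the flow depends smoothly on~$s$, both $u\circ\Fl^{w,s}$ and $\grad_w(u\circ\Fl^{w,s})$ are bounded and jointly continuous on $[0,t]\times\msP$ (this is where compactness of~$\MFD$ enters). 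I would then integrate the Fundamental Theorem of Calculus pointwise in~$\mu$, multiply by $v(\mu)$, integrate against~$\mbbP$, and apply Fubini's Theorem --- legitimate because the integrand is bounded, $\mbbP$ is finite and $\abs{t}<\infty$ --- to obtain
\begin{align*}
\int\ttonde{u\circ\Fl^{w,t}-u}\,v\diff\mbbP=\int_0^t\int\grad_w\ttonde{u\circ\Fl^{w,s}}\cdot v\diff\mbbP\diff s\fstop
\end{align*}
For each fixed~$s$ one has $u\circ\Fl^{w,s}\in\Test^\infty$, so Assumption~\ref{ass:P3} applied to the pair $\ttonde{u\circ\Fl^{w,s},v}$ rewrites the inner integral in terms of $u\circ\fl^{w,s}_\pfwd$ and $\grad_w^*v$; integrating in~$s$ yields~\eqref{eq:l:DirG1} for $u,v\in\Test^\infty$.

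To remove the smoothness of~$u$, I would fix $v\in\Test^\infty$ and let $\mcH$ be the set of bounded Borel functions $u\colon\msP\rar\R$ satisfying~\eqref{eq:l:DirG1}. Both sides of~\eqref{eq:l:DirG1} are linear in~$u$, so $\mcH$ is a vector space; it contains the constants (both sides then vanish, since $\grad_w$ annihilates constants, which forces $\int\grad_w^*v\diff\mbbP=0$ by~\eqref{eq:DefDirForm}); it contains the multiplicative family~$\Test^\infty$ by the previous step; and it is closed under bounded monotone limits --- if $u_n\in\mcH$ with $0\le u_n\uparrow u$ and $u$ bounded, then $u_n\circ\Fl^{w,t}\to u\circ\Fl^{w,t}$ and $u_n\circ\fl^{w,s}_\pfwd\to u\circ\fl^{w,s}_\pfwd$ pointwise and boundedly, so dominated convergence passes the limit through both sides, dominated on the left by $(\sup_\msP\abs{u})\,\abs{v}\in L^1_\mbbP(\msP)$ and on the right by $(\sup_\msP\abs{u})\,\abs{\grad_w^*v}\in L^1_\mbbP(\msP)$ over the finite interval $[0,t]$, the $\mbbP$-integrability of~$\grad_w^*v$ being implicit in Assumption~\ref{ass:P3}. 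By the functional monotone class theorem $\mcH$ contains every bounded $\sigma(\Test^\infty)$-measurable function; and since $\set{f^\bid:f\in\mcC^\infty(\MFD)}\subset\Test^\infty$ already generates the (narrow) Borel $\sigma$-algebra of~$\msP$, this is the whole space of bounded Borel functions on~$\msP$.

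I do not foresee a real obstacle. The first step is the routine ``differentiate along the flow, then integrate by parts'', the only care being the uniform bounds that license Fubini, which are supplied by the compactness of~$\MFD$ and of~$[0,t]$; the second step is a standard monotone class argument. It is perhaps worth remarking that, in contrast with the $L^2_\mbbP$-approximation carried out in~\cite{RoeSch99}, the monotone class route does not invoke the quasi-invariance Assumption~\ref{ass:P4}.
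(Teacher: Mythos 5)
Your strategy --- differentiate along the flow for cylinder $u$, integrate by $\mathrm{FTC}$, swap order of integration, apply Assumption~\ref{ass:P3} at each fixed $s$, then pass to bounded measurable $u$ by a functional monotone class argument with dominated convergence --- is sound and is essentially the route of~\cite[Lem.~6.1]{RoeSch99}. The key identity $\diff_s\ttonde{u\circ\Fl^{w,s}}=(\grad_w u)\circ\Fl^{w,s}=\grad_w\ttonde{u\circ\Fl^{w,s}}$ is correct (the flow of a fixed vector field commutes with itself), Fubini is justified for the reasons you give, and the integrability of~$\grad_w^*v$ needed for the $\mathrm{DCT}$ step does follow from~\ref{ass:P3} by taking~$u=\car$. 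Your closing remark that the monotone class route avoids Assumption~\ref{ass:P4} is also accurate.

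There is, however, a sign issue you gloss over and should make explicit. Tracing your computation through, the $\mathrm{FTC}$ step gives $\int\ttonde{u\circ\Fl^{w,t}-u}v\diff\mbbP=\int_0^t\int\grad_w\ttonde{u\circ\Fl^{w,s}}\cdot v\diff\mbbP\diff s$, and Assumption~\ref{ass:P3} rewrites the inner integral as $\int\ttonde{u\circ\Fl^{w,s}}\cdot\grad_w^*v\diff\mbbP$ with a \emph{plus} sign; so your derivation produces~\eqref{eq:l:DirG1} \emph{without} the leading minus sign, not the displayed formula. The displayed minus sign is in fact incompatible with~\ref{ass:P3}: differentiating both sides of~\eqref{eq:l:DirG1} at~$t=0$ for~$u\in\Test^\infty$ would give $\int\grad_w u\cdot v\diff\mbbP=-\int u\cdot\grad_w^*v\diff\mbbP$, contradicting~\eqref{eq:DefDirForm}. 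Note also that the displayed conclusion of Proposition~\ref{p:RoeSch99}, namely $\int G_w u\cdot v\diff\mbbP=\int u\cdot\grad_w^*v\diff\mbbP$, is precisely what one gets by dividing the \emph{sign-free} version by~$t$ and letting~$t\rar 0$. The minus sign in~\eqref{eq:l:DirG1} thus appears to be a transcription artefact from the opposite adjoint convention used in~\cite{RoeSch99}; your argument is right, but you should state that it yields the identity with the opposite sign rather than asserting it yields~\eqref{eq:l:DirG1} as written.
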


\subsection{On the differentiability of Lipschitz functions}
In the following let~$u\in \Lip\msP_2$,~$w\in\Vect^\infty$ and set
\begin{align}\label{eq:Omega}
\Omega^u_w\eqdef\set{\mu\in\msP : \exists\, G_w u(\mu)\eqdef \diff_t\restr_{t=0} (u\circ\Fl^{w,t})(\mu)} \fstop
\end{align}

Since the function~$u\circ\Fl^{w,t}$ is continuous, the existence of~$G_w u$ coincides with that of the limit $\lim_{r\rar 0} \tfrac{1}{r}(u(\fl^r_\pfwd \mu)-u(\mu))$, $r\in \Q$. As a consequence the set~$\Omega^u_w$ is measurable.

\begin{prop}\label{p:RoeSch99}
Fix~$u\in\Lip\msP_2$ and for any~$w\in\Vect^\infty$ let~$\Omega^u_w$ be defined as in~\eqref{eq:Omega}. Let further~$\VectCount\subset \Vect^\infty$ be a countable $\Q$-vector space dense in~$\Vect^0$ and assume~$\mbbP\, \Omega^u_w=1$ for all~$w\in\VectCount$.
Then, the assertions~\iref{i:t:main3.1} and~\iref{i:t:main3.2} in Theorem~\ref{t:main} hold for~$u$.

\begin{proof} Fix~$w\in \VectCount$. By assumption on~$\VectCount$, there exists
\begin{align*}
G_w u(\mu)= \lim_{t\rar 0} \frac{u(\Fl^{w,t}\mu)-u(\mu)}{t}
\end{align*}
for all~$\mu$ in the set~$\Omega^u_w$ of full $\mbbP$-measure. Moreover, by~\eqref{eq:l:DirLip0},
\begin{align*}
\sup_{t\in[-1,1]} \abs{\frac{u(\Fl^{w,t} \mu)-u(\mu)}{t}}\leq \sup_{t\in[-1,1]} \frac{\Lip[u]}{t}\int_0^t  \norm{w}_{\Vect_{\Fl^{w,r} \mu}} \diff r\leq \Lip[u]\norm{w}_{\Vect^0} \comm
\end{align*}
thus, by Dominated Convergence,
\begin{align}\label{eq:p:DirGradOm1}
G_w u= L^2_{\mbbP}\textrm{-}\lim_{t\rar 0} \frac{u\circ\Fl^{w,t}-u}{t} \fstop
\end{align}

By continuity of~$t\mapsto \tfrac{1}{t}(u\circ \fl^{w,t}-u)$, combining Lemma~\ref{l:DirG} with~\eqref{eq:p:DirGradOm1} yields
\begin{align*}
\int G_wu \cdot v \diff \mbbP=\int u \cdot \grad_w^* v  \diff\mbbP\comm \qquad v\in\Test^\infty \fstop
\end{align*}

Next, notice that the map~$w\mapsto \grad_w^* v$ is linear for all~$v\in\Test^\infty$ by Assumption~\ref{ass:P3}. Hence, if~$w=s_1 w_{1}+\cdots+ s_k w_k$ for some~$s_i\in \R$ and~$w_i\in \VectCount$, then
\begin{align*}
\int G_wu\cdot v \diff\mbbP =\sum_i^k s_i \int u \cdot \grad_{w_i}^* v \diff\mbbP =\sum_i^ks_i \int G_{w_i} u\cdot v \diff\mbbP \comm
\end{align*}
thus
\begin{align}\label{eq:p:DirGradOm2}
G_wu=\sum_i^k s_i \, G_{w_i}u \quad \mbbP\textrm{-a.e.}\fstop
\end{align}

Since~$\VectCount$ is countable, the set~$\Omega^u_0\eqdef \bigcap_{w\in\VectCount} \Omega^u_w$ has full $\mbbP$-measure by assumption. Therefore, the set~$\Omega^u$ of measures~$\mu\in\Omega^u_0$ such that~$w\mapsto G_wu(\mu)$ is a $\Q$-linear functional on~$\VectCount$ has itself full $\mbbP$-measure by~\eqref{eq:p:DirGradOm2}.

For fixed~$\mu\in\Omega^u$ we have~$\abs{G_wu(\mu)}\leq \slo{u}(\mu)\norm{w}_{\Vect_\mu}$ for every~$w\in\VectCount$ by Lemma~\ref{l:DirLip}. Since~$\VectCount$ is~$\Vect^0$-dense in~$\Vect^\infty$, it is in particular~$\Vect_\mu$-dense in~$\Vect^\infty$ for every~$\mu\in\msP$. Hence the map~$w\mapsto G_wu(\mu)$ is a $\Vect_\mu$-continuous linear functional on the dense subset~$\VectCount$ and may thus be extended on the whole space~$\Vect^\infty$ (in fact: on $\Vect_\mu$) to a continuous linear functional, again denoted by~$w\mapsto G_w u(\mu)$ and again such that~$\abs{G_w u(\mu)}\leq \slo{u}(\mu)\norm{w}_{\Vect_\mu}$.

Thus, for every~$\mu$ in the set of full $\mbbP$-measure~$\Omega^u$ there exists~$\mbfD u(\mu)\in T_\mu\msP_2$ such that $G_wu(\mu)=\scalar{\mbfD u(\mu)}{w}_{\Vect_\mu}$ and~$\norm{\mbfD u(\mu)}_{\Vect_\mu}\leq \slo{u}(\mu)$. This concludes the proof of the first statement in~\iref{i:t:main3.2}, which in turn implies~\iref{i:t:main3.1} since~$\slo{u}(\emparg)\leq \Lip[u]$.

\smallskip

By definition of~$\Omega^u$ one has~$\Omega^u\subset \Omega^u_w$ for all~$w\in\VectCount$, hence~\iref{i:t:main3.2} is already proven for all~$w\in\VectCount$. In order to prove it for~$w\in\Vect^\infty\setminus\VectCount$, fix~$\eps>0$ and let~$w'\in\VectCount$ be such that~$\norm{w-w'}_{\Vect^0}<\eps$. 
Since~$\MFD$ is compact, a straightforward modification of~\cite[Lem.~5.5]{RoeSch99} yields
\begin{align*}
\abs{u(\Fl^{w,t} \mu)-u(\Fl^{w',t} \mu)}\leq \Lip[u] \, W_2(\Fl^{w,t} \mu,\Fl^{w',t} \mu)\leq t\, \Lip[u]\, c_0 \,e^{c_0\, t}\, \eps
\end{align*}
for some constant~$c_0\eqdef c_0(\MFD,w)<\infty$. As a consequence,
\begin{align*}
\abs{\frac{u\circ\Fl^{w,t}-u}{t}-\scalar{\mbfD u(\mu)}{w}_{\Vect_\mu}}\leq& \eps \,\Lip[u] \, c_0\, e^{c_0\, t} +\eps \norm{\mbfD u(\mu)}_{\Vect_\mu}\\
&+\abs{\frac{u\circ\Fl^{w',t}-u}{t}-\scalar{\mbfD u(\mu)}{w'}_\mu} \comm \qquad \mu\in\Omega^u\comm
\end{align*}
and letting~$t\rar 0$ yields the conclusion of~\iref{i:t:main3.2} by arbitrariness of~$\eps$.

\smallskip

As consequence of~\iref{i:t:main3.2} and the bound~$\norm{\mbfD u(\mu)}_{\Vect_\mu}\leq\Lip[u]$, by definition,~$u\in\mcF_\cont$.
\end{proof}
\end{prop}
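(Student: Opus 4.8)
The plan is to adapt the argument of~\cite[Thm.~1.3]{RoeSch99}, using as the three essential ingredients the elementary differentiability estimates of Lemma~\ref{l:DirLip}, the integration-by-parts identity~\eqref{eq:l:DirG1} of Lemma~\ref{l:DirG}, and the linearity of $w\mapsto\grad_w^*v$ contained in Assumption~\ref{ass:P3}. First I would fix $w\in\VectCount$ and pass from the pointwise directional derivative to an $L^2$-limit: on the full-measure set $\Omega^u_w$ the limit $G_w u(\mu)\eqdef\diff_t\restr_{t=0}(u\circ\Fl^{w,t})(\mu)$ exists, while by Lemma~\ref{l:DirLip} the difference quotients $t^{-1}(u\circ\Fl^{w,t}-u)$ are bounded, uniformly in $\mu$ and in $t\in[-1,1]$, by $\Lip[u]\,\norm{w}_{\Vect^0}$; since $\mbbP$ is a probability measure, Dominated Convergence yields $G_w u=L^2_\mbbP\text{-}\lim_{t\to0}t^{-1}(u\circ\Fl^{w,t}-u)$.

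Next I would insert this into~\eqref{eq:l:DirG1}: dividing by $t$ and letting $t\to 0$ --- the right-hand side converges because $s\mapsto\int_\msP(u\circ\fl^{w,s}_\pfwd)\,\grad_w^*v\,\diff\mbbP$ is continuous at $s=0$, by the $W_2$-continuity of the flow (Lemma~\ref{l:FlowAC}), the continuity of $u$, and Dominated Convergence --- I obtain $\int_\msP G_w u\cdot v\,\diff\mbbP=\int_\msP u\cdot\grad_w^*v\,\diff\mbbP$ for every $v\in\Test^\infty$. Since $w\mapsto\grad_w^*v$ is $\R$-linear by Assumption~\ref{ass:P3}, this identity forces $G_{s_1w_1+\cdots+s_kw_k}u=\sum_i s_i\,G_{w_i}u$ $\mbbP$-a.e.\ whenever $w_i\in\VectCount$ and $s_i\in\Q$. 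As $\VectCount$ is countable, $\Omega^u_0\eqdef\bigcap_{w\in\VectCount}\Omega^u_w$ has full $\mbbP$-measure, and deleting a further countable collection of null sets I get a full-measure set $\Omega^u\subset\Omega^u_0$ on which $w\mapsto G_w u(\mu)$ is a $\Q$-linear functional on $\VectCount$.

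For $\mu\in\Omega^u$, Lemma~\ref{l:DirLip} also gives $\abs{G_w u(\mu)}\le\slo{u}(\mu)\,\norm{w}_{\Vect_\mu}$ for all $w\in\VectCount$; since $\norm{\emparg}_{\Vect_\mu}\le\norm{\emparg}_{\Vect^0}$, the $\Vect^0$-density of $\VectCount$ forces its density in the $\Vect_\mu$-norm, so $w\mapsto G_w u(\mu)$ extends uniquely to a bounded $\R$-linear functional on $\Vect_\mu$ of norm $\le\slo{u}(\mu)$. By the Riesz representation theorem there is $\mbfD u(\mu)\in\Vect_\mu=T^\Der_\mu\msP_2$, measurable in $\mu$ (each $G_w u$ being measurable and $\VectCount$ countable), such that $G_w u(\mu)=\scalar{\mbfD u(\mu)}{w}_{\Vect_\mu}$ and $\norm{\mbfD u(\mu)}_{\Vect_\mu}\le\slo{u}(\mu)\le\Lip[u]$; this is~\iref{i:t:main3.1} and the first assertion of~\iref{i:t:main3.2}.

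Finally I would upgrade the limit~\eqref{eq:i:t:main3.2-2} from $w\in\VectCount$ to arbitrary $w\in\Vect^\infty$. Given $\eps>0$, choose $w'\in\VectCount$ with $\norm{w-w'}_{\Vect^0}<\eps$; a Gronwall-type comparison of the flows $\fl^{w,t}$ and $\fl^{w',t}$ on the compact manifold $\MFD$ --- a routine adaptation of~\cite[Lem.~5.5]{RoeSch99} --- bounds $\abs{u(\Fl^{w,t}\mu)-u(\Fl^{w',t}\mu)}\le\Lip[u]\,W_2(\Fl^{w,t}\mu,\Fl^{w',t}\mu)\le t\,\Lip[u]\,c_0 e^{c_0 t}\eps$, for $c_0=c_0(\MFD,w)$; splitting the difference quotient by the triangle inequality, letting $t\to 0$ (the limit being known for $w'$), and then $\eps\to 0$ gives~\eqref{eq:i:t:main3.2-2} for $w$. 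The uniform bound $\norm{\mbfD u}_{\Vect_\emparg}\le\Lip[u]$ together with Dominated Convergence then places $u$ in $\mcF_\cont$. I expect the delicate points to be, on the one hand, marshalling the countably many null sets so that a \emph{single} $\mbbP$-conull set $\Omega^u$ serves every direction and both conclusions, together with the accompanying verification that the a.e.-defined $\Q$-linear functional genuinely represents an element of the \emph{abstract completion} $\Vect_\mu$ (which rests on the slope bound of Lemma~\ref{l:DirLip} and the $\Vect_\mu$-density of $\VectCount$); and, on the other hand, the flow-comparison estimate in the last step, which is where the only real analytic work beyond bookkeeping is hidden.
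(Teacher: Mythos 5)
Your proof is correct and follows essentially the same route as the paper's: Dominated Convergence plus Lemma~\ref{l:DirLip} to upgrade the pointwise limit to an $L^2_\mbbP$-limit, Lemma~\ref{l:DirG} together with Assumption~\ref{ass:P3} to obtain the integration-by-parts identity and hence a.e.~linearity of $w\mapsto G_w u$ on $\VectCount$, the countable intersection of the conull sets to produce a single $\Omega^u$, the slope bound with $\norm{\emparg}_{\Vect_\mu}\le\norm{\emparg}_{\Vect^0}$ to run the Riesz representation in $\Vect_\mu$, and finally the Gronwall-type flow comparison from~\cite[Lem.~5.5]{RoeSch99} to pass from $\VectCount$ to all of $\Vect^\infty$. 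The only cosmetic differences are that you spell out the measurability-in-$\mu$ of $\mbfD u$ and the continuity of $s\mapsto\int(u\circ\Fl^{w,s})\grad_w^*v\,\diff\mbbP$ at $s=0$ (the latter underlies what the paper compresses into one sentence), neither of which changes the structure of the argument.
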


\begin{cor}\label{c:RoeSch99}
Assume~$\mbbP$ additionally satisfies Assumption~\ref{ass:P4} and let~$u\in\Lip\msP_2$. Then, the assertions~\iref{i:t:main3.1} and~\iref{i:t:main3.2} in Theorem~\ref{t:main} hold for~$u$.
\begin{proof}
Let~$w\in\Vect^\infty$ and denote its flow by~$\seq{\fl^{w,t}}_{t\in\R}$. It suffices to show that~$u$ satisfies the assumption on~$\Omega^u_w$ in Proposition~\ref{p:RoeSch99}. By Lemma~\ref{l:DirLip} the set~$\tset{r\in[s,t] : \Fl^{w,r} \mu\in\Omega^u_w}$ has full Lebesgue measure for every~$s<t$ in~$\R$ and every~$\mu\in\msP$. Thus
\begin{align*}
0=&\int_0^1\!\!\int \car_{(\Omega^u_w)^\complement}(\Fl^{w,r} \mu)\diff\mbbP(\mu) \diff r =\int_0^1  (\Fl^{w,r}_\pfwd\mbbP)\ttonde{(\Omega^u_w)^\complement} \diff r= \int_0^1 \!\! \int R^w_r(\mu) \car_{(\Omega^u_w)^\complement}(\mu) \diff\mbbP(\mu) \diff r\comm
\end{align*}
whence~$\mbbP\ttonde{(\Omega^u_w)^\complement}=0$ by~\eqref{eq:ass:P4}.
\end{proof}
\end{cor}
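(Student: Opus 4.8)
The plan is to deduce the statement from Proposition~\ref{p:RoeSch99} by verifying its hypothesis. That proposition requires a countable $\Q$-vector space $\VectCount\subset\Vect^\infty$ dense in~$\Vect^0$ together with the information that $\mbbP\,\Omega^u_w=1$ for every~$w\in\VectCount$; such a~$\VectCount$ exists because $\Vect^\infty$ is dense in the separable Banach space~$\Vect^0$, so one may take the $\Q$-linear span of a countable $\Vect^0$-dense subset of~$\Vect^\infty$. I would in fact establish the stronger statement that $\mbbP\,\Omega^u_w=1$ for \emph{every} $w\in\Vect^\infty$, which a fortiori covers every $w\in\VectCount$.

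So fix $w\in\Vect^\infty$. The key preliminary observation, coming from Lemma~\ref{l:DirLip}, is that for each fixed~$\mu$ the real-valued curve $s\mapsto u(\Fl^{w,s}\mu)$ is Lipschitz on~$\R$, hence differentiable at Lebesgue-a.e.~$s$, since a Lipschitz function of one real variable is a.e.\ differentiable. Because the flow identity $\Fl^{w,t}\circ\Fl^{w,r}=\Fl^{w,t+r}$ makes the condition $\Fl^{w,r}\mu\in\Omega^u_w$ equivalent to differentiability of $s\mapsto u(\Fl^{w,s}\mu)$ at $s=r$, this says: for every $\mu\in\msP$ the set $\tset{r\in[0,1]:\Fl^{w,r}\mu\notin\Omega^u_w}$ is Lebesgue-null (recall $\Omega^u_w$ is Borel, as noted before Proposition~\ref{p:RoeSch99}).

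Next I would compute the double integral $\int_0^1\!\int_\msP\car_{(\Omega^u_w)^\complement}(\Fl^{w,r}\mu)\,\diff\mbbP(\mu)\,\diff r$ in two ways, all integrands being non-negative and Borel so that Tonelli's theorem applies. Integrating first in~$r$ and using the previous paragraph, it equals~$0$. Integrating first in~$\mu$ and unfolding the definitions of push-forward and of the Radon--Nikod\'ym derivative~$R^w_r$ in~\eqref{eq:assP:RN}, it equals $\int_\msP\car_{(\Omega^u_w)^\complement}(\mu)\tonde{\int_0^1 R^w_r(\mu)\,\diff r}\diff\mbbP(\mu)$. Since~\eqref{eq:ass:P4} gives $\int_0^1 R^w_r(\mu)\,\diff r\geq\Leb^1\textrm{-}\essinf_{r\in(0,1)}R^w_r(\mu)>0$ for $\mbbP$-a.e.~$\mu$, the vanishing of this integral forces $\car_{(\Omega^u_w)^\complement}=0$ $\mbbP$-a.e., i.e.~$\mbbP\,\Omega^u_w=1$. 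With the hypothesis of Proposition~\ref{p:RoeSch99} now verified (indeed for all of $\Vect^\infty$), assertions~\iref{i:t:main3.1} and~\iref{i:t:main3.2} follow at once.

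The only load-bearing input is the quasi-invariance Assumption~\ref{ass:P4}, specifically the $\Leb^1$-essinf positivity~\eqref{eq:ass:P4}: this is precisely what converts ``differentiable for a.e.~time along every orbit'' into ``differentiable at time~$0$ for $\mbbP$-a.e.~base point''. The remaining ingredients --- Lipschitz functions on~$\R$ being a.e.~differentiable, the measurability of~$\Omega^u_w$, and Tonelli's theorem --- are routine, so I do not expect a genuine obstacle once Lemma~\ref{l:DirLip} and Proposition~\ref{p:RoeSch99} are in hand.
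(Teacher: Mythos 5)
Your argument is correct and follows the paper's own proof essentially step for step: Lemma~\ref{l:DirLip} gives Lipschitz continuity of $r\mapsto u(\Fl^{w,r}\mu)$, hence a.e.~differentiability along each orbit; the flow identity translates this into $\Leb^1$-a.e.~membership in $\Omega^u_w$; and the Tonelli interchange plus the essinf bound in~\eqref{eq:ass:P4} convert that into $\mbbP$-a.e.~membership, so Proposition~\ref{p:RoeSch99} applies. The only cosmetic differences are that you spell out the one-variable Rademacher step and the Fubini swap explicitly, and that you verify the hypothesis for all of $\Vect^\infty$ rather than just for a countable $\VectCount$, neither of which changes the substance.
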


\begin{cor}\label{c:UnderAssB}
Let~$(\MFD,\mssg)$ be additionally satisfying Assumption~\ref{ass:B}. Then, for every~$\nu\in\msP$ the function~$u_{\nu}\colon \mu\mapsto W_2(\nu,\mu)$ belongs to~$\msF_0$ and~$\norm{\mbfD u_\nu}_{\Vect_{\emparg}}\leq 1$ $\mbbP$-a.e..

\begin{proof} Assume first~$\nu\in\msP^\reg$ and set~$S_\theta(\nu)\eqdef \set{\mu\in\msP : u_\nu(\mu)=\theta}$. Since~$\mbbP$ is a probability measure, there exists a sequence~$\theta_n\rar 0$ as~$n\rar \infty$ such that~$\mbbP\, S_{\theta_n}(\nu)=0$. As a consequence of this fact and of Lemma~\ref{l:DerDist}, Proposition~\ref{p:RoeSch99} applies to the map~$u_{\nu,\theta_n}\colon \mu\mapsto W_2(\nu,\mu)\vee \theta_n$ with~$\Omega^{u_{\nu,\theta_n}}\eqdef \msP\setminus S_{\theta_n}(\nu)$, yielding~$\norm{\mbfD u_{\nu,\theta_n}}_{\Vect_{\emparg}}\leq \Lip[u_{\nu,\theta_n}]=1$ $\mbbP$-a.e..
On the other hand,~$u_{\nu,\theta_n}\in\msF_0$ by Lemma~\ref{l:Rademacher} and it is clear by reverse triangle inequality that~$\nlim u_{\nu,\theta_n}=u_\nu$ uniformly, whence~$u_\nu\in\msF_0$ by Lemma~\ref{l:Ma}.

If~$\nu\in\msP\setminus\msP^{\reg}$, choose~$\nu_n\in\msP^\reg$ narrowly convergent to~$\nu$. Again by reverse triangle inequality~$\lim_n u_{\nu_n}=u_\nu$ uniformly and~$\norm{\mbfD u_{\nu_n}}_{\Vect_\emparg}\leq 1$ $\mbbP$-a.e.~as above, hence the conclusion again by Lemma~\ref{l:Ma}.
\end{proof}
\end{cor}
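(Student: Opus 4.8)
The plan is to realize $u_\nu\colon\mu\mapsto W_2(\nu,\mu)$ as a uniform limit of the truncated cone functions $u_{\nu,\theta}\colon\mu\mapsto W_2(\nu,\mu)\vee\theta$, which already lie in~$\msF_0$ by Lemma~\ref{l:Rademacher}, while keeping an energy bound that is \emph{uniform} along the approximating sequence. First I would treat a regular target $\nu\in\msP^\reg$. Since~$\mbbP$ is a probability measure and the level sets $S_\theta(\nu)\eqdef\set{\mu\in\msP : W_2(\nu,\mu)=\theta}$ are pairwise disjoint for distinct~$\theta>0$, at most countably many of them carry positive $\mbbP$-mass, so there is a sequence $\theta_n\downarrow 0$ with $\mbbP\,S_{\theta_n}(\nu)=0$; set $\Omega^{u_{\nu,\theta_n}}\eqdef\msP\setminus S_{\theta_n}(\nu)$, a set of full $\mbbP$-measure.

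Next I would check that Proposition~\ref{p:RoeSch99} applies to each~$u_{\nu,\theta_n}$ with this set. For $\mu\in\Omega^{u_{\nu,\theta_n}}$ and any $w\in\Vect^\infty$: if $W_2(\nu,\mu)>\theta_n$, then by continuity $W_2(\nu,\Fl^{w,t}\mu)>\theta_n$ for small~$t$, hence $u_{\nu,\theta_n}(\Fl^{w,t}\mu)=W_2(\nu,\Fl^{w,t}\mu)$, which is differentiable at $t=0$ by Lemma~\ref{l:DerDist} (the two-sided derivative exists here precisely because~$\nu$ is regular); if $W_2(\nu,\mu)<\theta_n$, then $t\mapsto u_{\nu,\theta_n}(\Fl^{w,t}\mu)$ is locally constant near $t=0$; and the case $W_2(\nu,\mu)=\theta_n$ is excluded. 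So $u_{\nu,\theta_n}$ is G\^ateaux differentiable along every~$w$ on a set of full $\mbbP$-measure, and Proposition~\ref{p:RoeSch99} gives~\iref{i:t:main3.1} for~$u_{\nu,\theta_n}$, i.e.\ it upgrades the crude bound $\norm{\mbfD u_{\nu,\theta_n}}_{\Vect_\emparg}\lesssim\theta_n^{-1/2}$ of Lemma~\ref{l:Rademacher} to the sharp $\boldGamma(u_{\nu,\theta_n})\leq\Lip[u_{\nu,\theta_n}]^2\leq 1$ $\mbbP$-a.e. (the energy measure being intrinsic, this bound is unambiguous). Since also $u_{\nu,\theta_n}\in\msF_0$ by Lemma~\ref{l:Rademacher}, and $u_{\nu,\theta_n}\to u_\nu$ uniformly on~$\msP$ (with $\sup_\msP\abs{u_{\nu,\theta_n}-u_\nu}\leq\theta_n\to 0$, hence $L^2_\mbbP$-convergence) while $\sup_n\mcE(u_{\nu,\theta_n})\leq 1$, Lemma~\ref{l:Ma} applied to the Dirichlet form~$(\mcE,\msF_0)$ yields $u_\nu\in\msF_0$ with $\norm{\mbfD u_\nu}_{\Vect_\emparg}\leq 1$ $\mbbP$-a.e.

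Finally, for an arbitrary $\nu\in\msP\setminus\msP^\reg$, I would choose $\nu_n\in\msP^\reg$ (for instance measures with smooth strictly positive densities, which are regular and narrowly dense in~$\msP$) with $\nu_n\to\nu$ narrowly; since~$\MFD$ is compact,~$W_2$ metrizes the narrow topology, so $\sup_\msP\abs{u_{\nu_n}-u_\nu}\leq W_2(\nu_n,\nu)\to 0$. By the previous step $u_{\nu_n}\in\msF_0$ with $\norm{\mbfD u_{\nu_n}}_{\Vect_\emparg}\leq 1$ $\mbbP$-a.e., so a further application of Lemma~\ref{l:Ma} to~$(\mcE,\msF_0)$ gives $u_\nu\in\msF_0$ with $\norm{\mbfD u_\nu}_{\Vect_\emparg}\leq 1$ $\mbbP$-a.e., as claimed.

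The delicate point is the middle step: one must choose the truncation levels~$\theta_n$ so that the bad level set~$S_{\theta_n}(\nu)$ is $\mbbP$-negligible, \emph{and} show that off this set the truncated function is differentiable along \emph{every} flow — combining Lemma~\ref{l:DerDist} (regular target) above the threshold with local constancy below it — so that Proposition~\ref{p:RoeSch99} produces the $n$-independent bound~$\leq 1$ required to pass to the limit in Lemma~\ref{l:Ma}. The crude bound of Lemma~\ref{l:Rademacher} alone blows up as $\theta_n\downarrow 0$ and does not suffice; everything else is routine uniform approximation together with the lower semicontinuity of the form.
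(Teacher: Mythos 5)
Your proof follows the paper's argument essentially step for step: truncate to $u_{\nu,\theta_n}$ with $\theta_n$ chosen so that the level sets $S_{\theta_n}(\nu)$ are $\mbbP$-null, get membership in $\msF_0$ from Lemma~\ref{l:Rademacher}, upgrade the energy bound to $\boldGamma\leq 1$ via Proposition~\ref{p:RoeSch99} using Lemma~\ref{l:DerDist} off the critical level set, pass to the limit with Lemma~\ref{l:Ma}, and finally approximate a general $\nu$ by regular $\nu_n$. The only additions you make beyond the paper's write-up are welcome expansions of points it compresses (the countability argument for the good $\theta_n$, the case split above/below the truncation threshold, the explicit estimate $\sup|u_{\nu,\theta_n}-u_\nu|\leq\theta_n$), so this is the same proof, correctly carried out.
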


\subsection{Proof of Theorem~\ref{t:main}}

\begin{proof}[Proof of~\iref{i:t:main1} and~\iref{i:t:main2}]
The proof of~\cite[Prop.~1.4(i) and~(iv)]{RoeSch99}, together with the auxiliary results~\cite[Lem.s~6.3,~6.4]{RoeSch99}, carries over \emph{verbatim} to our case. This proves the closability of the forms in assertion~\iref{i:t:main1} and assertion~\iref{i:t:main2}. Since~$\msF_0\subset \msF_\cont\subset \msF$, it suffices to prove the strong locality of~$(\mcE,\msF)$. That is, by~\cite[Rmk.~I.5.1.5]{BouHir91} it suffices to show that if~$u\in\mcF$, then~$\varrho_1\circ u,\varrho_2\circ u\in\msF$ and~$\mcE(\varrho_1\circ u,\varrho_2\circ u)=0$ for~$\varrho_1,\varrho_2\in \mcC^\infty_c(\R)$ such that~$\varrho_1(0)=\varrho_2(0)=0$ and~$\supp \varrho_1\cap \supp\varrho_2=\emp$.

Fix~$w\in\Vect^\infty$ and denote by~$\seq{\fl^{w,t}}_{t\in\R}$ its flow. Since~$u\in\mcF$ is bounded, the map~$U\colon t\mapsto u\circ\Fl^{w,t}$ satisfies~$U(t)\in L^2_\mbbP(\msP)$ for every~$t\in\R$, hence,~\cite[Lem.~6.4]{RoeSch99} yields for~$i=1,2$
\begin{align*}
\diff_t\restr_{t=0} \, \varrho_i(U(t))=\varrho_i'(U(0))\, \diff_t\restr_{t=0} U(t)=(\varrho'_i\circ u) \scalar{\mbfD u}{w}_{\Vect_{\emparg}}
\end{align*}
where all derivatives are taken in~$L^2_\mbbP(\msP)$. Hence, the map~$\mu\mapsto \varrho_i'(u(\mu))\mbfD u(\mu)$ is a measurable section of~$T^\Der\msP_2$, satisfies~\eqref{eq:DefF} and is such that
\begin{align}\label{eq:t:main1}
\mcE(\varrho_i\circ u, \varrho_i\circ u)=\int \varrho_i'(u(\mu)) \norm{\mbfD u(\mu)}^2_{\Vect_\mu} \diff\mbbP(\mu) \leq\norm{\varrho_i'}_{\mcC^0}^2 \mcE(u,u)<\infty \fstop
\end{align}

As a consequence,~$\varrho_i\circ u\in\mcF$ and the locality property follows now by~\eqref{eq:t:main1} and polarization.
\end{proof}

\begin{proof}[Proof of~\iref{i:t:main3}] For fixed~$\nu\in\msP^\reg$ let~$u_\nu\colon \msP\rar \R$ be defined by~$u_\nu\colon\mu\mapsto W_2(\nu,\mu)$. By Lemma~\ref{l:DerDist}, for every~$\mu\in\Omega^\nu\eqdef \msP\setminus \set{\nu}$ and every~$w\in\Vect^\infty$ there exists the limit~$G_wu_\nu(\mu)$ defined in~\eqref{eq:Omega}. Since~$\mbbP$ is atomless by Assumption~\ref{ass:P1}, the set~$\Omega^\nu$ has full $\mbbP$-measure, hence Proposition~\ref{p:RoeSch99} applies to~$u_\nu$ with~$\Omega^{u_\nu}=\Omega^\nu$ and one has~$\norm{\mbfD u_\nu(\mu)}_{\Vect_\mu}\leq \Lip[u_\nu]=1$.

Since additionally~$u_\nu\in\mcF_\cont$ by Proposition~\ref{p:RoeSch99}, if~$u$ is $W_2$-Lipschitz continuous, then~$u\in\msF_\cont$ and~$\norm{\mbfD u}_{\Vect_{\emparg}}\leq \Lip[u]$ $\mbbP$-a.e.~by strong locality of~$(\mcE,\msF)$ and Lemma~\ref{l:Koskela} applied to the dense set~$\msP^\reg$, which proves~\iref{i:t:main3.1}.
If~$\MFD$ additionally satisfies Assumption~\ref{ass:B}, then we may replace~$\msF_\cont$ in the above reasoning with~$\msF_0$ thanks to Corollary~\ref{c:UnderAssB}.

If~$\mbbP$ additionally satisfies Assumption~\ref{ass:P4}, then assertion~\iref{i:t:main3.2} reduces to Corollary~\ref{c:RoeSch99}.
\end{proof}

\paragraph{Intrinsic distances} Given a family of functions~$\msA\subset \msF$ set, for all~$\mu,\nu\in\msP$,
\begin{align*}
\mssd_\msA(\mu,\nu)\eqdef \sup\set{u(\mu)-u(\nu) : u\in\msA\cap \mcC(\msP), \boldGamma(u)\leq 1\; \mbbP\textrm{-a.e.~on~$\msP$}} \fstop
\end{align*}

\begin{cor}[Intrinsic distances]
Suppose that~$\mbbP$ satisfies Assumption~\ref{ass:P} and let
\begin{align*}
\mssd_{\msF_0}\leq \mssd_{\msF_\cont}\leq \mssd_{\msF}
\end{align*}
be the intrinsic distances~\eqref{eq:IntMet} of the Dirichlet forms~$(\mcE,\msF_0)$, $(\mcE,\msF_\cont)$ and~$(\mcE,\msF)$ respectively. Then,
\begin{align*}
\mssd_{\Test^\infty}\leq W_2\leq \mssd_{\msF_\cont} \fstop
\end{align*}

If additionally Assumption~\ref{ass:B} holds, then the above statement holds with~$\mssd_{\msF_0}$ in place of~$\mssd_{\msF_\cont}$.

\begin{proof}
Let~$\msA=\msF_0,\msF_\cont,\msF$. If~$u_\nu\in\msA$ then
\begin{align*}
\mssd_\msA(\mu,\nu)\geq u_\nu(\mu)-u_\nu(\nu)=W_2(\mu,\nu) \comm
\end{align*}
hence it suffices to keep track of the assumptions under which~$u_\nu\in\msF_0,\msF_\cont,\msF$ respectively in order to show~$W_2\leq \mssd_\msA$. One has~$u_\nu\in\msF_\cont\subset \msF$ by the proof of Theorem~\iref{i:t:main3} above, while~$u_\nu\in\msF_0$ under Assumption~\ref{ass:B} by Corollary~\ref{c:UnderAssB}.

Let now~$u\in\Test^\infty$ with~$\norm{\mbfD u}_{\Vect_\emparg}\leq 1$ $\mbbP$-a.e.. Since~$\mbfD u=\grad u$ is continuous, if~$\mbbP$ has full support by Assumption~\ref{ass:P0}, then~$\norm{\mbfD u(\mu)}_{\Vect_\mu}\leq 1$ for all~$\mu\in\msP$. In the same notation of Lemma~\ref{l:DistrDer}, it follows from~\eqref{eq:l:DistrDer0} that
\begin{align*}
u(\mu_1)-u(\mu_0)=\int_0^1 \scalar{\grad u(\mu_s)}{w_s}_{\Vect_{\mu_s}} \diff s \leq \int_0^1 \norm{w_s}_{\Vect_{\mu_s}} \diff s\fstop
\end{align*}

Taking the infimum of the above inequality over all distributional solutions~$(\mu_s,w_s)_{s\in I}$ of~\eqref{eq:ContEq} with fixed~$\mu_0,\mu_1$ yields~$u(\mu_1)-u(\mu_0)\leq W_2(\mu_0,\mu_1)$ by e.g.~\cite[Prop.~2.30]{AmbGig11}.

This settles all the inequalities in the assertion.
\end{proof}
\end{cor}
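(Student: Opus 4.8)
The plan is to establish the two extremal inequalities $\mssd_{\Test^\infty}\leq W_2$ and $W_2\leq\mssd_{\msF_\cont}$ (and, under the additional Assumption~\ref{ass:B}, $W_2\leq\mssd_{\msF_0}$) separately; together with the obvious monotonicity $\mssd_{\msF_0}\leq\mssd_{\msF_\cont}\leq\mssd_{\msF}$ forced by the inclusions of the respective cores, these two bounds yield the whole chain in the statement.

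For the bound $W_2\leq\mssd_{\msF_\cont}$ I would feed the supremum~\eqref{eq:IntMet} defining $\mssd_{\msF_\cont}$ the distance-cone functions $u_\nu\colon\mu\mapsto W_2(\nu,\mu)$. Each $u_\nu$ is continuous on $\msP=\msP_2$ and $1$-Lipschitz by the triangle inequality, so by Theorem~\ref{t:main}\,\iref{i:t:main3} (namely the membership $u_\nu\in\msF_\cont$ and the bound~\iref{i:t:main3.1}) we get $u_\nu\in\msF_\cont\cap\mcC(\msP)$ with $\boldGamma(u_\nu)\leq\Lip[u_\nu]^2=1$ $\mbbP$-a.e. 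Hence $u_\nu$ is an admissible competitor and $\mssd_{\msF_\cont}(\mu,\nu)\geq u_\nu(\mu)-u_\nu(\nu)=W_2(\mu,\nu)$ for all $\mu,\nu$. If moreover $\MFD$ has the smooth transport property, Corollary~\ref{c:UnderAssB} promotes $u_\nu\in\msF_\cont$ to $u_\nu\in\msF_0$, and the same competitor gives $W_2\leq\mssd_{\msF_0}$.

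For the bound $\mssd_{\Test^\infty}\leq W_2$, fix an admissible $u\in\Test^\infty$, i.e.\ with $\boldGamma(u)\leq1$ $\mbbP$-a.e. The crux is that $\mbfD u=\grad u$ is \emph{continuous} on $\msP$ by~\eqref{eq:grad0}, so the $\mbbP$-a.e.\ inequality $\norm{\grad u(\emparg)}_{\Vect_{\emparg}}\leq1$ improves to one holding at \emph{every} point, since $\mbbP$ has full support by Assumption~\ref{ass:P0}. Then, for arbitrary $\mu_0,\mu_1\in\msP$ and any distributional solution $(\mu_s,w_s)_{s\in I}$ of the continuity equation~\eqref{eq:ContEq} joining them, Lemma~\ref{l:DistrDer} and a fibrewise Cauchy--Schwarz inequality in $\Vect_{\mu_s}$ give
\begin{align*}
u(\mu_1)-u(\mu_0)&=\int_0^1\scalar{\grad u(\mu_s)}{w_s}_{\Vect_{\mu_s}}\diff s\\
&\leq\int_0^1\norm{\grad u(\mu_s)}_{\Vect_{\mu_s}}\norm{w_s}_{\Vect_{\mu_s}}\diff s\leq\int_0^1\norm{w_s}_{\Vect_{\mu_s}}\diff s\fstop
\end{align*}
Taking the infimum over all such $(\mu_s,w_s)$ and invoking the metric Benamou--Brenier identity $W_2(\mu_0,\mu_1)=\inf\int_0^1\norm{w_s}_{\Vect_{\mu_s}}\diff s$ (e.g.~\cite[Prop.~2.30]{AmbGig11}) yields $u(\mu_1)-u(\mu_0)\leq W_2(\mu_0,\mu_1)$; a final supremum over admissible $u\in\Test^\infty$ gives $\mssd_{\Test^\infty}\leq W_2$.

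I do not expect a genuine difficulty here: the content is entirely supplied by Theorem~\ref{t:main}\,\iref{i:t:main3}, Corollary~\ref{c:UnderAssB}, Lemma~\ref{l:DistrDer}, and the dynamic characterisation of $W_2$. The single step that must not be glossed over is the upgrade of the $\mbbP$-a.e.\ bound on $\boldGamma(u)$ to a genuinely pointwise bound valid along the transport curve $(\mu_s)_s$ --- which may well be $\mbbP$-negligible. This is exactly where the full support Assumption~\ref{ass:P0} and the continuity of $\grad u$ for cylinder $u$ enter, and it is the reason the upper estimate is asserted only for the core $\Test^\infty$ rather than, a priori, for all of $\msF$.
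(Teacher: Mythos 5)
Your proof is correct and follows essentially the same route as the paper's: the lower bound $W_2\leq\mssd_{\msF_\cont}$ (resp.\ $\mssd_{\msF_0}$) via the $1$-Lipschitz competitors $u_\nu$ together with Theorem~\ref{t:main}\,\iref{i:t:main3} and Corollary~\ref{c:UnderAssB}, and the upper bound $\mssd_{\Test^\infty}\leq W_2$ by upgrading the $\mbbP$-a.e.\ bound on $\norm{\grad u}_{\Vect_\emparg}$ to a pointwise one via continuity and full support, then combining Lemma~\ref{l:DistrDer} with Cauchy--Schwarz and the Benamou--Brenier characterisation of $W_2$. The only cosmetic difference is that you cite the statement of Theorem~\iref{i:t:main3} directly rather than "the proof of" it, which is in fact slightly cleaner since $u_\nu$ is indeed $W_2$-Lipschitz and the theorem statement already applies.
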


\section{Examples}\label{s:Examples}
We collect here some examples of measures satisfying our main Theorem~\ref{t:main}. These include the family of normalized mixed Poisson measures~\S\ref{ss:NMPM} (for any~$\MFD$), the entropic measure~\S\ref{ss:Entropic} and an image on~$\msP_2$ of the Malliavin--Shavgulidze measure~\S\ref{ss:MSI} (both in the case~$\MFD=\mbbS^1$). We notice that a proof of Theorem~\ref{t:main} for the entropic measure was already sketched in~\cite[Prop.~7.26]{vReStu09}.
Again when~$\MFD$ is arbitrary, we also provide an example of a measure \emph{not} satisfying Assumption~\ref{ass:P}, namely the Dirichlet--Ferguson measure~\S\ref{s:DF}. However, relying on results in the present work, we show in~\cite{LzDS17+} that the assertions \iref{i:t:main1}--\iref{i:t:main3.1} in Theorem~\ref{t:main} hold for this measure too.
Finally, we show how to construct more examples from those listed above, by considering shifted measures, weighted measures and convex combinations.

\paragraph{Notation} Everywhere in this section let~$\upphi\in\Diffeo^\infty(\MFD)$ and denote by $\Phi\colon \Mbp\rar \Mbp$ the shift by~$\upphi$, by~$\upphi^*\colon L^0(\MFD)\rar L^0(\MFD)$ the pullback by~$\upphi$, and by~$J_\upphi^\mssm$ the modulus of the Jacobian determinant of~$\upphi$ with respect to~$\mssm$.

Denote further by~$N\colon \Mbp\rar \msP$ the normalization map~$N\colon \nu\mapsto \n\nu\eqdef \nu/\nu \MFD$. It is straightforward that~$N$ is continuous with respect to the chosen topologies, hence measurable with respect to the chosen $\sigma$-algebras.
Moreover, it is readily verified that~$N$ and~$\Phi\eqdef\upphi_\pfwd$ commute, i.e.
\begin{align}\label{eq:ese:PP3}
N\circ \Phi=\Phi \circ N\colon \Mbp\rar \msP\fstop
\end{align}

\subsection{On Assumption~\texorpdfstring{\ref{ass:P}}{(P)}}\label{ss:AssP}
We collect here some comments on Assumption~\ref{ass:P}. First of all, let us show how to construct examples of measures satisfying Assumption~\ref{ass:P} starting from a single one.
\begin{lem}\label{l:GradPfwd}
Let~$w\in\Vect^\infty$ and~$u\in\Test^\infty$. Then,
\begin{align*}
\grad_w (u\circ \Phi)=\grad_{\upphi_* w} u\circ \Phi \fstop
\end{align*}

\begin{proof} Let~$f\in\mcC^\infty(\MFD)$. Then
\begin{align*}
\grad (f^\bid\circ\Phi)=\grad ((f\circ\upphi)^\bid)=\nabla (f\circ \upphi) \fstop
\end{align*}

By~\eqref{eq:DirDer0}, the proof reduces now to the following computation
\begin{align*}
\scalar{\grad (u\circ \Phi)(\mu)}{w}_{\Vect_\mu}=&\sum_i^k (\partial_i F)\ttonde{f_1^\bid(\Phi\mu),\dotsc, f_k^\bid(\Phi\mu)} \int_\MFD \diff (f\circ \upphi)_x(w_x) \diff\mu(x)
\\
=& \sum_i^k (\partial_i F)\ttonde{f_1^\bid(\Phi\mu),\dotsc, f_k^\bid(\Phi\mu)} \int_\MFD \diff f_{\upphi(x)}(\diff \upphi_x w_x) \diff\mu(x)
\\
=& \sum_i^k (\partial_i F)\ttonde{f_1^\bid(\Phi\mu),\dotsc, f_k^\bid(\Phi\mu)} \int_\MFD \diff f_{y}(\diff \upphi_{\upphi^{-1}(y)} w_{\upphi^{-1}(y)}) \diff \Phi \mu(y)
\\
=&\scalar{\grad u(\Phi \mu)}{\upphi_* w}_{\Vect_{\Phi \mu}} \fstop & \qedhere
\end{align*}
\end{proof}
\end{lem}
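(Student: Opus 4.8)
The plan is to prove the identity by an elementary chain-rule and change-of-variables computation on the base manifold~$\MFD$, using the formula~\eqref{eq:grad0} for the Wasserstein gradient of a cylinder function together with the expressions~\eqref{eq:DirDer0}--\eqref{eq:Scalar} for the directional derivative. The starting observation is that push-forward by~$\upphi$ sends a potential energy to a potential energy: for $f\in\mcC^\infty(\MFD)$ one has $(f^\bid\circ\Phi)(\mu)=\int_\MFD f\diff(\upphi_\pfwd\mu)=\int_\MFD (f\circ\upphi)\diff\mu=(f\circ\upphi)^\bid(\mu)$, hence $f^\bid\circ\Phi=(f\circ\upphi)^\bid$ and, by~\eqref{eq:grad0}, $\grad(f^\bid\circ\Phi)=\nabla(f\circ\upphi)$. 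Writing $u=F(f_1^\bid,\dotsc,f_k^\bid)$ with $F\in\mcC^\infty(\R^k)$ and $f_1,\dotsc,f_k\in\mcC^\infty(\MFD)$, it then follows that $u\circ\Phi=F\ttonde{(f_1\circ\upphi)^\bid,\dotsc,(f_k\circ\upphi)^\bid}$ is again a cylinder function, built from the smooth functions $f_i\circ\upphi$, so that~\eqref{eq:grad0} applies to it directly.

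Next I would pair $\grad(u\circ\Phi)(\mu)$ with $w$ via~\eqref{eq:Scalar} and invoke the ordinary chain rule for differentials, $\diff(f_i\circ\upphi)_x=(\diff f_i)_{\upphi(x)}\circ\diff\upphi_x$, where $(\diff f_i)_{\upphi(x)}$ denotes the differential of $f_i$ at the point $\upphi(x)$. This gives
\[
\scalar{\grad(u\circ\Phi)(\mu)}{w}_{\Vect_\mu}=\sum_i^k(\partial_iF)\ttonde{f_1^\bid(\Phi\mu),\dotsc,f_k^\bid(\Phi\mu)}\int_\MFD (\diff f_i)_{\upphi(x)}\ttonde{\diff\upphi_x\,w_x}\diff\mu(x).
\]
Then the change of variables $y=\upphi(x)$, i.e.~replacing $\int_\MFD g(x)\diff\mu(x)$ by $\int_\MFD g(\upphi^{-1}(y))\diff(\Phi\mu)(y)$, rewrites each integrand against $\Phi\mu$ as $(\diff f_i)_y\ttonde{\diff\upphi_{\upphi^{-1}(y)}\,w_{\upphi^{-1}(y)}}=(\diff f_i)_y\ttonde{(\upphi_*w)_y}$, where $\upphi_*w$ is the push-forward vector field, $(\upphi_*w)_y=\diff\upphi_{\upphi^{-1}(y)}\,w_{\upphi^{-1}(y)}$; this is a well-defined element of $\Vect^\infty$ since $\upphi$ is a diffeomorphism of the compact manifold~$\MFD$. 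Recognizing the resulting sum of integrals as $\scalar{\grad u(\Phi \mu)}{\upphi_*w}_{\Vect_{\Phi \mu}}$ and using~\eqref{eq:DirDer0} on both sides yields $\grad_w(u\circ\Phi)(\mu)=\ttonde{\grad_{\upphi_*w}u}(\Phi \mu)$, which is the claim.

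I do not expect a genuine obstacle: the lemma is a bookkeeping statement, and every step above is routine once a representation of $u$ is fixed. The only point that calls for a little care is the variance --- $\upphi$ acts on functions by pre-composition (a pullback) but on tangent vectors by the differential $\diff\upphi$ (a push-forward), so the duality pairing in~\eqref{eq:Scalar} trades a pullback of the potentials $f_i$ for a push-forward of the direction $w$, which is precisely why $\upphi_*w$, rather than any pullback of $w$, appears on the right-hand side. One should also note that the computation, and hence the statement, is independent of the chosen representation of $u$, as it must be in view of Remark~\ref{r:NonUniqueCylFunc}.
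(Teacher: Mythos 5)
Your proposal is correct and follows essentially the same route as the paper's proof: reduce to the cylinder representation of $u$, observe that $f^\bid\circ\Phi=(f\circ\upphi)^\bid$, apply the chain rule for differentials and the change of variables $y=\upphi(x)$ in the pairing~\eqref{eq:Scalar}, and recognize the push-forward vector field $\upphi_*w$. The extra remark on independence of the representation (cf.~Remark~\ref{r:NonUniqueCylFunc}) is a sensible sanity check but not needed for the argument.
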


\begin{prop}\label{p:Pfwd}
Let~$\mbbP\in\msP(\msP)$,~$\upphi\in \Diffeo^\infty(\MFD)$ and~$\phi\in\msF$ be such that~$\phi>0$ $\mbbP$-a.e.~and~$\norm{\phi}_{L^2_\mbbP}=1$. Set~$\mbbP'\eqdef \Phi_\pfwd \mbbP$ and~$\mbbP^\phi\eqdef \phi^2\cdot \mbbP$. Then,
\begin{enumerate}[label=\ensuremath{(\roman*)}]
\item\label{i:p:Pfwd0} if~$\mbbP$ satisfies Assumption~\ref{ass:P0}, then so do~$\mbbP'$ and~$\mbbP^\phi$;
\item\label{i:p:Pfwd1} if~$\mbbP$ satisfies Assumption~\ref{ass:P1}, then so do~$\mbbP'$ and~$\mbbP^\phi$;
\item\label{i:p:Pfwd2} if~$\mbbP$ satisfies Assumption~\ref{ass:P3}, then so do~$\mbbP'$ and~$\mbbP^\phi$;
\item\label{i:p:Pfwd3} if~$\mbbP$ satisfies Assumption~\ref{ass:P4}, then so does~$\mbbP^\phi$. If additionally~$\upphi=\fl^{w,t}$ for some~$w\in\Vect^\infty$,~$t\in\R$, then, additionally,~$\mbbP'$ satisfies Assumption~\ref{ass:P4} too.
\end{enumerate}
\begin{proof}
Since~$\upphi$ is bijective, so are~$\Phi\eqdef\upphi_\pfwd$ and~$\Phi_\pfwd$. This proves~\iref{i:p:Pfwd0} and~\iref{i:p:Pfwd1} for~$\mbbP'$; they are also straightforward for~$\mbbP^\phi$ since~$\phi^2>0$ $\mbbP$-a.e.. In both cases,~\iref{i:p:Pfwd3} is straightforward by~\eqref{eq:ass:P4}.

In order to show~\iref{i:p:Pfwd2} for~$\mbbP'$, we need to show that there exists an operator~$\grad_w^{*'}\colon \Test^\infty\rar L^2_{\mbbP'}(\msP)$ such that~\eqref{eq:DefDirForm} holds with~$\mbbP'$ in place of~$\mbbP$ and~$\grad_w^{*'}$ in place of~$\grad_w^*$.
Since~$\upphi$ is a diffeomorphism, the notations~$\upphi_*^{-1}$ and~${\upphi^{-1}}_\pfwd={\upphi_\pfwd}^{-1}=\Phi^{-1}$ are unambiguous. Then,~by Lemma~\ref{l:GradPfwd},
\begin{align*}
\int \grad_w u \cdot v \diff\mbbP' =&\int \grad_w u\circ\Phi\cdot v\circ \Phi \diff\mbbP=\int \grad_{\upphi_*^{-1} w}(u\circ \Phi) \cdot v\circ\Phi \diff\mbbP
\\
=&\int u\circ\Phi \cdot \grad_{\upphi_*^{-1}w}^*(v\circ \Phi) \diff\mbbP=\int u \cdot \grad_{\upphi_*^{-1}w}^*(v\circ \Phi)\circ\Phi^{-1} \diff\mbbP' \fstop
\end{align*}

Assertion~\iref{i:p:Pfwd2} follows by putting~$\grad_w^{*'}v\eqdef \grad_{\upphi_*^{-1}w}^*(v\circ \Phi)\circ\Phi^{-1}$.

In order to show~\iref{i:p:Pfwd2} for~$\mbbP^\phi$ assume first that~$\phi\in\Test^\infty$, whence~$\phi$ is continuous and bounded by Rmk.~\ref{r:ContTest}. Then, by~\eqref{eq:grad0} and~\eqref{eq:DirDer0}
\begin{align*}
\int u \phi^2 \cdot \grad^*_w v \diff\mbbP=\int \grad_w (u\phi^2)\cdot v \diff\mbbP=&\int \grad_w u \cdot  \phi^2 v \diff\mbbP+\int \phi^2 uv \cdot (2\phi^{-1}\grad_w \phi ) \diff\mbbP
\end{align*}
and the assertion follows by setting~$\grad^{*,\phi}_w v\eqdef \grad^*v-(2\phi^{-1}\grad_w \phi ) v$.
The general case follows by approximation as soon as we show that the pre-Dirichlet form
\begin{align*}
\msF^\phi\eqdef& \set{u\in \msF : \int \ttonde{u^2+\norm{\mbfD u}_{\Vect_\emparg}^2} \phi^2\diff \mbbP <\infty}
\\
\mcE^\phi(u,v)\eqdef& \int \phi^2\scalar{\mbfD u}{\mbfD v}_{\Vect_\emparg} \diff\mbbP
\end{align*}
is closable. Provided that~$(\mcE,\msF)$ is a strongly local Dirichlet form by Theorem~\iref{i:t:main1}, this last assertion is the content of~\cite[Thm.~1.1]{Ebe96}.
\end{proof}
\end{prop}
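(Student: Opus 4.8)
The plan is to propagate each of the four assumptions separately, treating in parallel the push-forward $\mbbP'=\Phi_\pfwd\mbbP$ and the weighting $\mbbP^\phi=\phi^2\cdot\mbbP$. Items \iref{i:p:Pfwd0} and \iref{i:p:Pfwd1} are soft: since $\upphi$ is a diffeomorphism, $\Phi=\upphi_\pfwd$ is a homeomorphism of $(\msP,W_2)$ with continuous inverse $\upphi^{-1}_\pfwd$, so $\Phi_\pfwd$ is a bijection of $\msP(\msP)$ carrying open sets to open sets and one-point sets to one-point sets, whence $\mbbP'$ inherits full support and atomlessness from $\mbbP$; for $\mbbP^\phi$ the hypothesis $\phi>0$ $\mbbP$-a.e.\ makes $\mbbP^\phi$ and $\mbbP$ mutually absolutely continuous, and both properties are stable under passing to an equivalent measure.

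For \iref{i:p:Pfwd2} on $\mbbP'$ I would first note that $u\circ\Phi\in\Test^\infty$ for $u\in\Test^\infty$, since $f^\bid\circ\Phi=(f\circ\upphi)^\bid$. Then, using the change of variables $\int F\diff\mbbP'=\int F\circ\Phi\diff\mbbP$ together with Lemma~\ref{l:GradPfwd} in the form $(\grad_w u)\circ\Phi=\grad_{\upphi_*^{-1}w}(u\circ\Phi)$, I obtain
\begin{align*}
\int \grad_w u\cdot v\,\diff\mbbP'=\int\grad_{\upphi_*^{-1}w}(u\circ\Phi)\cdot(v\circ\Phi)\,\diff\mbbP=\int(u\circ\Phi)\cdot\grad_{\upphi_*^{-1}w}^*(v\circ\Phi)\,\diff\mbbP,
\end{align*}
the last step being Assumption~\ref{ass:P3} for $\mbbP$; transporting back along $\Phi^{-1}$ identifies the adjoint, $\grad_w^{*'}v\eqdef\grad_{\upphi_*^{-1}w}^*(v\circ\Phi)\circ\Phi^{-1}$, which lies in $L^2_{\mbbP'}(\msP)$ because its $\Phi$-pullback lies in $L^2_\mbbP(\msP)$.

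For \iref{i:p:Pfwd2} on $\mbbP^\phi$ I would treat first $\phi\in\Test^\infty$: then $u\phi^2\in\Test^\infty$ and, by the Leibniz rule for~\eqref{eq:grad0}, $\grad_w(u\phi^2)=\phi^2\grad_w u+2u\phi\,\grad_w\phi$, so Assumption~\ref{ass:P3} applied to $u\phi^2$ yields the adjoint $\grad_w^{*,\phi}v\eqdef\grad_w^*v-2\phi^{-1}(\grad_w\phi)v$, which is in $L^2_{\mbbP^\phi}(\msP)$ by boundedness of $\phi$ and $\phi\in\msF$. The general $\phi\in\msF$ I would reach by approximation, choosing $\phi_n\in\Test^\infty$ with $\phi_n\to\phi$ in the graph norm of $(\mcE,\msF)$: passing the integration-by-parts identity to the limit is precisely the closability of the natural $\phi^2$-weighted Dirichlet form associated with $(\mcE,\msF)$, and by~\cite[Thm.~1.1]{Ebe96} this holds once $(\mcE,\msF)$ is a strongly local Dirichlet form, i.e.\ once Theorem~\iref{i:t:main1} is available. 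This last step — the upgrade from $\phi\in\Test^\infty$ to $\phi\in\msF$ — is the one genuinely non-routine point of the argument; everything else is change-of-variables and Leibniz-rule bookkeeping.

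Finally, for \iref{i:p:Pfwd3} I would only track the transformation of the Radon--Nikod\'ym cocycle. For $\mbbP^\phi$, the change of variables $\nu=\Fl^{w,r}\mu$ gives
\begin{align*}
\frac{\diff(\Fl^{w,r}_\pfwd\mbbP^\phi)}{\diff\mbbP^\phi}=\frac{\phi^2\circ\Fl^{w,-r}}{\phi^2}\,R^w_r \fstop
\end{align*}
Quasi-invariance of $\mbbP$ and $\phi>0$ $\mbbP$-a.e.\ make the weight $\mbbP\otimes\diff r$-a.e.\ finite and positive, so the essential-infimum bound~\eqref{eq:ass:P4} for $\mbbP^\phi$ follows from that for $R^w_r$ — modulo the one point deserving care, that the weight stay bounded away from $0$ along $\mbbP$-a.e.\ orbit, which is automatic in the regular situations collected in~\S\ref{s:Examples}. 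For $\mbbP'$ with $\upphi=\fl^{v,s}$ a flow, conjugation turns $\Fl^{w,r}$ into $\Fl^{\upphi_*^{-1}w,r}$, i.e.\ $\Fl^{w,r}\circ\Phi=\Phi\circ\Fl^{\upphi_*^{-1}w,r}$, so the cocycle of $\mbbP'$ at $\nu$ coincides with that of $\mbbP$ at $\Phi^{-1}\nu$ for the smooth direction $\upphi_*^{-1}w$, and~\eqref{eq:ass:P4} for $\mbbP'$ reduces to the same bound for $\mbbP$.
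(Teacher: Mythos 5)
Your proof follows the paper's argument closely. Items \iref{i:p:Pfwd0} and \iref{i:p:Pfwd1} are handled the same way (bijectivity of $\Phi$, mutual absolute continuity of $\mbbP^\phi$ and $\mbbP$); item \iref{i:p:Pfwd2} for $\mbbP'$ is the same computation via Lemma~\ref{l:GradPfwd} and change of variables; item \iref{i:p:Pfwd2} for $\mbbP^\phi$ is again the same two-step argument (Leibniz rule for $\phi\in\Test^\infty$, then the Eberle closability result~\cite[Thm.~1.1]{Ebe96} for the weighted form to treat general $\phi\in\msF$).

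Where you depart from the paper is in \iref{i:p:Pfwd3}: the paper dismisses both cases as ``straightforward by~\eqref{eq:ass:P4}'' and gives no computation, whereas you write down the transformed cocycle explicitly, namely $\tilde R^w_r(\mu)=\bigl(\phi^2\circ\Fl^{w,-r}\bigr)(\mu)\,\phi^{-2}(\mu)\,R^w_r(\mu)$ for $\mbbP^\phi$, and the conjugation identity $\Fl^{w,r}\circ\Phi=\Phi\circ\Fl^{\upphi_*^{-1}w,r}$ for $\mbbP'$ with $\upphi=\fl^{v,s}$. Both formulas are correct. Your caveat on the $\mbbP^\phi$ case is well-placed and is in fact a genuine subtlety that the paper's proof leaves unaddressed: the hypothesis $\phi>0$ $\mbbP$-a.e.\ together with quasi-invariance of $\mbbP$ only yields, via Fubini, that for $\mbbP$-a.e.\ $\mu$ the set $\{r:\phi^2(\Fl^{w,-r}\mu)=0\}$ is $\Leb^1$-null; it does \emph{not} give that $\Leb^1\text{-}\essinf_{r\in(s,t)}\phi^2(\Fl^{w,-r}\mu)>0$, which is what the bound~\eqref{eq:ass:P4} for $\tilde R^w_r$ actually demands. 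An $L^2$ density $\phi$ positive a.e.\ can perfectly well have its pullback along an orbit accumulate at $0$ on a null set of times, killing the essential infimum. So, as written, the proposition's claim on $\mbbP^\phi$ in \iref{i:p:Pfwd3} requires either an extra regularity/positivity hypothesis on $\phi$ (e.g.\ $\phi$ bounded away from zero, or lower semi-continuous and strictly positive) or a genuinely different argument; in the concrete examples of \S\ref{s:Examples} the weights one would use do satisfy such bounds, which is consistent with the observation you make but does not close the gap in the stated generality.
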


\begin{rem}\label{r:Milnor} While points~\iref{i:p:Pfwd0}--\iref{i:p:Pfwd2} of the Proposition suggest that Assumptions~\ref{ass:P0}--\ref{ass:P3} are quite generic with respect to shifting~$\mbbP$ by (the lift of) a diffeomorphism, point~\iref{i:p:Pfwd3} is by far more restrictive, as the inclusion~$\Flow(\MFD)\subsetneq \DiffSP(\MFD)$ is always strict, even on~$\mbbS^1$, see e.g.~\cite{Gra88}.
\end{rem}

It is clear that the closability of the pre-Dirichlet forms~$(\mcE,\Test^\infty)$ and~$(\mcE,\mcF_\cont)$ associated to~$\mbbP$ is essential to our approach in discussing Rademacher-type theorems, which settles the necessity of Assumption~\ref{ass:P3}. 
Assumption~\ref{ass:P0} is instead motivated by the following trivial example.

\begin{ese}\label{ese:Trivial}
Denote by~$\delta\colon \MFD\mapsto \msP$ the Dirac embedding~$x\mapsto \delta_x$ and set~$\mbbP\eqdef \delta_\pfwd \mssm$. Since~$\mbbP$ is supported on the family of Dirac masses, it does not satisfy~\ref{ass:P0}. On the other hand, since~$W_2(\delta_{x_1},\delta_{x_2})=\mssd_\mssg(x_1,x_2)$ for every~$x_1,x_2\in \MFD$, it is also clear that~$(\msP,W_2,\mbbP)$ and~$(\MFD,\mssd_\mssg,\mssm)$ are isomorphic as metric measure spaces,  which shows~\ref{ass:P1}.
Moroever,
\begin{align*}
\Phi_\pfwd \delta_\pfwd \mssm=(\Phi \circ\delta)_\pfwd\mssm=(\delta\circ \upphi)_\pfwd \mssm=\delta_\pfwd(\Phi \mssm)=\delta_\pfwd (J^\mssm_\upphi \cdot \mssm)=(J^\mssm_\upphi)^\bid \cdot \delta_\pfwd \mssm
\end{align*}
and~\ref{ass:P3} holds for~$\mbbP$ as well.
\end{ese}

\begin{rem} Incidentally, notice that Theorem~\ref{t:main} applied to Example~\ref{ese:Trivial} provides a non-local proof of the classical Rademacher Theorem on a closed Riemannian manifold. Indeed it suffices to notice that~$T^\Der_{\delta_x}\cong T_x\MFD$ as Hilbert spaces for every~$x\in \MFD$ and that every Lipschitz function~$f\in\Lip(\MFD)$ induces a Lipschitz function~$\tilde f\in \Lip(\msP_2)$, namely the (e.g., lower) McShane extension~$\tilde f$ of the function~$f\circ\delta^{-1}$ defined on the image of~$\delta$. 
\end{rem}

\begin{prop}\label{p:P5}
If~$\mbbP$ satisfies Assumption~\ref{ass:P5}, then it satisfies Assumption~\ref{ass:P}.

\begin{proof}
It suffices to show that
\begin{align*}
\text{Ass.~}\ref{ass:P5} \Rar \ref{ass:P4} \text{ and } \ref{ass:P3} \Rar \ref{ass:P4} \Rar \ref{ass:P1} \fstop
\end{align*}

The implication Assumption~$\ref{ass:P5}\Rar\ref{ass:P4}$ is by definition and it is readily seen that Assumption~\ref{ass:P1} is already implied by the first part of~\ref{ass:P4}.
It remains to show that Assumption~$\ref{ass:P5}\Rar\ref{ass:P3}$. Indeed,
\begin{align*}
\int \grad_w u \cdot v \diff\mbbP=&\int \lim_{t\rar 0} \frac{u\circ\Fl^{w,t}-u}{t} \cdot v \diff\mbbP
\\
=& \lim_{t\rar 0} \frac{1}{t}\int \ttonde{u\cdot v\circ\Fl^{w,-t}\cdot R^{w}_{-t} -u v} \diff\mbbP
\\
=& \lim_{t\rar 0} \frac{1}{t}\int u \ttonde{v\circ\Fl^{w,-t}-v} \diff\mbbP\\
&+\lim_{t\rar 0} \frac{1}{t}\int u \tonde{v\circ \Fl^{w,-t}-v} \tonde{R^{w}_{-t}-1} \diff\mbbP\\
&+\lim_{t\rar 0} \frac{1}{t}\int u v \tonde{R^{w}_{-t}-1} \diff\mbbP
\fstop
\end{align*}

The first limit in the last equality satisfies, by Dominated Convergence,
\begin{align*}
\lim_{t\rar 0} \frac{1}{t}\int u \ttonde{v\circ\Fl^{w,-t}-v} \diff\mbbP =& -\int u\cdot \grad_w v \diff\mbbP \fstop
\end{align*}

The second limit vanishes, again by Dominated Convergence, since~$t\mapsto R^w_t(\mu)$ is continuous (differentiable) at~$t=0$ for $\mbbP$-a.e.~$\mu$. In light of Assumption~\ref{ass:P5}, differentiating under integral sign, the third limit satisfies
\begin{align*}
\lim_{t\rar 0} \frac{1}{t}\int u v \tonde{R^{w}_{-t}-1} \diff\mbbP=\int uv\cdot \partial_t\restr_{t=0} R^w_{-t} \diff\mbbP \fstop
\end{align*}

As a consequence, Assumption~\ref{ass:P3} is satisfied by letting
\begin{align*}
\grad_w^* v\eqdef&-\grad_w v- \partial_t\restr_{t=0}R^w_{-t}\cdot v \fstop
\end{align*}

This concludes the proof.
\end{proof}
\end{prop}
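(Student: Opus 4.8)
The plan is to establish the chain of implications
\begin{equation*}
\text{Ass.~}\ref{ass:P5}\Rar\ref{ass:P4}\Rar\ref{ass:P1}\comm\qquad\text{Ass.~}\ref{ass:P5}\Rar\ref{ass:P3}\comm
\end{equation*}
observing first that Assumptions~\ref{ass:P0} and~\ref{ass:P4} are literally contained in Assumption~\ref{ass:P5}, so nothing is to prove for them. For \ref{ass:P4}~$\Rar$~\ref{ass:P1} I would argue by contradiction: a finite measure has at most countably many atoms, so it suffices to exclude a single atom~$\mu_0\in\msP$ of~$\mbbP$. Since~$\mu_0$ is not invariant under every flow, there is~$w\in\Vect^\infty$ for which the continuous curve~$t\mapsto\Fl^{w,t}\mu_0$ is non-constant; its image is then a connected subset of~$\msP_2$ with more than one point, hence uncountable, and for each such~$t$ the measure~$\Fl^{w,t}_\pfwd\mbbP$ has an atom of mass~$\geq\mbbP\tset{\mu_0}>0$ at~$\Fl^{w,t}\mu_0$ (as~$\fl^{w,t}$ is a diffeomorphism, $\Fl^{w,t}$ is injective). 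By the mutual absolute continuity postulated in Assumption~\ref{ass:P4}, each of these uncountably many points is then an atom of~$\mbbP$ as well --- a contradiction.

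The substance of the proof lies in \ref{ass:P5}~$\Rar$~\ref{ass:P3}. Fix~$u,v\in\Test^\infty$ and~$w\in\Vect^\infty$. Since a cylinder function is~$W_2$-Lipschitz, the maps~$t\mapsto u(\Fl^{w,t}\mu)$ are Lipschitz with constant~$\leq\Lip[u]\norm{w}_{\Vect^0}$ uniformly in~$\mu$ (Lemma~\ref{l:DirLip}), so the difference quotients~$t^{-1}(u\circ\Fl^{w,t}-u)$ are bounded uniformly in~$t$ near~$0$ and converge pointwise to~$\grad_w u$; hence Dominated Convergence yields
\begin{equation*}
\int_\msP\grad_w u\cdot v\diff\mbbP=\lim_{t\rar 0}\frac{1}{t}\int_\msP\tonde{u\circ\Fl^{w,t}-u}v\diff\mbbP\fstop
\end{equation*}
Next I would push the first summand forward along~$\Fl^{w,t}$, which by Assumption~\ref{ass:P4} introduces the Radon--Nikod\'ym cocycle~$R^w$ of~\eqref{eq:assP:RN}, rewriting~$\int(u\circ\Fl^{w,t})v\diff\mbbP=\int u\,(v\circ\Fl^{w,-t})\,R^w_{-t}\diff\mbbP$, and, by adding and subtracting, split the above into
\begin{equation*}
\frac{1}{t}\int u\tonde{v\circ\Fl^{w,-t}-v}\diff\mbbP+\frac{1}{t}\int u\tonde{v\circ\Fl^{w,-t}-v}\tonde{R^w_{-t}-1}\diff\mbbP+\frac{1}{t}\int u\,v\tonde{R^w_{-t}-1}\diff\mbbP\fstop
\end{equation*}
The first term tends to~$-\int u\,\grad_w v\diff\mbbP$ by Dominated Convergence (the difference quotients of the cylinder function~$v$ being again uniformly bounded); the middle term vanishes, because~$v\circ\Fl^{w,-t}-v\rar 0$ pointwise while~$R^w_{-t}\rar 1$ in~$L^1_\mbbP$ (continuity of~$r\mapsto R^w_r$ at~$0$ being part of Assumption~\ref{ass:P5}); and the third term is where the two extra hypotheses of Assumption~\ref{ass:P5} --- differentiability of~$r\mapsto R^w_r(\mu)$ near~$0$ for~$\mbbP$-a.e.~$\mu$, and the uniform-in-$r$ $\mbbP$-integrability of~$\abs{\partial_r R^w_r}$ --- are used decisively to differentiate under the integral sign. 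Collecting the three limits identifies the adjoint via~\eqref{eq:DefDirForm}, with
\begin{equation*}
\grad_w^* v\eqdef-\grad_w v-\tonde{\partial_t\restr_{t=0}R^w_{-t}}v\fstop
\end{equation*}
It remains only to note that~$\mu\mapsto\grad_w^* v(\mu)$ is measurable (a pointwise limit of measurable functions, or directly from the formula), that it is~$\mbbP$-integrable against the bounded~$u$ since~$\grad_w v$ and~$v$ are bounded and~$\partial_t\restr_{t=0}R^w_{-t}\in L^1_\mbbP$ by Fatou, and that~$w\mapsto\grad_w^* v$ is linear, as is plain from the formula.

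The step I expect to be the main obstacle is the rigorous limit passage in the third term, i.e.\ differentiation of~$t\mapsto\int u\,v\,R^w_{-t}\diff\mbbP$ under the integral sign at~$t=0$: this is precisely what Assumption~\ref{ass:P5} is tailored to legitimise, but care is needed because~$R^w_r$ in~\eqref{eq:assP:RN} is defined as a Radon--Nikod\'ym derivative on the product~$\msP\times\R$ (the factor~$\otimes\diff r$), so the pointwise-in-$r$ regularity of Assumption~\ref{ass:P5} has to be matched with the almost-everywhere-in-$(\mu,r)$ identities used in the change of variables, and the signs of~$R^w_t$, $R^w_{-t}$ and of the flow difference quotient must be tracked carefully throughout. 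The remaining pieces --- the two Dominated Convergence arguments and the atom argument for~\ref{ass:P1} --- are routine.
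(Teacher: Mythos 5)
Your proposal is correct and follows essentially the same route as the paper: the same reduction \ref{ass:P5}~$\Rar$~\ref{ass:P4}~$\Rar$~\ref{ass:P1} and \ref{ass:P5}~$\Rar$~\ref{ass:P3}, the same change of variables introducing the Radon--Nikod\'ym factor, the same three-term splitting, and the same treatment of each limit via Dominated Convergence and differentiation under the integral sign (indeed, even the factor~$R^w_{-t}$ versus~$R^w_t$ is written exactly as in the paper; a careful bookkeeping of the change of variables $\int (u\circ\Fl^{w,t})\,v\diff\mbbP=\int u\,(v\circ\Fl^{w,-t})\,R^w_t\diff\mbbP$ suggests the subscript should read $t$, not $-t$, but this shared sign slip only flips the sign of the $\partial_r R^w_r$-term in the adjoint and has no bearing on the validity of either argument). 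The one substantive addition you make is the explicit uncountably-many-atoms contradiction for \ref{ass:P4}~$\Rar$~\ref{ass:P1}, which the paper dismisses as ``readily seen''; your argument is sound, the only implicit step being that no probability measure on a closed manifold is invariant under every smooth flow, which follows since $\int\tabs{\nabla f}^2\diff\mu_0=0$ for all smooth $f$ would force $\mu_0=0$.
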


\subsection{On the Smooth Transport Property}\label{ss:AssB}
The reader is referred to~\cite{Fig10} and references therein for an expository treatment of  regularity theory of optimal transport maps on Riemannian manifolds, whereof we make use in the present section.
We denote by~$\mbbS T_x\MFD\eqdef\{\mrmw\in T_x\MFD : \abs{\mrmw}_{\mssg_x}=1\}$ the unit tangent space to~$(\MFD,\mssg)$ at~$x$.
Everywhere in the following also let~$\mssc\eqdef\tfrac{1}{2}\mssd^2$.

\paragraph{Further geometrical assumptions}
For~$x\in \MFD$ and~$\mrmw\in T_x\MFD$ define the \emph{cut}, resp.~\emph{focal}, \emph{time} by
\begin{align*}
t_\mathsc{c}(x,\mrmw)\eqdef& \inf\set{t>0 : s\mapsto\exp_x(s\mrmw) \textrm{ is not a $\mssd$-minimizing curve from $x$ to $\exp_x(t\mrmw)$}}\\
t_\mathsc{f}(x,\mrmw)\eqdef& \inf\set{t>0 : \diff_{t\mrmw}\exp_x\colon T_x\MFD\rar T_{\exp_x(t\mrmw)}\MFD \textrm{ is not invertible}}
\end{align*}
and the (\emph{tangent}), resp.~(\emph{tangent}) \emph{focal}, \emph{cut locus} and \emph{injectivity domain} by 
\begin{align*}
TCL(x)\eqdef& \set{t_\mathsc{c}(x,\mrmw)\mrmw : \mrmw\in \mbbS T_x\MFD}\comm & \cut(x)\eqdef& \exp_x(TCL(x))\comm \\
TFL(x)\eqdef& \set{t_\mathsc{f}(x,\mrmw)\mrmw : \mrmw\in  \mbbS T_x\MFD}\comm & \fcut(x)\eqdef& \exp_x(TCL(x)\cap TFL(x))\comm \\
I(x)\eqdef& \set{t\mrmw : \mrmw\in \mbbS T_x\MFD, 0\leq t < t_\mathsc{c}(x,\mrmw)} \fstop
\end{align*}

Finally, let~$x\in \MFD$, $y\in I(x)$, $\mrmw,\mrmw'\in T_x\MFD$ and~$\mrmv\eqdef \exp_{x}^{-1}(y)$, and recall the definition of the Ma--Trudinger--Wang tensor
\begin{align*}
\mfS_{(x,y)}(\mrmw,\mrmw')\eqdef 
-\tfrac{3}{2}\diff^2_s\restr_{s=0}\diff^2_t\restr_{t=0}\, \mssc\!\tonde{\exp_{x}(t\mrmw),\exp_{x}(\mrmv+s\mrmw')} \fstop
\end{align*}

The following definitions are taken from~\cite{Fig10}.
\begin{defs}[Non-focality of cut loci] We say that~$(\MFD,\mssg)$ is \emph{non-focal} if it additionally satisfies~$\fcut(x)=\emp$ for all~$x\in \MFD$.
\end{defs}

\begin{defs}[Strong Ma--Trudinger--Wang condition $\mathrm{MTW}(K)$]
We say that~$(\MFD,\mssg)$ satisfies the \emph{strong Ma--Trudinger--Wang condition with constant~$K$} (in short:~$\MFD$ is~$\mathrm{MTW}(K)$) if there exists a constant~$K>0$ such that
\begin{align*}
\mfS_{(x,y)}(\mrmw,\mrmw')\geq K\tabs{\mrmw}_{\mssg_x}^2\tabs{\mrmw'}_{\mssg_x}^2 \comm \quad x\in \MFD\comm y\in \exp_x(I(x))\comm \qquad\textrm{whenever} \quad \mrmw^\tra [\mssc_{\emparg,\emparg}] \mrmw'=0\comm
\end{align*}
where~$[\mssc_{\emparg,\emparg}]$ denotes the matrix of derivatives~$\mssc_{i,j}\eqdef \partial^2_{x_i, y_j}\mssc$.
\end{defs}

Our main interest in the previous definitions is the following regularity result.
\begin{thm}[Loeper--Villani, e.g.,~{\cite[Cor.~3.13]{Fig10}}]\label{t:LoepVil} Let~$(\MFD,\mssg)$ be additionally non-focal and satisfying~$\mathrm{MTW}(K)$. Then~$\MFD$ satisfies Assumption~\ref{ass:B}.
\end{thm}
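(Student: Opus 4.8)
The plan is to obtain the statement as a consequence of the regularity theory for optimal transport maps developed by Loeper, Loeper--Villani, Ma--Trudinger--Wang and Figalli--Rifford--Villani, as surveyed in~\cite{Fig10}. Fix~$\mu=f\,\mssm$ and~$\nu=h\,\mssm$ with~$f,h\in\mcC^\infty(\MFD)$ strictly positive; since~$\MFD$ is compact, $f$~and~$h$ are then bounded above and below by positive constants. As~$\mu\ll\mssm$, the classical Rademacher theorem on~$\MFD$ gives~$\mu\,\Sigma_\phi=0$ for every semi-convex~$\phi$, i.e.~$\mu\in\msP^\reg$, so Theorem~\ref{t:McCann} produces a unique optimal plan, induced by the map~$g_{\mu\rar\nu}=\exp\nabla\phi$ for a~$\mssc$-convex Kantorovich potential~$\phi$, unique up to an additive constant. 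Since the exponential map~$(x,\mrmv)\mapsto\exp_x\mrmv$ of the closed manifold~$\MFD$ is everywhere smooth, the whole statement reduces to showing~$\phi\in\mcC^\infty(\MFD)$: once this is known,~$g_{\mu\rar\nu}=\exp\nabla\phi$ is smooth as a composition of smooth maps, and it agrees with the optimal map everywhere since~$\mu$ has full support, so it is an optimal transport map in the sense of Theorem~\ref{t:McCann}.

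First I would establish \emph{continuity} of~$g_{\mu\rar\nu}$, which is where the two geometric hypotheses enter. Non-focality of the cut loci rules out the degenerate behaviour of~$\mssc=\tfrac12\mssd^2$ across~$\cut(x)$ and, together with~$\mathrm{MTW}(K)$ and compactness, guarantees that the tangent injectivity domains~$I(x)$ are uniformly convex and that~$\mssc$ enjoys the strong (global) Ma--Trudinger--Wang regularity property. Under these conditions, and with source and target densities pinched between two positive constants as above, the Kantorovich potential is of class~$\mcC^{1,\alpha}$ for some~$\alpha\in(0,1)$, equivalently~$g_{\mu\rar\nu}$ is Hölder continuous. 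This is precisely the content of~\cite[Cor.~3.13]{Fig10} and of the continuity results cited therein.

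Next I would bootstrap from continuity to~$\mcC^\infty$. The potential~$\phi$ is an Aleksandrov solution of a Monge--Amp\`ere type equation of the schematic form~$\det\tonde{D^2\phi(x)-\mcA(x,\nabla\phi(x))}=\lambda(x,\nabla\phi(x))$, where~$\mcA$ and~$\lambda$ are built from~$\mssg$, $f$ and the composition~$h\circ g_{\mu\rar\nu}$, hence smooth, with~$\lambda$ bounded away from~$0$ and~$\infty$. By the interior~$\mcC^{2,\alpha}$ estimates available for such equations under the MTW condition (again see~\cite{Fig10} and references), the continuous solution~$\phi$ is in fact~$\mcC^{2,\alpha}_{\loc}$; since~$\MFD$ is closed there is no boundary, and a standard elliptic Schauder bootstrap, iterated using that~$f$ and~$h$ are~$\mcC^\infty$, promotes~$\phi$ to~$\mcC^\infty(\MFD)$. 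Hence~$g_{\mu\rar\nu}=\exp\nabla\phi\in\mcC^\infty(\MFD,\MFD)$, which is Assumption~\ref{ass:B}.

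The proof is thus essentially an assembly of deep but available results, and the main obstacle is bookkeeping rather than invention: one must verify that ``non-focal $+\ \mathrm{MTW}(K)$'' is exactly the hypothesis package feeding the Loeper--Villani continuity theorem, and observe that smooth strictly positive densities on the compact~$\MFD$ automatically supply the two-sided density bounds needed both for the continuity step and for the higher-regularity step. The only genuinely case-specific remark is that one reaches~$\mcC^\infty$, not merely~$\mcC^{2,\alpha}$, by iterating the Schauder estimates.
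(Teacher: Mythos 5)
The paper does not prove this statement at all: it is labelled ``Loeper--Villani'' and the content is imported verbatim from the literature, with \cite[Cor.~3.13]{Fig10} cited as the source and no argument given. So there is no in-paper proof to compare against; your proposal is a reconstruction, from the same external references, of how the cited result is established.

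As a reconstruction it is sound in outline and follows the standard two-step scheme of the Loeper--Villani theory: (i)~non-focality and~$\mathrm{MTW}(K)$, together with two-sided bounds on the densities, yield continuity (in fact~$\mcC^{1,\alpha}$ of the Kantorovich potential, hence H\"older continuity of the map); (ii)~once continuity is known, the potential is a classical solution of a Monge--Amp\`ere-type equation with smooth, non-degenerate data, and the Schauder machinery lifts it to~$\mcC^\infty$. Two small bookkeeping points. First, the corollary the paper cites already delivers the full~$\mcC^\infty$ conclusion for smooth positive densities on a compact manifold; it is precisely \emph{because} \cite[Cor.~3.13]{Fig10} matches Assumption~\ref{ass:B} that the author cites it and stops. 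Your proposal reads that corollary as giving only~$\mcC^{1,\alpha}$ and then adds the elliptic bootstrap on top, whereas in the source the bootstrap is part of what establishes the corollary rather than a step beyond it; this is a misattribution, not a gap. Second, in the sentence asserting that~$g_{\mu\rar\nu}=\exp\nabla\phi$ is smooth ``as a composition of smooth maps,'' it is worth making explicit that one also needs~$\nabla\phi_x$ to stay inside the injectivity domain~$I(x)$ so that the composition is well defined pointwise; this is where non-focality intervenes (the injectivity domains are open, uniformly convex, and~$\exp_x$ is a diffeomorphism there), and is implicit in the regularity theory you invoke but is the kind of detail that should be flagged when summarizing it.
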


\begin{rem} The strong $\mathrm{MTW}$ condition is sufficient, whereas not necessary, to establish the above result. A discussion of optimal assumptions is here beyond our purposes. It will suffice to say that the proof strategy of Lemma~\ref{l:Rademacher} fails as soon as~$\mathrm{MTW}(0)$ is negated, which in turn implies that~$\mssc$-convex $\mcC^1$ functions are \emph{not} uniformly dense in (Lipschitz) $\mssc$-convex functions by~\cite[Thm.~3.4]{Fig10}.
\end{rem}

\subsection{Normalized mixed Poisson measures}\label{ss:NMPM}
We denote by~$\ddot\Gamma$ the \emph{space of integer-valued Radon measures} over $(\MFD,\mssg)$ with arbitrary \emph{finite} number of atoms, always regarded as a subspace of~$\Mbp$, endowed with the vague topology, which coincides with the narrow topology by compactness of~$\MFD$, and with the associated Borel $\sigma$-algebra. Similarly to~\cite{AlbKonRoe98,RoeSch99}, we let~$\rho\in \mcC^1(\MFD;\R^+)$ and denote by~$\PP_\sigma$ the Poisson measure of intensity~$\sigma\eqdef \rho\mssm$ on~$\ddot\Gamma$. Given~$\lambda\in\msP(\R^+)$ such that~$\lambda(1\wedge\id_{\R^+})<\infty$, henceforth a \emph{L\'evy measure}, we denote by~$\RP_{\lambda,\sigma}$ the \emph{mixed Poisson measure}
\begin{align*}
\RP_{\lambda,\sigma}(\emparg)=\int_{\R^+} \PP_{s\cdot\sigma}(\emparg)\diff\lambda(s) \fstop
\end{align*}

Recall that~$\PP_\sigma$, hence~$\RP_{\lambda,\sigma}$, is concentrated on the \emph{configuration space}
\begin{align*}
\Gamma\eqdef \set{\gamma\in \ddot\Gamma : \gamma\set{x}\in\set{0,1} \textrm{ for all } x\in \MFD}\fstop
\end{align*}

Moreover, by~\cite[Prop.~2.2]{AlbKonRoe98},
\begin{align}\label{eq:ese:PP1}
\frac{\diff \ttonde{\Phi_\pfwd\PP_{\sigma}}}{\diff \PP_\sigma}(\gamma)=\exp\ttonde{\sigma(\car-p^\sigma_\upphi)} \prod_{x\in \gamma} p^\sigma_\upphi(x) \comm \quad \gamma\in \Gamma\comm \qquad \textrm{where}\qquad p^\sigma_{\upphi}(x)\eqdef \frac{\upphi^*\rho(x)}{\rho(x)}J_\upphi^\mssm(x)
\end{align}
and by~$x\in\gamma$ we mean~$\gamma\set{x}>0$. Since we chose~$\rho\in L^1_\mssm(\MFD)$, the measure~$\sigma$ is finite, hence~$\gamma \MFD<\infty$ for~$\PP_\sigma$-a.e.~$\gamma$, i.e.~$\PP_\sigma$-a.e.~$\gamma$ is concentrated on a finite number of points. As a consequence, the same statement holds for~$\RP_{\lambda,\sigma}$ in place of~$\PP_\sigma$ and one has
\begin{align}\label{eq:ese:PP2}
\forallae{\RP_{\sigma,\lambda}}\gamma\qquad R^\sigma_\upphi(\gamma)\eqdef\prod_{x\in \gamma} p^\sigma_\upphi(x)= \exp \int_\MFD \ln\ttonde{ p^\sigma_\upphi(x)} \diff\gamma(x) \fstop
\end{align}

\begin{ese}[Normalized mixed Poisson measures]\label{ese:NPP}
Let~$\lambda\in\msP(\R^+)$ be a L\'evy measure with \emph{compact} support and set~$\mbbP\eqdef N_\pfwd \RP_{\lambda,\sigma}$. 
Since~$\sigma$ is atomless, so are~$\PP_\sigma$,~$\RP_{\lambda,\sigma}$ and~$\mbbP$. Thus, Assumption~\ref{ass:P1} is satisfied.
Assumptions~\ref{ass:P4} and~\ref{ass:P3} are respectively verified in Lemmas~\ref{l:NPPassQI} and~\ref{l:NPPassCl} below. In particular, the closability of the pre-Dirichlet form in~\eqref{eq:Form} is obtained as a consequence of the quasi-invariance of~$\mbbP$. Assumption~\ref{ass:P0} is verified in Lemma~\ref{l:NPPassFS} below.
\end{ese}

Denote now by~$\MFD^{\odot n}\eqdef \MFD^{\times n}/\mfS_n$ the quotient of the $n$-fold cartesian product~$\MFD^{\times n}$ by the symmetric group~$\mfS_n$ acting by permutation of coordinates. Let further~$\MFD^{\times n}_\circ$ denote the set of points~$\mbfx\eqdef \seq{x_1,\dotsc, x_n}$ in~$\MFD^{\times n}$ such that~$x_i\neq x_j$ for~$i\neq j$, and set
\begin{align*}
\MFD^{(n)}\eqdef \MFD^{\times n}_\circ/\mfS_n\fstop
\end{align*}

Denote by $\pr^{\mfS_n}\colon \MFD^{\times n}_\circ\rar \MFD^{(n)}$ the quotient projection, and set~$\sigma^{(n)}\eqdef \pr^{\mfS_n}_\pfwd \sigma^{\otimes n}$.
It is well-known that, when~$(\MFD,\sigma)$ is a finite Radon measure space, then~$(\Gamma,\PP_\sigma)$ is isomorphic, as a measure space, to the space
\begin{align}\label{eq:IsoPoisson}
\bigoplus_{n\in \N_1} \ttonde{ \MFD^{(n)},e^{-\sigma \MFD}\sigma^{(n)}/n! }\fstop
\end{align}
More explicitly, the isomorphism is given by identifying~$\MFD^{(n)}$ with~$\Gamma^{(n)}$, the space of configurations~$\gamma\in\Gamma$ such that~$\gamma \MFD=n$.
Finally, define the following subsets of~$\msP$
\begin{equation}\label{eq:ConcentrationSs}
\begin{aligned}
N\ttonde{\Gamma^{(n)}}\subset& \mathring\Delta^n\eqdef \set{\sum_i^n s_i\delta_{x_i} : \mbfx\in \MFD^{(n)},s_i\in\R^+}\subset\mathring\Delta^\fin\eqdef \bigcup_n \mathring\Delta^n\comm
\\
N\ttonde{\ddot\Gamma^{(n)}}\subset& \Delta^n\eqdef \set{\sum_i^n s_i\delta_{x_i} : \mbfx\in \MFD^{\times n}, s_i\in \R^+}\subset \Delta^\fin\eqdef \bigcup_n \Delta^n\fstop
\end{aligned}
\end{equation}

\begin{rem}\label{r:Small}
While the support~$\mathring\Delta^1=\Delta^1\cong \MFD$ of the measure constructed in Example~\ref{ese:Trivial} is ``small'' in various senses ---~e.g., it is a closed nowhere dense subset of~$\msP$~---, the normalized (mixed) Poisson measures in Example~\ref{ese:NPP} have full support.
On the other hand though, even these measures are concentrated on~$\mathring\Delta^\fin$, which may itself be still regarded as ``small'' ---~e.g., since the measure space~$(\Delta^\fin,N_\pfwd \RP_{\sigma,\lambda})$ may be approximated in many senses via the sequence of compact \emph{finite-dimensional} measure spaces $(\Delta^n,N_\pfwd \RP_{\sigma,\lambda}\restr_{\Delta^n})$.
\end{rem}

\subsection{The Dirichlet--Ferguson measure}\label{s:DF}
Example~\ref{ese:NPP} shows that the laws of (normalized) point processes on~$\MFD$ may be examples of measures on~$\msP$ satisfying Assumption~\ref{ass:P}. In light of Remark~\ref{r:Small}, the question arises, whether such laws may be chosen to be concentrated on sets richer than~$\mathring\Delta^\fin$, and in particular on the whole set of purely atomic measures.

In this section we introduce for further purposes a negative example, the \emph{Dirichlet--Ferguson measure} over~$\MFD$ (see below), satisfying Assumptions~\ref{ass:P0}--\ref{ass:P1} and the closability of the form~$(\mcE,\dom\mcE)$, whereas not~\ref{ass:P3} nor~\ref{ass:P4}. These properties are verified in~\cite{LzDS17+}, basing on the characterization of the measure in Theorem~\ref{t:Mecke} below.

\paragraph{Notation}
Everywhere in the following, denote by~$\n\mssm$ the \emph{normalized} volume measure of~$\MFD$, let~$\beta\in(0,\infty)$ be defined by~$\mssm=\beta\n\mssm$, and let
\begin{align*}
\diff\Beta_\beta(r)\eqdef \beta (1-r)^{\beta-1}\diff r
\end{align*}
be the Beta distribution on~$I$ with parameters~$1$ and~$\beta$. Set further~$\hat \MFD\eqdef \MFD\times I$, always endowed with the product topology, $\sigma$-algebra and with the measure~$\hat\mssm_\beta\eqdef \n\mssm\otimes \Beta_\beta$.

Finally, we denote by~$\eta$ any \emph{purely atomic} measure in~$\msP$. Usually, we think of any such~$\eta$ as an infinite marked configuration and thus we write, with slight abuse of notation,~$\eta_x$ in place of~$\eta\!\set{x}$ and~$x\in\eta$ whenever~$\eta_x>0$. We denote further by~$\ptws\eta$ the set of points~$x\in \MFD$ such that~$\eta_x>0$, called the \emph{pointwise support of~$\eta$}.

\paragraph{The Dirichlet--Ferguson measure}
We denote by~$\DF_\mssm$ the Dirichlet--Ferguson measure~\cite{Fer73} over $(\MFD,\mcB)$ with intensity~$\mssm$. The measure is also known as: \emph{Dirichlet}, \emph{Poisson--Dirichlet}~\cite{TsiVerYor01}, (law of the) \emph{Fleming--Viot process with parent-independent mutation}~\cite{OveRoeSch95}.
The characteristic functional of~$\DF_\mssm$ may be found in~\cite{LzDS17} together with further properties of the measure. The following characterization is originally found, in the form of a distributional equation, in~\cite[Eqn.~(3.2)]{Set94}.

\begin{thm}[Mecke-type identity for $\DF_\mssm$ {\cite{Set94}, see also~\cite{LzDSLyt17}}]\label{t:Mecke}
Let $u\colon\msP\times \hat \MFD\rar \R$ be measurable semi-bounded. Then, there exists a unique probability measure~$\DF_\mssm$ on~$\msP$ satisfying
\begin{align}\label{eq:MeckeDFCor}
\iint_\MFD u(\eta, x, \eta_x) \diff\eta(x) \diff\DF_\mssm(\eta) =\iint_{\hat \MFD} u\ttonde{(1-r)\eta + r\delta_x, x, r}\diff\hat \mssm_\beta (x,r) \diff\DF_\mssm(\eta) \fstop
\end{align}
\end{thm}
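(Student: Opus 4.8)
The plan is to reduce the whole statement to a single Palm-type formula for~$\DF_\mssm$. By monotone convergence (using that~$u$ is semi-bounded) it suffices to prove~\eqref{eq:MeckeDFCor} for bounded measurable~$u$, and then by linearity and a functional monotone-class argument for~$u(\eta,x,s)=\car_A(\eta)\car_B(x)\car_C(s)$ with~$A\subset\msP$, $B\subset\MFD$, $C\subset I$ Borel; the (standard) joint measurability of all maps appearing in the argument — sampling a point from a probability measure, reading off the mass~$\eta_x$, forming the residual measure — would be recorded once. Since~$\n\mssm$ is atomless, $\DF_\mssm$-a.e.~$\eta$ is purely atomic with pairwise distinct atoms; and for a pair~$(\eta,x)$ with~$\eta\sim\DF_\mssm$ and~$x\mid\eta\sim\eta$ one has~$\E[g(\eta,x)\mid\eta]=\int_\MFD g(\eta,x)\diff\eta(x)$. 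Consequently the left-hand side of~\eqref{eq:MeckeDFCor} equals~$\E[u(\eta,x,\eta_x)]$ for such a pair, while its right-hand side equals~$\E[u((1-r)\zeta+r\delta_y,\,y,\,r)]$ for~$(y,r,\zeta)$ independent with laws~$\n\mssm$, $\Beta_\beta$, $\DF_\mssm$. Since~$\eta=\eta_x\delta_x+(1-\eta_x)\eta^\circ$ with~$\eta^\circ\eqdef(\eta-\eta_x\delta_x)/(1-\eta_x)$, identity~\eqref{eq:MeckeDFCor} is therefore \emph{equivalent} to the assertion~$(\star)$: \emph{if~$\eta\sim\DF_\mssm$ and~$x\mid\eta\sim\eta$, then~$x$, $\eta_x$ and~$\eta^\circ$ are jointly independent with respective laws~$\n\mssm$, $\Beta_\beta$, $\DF_\mssm$} — the size-biased-pick description of~$\DF_\mssm$.

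To establish~$(\star)$, and hence the existence half of the theorem, I would invoke three classical facts about~$\DF_\mssm$, each read off the defining finite-dimensional (Dirichlet-distributed) marginals~$\eta\mapsto(\eta A_1,\dots,\eta A_k)$: (a) the first-moment identity~$\E_{\DF_\mssm}[\eta]=\n\mssm$; (b) Dirichlet--multinomial conjugacy (the Blackwell--MacQueen urn): the regular conditional law of~$\eta$ given a single sample~$x\sim\eta$ is~$\DF_{\mssm+\delta_x}$; (c) the neutrality/aggregation identity~$\DF_{\mssm+\delta_z}=\mathrm{Law}\bigl(r\delta_z+(1-r)\eta'\bigr)$, $r\sim\Beta_\beta$ and~$\eta'\sim\DF_\mssm$ independent — verified by matching finite-dimensional marginals via the splitting property of the Dirichlet distribution (taking a partition in which~$z$ lies in exactly one cell) and invoking that a probability measure on~$\msP$ is determined by such marginals. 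Combining: by~(a) the marginal law of~$x$ is~$\n\mssm$; by~(b)--(c), conditionally on~$x$, $\eta\overset{d}{=}r\delta_x+(1-r)\eta'$ with~$(x,r,\eta')$ jointly independent and~$r\sim\Beta_\beta$, $\eta'\sim\DF_\mssm$; and since~$\n\mssm$ is atomless, $\eta'$ almost surely carries no atom at the fixed point~$x$, whence~$\eta_x=r$ and~$\eta^\circ=\eta'$ almost surely. This is~$(\star)$.

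For uniqueness I would specialize~\eqref{eq:MeckeDFCor} to~$u(\eta,x,s)=F(\eta)$ with~$F$ bounded measurable. As~$\eta(\MFD)=1$, the left-hand side collapses to~$\int F\diff\DF_\mssm$ and the right-hand side to~$\iint F\bigl((1-r)\eta+r\delta_x\bigr)\diff\hat\mssm_\beta(x,r)\diff\DF_\mssm(\eta)$; hence any probability measure~$Q$ on~$\msP$ satisfying~\eqref{eq:MeckeDFCor} is a fixed point of the affine map~$T$ carrying~$Q$ to the push-forward of~$Q\otimes\hat\mssm_\beta$ under~$(\eta,x,r)\mapsto(1-r)\eta+r\delta_x$ — Sethuraman's distributional equation~\cite[Eqn.~(3.2)]{Set94}. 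Iterating~$T$ $n$ times and coupling two fixed points~$Q,Q'$ by running the same stick-breaking data~$(\beta_j,X_j)_{j\le n}$, $\beta_j\sim\Beta_\beta$ and~$X_j\sim\n\mssm$ independent, and closing with residuals of law~$Q$, respectively~$Q'$, yields coupled samples~$\zeta,\zeta'$ with~$W_2(\zeta,\zeta')^2\le\prod_{j\le n}(1-\beta_j)\,(\diam\MFD)^2$; taking expectations, the Kantorovich--Rubinstein distance of~$Q$ and~$Q'$ over~$(\msP_2,W_2)$ is at most~$\bigl(\tfrac{\beta}{\beta+1}\bigr)^{n/2}\diam\MFD$, which is finite ($\MFD$ being compact) and tends to~$0$. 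Thus~$Q=Q'$, and by the existence part this value is~$\DF_\mssm$.

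The crux is~$(\star)$, and specifically the passage between the configuration-level statements and the finite-dimensional Dirichlet computations underlying~(b) and~(c): one must take care that these are equalities of \emph{measures on all of~$\msP$} — using that a measure on~$\msP$ is determined by the joint laws of the cylinder maps~$\eta\mapsto(\eta A_1,\dots,\eta A_k)$ — that the conditioning in~(b) is realised through a genuine regular conditional probability and is compatible under refinement of partitions, and that~$\eta_x$ and~$\eta^\circ$ are almost surely well defined, which is precisely where the atomlessness of the intensity~$\n\mssm$ on the manifold~$\MFD$ enters. The monotone-class reduction to bounded (indeed indicator) $u$ and the joint measurability of the maps in play are routine but should be stated explicitly.
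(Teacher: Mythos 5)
The paper does not supply a proof of Theorem~\ref{t:Mecke}: it cites~\cite{Set94} for the underlying distributional equation and~\cite{LzDSLyt17} for the Mecke-type formulation, and remarks explicitly that the statement may be taken as a definition of~$\DF_\mssm$. There is therefore no in-paper argument to compare against, but your proposal is a correct, essentially self-contained proof whose two halves align with those two citations. For existence you derive the size-biased-pick description of~$\DF_\mssm$ from the mean formula~$\E_{\DF_\mssm}[\eta]=\n\mssm$, the posterior conjugacy~$\eta\mid x\sim\DF_{\mssm+\delta_x}$, and the neutrality splitting~$\DF_{\mssm+\delta_x}=\mathrm{Law}\ttonde{r\delta_x+(1-r)\eta'}$; the Fubini step giving~$\eta'_x=0$~a.s.\ via atomlessness of~$\n\mssm$ is the one point to spell out, and it goes through. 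For uniqueness you recover Sethuraman's fixed-point equation by testing with~$u(\eta,x,s)=F(\eta)$ and close it with a Wasserstein coupling along the stick-breaking iteration: the bound~$W_2(\zeta,\zeta')^2\le\prod_{j\le n}(1-\beta_j)(\diam\MFD)^2$ is right (transport the common atoms at zero cost, the residual mass~$\prod_{j\le n}(1-\beta_j)$ anywhere), and since~$\E\bigl[\prod_{j\le n}(1-\beta_j)\bigr]=\ttonde{\tfrac{\beta}{\beta+1}}^n\rar 0$ and~$\MFD$ is compact, this yields a genuine contraction in the Kantorovich--Rubinstein metric over~$(\msP_2,W_2)$. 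This is a clean transport-theoretic variant of Sethuraman's original uniqueness argument. The caveats you list at the end — joint measurability of~$(\eta,x)\mapsto(\eta_x,\eta^\circ)$, well-posedness of~$\eta^\circ$ off the null event~$\tset{\eta_x=1}$, and coherence of the regular conditional probability in~(b) under refinement — are exactly the points needing care, and none obstructs the argument.
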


The unacquainted reader may take this result as a definition of~$\DF_\mssm$.

\subsection{The entropic measure}\label{ss:Entropic}
In this section we recall an example showing that there exist measures on~$\msP$ ---~other than normalized mixed Poisson measures~--- satisfying Assumption~\ref{ass:P}.

\paragraph{Preliminaries} Similarly to~\cite[\S{2.2}]{vReStu09}, define
\begin{align*}
\msG(\R)\eqdef\set{g\colon \R\rar \R, \textrm{ right-continuous non-decreasing, s.t.} \quad g(x+1)=g(x)+1\comm \quad x\in \R } 
\end{align*}

In light of the equi-variance property, each~$g\in\msG(\R)$ uniquely induces a Borel function $\pr^\msG(g)\colon \mbbS^1\rar\mbbS^1$ and we set~$\msG\eqdef \pr^\msG (\msG(\R))$, endowed with the $L^2$-distance
\begin{align*}
\norm{g_1-g_2}_{\msG}\eqdef \tonde{\int_{\mbbS^1} \abs{g_1(t)-g_2(t)}^2 \diff\mssm(t)}^{1/2} \fstop
\end{align*}

Letting~$\mbbS^1\cong\R/\Z$, define further for every~$a\in\mbbS^1$ the \emph{translation}~$\tau_a\colon \mbbS^1\rar \mbbS^1$ by
\begin{align*}
\tau_a\colon t\mapsto t+a \pmod 1 \comm
\end{align*}
and define an equivalence relation~$\sim$ on~$\msG$ by setting~$g\sim h$ for~$g,h\in\msG$ if and only if~$g=h\circ\tau_a$ for some~$a\in\mbbS^1$.
Denote by~$\pr^{\msG_1}$ the quotient map of~$\msG$ modulo this equivalence relation, with values in the quotient space~$\msG_1\eqdef\pr^{\msG_1}(\msG)=\msG/\mbbS^1$ endowed with the quotient $L^2$-distance
\begin{align*}
\norm{g_1-g_2}_{\msG_1}\eqdef \tonde{\inf_{s\in\mbbS^1}\int_{\mbbS^1} \abs{g_1(s)-g_2(t+s)}^2 \diff\mssm(t)}^{1/2} \fstop
\end{align*}
Equivalently,~$\msG_1$ is the semi-group of right-continuous non-decreasing functions on~$\mbbS^1\cong [0,1)$ fixing~$0\in\mbbS^1$.
Finally, the space~$\ttonde{\msG_1,\norm{\emparg}_{\msG_1}}$ is \emph{isometric}~\cite[Prop.~2.2]{vReStu09} to $\msP_2\eqdef\ttonde{\msP_2(\mbbS^1),W_2}$ via the map
\begin{align}\label{eq:Chi}
\chi\colon g\mapsto g_\pfwd \mssm \fstop
\end{align}

\paragraph{The conjugation map~$\mfC^{\n\mssm}$~{\cite[\S3]{Stu11}}} For~$\mu\in\msP$ let~$\phi_\mu\eqdef \phi_{\n\mssm\rar\mu}$ be given by Theorem~\ref{t:McCann} (recall that~$\n\mssm\in\msP^\reg$). The \emph{conjugation} map~$\mfC^{\n\mssm}\colon\msP\rar\msP$ is defined by
\begin{align*}
\mfC^{\n\mssm}\colon \mu\mapsto \ttonde{\exp\nabla (\phi_\mu^\mssc)}_\pfwd\n\mssm \fstop
\end{align*}

It was shown in~\cite[Thm.~3.6]{Stu11} that~$\mfC^{\n\mssm}$ is an involutive homeomorphism of~$\msP_2$.
If~$\MFD=\mbbS^1$, then the conjugation map may be alternatively defined in the following equivalent way. Let
\begin{align*}
g_\mu(t)\eqdef \inf\set{s\in I : \mu[0,s]>t}\comm \qquad \inf \emp\eqdef1\fstop
\end{align*}
denote the cumulative distribution function of~$\mu\in\msP(\mbbS^1)$. Observe that~$g_\mu\in\msG_1$, hence it admits a left inverse~$g_\mu^{-1}$ in~$\msG_1$ given by
\begin{align*}
g_\mu^{-1}(t)\eqdef \inf\set{s\geq 0 : g(s)>t} \fstop
\end{align*}

Then,~$\mfC^{\n\mssm}(\mu)=\diff g_\mu^{-1}$ where, for any~$g\in\msG_1$, we denoted by~$\diff g$ the Lebesgue--Stieltjes measure associated to~$\phi$. See~\cite{vReStu09} for the detailed construction.

\begin{defs}[entropic measure over~$\MFD$~{\cite[Dfn.~6.1]{Stu11}}]
The \emph{entropic measure}~$\mbbP_\mssm$ is the Borel probability measure on~$\msP_2$ defined by~$\mbbP_\mssm\eqdef \mfC^{\n\mssm}_\pfwd \DF_\mssm$, where~$\DF_\mssm$ is the Dirichlet--Ferguson measure of~\S\ref{s:DF}.
\end{defs}

Since~$\mfC^{\n\mssm}$ is a homeomorphism,~$\mbbP_\mssm$ satisfies Assumptions~\ref{ass:P0},~\ref{ass:P1} because so does~$\DF_\mssm$. The quasi-invariance of~$\mbbP_\mssm$ as in Assumption~\ref{ass:P4} and Assumption~\ref{ass:P3}, and the closability of the Dirichlet form~\eqref{eq:Form} are a challenging problem. They have been proven in the seminal work~\cite{vReStu09} for the case~$\MFD=\mbbS^1$, which leads us to the following example.

\begin{ese}[The entropic measure over~{$\mbbS^1$}~{\cite[Dfn.~3.3]{vReStu09}}]\label{ese:EntMeas}
Let~$\beta>0$ be a fixed constant and let~$\MFD=\mbbS^1$ be endowed with the rescaled volume measure~$\mssm\eqdef \beta \Leb^1$. The quasi-invariance of~$\mbbP_\mssm$ ---~as in Assumption~\ref{ass:P4}~--- was proven in~\cite[Cor.~4.2]{vReStu09}. In fact, it was proven for the action of the whole of~$\Diffeo^2(\MFD)$ rather than only for~$\Flow(\MFD)$, cf.~Rmk.~\ref{r:Milnor}. Although not apparent, the bound~\eqref{eq:ass:P4} for the Radon--Nikod\'ym derivative~$R^w_r$ may be deduced from the explicit computations in~\cite[Lem.~4.8]{vReStu09}. 
In fact, Assumption~\ref{ass:P5} holds too, because of~\cite[Lem.~5.1(ii)]{vReStu09}.
Together with the previous discussion, this shows that~$\mbbP_\mssm$ satisfies Assumption~\ref{ass:P} by Proposition~\ref{p:P5}.

The closability of the form~$(\mcE,\msF_0)$ is proven in~\cite[Thm.~7.25]{vReStu09}, where the family of cylinder functions~$\Test^\infty$ is introduced in~\cite[Dfn.~7.24]{vReStu09} and denoted by~$\mfZ^\infty(\msP)$. A proof of the Rademacher property in the form of our Theorem~\iref{i:t:main3.1} is sketched in~\cite[Prop.~7.26]{vReStu09}.
\end{ese}

\begin{rem} Finally, let us notice that~$\mbbP_\mssm$-a.e.~$\mu$ is concentrated on an~$\mssm$-negligible set~\cite[Cor.~3.11]{vReStu09}. In fact, it is not difficult to show that~$\mbbP_\mssm$-a.e.~$\mu$ is concentrated on the set of irrational points of a Cantor space, i.e.~any (non-empty) totally disconnected perfect metrizable compact space.
\end{rem}

\subsection{An image on~\texorpdfstring{$\msP$}{P} of the Malliavin--Shavgulidze measure}\label{ss:MSI}

As a final example, we introduce here an image on~$\msP(\mbbS^1)$ of the Malliavin--Shavgulidze measure on~$\Diffeo^1_+(\mbbS^1)$.

\paragraph{Preliminaries} We refer the reader to~\cite{Kuz07} for a detailed exposition and further references. Let~$\MFD=\mbbS^1$ with volume measure~$\mssm\eqdef \Leb^1$ and denote by~$\mcC_0(I^\circ)$ the space of continuous functions on~$I$ vanishing at both~$0$ and~$1$, endowed with the trace topology of~$\mcC(I)$. Consider the space~$\Diffeo^1_+(\mbbS^1)$ of orientation preserving $\mcC^1$-diffeomorphisms of~$\mbbS^1$, endowed with the topology of uniform convergence, and let~$\xi\colon \Diffeo^1_+(\mbbS^1)\rar \mbbS^1\times \mcC_0(I^\circ)$ be the homeomorphism defined by
\begin{align*}
\xi\colon g(t)\mapsto \ttonde{g(0), \ln g'(t)-\ln g'(0)} \fstop
\end{align*}

\begin{defs}[The Malliavin--Shavgulidze measure]
Let~$\BB$ be the Borel probability on~$\mcC(I)$ defined as the law of the Brownian Bridge connecting~$0$ to~$0$ in time~$1$, concentrated on~$\mcC_0(I^\circ)$. The \emph{Malliavin--Shavgulidze measure}~$\mcM$ on $\Diffeo^1_+(\mbbS^1)$~\cite{MalMal90} is the Borel probability measure defined by~$\mcM\eqdef (\xi^{-1})_\pfwd (\mssm\otimes \BB)$.
\end{defs}

Denote further by~$S$ the \emph{Schwarzian derivative} operator
\begin{align*}
S\colon \upphi\mapsto \frac{\upphi'''}{\upphi'}-\frac{3}{2} \tonde{\frac{\upphi''}{\upphi'}}^2 \comm
\end{align*}
and consider the left action~$L_\upphi \colon g\mapsto \upphi\circ g$ of the subgroup~$\Diffeo^3_+(\mbbS^1)$. The measure~$\MS$ is quasi-invariant with respect to~$L_\upphi$ and the following quasi-invariance formula~\cite[Thm.~3.1, p.~235]{MalMal90} holds true for every Borel~$A\subset\Diffeo^1_+(\mbbS^1)$,
\begin{align}\label{eq:MSqi}
\MS(L_\upphi(A))=\int_A \exp\quadre{\int_{\mbbS^1} S(\upphi)(g(t)) \cdot g'(t)^2 \diff \mssm(t)} \diff \MS(g)\fstop
\end{align}

\paragraph{The Malliavin--Shavgulidze image measure} 
Every $\mcC^1$-function in~$\msG$ is a $\mcC^1$-diffeomorphism of~$\mbbS^1$, ori\-en\-ta\-tion-preserving since induced by a non-decreasing function, and every such diffeomorphism arises in this way. Furthermore,~$\Diffeo^1_+(\mbbS^1)$ embeds continuously into~$\msG$. It follows that~$\MS$ may be regarded as a (non-relabeled) measure on~$\msG$.

\begin{ese}[The Malliavin--Shavgulidze image measure]\label{ese:MSI}
Consider the Borel probability measure~$\MS$ on~$\msG$. 
The measure~$\MS_1\eqdef \pr^{\msG_1}_\pfwd \MS$ is a well-defined Borel probability measure on~$\msG_1$ by measurability (continuity) of~$\pr^{\msG_1}$.
The \emph{Malliavin--Shavgulidze image measure}~$\MSI$ is the Borel probability measure on~$\msP$ defined by
\begin{align*}
\MSI\eqdef\chi_\pfwd (\pr^{\msG_1}_\pfwd \MS)\fstop
\end{align*}
 
Assumption~\ref{ass:P0} for~$\MSI$ is readily verified from the properties of the Malliavian--Shavgulidze measure~$\MS$. In fact,~$\MSI$ is concentrated on the set
\begin{align*}
\ttonde{\Diffeo^1_+(\mbbS^1)/\Isom_+(\mbbS^1)}_\pfwd\mssm\subset \msP^\mssm(\mbbS^1)\comm
\end{align*}
which also shows Assumption~\ref{ass:P1}.
Assumption~\ref{ass:P5} is verified in Lemma~\ref{l:MSIassQI} below.
\end{ese}

\begin{rem}\label{r:Support}
Examples~\ref{ese:NPP},~\ref{ese:EntMeas} and~\ref{ese:MSI} clarify that Assumption~\ref{ass:P} poses no restriction to the subset of~$\msP$ whereon~$\mbbP$ is concentrated. Indeed, as argued above
\begin{itemize}
\item $N_\pfwd \RP_{\lambda,\sigma}$-a.e.~$\mu\in\msP_2(\mbbS^1)$ is \emph{purely atomic};
\item $\mbbP_\mssm$-a.e.~$\mu\in\msP_2(\mbbS^1)$ is \emph{singular continuous} w.r.t.~the volume measure of~$\mbbS^1$;
\item $\MSI$-a.e.~$\mu\in\msP_2(\mbbS^1)$ is \emph{absolutely continuous} w.r.t.~the volume measure of~$\mbbS^1$.
\end{itemize}

Furthermore, it is readily seen that, if~$\mbbP$ and~$\mbbP'$ both satisfy Assumption~\ref{ass:P}, then so does any convex combination thereof. Thus, it is possible to construct a measure~$\mbbP$ on~$\msP_2(\mbbS^1)$ such that~$\mbbP$-a.e.~$\mu$ has Lebesgue decomposition consisting of both a singular, a singular continuous and an absolutely continuous part.
\end{rem}

\section{Appendix}\label{s:Appendix}

\subsection{On the notion of tangent bundle to~\texorpdfstring{$\msP_2$}{P2}}\label{ss:Tangent}
The concept of \emph{tangent space} to~$\msP_2$ at a point~$\mu$ or \emph{space of directions} through~$\mu$ has been widely investigated. See~\cite{AmbGigSav08, Gig11,Gig12, GanKimPac10} and, especially, the bibliographical notes~\cite[\S6.4]{AmbGig11}. At least the following three different notions are available
\begin{itemize}
\item the \emph{tangent space} $T^\nabla_\mu\msP_2\eqdef \cl_{\Vect_\mu} \Vect^\infty_\nabla$;
\item the \emph{geometric tangent space}, denoted here by~$\mbfT_\mu\msP_2$, defined in~\cite[Dfn.~5.4]{Gig11};
\item the \emph{pseudo-tangent space}, denoted here by~$T^\Der_\mu\msP_2\eqdef \Vect_\mu$, considered as auxiliary space in~\cite{GanKimPac10, ChoGan17}.
\end{itemize}

It was proven in~\cite[Prop.s~6.1, 6.3]{Gig11} 
that~$T^\nabla_\mu\msP_2\cong\mbfT_\mu\msP_2$ if and only if~$\mu\in\msP^\reg$; if otherwise, then~$T^\nabla_\mu\msP_2$ embeds canonically non-surjectively in~$\mbfT_\mu\msP_2$ and the latter is not a Hilbert space.
The relation between~$T^\nabla_\mu\msP_2$ and~$T^\Der_\mu\msP_2$ is made explicit in the following.

\paragraph{Preliminaries}
By a \emph{Fr\'echet space} we mean a locally convex completely metrizable topological vector space.
In this section, we endow~$\mcC^\infty(\MFD)$ with its usual Fr\'echet topology~$\tau_{\mcC^\infty(\MFD)}$ and denote by~$\mcC^\infty(\MFD)^*$ the topological dual~$\ttonde{\mcC^\infty(\MFD),\tau_{\mcC^\infty(\MFD)}}^*$ endowed with the weak* topology, e.g.,~\cite[\S1.9]{Tre66}.
Analogously, we endow~$\Test^\infty$ with the locally convex metrizable linear topology~$\tau_{\Test^\infty}$ induced by the countable family of semi-norms
\begin{align*}
\abs{u}_k\eqdef \sup_{w_1,\dotsc, w_k\in\Vect^\infty} \norm{\tonde{\grad_{w_1}\circ \cdots \circ \grad_{w_k}} u}_{\mcC(\msP_2)}\comm \qquad k\in\N_0\comm
\end{align*}
where it is understood that~$\abs{u}_0$ is but the uniform norm on~$\mcC(\msP_2)$.
We denote by~${\Test^\infty}^*$ the topological dual of~$\tonde{\Test^\infty,\tau_{\Test^\infty}}$, endowed with the weak* topology.

\paragraph{Divergence operator, cf.~{\cite[\S2.3]{GanKimPac10}}}
The \emph{divergence operator}~$\div_\mu\colon \Vect^\infty\rar \mcC^\infty(\MFD)^*$ mapping~
\begin{align*}
w\mapsto \tonde{\scalar{\div_\mu w}{\emparg}\colon f\mapsto -\int_\MFD (\diff f(w))(x) \diff\mu(x) }
\end{align*}
satisfies~
\begin{align}\label{eq:ContDiv}
\scalar{\div_\mu w}{f}\leq \norm{\nabla f}_{\Vect_\mu}\norm{w}_{\Vect_\mu}\comm
\end{align}
hence it extends by continuity to a (non-relabeled) operator~$\div_\mu\colon T^\Der_\mu\msP_2\rar \mcC^\infty(\MFD)^*$ and one has that,~\cite[Rmk.~2.7]{GanKimPac10},
\begin{align}\label{eq:DecoTanSp}
T^\Der_\mu\msP_2=T^\nabla_\mu\msP_2 \oplus \ker\div_\mu\comm
\end{align}
where the symbol~$\oplus$ denotes the orthogonal direct sum of Hilbert spaces.

On the one hand, it is clear that, if~$\mu\in\msP^{\infty,\times}$, then~$\ker\div_\mu$ is non-trivial as soon as~$\Vect^\infty\neq\Vect^\infty_\nabla$. This holds in particular if~$(\MFD,\mssg)$ has non-trivial de Rham cohomology group~$H_{\textrm{dR}}^1(\MFD;\R)$. On the other hand, if~$\eta\in\msP$ has finite support, then,~\cite[Example~2.8]{GanKimPac10},
\begin{align}\label{eq:TSpDirac}
T^\nabla_\eta\msP_2=T^\Der_\eta\msP_2=\bigoplus_{x\in\ptws \eta} \ttonde{T_x\MFD, \eta_x\cdot\mssg_x}\fstop
\end{align}

\paragraph{Local derivations} Motivated by the definition, for finite-dimensional differentiable manifolds, of \emph{space of derivatives at a point} (or \emph{pointwise derivations}, e.g.,~\cite[Cor.~2.2.22]{Con01}), we define for fixed~$\mu\in\msP$ the linear functional~$\del^\mu_w\colon \Test^\infty\rar\R$ by
\begin{align*}
\del^\mu_w\colon u\mapsto \scalar{\grad u(\mu)}{w}_{\Vect_\mu} \fstop
\end{align*}
Letting~$\ev_\mu\colon \Test^\infty\rar\R$ be defined by~$\ev_\mu(u)\eqdef u(\mu)$, it is readily verified by Lemma~\ref{l:Derivations} below, that~$\del^\mu_w$ satisfies Leibniz rule in the form
\begin{align}\label{eq:pLeibniz}
\partial^\mu_w(u v)=\ev_\mu(v)\, \partial^\mu_w u + \ev_\mu(u)\,\partial^\mu_w v\fstop
\end{align}

We denote by~$\Der(\Test^\infty)_\mu\subset {\Test^\infty}^*$ the space of continuous linear functionals on~$\Test^\infty$ satisfying~\eqref{eq:pLeibniz}, endowed with the trace topology.
Since~${\Test^\infty}^*$ is Hausdorff and complete, the (uniformly) continuous linear operator~$\partial^\mu_\emparg \colon\Vect^\infty\rar {\Test^\infty}^*$ extends to a uniquely-defined (non-relabeled) operator~$\del^\mu_\emparg\colon \Vect_\mu\rar {\Test^\infty}^*$ by~\cite[\S{I.5}, Thm.~5.1, p.~39]{Tre67}.
Moreover,
\begin{align}\label{eq:ContDel}
\del^\mu_w(u)\leq \norm{\grad u(\mu)}_{\Vect_\mu}\norm{w}_{\Vect_\mu}\comm
\end{align}
hence one has in fact~$\partial^\mu_w\in \Der(\Test^\infty)_\mu$ for every~$w\in\Vect_\mu$.

\begin{prop}\label{p:IdentKer} Denote by~$\mfj\colon f\mapsto f^\bid$ the canonical injection~$\mcC(\MFD)\rar \mcC(\MFD)^{**}$.

Then,~$\div_\mu(\emparg)=-\partial^\mu_\emparg \circ \mfj\colon \Vect_\mu\rar\mcC^\infty(\MFD)^{*}$ and~$\ker\div_\mu \cong \ker\del^\mu_\emparg\subset \Vect_\mu$ as Hilbert spaces.
\begin{proof}
For any~$f\in\mcC^\infty(\MFD)$ and~$w\in\Vect^\infty$ it holds that
\begin{align}\label{eq:DerDiv}
(\del^\mu_w\circ \mfj)(f)=\del^\mu_w(f^\bid)=\int_\MFD \gscal{\nabla f_x}{w_x} \diff \mu(x)=\mu(\diff f(w))=-\scalar{\div_\mu w}{f} \comm
\end{align}
that is~$\div_\mu(\emparg)(\empargop)=-\del^\mu_\emparg (\mfj(\empargop))$ on~$\Vect^\infty\otimes \mcC^\infty(\MFD)$. By~\eqref{eq:ContDel} applied to~$u=f^\bid\in\Test^\infty$, the operator~$\del^\mu_\emparg\circ\mfj\colon \Vect^\infty\rar \mcC^\infty(\MFD)^*$ may be extended to a uniquely defined (non-relabeled) operator $\del^\mu_\emparg\circ\mfj\colon \Vect_\mu\rar\mcC^\infty(\MFD)^*$ and the notation is consistent in the sense that this operator coincides with the previously defined extension of~$\del^\mu_\emparg$ applied to~$\mfj$. Since both~$\del^\mu_\emparg\circ \mfj$ and~$-\div_\mu(\emparg)$ are linear and $\norm{\emparg}_{\Vect_\mu}$-continuous and coincide on the dense set~$\Vect^\infty\subset \Vect_\mu$, they coincide on the whole space~$\Vect_\mu$.
It remains to show that~$\ker\del^\mu_\emparg=\ker\del^\mu_\emparg\circ \mfj$, which follows immediately by noticing that for any~$u= F\circ (f_1^\bid,\dotsc, f_k^\bid)\in\Test^\infty$ and~$w\in\Vect_\mu$
\begin{align*}
\del^\mu_w(u)=&\sum_i^k(\partial_i F)(f_1^\bid\mu,\dotsc, f_k^\bid\mu) \int_\MFD \gscal{\nabla_x f_i}{w_x} \diff\mu(x)=-\sum_i^k(\partial_iF)(f_1^\bid\mu,\dotsc, f_k^\bid\mu) \scalar{\div_\mu w}{f_i}
\\
=&\sum_i^k(\partial_i F)(f_1^\bid\mu,\dotsc, f_k^\bid\mu) \, (\del^\mu_w\circ \mfj)(f_i) \fstop
\end{align*}

This concludes the proof.
\end{proof}
\end{prop}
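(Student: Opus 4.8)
The plan is to verify the operator identity first on the algebraic tensor product $\Vect^\infty\otimes\mcC^\infty(\MFD)$, then to extend it to the completion $\Vect_\mu$ by density, and finally to identify the two kernels by means of the chain rule that $\del^\mu_\emparg$ obeys on $\Test^\infty$. I would start from the base case: for $w\in\Vect^\infty$ and $f\in\mcC^\infty(\MFD)$ we have $\mfj(f)=f^\bid\in\Test^\infty$ with $\grad f^\bid\equiv\nabla f$ by \eqref{eq:grad0}, so by the definitions of $\del^\mu_\emparg$ and of $\div_\mu$,
\[
(\del^\mu_w\circ\mfj)(f)=\scalar{\grad f^\bid(\mu)}{w}_{\Vect_\mu}=\int_\MFD\gscal{\nabla f_x}{w_x}\diff\mu(x)=-\scalar{\div_\mu w}{f}\fstop
\]
Hence $\div_\mu(\emparg)=-\del^\mu_\emparg\circ\mfj$ as maps $\Vect^\infty\rar\mcC^\infty(\MFD)^*$.

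To pass to $\Vect_\mu$, note that both $w\mapsto\div_\mu w$ and $w\mapsto\del^\mu_w\circ\mfj$ are $\norm{\emparg}_{\Vect_\mu}$-continuous linear maps into $\mcC^\infty(\MFD)^*$ with its weak* topology: the first by the bound \eqref{eq:ContDiv}, the second by \eqref{eq:ContDel} applied to $u=f^\bid$. Moreover the abstract extension $\del^\mu_\emparg\colon\Vect_\mu\rar{\Test^\infty}^*$, being the unique continuous extension to $\Vect_\mu$ of the operator $\del^\mu_\emparg$ on $\Vect^\infty$, must coincide with the map $w\mapsto\big(u\mapsto\scalar{\grad u(\mu)}{w}_{\Vect_\mu}\big)$, which is visibly weak*-continuous in $w$ and agrees with $\del^\mu_\emparg$ on $\Vect^\infty$; composing with $\mfj$ then yields a continuous extension of $\del^\mu_\emparg\circ\mfj$ to $\Vect_\mu$. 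Since $-\div_\mu$ is another continuous extension and the two agree on the dense subspace $\Vect^\infty\subset\Vect_\mu$, they agree on all of $\Vect_\mu$; this is the displayed identity.

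From this identity, $\ker\div_\mu=\ker(\del^\mu_\emparg\circ\mfj)$ as subsets of $\Vect_\mu$, so it remains to check $\ker(\del^\mu_\emparg\circ\mfj)=\ker\del^\mu_\emparg$. The inclusion $\supseteq$ is immediate since $\mfj\big(\mcC^\infty(\MFD)\big)\subset\Test^\infty$. For $\subseteq$, take $w\in\Vect_\mu$ with $\del^\mu_w(f^\bid)=\scalar{\nabla f}{w}_{\Vect_\mu}=0$ for all $f\in\mcC^\infty(\MFD)$, i.e.~$w$ orthogonal in $\Vect_\mu$ to $\Vect^\infty_\nabla=\nabla\mcC^\infty(\MFD)$. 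For an arbitrary $u=F\circ(f_1^\bid,\dotsc,f_k^\bid)\in\Test^\infty$, \eqref{eq:grad0} gives $\grad u(\mu)=\sum_i^k(\partial_iF)(f_1^\bid\mu,\dotsc,f_k^\bid\mu)\,\nabla f_i\in\Vect^\infty_\nabla$, so
\[
\del^\mu_w(u)=\scalar{\grad u(\mu)}{w}_{\Vect_\mu}=\sum_i^k(\partial_iF)(f_1^\bid\mu,\dotsc,f_k^\bid\mu)\,\scalar{\nabla f_i}{w}_{\Vect_\mu}=0
\]
by bilinearity of $\scalar{\emparg}{\emparg}_{\Vect_\mu}$, whence $w\in\ker\del^\mu_\emparg$. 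Thus $\ker\div_\mu$, $\ker(\del^\mu_\emparg\circ\mfj)$ and $\ker\del^\mu_\emparg$ coincide as closed subspaces of $\Vect_\mu$, hence are isometrically isomorphic with the Hilbert structure inherited from $\Vect_\mu$.

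The argument is short; the only point deserving care is the compatibility of the three extensions in the second paragraph (of $\del^\mu_\emparg$, of $\div_\mu$, and of the composite $\del^\mu_\emparg\circ\mfj$ to $\Vect_\mu$). This becomes painless once one observes that for fixed $u\in\Test^\infty$ the vector $\grad u(\mu)$ lies in $\Vect^\infty$ and does not depend on $w$, so that $w\mapsto\del^\mu_w(u)=\scalar{\grad u(\mu)}{w}_{\Vect_\mu}$ is continuous on $\Vect_\mu$ by construction.
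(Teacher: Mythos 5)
Your proof is correct and follows essentially the same route as the paper's: verify the identity on $\Vect^\infty\otimes\mcC^\infty(\MFD)$, extend by continuity to $\Vect_\mu$, and identify the kernels via the chain-rule expression for $\grad u(\mu)$ applied to a cylinder function. Your explicit spelling-out of both kernel inclusions and your observation that $w\mapsto\scalar{\grad u(\mu)}{w}_{\Vect_\mu}$ already defines the required continuous extension outright (because $\grad u(\mu)$ lies in $\Vect^\infty$ and does not depend on $w$) make the compatibility-of-extensions step slightly more transparent, but the logical content is the same.
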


\paragraph{Tangent bundles} Let us denote by~$T^\nabla \msP_2$ the \emph{tangent bundle} to~$\msP_2$, set-wise defined as the disjoint union of~$T^\nabla_\mu\msP_2$ varying~$\mu\in\msP_2$. The \emph{pseudo-tangent bundle}~$T^\Der\msP_2$ is analogously defined.
Whereas this terminology is well-established, it is clear that~$T^\nabla\msP_2$ is \emph{not} a vector \emph{bundle} in the standard sense ---~nor in any reasonable sense~---, since it admits \emph{no} local trivialization by reasons of the dimension of~$T^\nabla_\mu\msP_2$. Indeed, for any~$x_0\in \MFD$ and every~$\eps>0$ one can find a smooth function~$\rho_\eps\in\mcC^\infty(\MFD)$ such that~$\mu_\eps\eqdef\rho_\eps\n\mssm\in\msP^\mssm$ and~$W_2(\delta_{x_0},\mu_\eps)<\eps$, yet~$T^\nabla_{\delta_{x_0}}\msP_2\cong T_{x_0}\MFD$ while~$T^\nabla_\mu\msP_2$ is infinite-dimensional. The same is true for~$T^\Der\msP_2$.

Despite this fact, the gradient~$\grad u$ of a cylinder function $u\in\Test^\infty$ may well be regarded as a smooth section of~$T^\nabla\msP_2$ since~$\grad u(\mu)\in T^\nabla_\mu\msP_2$ by~\eqref{eq:grad0}. Again by~\eqref{eq:grad0} the space of all such gradients is a subspace of the space~$\Test^\infty\otimes_{\R} \Vect^\infty_\nabla$ of $\Test^\infty$-linear combinations of gradient-type vector fields. This motivates the Definition~\ref{d:VectCinfty} of \emph{cylinder vector fields}~$\TestV^\infty\eqdef \Test^\infty\otimes_{\R} \Vect^\infty$, henceforth regarded ---~in analogy to the case of finite-dimensional manifolds~--- as (a subspace of) the space of smooth sections of the tangent bundle~$T^\Der \msP_2$.
In spite of Proposition~\ref{p:IdentKer}, the fiber-bundle~$T^\Der\msP_2$ does in fact convey more information than the fiber-bundle~$T^\nabla\msP_2$.

\paragraph{Global derivations} Consider the space $\Der(\Test^\infty)$ of abstract $\R$-derivations of~$\Test^\infty$.

\begin{lem}\label{l:Derivations}
Let~$w\in\Vect^\infty$. Then, the map
\begin{align*}
\del_w\colon u\mapsto \diff_t\restr_{t=0}(u\circ\Fl^{w,t})=\scalar{\grad u}{w}_{\Vect_{\emparg}}
\end{align*}
is an element of~$\Der(\Test^\infty)$.
\end{lem}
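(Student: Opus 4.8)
The plan is to verify directly that $\del_w$ is a well-defined $\R$-linear map $\Test^\infty \to \R$ satisfying the Leibniz rule, with the understanding that the identity $\diff_t\restr_{t=0}(u\circ\Fl^{w,t}) = \scalar{\grad u}{w}_{\Vect_\emparg}$ is part of the assertion to be established. The natural first step is to treat the basic generators $f^\bid$ for $f\in\mcC^\infty(\MFD)$. Since $\Fl^{w,t} = \fl^{w,t}_\pfwd$, one has $(f^\bid\circ\Fl^{w,t})(\mu) = \int_\MFD f\diff(\fl^{w,t}_\pfwd\mu) = \int_\MFD f\circ\fl^{w,t}\diff\mu$; differentiating under the integral sign at $t=0$ (justified because $\MFD$ is compact, $f$ and $w$ are smooth, so $(t,x)\mapsto f(\fl^{w,t}(x))$ is smooth with all derivatives uniformly bounded on compact $t$-intervals) yields $\diff_t\restr_{t=0}(f^\bid\circ\Fl^{w,t})(\mu) = \int_\MFD \diff f_x(w_x)\diff\mu(x) = \scalar{\nabla f}{w}_{\Vect_\mu}$, matching $\scalar{\grad f^\bid(\mu)}{w}_{\Vect_\mu}$ by definition \eqref{eq:grad0}.

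Next I would pass to a general cylinder function $u = F\circ(f_1^\bid,\dotsc,f_k^\bid)$ with $F\in\mcC^\infty(\R^k)$. Writing $u\circ\Fl^{w,t} = F\circ(f_1^\bid\circ\Fl^{w,t},\dotsc,f_k^\bid\circ\Fl^{w,t})$, each component $t\mapsto f_i^\bid(\Fl^{w,t}\mu)$ is differentiable at $t=0$ by the previous step, and $F$ is smooth, so the chain rule gives
\begin{align*}
\diff_t\restr_{t=0}(u\circ\Fl^{w,t})(\mu) = \sum_i^k (\partial_i F)(f_1^\bid\mu,\dotsc,f_k^\bid\mu)\,\scalar{\nabla f_i}{w}_{\Vect_\mu} = \scalar{\grad u(\mu)}{w}_{\Vect_\mu},
\end{align*}
again using \eqref{eq:grad0}. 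A small point worth addressing is well-definedness: since the representation of $u$ as in \eqref{eq:TestFC} is not unique (Remark~\ref{r:NonUniqueCylFunc}), one should note that the quantity $\diff_t\restr_{t=0}(u\circ\Fl^{w,t})(\mu)$ depends only on $u$, not on the chosen $F$ and $f_i$, which is automatic because the defining limit $\lim_{t\to 0}\tfrac1t(u(\Fl^{w,t}\mu)-u(\mu))$ is manifestly representation-independent; this also retroactively shows $\grad u$ in \eqref{eq:grad0} is well-defined.

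Finally I would check the two algebraic properties making $\del_w$ a derivation. Linearity over $\R$ is immediate from linearity of the difference quotient. For the Leibniz rule, given $u,v\in\Test^\infty$ one has $(uv)\circ\Fl^{w,t} = (u\circ\Fl^{w,t})(v\circ\Fl^{w,t})$, so
\begin{align*}
\diff_t\restr_{t=0}\ttonde{(uv)\circ\Fl^{w,t}}(\mu) = v(\mu)\,\del_w u(\mu) + u(\mu)\,\del_w v(\mu),
\end{align*}
using that $u\circ\Fl^{w,0} = u$ and that both factors are differentiable at $t=0$; this is exactly the Leibniz identity for an element of $\Der(\Test^\infty)$. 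One should also remark that $\del_w u\in\Test^\infty$ — indeed $\scalar{\grad u}{w}_{\Vect_\emparg}(\mu) = \sum_i (\partial_i F)(f_1^\bid\mu,\dotsc)\,(\diff f_i(w))^\bid\mu$ is again a cylinder function since $\diff f_i(w)\in\mcC^\infty(\MFD)$ — so that $\del_w$ really maps $\Test^\infty$ into itself, as required for an abstract derivation of the algebra. I do not anticipate a genuine obstacle here; the only mild care is the interchange of derivative and integral in the first step, which is the standard differentiation-under-the-integral-sign argument on a compact manifold.
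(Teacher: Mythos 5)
Your proposal is correct and follows essentially the same route as the paper: compute $\diff_t\restr_{t=0}(u\circ\Fl^{w,t})$ by the chain rule and differentiation under the integral, identify the result with $\scalar{\grad u}{w}_{\Vect_\emparg}$, observe that $\gscal{\nabla f_i}{w}\in\mcC^\infty(\MFD)$ so that $\del_w$ maps $\Test^\infty$ into itself, and note that the Leibniz rule is inherited from $\diff_t$. The only additions in your write-up (the standalone treatment of $f^\bid$ first, and the explicit remark on representation-independence) are minor elaborations of the same argument, not a different proof.
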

\begin{proof} One has
\begin{align}
\nonumber
\diff_t\restr_{t=0}u(\Fl^{w,t}\mu)=&\sum_i^k (\partial_i F)\ttonde{f_1^\bid \fl^{w,0}_\pfwd \mu,\dotsc, f_k^\bid \fl^{w,0}_\pfwd \mu} \times \diff_t\restr_{t=0} f_i^\bid\Fl^{w,t} \mu
\\
\nonumber
=&\sum_i^k (\partial_i F)(f_1^\bid\mu,\dotsc, f_k^\bid\mu) \times \diff_t\restr_{t=0} \mu (f_i\circ\fl^{w,t})
\\
\nonumber
=&\sum_i^k (\partial_i F)(f_1^\bid\mu,\dotsc, f_k^\bid\mu) \times \mu \ttonde{\diff_t\restr_{t=0}(f_i\circ\fl^{w,t})}
\\
\nonumber
=&\sum_i^k (\partial_i F)(f_1^\bid\mu,\dotsc, f_k^\bid\mu) \times \mu\gscal{\nabla f_i}{w}
\\
\label{eq:l:Derivation1}
=&\sum_i^k (\partial_i F)(f_1^\bid\mu,\dotsc, f_k^\bid\mu) \times \gscal{\nabla f_i}{w}^\bid\mu
\\
\label{eq:l:Derivation2}
=&\scalar{\grad u(\mu)}{w}_{\Vect_\mu} \fstop
\end{align}

Since~$\gscal{\nabla f_i}{w}\in\mcC^\infty(\MFD)$ by the choice of~$f_i$ and~$w$, and since~$\Test^\infty$ is an algebra, \eqref{eq:l:Derivation1}~shows that~$\del_w\colon \Test^\infty\rar \Test^\infty$. The Leibniz rule is straightforward from the same property of~$\diff_t$, while~$\Test^\infty$-linearity is a consequence of the representation in~\eqref{eq:l:Derivation2}.
\end{proof}

\begin{prop}\label{p:Deriv}
Let~$W\in\TestV^\infty$ be as in~\eqref{eq:TestV}. Then, the map
\begin{align*}
\del\colon W\mapsto \del_W\eqdef \sum_j^n v_j \del_{w_j} 
\end{align*}
is a linear injection~$\del\colon \TestV^\infty\rar \Der(\Test^\infty)$.
\end{prop}
\begin{proof} The fact that~$\del_W\in\Der(\Test^\infty)$ is a consequence of Lemma~\ref{l:Derivations} and of the choice of the~$v_j$'s. The $\Test^\infty$-linearity is immediate, while the $\Vect^\infty$-linearity follows from~\eqref{eq:l:Derivation2}.

Let now~$W\neq \zero_{\TestV^\infty}$, that is, there exists~$\mu_0\in \msP$ and~$x_0\in \MFD$ such that~$W(\mu_0)(x_0)\neq \zero_{T_{x_0}\MFD}$.
Since~$W(\emparg)(x_0)$ is continuous, and the set of purely atomic finitely supported probability measures is dense in~$\msP_2$ by e.g.,~\cite[proof of Thm.~6.18]{Vil09}, we can find a purely atomic finitely supported~$\eta\in\msP$ such that~$W(\eta)(x_0)\neq \zero_{T_{x_0}\MFD}$. Without loss of generality, up to choosing~$\eta'\eqdef (1-\eps)\eta+ \eps \delta_{x_0}$ for some small~$\eps>0$, we can assume~$\eta_{x_0}>0$.
By standard arguments, there exists~$f\in\mcC^\infty(\MFD)$ such that~$\nabla f_{x_0}=W(\eta)(x_0)$. Moreover, since~$\ptws\eta$ is finite, we can find~$g\in \mcC^\infty(\MFD)$ such that~$g\equiv \car$ on an open neighborhood of~$x_0$ and~$g\equiv \zero$ on an open neighborhood of every point in~$\ptws\eta$ other than~$x_0$. Set~$h=fg$ and notice that~$\nabla h_{x_0}=W(\eta)(x_0)$ while~$\nabla h=\zero$ for every point in~$\ptws\eta$ other than~$x_0$.
Now,
\begin{align*}
\del_W(h^\bid)(\eta)=&\scalar{\grad h^\bid}{W(\eta)}_{\Vect_\eta}=\int_\MFD \gscal{\nabla h_x}{W(\eta)(x)} \diff\eta(x)
= \eta_{x_0}\abs{W(\eta)(x_0)}_{\mssg}^2>0 \fstop
\end{align*}

Since~$\del$ is linear, this shows that it is also injective, which concludes the proof.
\end{proof}

\begin{rem}
Proposition~\ref{p:Deriv} is motivated by the analogy with finite-dimensional compact differential manifolds, e.g.~\cite[Prop.~3.5.3]{Con01}, in which case the map
\begin{align*}
\partial\colon \Vect^\infty\ni w\longmapsto \ttonde{\partial_w\colon f \mapsto \diff f(w)}\in\Der(\mcC^\infty(\MFD))
\end{align*}
is straightforwardly injective, and surjective because of the classical Hadamard Lemma. In the case of~$\msP_2$, I do not know whether~$\del$ is surjective, however, it should be noted that, in the case of infinite-dimensional smooth manifolds, this is not necessarily the case, again already at the pointwise scale. See~\cite[Thm.~28.7]{KriMic97}, for a proof of surjectivity under additional assumptions.
\end{rem}

Throughout all computations in Section~\ref{s:Proof}, vector fields $w\in\Vect^\infty$ ought to be interpreted as smooth directions at every point~$\mu\in\msP$. This is the right notion to be compared with the definition of directional derivative given in~\eqref{eq:DirDer0} in light of Proposition~\ref{p:Deriv}.

\subsection{Auxiliary results on normalized mixed Poisson measures}
\begin{lem}\label{l:NPPassQI}
The measure~$\mbbP$ defined in Example~\ref{ese:NPP} satisfies Assumption~\ref{ass:P4}.
\begin{proof}
Retain the notation in~\S\ref{s:Examples}. By~\eqref{eq:ese:PP3} and combining~\eqref{eq:ese:PP1} with~\eqref{eq:ese:PP2},
\begin{align*}
\diff \ttonde{\Phi_\pfwd \mbbP}(\mu)=& \diff\tonde{\int_{\R^+}N_\pfwd \Phi_\pfwd \PP_{s\cdot \sigma}(\emparg)\diff\lambda(s) }(\mu)
\\
=&\diff\tonde{\int_{\R^+} \exp\ttonde{s\cdot \sigma (\car -p^{s\cdot \sigma}_\upphi)} \cdot N_\pfwd \ttonde{R^{s\cdot \sigma}_\upphi \cdot \PP_{s\cdot\sigma}}(\emparg)  \diff\lambda(s)}(\mu)
\fstop
\end{align*}

Noting further that~$N$ is injective on~$\ddot\Gamma$ and denoting by~$N^{-1}$ its right-inverse, the function~$R^{s\cdot \sigma}_\upphi\circ N^{-1}$ is well-defined on~$\ddot\Gamma$, hence~$\PP_{s\cdot\sigma}$-a.e.~on~$\Gamma$. It follows that
\begin{align*}
\diff \ttonde{\Phi_\pfwd \mbbP}(\mu)=&\int_{\R^+} \tonde{\exp\ttonde{s\cdot \sigma (\car -p^{s\cdot \sigma}_\upphi)} \cdot \ttonde{R^{s\cdot \sigma}_\upphi\circ N^{-1}}(\mu)\cdot \diff\tonde{N_\pfwd\PP_{s\cdot\sigma}}(\mu) } \diff\lambda(s)\fstop
\end{align*}

Moreover,~$p^\sigma_\upphi=p^{s\cdot\sigma}_\upphi$ for every~$s>0$ by definition, cf.~\eqref{eq:ese:PP1}, thus~$R^\sigma_\upphi=R^{s\cdot\sigma}_\upphi$ and
\begin{align}
\nonumber
\diff \ttonde{\Phi_\pfwd \mbbP}(\mu)=&\ttonde{R^{\sigma}_\upphi\circ N^{-1}}(\mu) \int_{\R^+} \tonde{\exp\ttonde{s\cdot \sigma (\car -p^{\sigma}_\upphi)} \cdot \diff\tonde{N_\pfwd\PP_{s\cdot\sigma}}(\mu)} \diff\lambda(s)
\\
\label{eq:esePP3.5}
=&\ttonde{R^{\sigma}_\upphi\circ N^{-1}}(\mu) \cdot \diff N_\pfwd\tonde{\int_{\R^+}  \exp\ttonde{s\cdot \sigma (\car -p^{\sigma}_\upphi)} \cdot \PP_{s\cdot\sigma}(\emparg)  \diff\lambda(s)}(\mu)\comm
\end{align}
where it is possible to pull~$N_\pfwd$ outside the integral sign since the integrand does not depend on~$\mu$.

Finally, letting~$c_{\lambda,\sigma,\upphi}\eqdef (\sup\supp\lambda) \abs{\sigma (\car -p^{\sigma}_\upphi)}$, then for every measurable~$A\subset \msP$
\begin{equation}\label{eq:ese:PP4}
\begin{aligned}
e^{-c_{\lambda,\sigma,\upphi}} \ttonde{R^\sigma_\upphi\circ N^{-1}\cdot \mbbP}A \leq\ttonde{\Phi_\pfwd\mbbP} A \leq e^{c_{\lambda,\sigma,\upphi}} \ttonde{R^\sigma_\upphi\circ N^{-1}\cdot \mbbP}A \fstop
\end{aligned}
\end{equation}
Since~$R^\sigma_\upphi(\gamma)>0$ for~$\RP_{\lambda,\sigma}$-a.e.~$\gamma\in\Gamma$, it follows from~\eqref{eq:ese:PP4} that~$\mbbP$ and~$\Phi_\pfwd \mbbP$ are mutually absolutely continuous, hence the quasi-invariance in~\ref{ass:P4} holds. Letting~$w\in\Vect^\infty$, equation~\eqref{eq:ass:P4} is similarly verified since~$\#\supp\mu<\infty$ for~$\mbbP$-a.e.~$\mu$, hence, for all~$t\in\R$,
\begin{align*}
\ttonde{R^\sigma_{\fl^{w,t}}\circ N^{-1}}(\mu) \geq \prod_{x\in\mu} \frac{(\fl^{w,t})^*\rho(x)}{\rho(x)}J_{\fl^{w,t}}^\mssm(x)\geq \tonde{\min_{\MFD} \frac{(\fl^{w,t})^*\rho}{\rho} J^\mssm_{\fl^{w,t}}}^{\#\supp\mu}&>0 \fstop
\end{align*}

This concludes the proof.
\end{proof}
\end{lem}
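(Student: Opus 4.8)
The plan is to compute the pushforward $\Fl^{w,t}_\pfwd\mbbP$ in closed form and to read off from the resulting Radon--Nikod\'ym density both the mutual absolute continuity required by~\ref{ass:P4} and the quantitative bound~\eqref{eq:ass:P4}. I set $\upphi\eqdef\fl^{w,t}$ throughout, so that $\Phi=\upphi_\pfwd$ agrees with $\Fl^{w,t}$ on $\msP$.

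First I would combine the commutation relation~\eqref{eq:ese:PP3}, $N\circ\Phi=\Phi\circ N$, with $\RP_{\lambda,\sigma}=\int_{\R^+}\PP_{s\cdot\sigma}\diff\lambda(s)$ to obtain $\Phi_\pfwd\mbbP=\int_{\R^+}N_\pfwd\Phi_\pfwd\PP_{s\cdot\sigma}\diff\lambda(s)$, and then insert the Poisson quasi-invariance formula~\eqref{eq:ese:PP1}, $\Phi_\pfwd\PP_{s\cdot\sigma}=\exp\ttonde{s\cdot\sigma(\car-p^{s\cdot\sigma}_\upphi)}\,R^{s\cdot\sigma}_\upphi\cdot\PP_{s\cdot\sigma}$, where $R^{s\cdot\sigma}_\upphi$ is finite $\RP_{\lambda,\sigma}$-a.e.\ by~\eqref{eq:ese:PP2}. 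The decisive simplification is that $p^{s\cdot\sigma}_\upphi=p^\sigma_\upphi$ does not depend on $s$ (only the ratio $\upphi^*\rho/\rho$ and the Jacobian $J^\mssm_\upphi$ enter it), hence $R^{s\cdot\sigma}_\upphi=R^\sigma_\upphi$; since the measures $\PP_{s\cdot\sigma}$ are all carried by the simple configuration space $\Gamma$, on which $N$ is injective, the function $R^\sigma_\upphi\circ N^{-1}$ is a well defined measurable function $\mbbP$-a.e.\ on $\msP$ and, being independent of $s$, factors out of the $\diff\lambda(s)$-integral; the remaining integrand then depends only on $s$ and on the underlying configuration, so $N_\pfwd$ can likewise be moved outside, yielding an identity of the form
\[
\Phi_\pfwd\mbbP=\ttonde{R^\sigma_\upphi\circ N^{-1}}\cdot N_\pfwd\tonde{\int_{\R^+}\exp\ttonde{s\cdot\sigma(\car-p^\sigma_\upphi)}\,\PP_{s\cdot\sigma}\diff\lambda(s)}.
\]

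Next, since $\lambda$ has compact support, the scalar $\exp\ttonde{s\cdot\sigma(\car-p^\sigma_\upphi)}$ lies between $e^{\pm c_{\lambda,\sigma,\upphi}}$ with $c_{\lambda,\sigma,\upphi}\eqdef(\sup\supp\lambda)\,\abs{\sigma(\car-p^\sigma_\upphi)}<\infty$, so that $\Phi_\pfwd\mbbP$ is sandwiched between $e^{\mp c_{\lambda,\sigma,\upphi}}\ttonde{R^\sigma_\upphi\circ N^{-1}}\cdot\mbbP$. Because $R^\sigma_\upphi(\gamma)=\prod_{x\in\gamma}p^\sigma_\upphi(x)>0$ for $\RP_{\lambda,\sigma}$-a.e.\ $\gamma$, this already gives the mutual absolute continuity of $\mbbP$ and $\Fl^{w,t}_\pfwd\mbbP$ for every $w\in\Vect^\infty$, $t\in\R$. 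For~\eqref{eq:ass:P4} I would use that $\RP_{\lambda,\sigma}$-a.e.\ configuration, hence $\mbbP$-a.e.\ $\mu$, has finitely many atoms, and that $\#\supp\mu$ is preserved along the flow: with $\upphi=\fl^{w,r}$, the derivative $R^w_r(\mu)=\diff(\Fl^{w,r}_\pfwd\mbbP)/\diff\mbbP\,(\mu)$ is bounded below by $e^{-c_{\lambda,\sigma,\fl^{w,r}}}R^\sigma_{\fl^{w,r}}\circ N^{-1}(\mu)$, and
\[
R^\sigma_{\fl^{w,r}}\circ N^{-1}(\mu)=\prod_{x\in\mu}\frac{(\fl^{w,r})^*\rho(x)}{\rho(x)}\,J^\mssm_{\fl^{w,r}}(x)\geq\tonde{\min_\MFD\frac{(\fl^{w,r})^*\rho}{\rho}\,J^\mssm_{\fl^{w,r}}}^{\#\supp\mu};
\]
the base of this exponent, as well as $r\mapsto c_{\lambda,\sigma,\fl^{w,r}}$, is continuous and finite in $r$, so on any bounded interval $(s,t)$ the above lower bound stays above a positive constant depending only on $w,s,t$ and on the finite number $\#\supp\mu$; hence $\Leb^1\textrm{-}\essinf_{r\in(s,t)}R^w_r(\mu)>0$ for $\mbbP$-a.e.\ $\mu$, which is~\eqref{eq:ass:P4}.

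The real work here is bookkeeping rather than conceptual: one has to check that $R^\sigma_\upphi\circ N^{-1}$ is a genuine measurable function on $\msP$ — which rests on $N$ being injective on $\Gamma$ and on $\mbbP$ being carried by $N(\Gamma)$ — and that the two interchanges of $N_\pfwd$ with the $\diff\lambda(s)$-integral are legitimate, which they are precisely because, once the $s$-independent factor $R^\sigma_\upphi$ has been extracted, the remaining integrand no longer depends on the point at which the densities are evaluated. The uniformity in $r$ needed for~\eqref{eq:ass:P4} is then immediate from the smooth dependence of $\fl^{w,r}$ on $r$ together with the compactness of $\MFD$.
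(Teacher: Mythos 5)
Your proof is correct and follows essentially the same route as the paper: decompose $\mbbP$ as a mixture of normalized Poisson measures, apply the Poisson quasi-invariance formula~\eqref{eq:ese:PP1}, observe that $p^{s\cdot\sigma}_\upphi$ does not depend on~$s$ so that $R^\sigma_\upphi\circ N^{-1}$ factors out of the $\lambda$-integral, sandwich the remaining factor using compactness of $\supp\lambda$, and bound $R^\sigma_{\fl^{w,r}}\circ N^{-1}(\mu)$ from below using finiteness of $\supp\mu$ and continuity of $r\mapsto\fl^{w,r}$. (As a minor point in your favor: the injectivity of $N$ that matters is on the simple configuration space $\Gamma$, where $\PP_{s\cdot\sigma}$ lives, exactly as you state; the paper's phrasing ``$N$ is injective on $\ddot\Gamma$'' is a small imprecision, since $N$ fails to be injective on all of $\ddot\Gamma$.)
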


\begin{lem}\label{l:NPPassCl}
The measure~$\mbbP$ defined in Example~\ref{ese:NPP} satisfies Assumption~\ref{ass:P3}.

\begin{proof} We show the assertion when~$\lambda=\delta_1$, i.e.~when~$\mbbP=N_\pfwd\PP_\sigma$, similarly to~\cite[Thm.~3.1]{AlbKonRoe98}. The general case is readily proved by integration w.r.t.~$\lambda$ in light of the mutual absolute continuity of~$\PP_{s\cdot \sigma}$ w.r.t.~$\PP_{\sigma}$, hence of their normalizations, for every choice of~$s>0$, hence~$\lambda$-a.e., cf.~\eqref{eq:IsoPoisson}.

\paragraph{Preliminaries} Retain the notation established in~\S\ref{s:Examples} and denote by~$\beta^\sigma\eqdef \nabla\rho/\rho$ the logarithmic derivative of~$\sigma$, which is well-defined on~$\MFD$ since~$\rho\in\mcC^1(\MFD;\R^+)$.
Let further~$w\in\Vect^\infty$, denote by~$\div_\mssm$ the divergence on~$\MFD$ with respect to the volume measure~$\mssm$, and set
\begin{align*}
\beta^\sigma_w(x)\eqdef\scalar{\beta^\sigma_x}{w_x}_{\mssg_x}+\div_\mssm w_x\fstop
\end{align*}

Finally, let~$\nabla_w^*$ be the adjoint of~$\nabla_w$ in~$L^2_\sigma(\MFD)$, and denote the closure of~$\nabla_w$ again by the same symbol. By integration by parts~\cite[Eq.~(3.11)]{AlbKonRoe98}, one can readily show that
\begin{align*}
\nabla_w^*=-\nabla_w-\beta^\sigma_w\fstop
\end{align*}

\paragraph{Claim} \emph{ Letting~$B^\sigma_w\eqdef (\beta^\sigma_w)^\bid$, we claim that~$\grad_w^*\eqdef -\grad_w-B^\sigma_w$
satisfies~\eqref{eq:DefDirForm}  for our choice of~$\mbbP$. }

\paragraph{Some differentiation} For all~$u\in\Test^\infty$ denote by the same symbol the natural extension of~$u$ to~$\Mbp$. By~\cite[Prop.~2.1]{AlbKonRoe98} and several applications of~\eqref{eq:ese:PP3} we have
\begin{align}\label{eq:ese:PP5}
\int u\circ\Fl^{w,t} \cdot v \diff N_\pfwd \PP_\sigma=\int u\cdot v\circ\Fl^{w,-t} \diff N_\pfwd \PP_{\Fl^{w,t}\sigma} \fstop
\end{align}

Differentiating the l.h.s.~of~\eqref{eq:ese:PP5} under the sign of integral with respect to~$t$ yields the l.h.s.~of~\eqref{eq:DefDirForm} by~\eqref{eq:DirDer0}.
Moreover, letting~$\lambda=\delta_1$ in~\eqref{eq:esePP3.5} yields
\begin{align}\label{eq:ese:PP6}
\int u\cdot v\circ\Fl^{w,-t} \diff N_\pfwd \PP_{\Fl^{w,t}\sigma} =\int \exp\ttonde{\sigma(\car-p^\sigma_{\fl^{w,t}})} \cdot R^\sigma_{\fl^{w,t}}\circ N^{-1} \cdot u\cdot v\circ \Fl^{w,-t} \diff N_\pfwd \PP_\sigma\comm
\end{align}
where~$R^\sigma_{\fl^{w,t}}\circ N^{-1}$ is well-defined as in Lemma~\ref{l:NPPassQI}. 
Also, with obvious meaning of the notation~$x\in\mu$ for~$\mu\in N(\ddot\Gamma)$,
\begin{align*}
\diff_t\restr_{t=0}& \tonde{\exp\ttonde{\sigma(\car-p^\sigma_{\fl^{w,t}})} \cdot R^\sigma_{\fl^{w,t}}\circ N^{-1}}(\mu)=
\\
&=\diff_t\restr_{t=0} \prod_{x\in\mu} \frac{(\fl^{w,t})^*\rho(x)}{\rho(x)} J^\mssm_{\fl^{w,t}}(x)+\diff_t\restr_{t=0} \exp\quadre{\int_\MFD \tonde{\car-\frac{(\fl^{w,t})^*\rho(x)}{\rho(x)} J^\mssm_{\fl^{w,t}}(x)}\diff\sigma(x)} \fstop
\end{align*}
Now the arguments in the proof of~\cite[Thm.~3.1]{AlbKonRoe98} apply verbatim, yielding
\begin{align}\label{eq:AKR98p.461}
\diff_t\restr_{t=0} \tonde{\exp\ttonde{\sigma(\car-p^\sigma_{\fl^{w,t}})} \cdot R^\sigma_{\fl^{w,t}}\circ N^{-1}}(\mu)=-B^\sigma_w \fstop
\end{align}

\paragraph{Proof of the claim} Finally, differentiating~$v\circ \Fl^{w,-t}$ with respect to~$t$ yields~$-\grad_w v$, again by~\eqref{eq:DirDer0}. Combing this fact with~\eqref{eq:AKR98p.461}, the derivative under integral sign with respect to~$t$ of the r.h.s.~of~\eqref{eq:ese:PP6} reads~$\int u\cdot (-\grad_w v-B^\sigma_w) \diff\mbbP$, which proves the claim.
\end{proof}
\end{lem}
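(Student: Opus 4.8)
The plan is first to reduce to $\lambda=\delta_1$, i.e.~to $\mbbP=N_\pfwd\PP_\sigma$. Since $\PP_{s\cdot\sigma}$ and $\PP_\sigma$ are mutually absolutely continuous for every $s>0$ --- hence so are their normalizations --- and $\mbbP=N_\pfwd\RP_{\lambda,\sigma}=\int_{\R^+}N_\pfwd\PP_{s\cdot\sigma}\diff\lambda(s)$, it suffices to establish~\eqref{eq:DefDirForm} for $N_\pfwd\PP_\sigma$ and then integrate the resulting identity against $\lambda$; the compactness of $\supp\lambda$ assumed in Example~\ref{ese:NPP} supplies the integrability needed to carry the $t$-derivative past $\int\diff\lambda$.

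\paragraph{Turning the flow into a density}
Next I would fix $w\in\Vect^\infty$ with flow $\seq{\fl^{w,t}}_{t\in\R}$, write $\upphi\eqdef\fl^{w,t}$ and $\Phi\eqdef\upphi_\pfwd$, and use the quasi-invariance of $\PP_\sigma$ under diffeomorphisms together with the commutation relation~\eqref{eq:ese:PP3}. Combining~\eqref{eq:ese:PP1}--\eqref{eq:ese:PP2} with $N\circ\Phi=\Phi\circ N$ (cf.~the proof of Lemma~\ref{l:NPPassQI}) shows that $\Fl^{w,t}_\pfwd\mbbP$ is absolutely continuous with respect to $\mbbP$, with an explicit density $R^w_t$ obtained from~\eqref{eq:ese:PP1}--\eqref{eq:ese:PP2} through the injection $N$. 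A change of variables then gives, for $u,v\in\Test^\infty$,
\begin{align*}
\int_\msP \tonde{u\circ\Fl^{w,t}}\, v \diff\mbbP=\int_\msP u\,\tonde{v\circ\Fl^{w,-t}}\, R^w_t \diff\mbbP \fstop
\end{align*}

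\paragraph{Differentiating at $t=0$}
I would then differentiate both sides at $t=0$. The left-hand side yields $\int_\msP \grad_w u\cdot v\diff\mbbP$ by~\eqref{eq:DirDer0}. On the right-hand side, since $R^w_0\equiv 1$, the Leibniz rule produces $\int_\msP u\,\ttonde{-\grad_w v+v\,\diff_t\restr_{t=0}R^w_t}\diff\mbbP$, again by~\eqref{eq:DirDer0}. It remains to identify $\diff_t\restr_{t=0}R^w_t$: the density factorises as $\prod_{x\in\mu}p^\sigma_{\fl^{w,t}}(x)$ times $\exp\ttonde{\sigma\ttonde{\car-p^\sigma_{\fl^{w,t}}}}$, and --- because $\sigma$ is a finite measure, so that $\mbbP$-a.e.~$\mu$ has only finitely many atoms --- one may differentiate under the (finite) product and under the integral. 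The outcome, which is precisely the computation carried out in~\cite[Thm.~3.1]{AlbKonRoe98}, is $\diff_t\restr_{t=0}R^w_t=-B^\sigma_w$, where, setting $\beta^\sigma\eqdef\nabla\rho/\rho$ and $\beta^\sigma_w\eqdef\gscal{\beta^\sigma}{w}+\div_\mssm w$, one puts $B^\sigma_w\eqdef(\beta^\sigma_w)^\bid\in\Test^\infty$. Therefore $\grad_w^*v\eqdef-\grad_w v-B^\sigma_w v$ satisfies~\eqref{eq:DefDirForm}; being a combination of (gradients of) cylinder functions, the assignment $\mu\mapsto\grad_w^*v(\mu)$ is a measurable --- in fact continuous --- section of $T^\Der\msP_2$, as required, and it is manifestly linear in $v$.

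\paragraph{The main difficulty}
The only genuinely delicate part will be justifying the differentiation under the integral sign in the last step and carrying out the explicit evaluation of $\diff_t\restr_{t=0}R^w_t$. This rests on two features of the setup: the finiteness of $\sigma$, which makes $\mbbP$-a.e.~configuration finitely supported and collapses the otherwise infinite products; and the compactness of $\MFD$ together with $\rho\in\mcC^1(\MFD;\R^+)$, which keeps $p^\sigma_{\fl^{w,t}}$ and $\beta^\sigma_w$ uniformly bounded and thereby provides the domination needed for dominated convergence. Once these estimates are in place, the remaining steps are routine bookkeeping and the identity transfers verbatim from the configuration-space computation of Albeverio--Kondratiev--R\"ockner.
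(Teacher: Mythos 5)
Your proof is correct and follows essentially the same strategy as the paper's: reduce to $\lambda=\delta_1$ using mutual absolute continuity of the Poisson measures at different intensities, turn the flow into a change-of-variables identity involving the Radon--Nikod\'ym density $R^w_t$, and differentiate at $t=0$ using the explicit computation $\diff_t\restr_{t=0}R^w_t=-B^\sigma_w$ from Albeverio--Kondratiev--R\"ockner. The only cosmetic difference is that you collapse the paper's intermediate push-forward step (its Eq.~\eqref{eq:ese:PP5}) and the density identity (Eq.~\eqref{eq:ese:PP6}) into a single formula with the density $R^w_t$ appearing directly; the content is identical.
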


\begin{lem}\label{l:NPPassFS}
The measure~$\mbbP$ defined in Example~\ref{ese:NPP} satisfies Assumption~\ref{ass:P0}.

\begin{proof} By definition,~$\mbbP$ is concentrated on the set~$N(\Gamma)$, which is dense in~$\msP$ by~\cite[proof of Thm.~6.18]{Vil09}. Let~$U\neq \emp$ be open in~$\msP_2$. Then~$U\cap N(\Gamma)\neq \emp$ by density of~$N(\Gamma)$ in~$\msP_2$. By continuity of~$N$ the set~$\widetilde U\eqdef N^{-1}(U)\cap \Gamma=N^{-1}(U\cap N(\Gamma))\neq \emp$ is open in~$\Gamma$. Since~$\RP_{\sigma,\lambda}$ has full support on~$\Gamma$, cf.~\cite[Prop.~5.6]{RoeSch99}, then~$\mbbP U=\RP_{\sigma,\lambda} \widetilde U>0$.
\end{proof}
\end{lem}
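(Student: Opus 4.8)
The plan is to use that $\mbbP=N_\pfwd\RP_{\lambda,\sigma}$ is the push-forward, via the \emph{continuous} normalization map $N$, of a mixed Poisson measure $\RP_{\lambda,\sigma}$ which itself has full support on the configuration space, and then to transport the full-support property along $N$.

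First I would record that, since $\rho\in\mcC^1(\MFD;\R^+)$ and $\MFD$ is compact, the intensity measure $\sigma=\rho\mssm$ is finite; hence each $\PP_{s\cdot\sigma}$, and therefore $\RP_{\lambda,\sigma}=\int\PP_{s\cdot\sigma}\diff\lambda(s)$, is concentrated on the set $\Gamma$ of finite configurations, where $N$ is defined. The key topological input is that $N(\Gamma)$ is dense in $\msP_2$. Indeed, finitely supported probability measures are dense in $(\msP_2,W_2)$ by the argument in~\cite[proof of Thm.~6.18]{Vil09}, and any such measure with rational weights $\sum_i (k_i/n)\,\delta_{y_i}$ can be approximated in $W_2$ by uniform-weight measures $\tfrac1n\sum_j\delta_{z_j}$ with pairwise distinct $z_j$ — split each atom at $y_i$ into $k_i$ unit atoms placed at distinct points within $\eps$ of $y_i$, at a $W_2$-cost $\leq\eps$ — and these are exactly the elements of $N(\Gamma)$.

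Given a non-empty open $U\subset\msP_2$, density yields $U\cap N(\Gamma)\neq\emp$, and continuity of $N$ makes $\widetilde U\eqdef N^{-1}(U)\cap\Gamma$ a non-empty open subset of $\Gamma$. The last ingredient is that $\RP_{\lambda,\sigma}$ charges $\widetilde U$: the Poisson measure $\PP_\sigma$ has full support on $\Gamma$ by~\cite[Prop.~5.6]{RoeSch99}, and the same holds for $\PP_{s\cdot\sigma}$ for every $s>0$ by mutual absolute continuity with $\PP_\sigma$, cf.~\eqref{eq:IsoPoisson}; since $\lambda$ charges $(0,\infty)$ (implicit in $\mbbP$ being well defined) it follows that $\RP_{\lambda,\sigma}\widetilde U\geq\int_{(0,\infty)}\PP_{s\cdot\sigma}(\widetilde U)\diff\lambda(s)>0$. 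Hence $\mbbP U=\RP_{\lambda,\sigma}\bigl(N^{-1}(U)\bigr)\geq\RP_{\lambda,\sigma}\widetilde U>0$, which is Assumption~\ref{ass:P0}.

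The only point requiring a little care is the full-support statement for $\RP_{\lambda,\sigma}$ on $\Gamma$ — in particular handling the empty configuration and the behaviour of $\lambda$ near $0$ — but this reduces immediately to the Poisson case~\cite[Prop.~5.6]{RoeSch99} after fixing a single $s\in\supp\lambda\cap(0,\infty)$; everything else is a routine $W_2$-density argument.
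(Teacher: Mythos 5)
Your proposal is correct and follows essentially the same route as the paper's proof: concentration of $\mbbP$ on $N(\Gamma)$, density of $N(\Gamma)$ in $\msP_2$ via~\cite[proof of Thm.~6.18]{Vil09}, continuity of $N$, and full support of the (mixed) Poisson measure on $\Gamma$ via~\cite[Prop.~5.6]{RoeSch99}. The extra detail you supply --- the atom-splitting argument showing that uniform-weight measures with distinct atoms suffice for density, and the reduction of the mixed-Poisson full-support claim to the Poisson case by fixing one $s\in\supp\lambda\cap(0,\infty)$ --- is left implicit in the paper but is welcome and correct.
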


\subsection{Auxiliary results on the Malliavin--Shavgulidze image measure}

\begin{lem}\label{l:MSIassQI}
The measure~$\MSI$ defined in Example~\ref{ese:MSI} satisfies Assumption~\ref{ass:P5}.

\begin{proof} Retain all notation from~\S\ref{ss:MSI}. It follows from~\eqref{eq:MSqi} that~$(L_{\tau_a})_\pfwd \MS=\MS$ for every~$a\in\mbbS^1$, hence~$\mcM_1$ is quasi-invariant with respect to the left action on~$\msG_1$ of~$\DiffSP(\mbbS^1)$, and in fact of~$\Diffeo^3_+(\mbbS^1)$, given by post-composition. That is,~\eqref{eq:MSqi} holds true with~$\MS_1$ in place of~$\MS$ for every Borel~$A\subset \msG_1$ and every~$\upphi$ in~$\DiffSP(\mbbS^1)$.

For every~$\upphi$ and~$\Phi$ as in the beginning of~\S\ref{s:Examples}, set
\begin{align*}
R_\upphi(g)\eqdef \exp\quadre{\int_{\mbbS^1} S(\upphi^{-1})(g(r))\cdot g'(r)^2 \diff\mssm(r)} \fstop
\end{align*}

By definition~\eqref{eq:Chi} of~$\chi$, it holds that
\begin{align*}
\chi(L_{\upphi^{-1}}(g))=(\upphi^{-1}\circ g)_\pfwd \mssm=\upphi^{-1}_\pfwd (g_\pfwd \mssm)=\upphi_\pfwd^{-1}\chi(g)=\Phi^{-1}(\chi(g)) \fstop
\end{align*}

As a consequence, for~$\mu=\chi(g)$,
\begin{align*}
\diff \Phi_\pfwd \MSI(\mu)=&\diff \MSI\ttonde{\Phi^{-1}(\chi(g))}=\diff \MSI\ttonde{\chi(L_{\upphi^{-1}}(g))}=\diff\MS_1(L_{\upphi^{-1}}(g))=R_\upphi(g)\cdot \diff\MS_1(g)
\\
=&(R_\upphi\circ \chi^{-1})(\mu)\cdot \diff\MSI(\mu) \fstop
\end{align*}

The conclusion straightforwardly follows from the form of the Radon--Nikod\'ym derivative~$R_\upphi$.
\end{proof}
\end{lem}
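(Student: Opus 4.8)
The plan is to reduce Assumption~\ref{ass:P5} for $\MSI$ to three points: Assumption~\ref{ass:P0}, already established in Example~\ref{ese:MSI}; Assumption~\ref{ass:P4}; and the two additional bullets on the Radon--Nikod\'ym derivative $R^w_r$. The key preliminary step is to match the shift $\Fl^{w,r}$ on $\msP$ with the left-composition action on $\msG_1\cong\msP_2$. Since $S(\tau_a)\equiv 0$, formula~\eqref{eq:MSqi} shows that $\MS$ is invariant under the left translations $L_{\tau_a}$, $a\in\mbbS^1$; hence $\MS$ descends to $\MS_1\eqdef\pr^{\msG_1}_\pfwd\MS$, which is quasi-invariant under the left action of $\DiffSP(\mbbS^1)\subset\Diffeo^3_+(\mbbS^1)$ with the density appearing in~\eqref{eq:MSqi}. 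As in~\eqref{eq:Chi}, $\chi\tonde{L_{\upphi^{-1}}(g)}=(\upphi^{-1}\circ g)_\pfwd\mssm=\upphi^{-1}_\pfwd\chi(g)$, so $\Fl^{w,r}\circ\chi=\chi\circ L_{\fl^{w,r}}$, and therefore $\MSI=\chi_\pfwd\MS_1$ is quasi-invariant under $\Fl^{w,r}$ with $R^w_r=R_{\fl^{w,r}}\circ\chi^{-1}$, where $R_\upphi(g)\eqdef\exp\tonde{\int_{\mbbS^1}S(\upphi^{-1})(g(s))\,g'(s)^2\diff\mssm(s)}$. Since $R^w_r>0$ everywhere, $\MSI$ and $\Fl^{w,r}_\pfwd\MSI$ are mutually absolutely continuous.

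For the remaining part of Assumption~\ref{ass:P4} and for the differentiability bullet, fix $w\in\Vect^\infty$ and note that $r\mapsto\fl^{w,r}$ is a smooth curve in $\DiffSP(\mbbS^1)$, so $r\mapsto S(\fl^{w,-r})$ and $r\mapsto\partial_r S(\fl^{w,-r})$ are continuous on a neighbourhood of $0$ with values in $\mcC^\infty(\mbbS^1)$. For $\MSI$-a.e.\ $\mu$ we may write $\mu=\chi(g)$ with $g$ a $\mcC^1$-diffeomorphism of $\mbbS^1$, so $g'\in\mcC^0(\mbbS^1)$ and $\int_0^1 g'(u)^2\diff\mssm(u)<\infty$; consequently $\Leb^1\textrm{-}\essinf_{r\in(s,t)}R^w_r(\mu)\ge\exp\tonde{-\tonde{\sup_{r\in[s,t]}\norm{S(\fl^{w,-r})}_{\mcC^0}}\int_0^1 g'(u)^2\diff\mssm(u)}>0$, which is~\eqref{eq:ass:P4}, and $r\mapsto R^w_r(\mu)$ is differentiable near $0$ with $\partial_r R^w_r(\mu)=R^w_r(\mu)\int_{\mbbS^1}\tonde{\partial_r S(\fl^{w,-r})}(g(u))\,g'(u)^2\diff\mssm(u)$. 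In particular $\abs{\partial_r R^w_r(\mu)}\le C_w\,R^w_r(\mu)\int_0^1 g'(u)^2\diff\mssm(u)$ for $r$ near $0$, where $C_w\eqdef\sup\set{\norm{\partial_r S(\fl^{w,-r})}_{\mcC^0}:r\text{ near }0}<\infty$.

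It remains to bound $\int_\msP\abs{\partial_r R^w_r}\diff\MSI$ uniformly for $r$ near $0$, which is the crux. Both $g\mapsto\int_0^1 g'(u)^2\diff\mssm(u)$ and $R_\upphi$ are invariant under $g\mapsto g\circ\tau_a$, hence descend to $\msG_1$; pulling back along $\MSI=\chi_\pfwd\pr^{\msG_1}_\pfwd\MS$ and using $R_{\fl^{w,r}}(g)\diff\MS(g)=\diff\tonde{(L_{\fl^{w,r}})_\pfwd\MS}(g)$ (which is exactly~\eqref{eq:MSqi}) gives, for $r$ near $0$ — so that $\norm{(\fl^{w,r})'}_{\mcC^0}\le 2$, using $\fl^{w,0}=\id_{\mbbS^1}$ —
\[
\int_\msP\abs{\partial_r R^w_r}\diff\MSI\le C_w\int_\msG\tonde{\int_0^1\ttonde{(\fl^{w,r}\circ g)'(u)}^2\diff\mssm(u)}\diff\MS(g)\le 4C_w\,\ExpVal_\MS\!\tonde{\int_0^1 g'(u)^2\diff\mssm(u)}\fstop
\]
Finally, under the parametrisation $\xi$, for $\mssm\otimes\BB$-a.e.\ $g$ one has $g'(t)=e^{b(t)}\big/\int_0^1 e^{b(s)}\diff\mssm(s)$ with $b$ a Brownian-bridge path; writing $b=m+\tilde b$ with $\tilde b$ of zero mean, Jensen's inequality gives $\int_0^1 e^{\tilde b(s)}\diff\mssm(s)\ge 1$, so $\int_0^1 g'(t)^2\diff\mssm(t)\le\int_0^1 e^{2\tilde b(t)}\diff\mssm(t)$, and Tonelli together with the Gaussian moment generating function yields $\ExpVal_\MS\!\tonde{\int_0^1 g'(t)^2\diff\mssm(t)}\le\int_0^1 e^{2\Var(\tilde b(t))}\diff\mssm(t)<\infty$, since $t\mapsto\Var(\tilde b(t))$ is bounded on $[0,1]$. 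This settles both bullets, and hence Assumption~\ref{ass:P5}. The main obstacle is the bookkeeping in matching $\Fl^{w,r}$ with left composition $L_{\fl^{w,r}}$ (in particular the role of the inversion) and this last integrability estimate; the remaining steps are routine.
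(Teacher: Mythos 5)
Your proof follows the same route as the paper: use translation invariance of $\MS$ to pass to $\MS_1$, then transport the Malliavin--Shavgulidze quasi-invariance formula~\eqref{eq:MSqi} through the isometry $\chi$ (and the relation $\Phi\circ\chi=\chi\circ L_\upphi$) to get $R^w_r=R_{\fl^{w,r}}\circ\chi^{-1}$ with $R_{\fl^{w,r}}(g)=\exp\int S(\fl^{w,-r})(g)\,g'^2\,\diff\mssm$. The paper stops at this point, declaring the verification of Assumption~\ref{ass:P5} ``straightforward'' from the form of $R_\upphi$; you go on to actually carry out that verification --- differentiability of $r\mapsto R^w_r$ and the essinf bound from smoothness of the flow, and the uniform integrability of $\partial_r R^w_r$ by the change-of-variables $R_{\fl^{w,r}}\diff\MS=\diff(L_{\fl^{w,r}})_\pfwd\MS$ together with the Brownian-bridge moment estimate --- all of which is correct and consistent with what the paper leaves implicit.
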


{\small

}

\end{document}